\documentclass{amsart}
\usepackage{verbatim}
\usepackage{amssymb}
\usepackage{amsmath}
\usepackage[usenames]{color}
\usepackage{graphicx}
\usepackage{amscd}
\usepackage[T1]{fontenc}  
\usepackage[ngerman]{babel}
\usepackage[numbers,sort,square]{natbib}
\usepackage[T1]{fontenc}

\usepackage{enumerate}

\theoremstyle{plain}
\newtheorem{theorem}{Theorem}

\newtheorem{proposition}[theorem]{Proposition}
\newtheorem{lemma}[theorem]{Lemma}
\newtheorem{corollary}[theorem]{Corollary}

\theoremstyle{definition}

\newtheorem{remark}[theorem]{Remark}

\numberwithin{equation}{section}
\numberwithin{theorem}{section}

\allowdisplaybreaks

\usepackage[colorinlistoftodos,prependcaption,color=yellow,textsize=tiny]{todonotes}

\def\R{{\mathbb R}}



\newcommand{\eps}{\varepsilon}


\definecolor{mycolour}{RGB}{70,70,255}
\newcommand{\dd}[0]{\mathrm{d}}
\newcommand{\ud}[0]{\,\mathrm{d}}

\newcommand{\vertiii}[1]{{\left\vert\kern-0.25ex\left\vert\kern-0.25ex\left\vert #1
    \right\vert\kern-0.25ex\right\vert\kern-0.25ex\right\vert}}

\begin{document}

\title[Stabilization by noise and enhanced dissipation]
{Stabilization by transport noise and enhanced dissipation in the Kraichnan model}

\author{Benjamin Gess}
\address{Max Planck Institute for Mathematics in the Sciences\\
Inselstra{\ss}e 22\\
04103 Leipzig
\&
Faculty of Mathematics\\
University of Bielefeld\\
Universit\"atsstra{\ss}e 25\\
33615 Bielefeld\\
Germany}
\email{benjamin.gess@mis.mpg.de}

\author{Ivan Yaroslavtsev}
\address{Department of Mathematics\\
University of Hamburg\\
Bundesstra{\ss}e 55\\
20146 Hamburg
\&
Max Planck Institute for Mathematics in the Sciences\\
Inselstra{\ss}e 22\\
04103 Leipzig\\
Germany}
\email{yaroslavtsev.i.s@yandex.ru}

\date{\today}

\begin{abstract}
Stabilization and sufficient conditions for mixing by stochastic transport are shown. More precisely, given a second order linear operator with possibly unstable eigenvalues on a smooth compact Riemannian manifold, it is shown that the inclusion of transport noise can imply global asymptotic stability. Moreover, it is shown that an arbitrary large exponential rate of convergence can be reached, implying enhanced dissipation. The sufficient conditions are shown to be satisfied by the so-called Kraichnan model for stochastic transport of passive scalars in turbulent fluids. In addition, an example is given showing that it can be sufficient to force four modes in order to induce stabilization. 
\end{abstract}

\keywords{Stabilization by noise, Capi\'nski conjecture, stochastic flow, isotropic flow, Lyapunov exponent, Lyapunov function, dynamical system, Kraichnan model, incompressible}

\subjclass[2020]{37H15 Secondary: 58J65, 60G15, 60H15, 76M35, 93D30}

\maketitle


\section{Introduction}\label{sec:intro}

Let $d\geq 2$ and let $\mathcal M$ be a $d$-dimensional $C^{\infty}$-smooth connected compact Riemannian manifold. In this work, we consider the following linear second order stochastic PDE with transport noise on  $\mathbb R_+ \times \mathcal M$,
\begin{equation}\label{eq:mainstochdiffeq}
\begin{cases}
 \ud u_t + A\sum_{k\geq 1} \langle \sigma_k, \nabla u_t\rangle_{T\mathcal M} \circ \ud W^k_t & = (T u_t + Cu_t) \ud t,\\
 u_0 &= u\in L^2(\mathcal M),
\end{cases}
\end{equation}
where 
$(W^k)_{k\geq 1}$ are independent real-valued Brownian motions, $A \geq 0$, $C \in \mathbb R$, 
$(\sigma_k)_{k\geq 1}$ are divergence-free $C^{\infty}$-smooth vector fields and
\begin{equation}\label{eq:INTROTda}
 T f:= \sum_{m=1}^n \Bigl\langle\chi_m, \nabla \bigl \langle \chi_m, \nabla f\bigr \rangle \Bigr\rangle,\;\;\; f\in C^{\infty}(\mathcal M),
\end{equation}
is a second-order strictly elliptic operator with $(\chi_m)_{m=1}^n$ divergence-free, smooth tangent vector fields on $\mathcal M$. 
 A model case is given by $T = \Delta$, and $\mathcal M = \mathbb T^d$. 

Stochastic equations of the form \eqref{eq:mainstochdiffeq} describe the evolution of a passive scalar in a turbulent fluid, see, for example \cite{MK99,Fl11}. In particular, for a certain choice of $(\sigma_k)_{k\geq 1}$ equation \eqref{eq:mainstochdiffeq} corresponds to the {\em Kraichnan model} from turbulence theory, see Section \ref{sec:Kraichnan} below. Another example of \eqref{eq:mainstochdiffeq} arises in magnetohydrodynamics, for example studied by Baxendale and Rozovskii in \cite{BR92}, see Example \ref{ex:BRMHDmodel} below. In the latter case, the corresponding sum over $k$ in \eqref{eq:mainstochdiffeq} is finite.

The motivation of the present work is twofold: The first aim is to prove the possibility of stabilization by noise for stochastic PDE with transport noise. The second aim is the derivation of sufficient conditions on the coefficients $\sigma_k$ in \eqref{eq:mainstochdiffeq} to  imply a.s.\ exponential mixing for \eqref{eq:mainstochdiffeq}. We will next describe each of these aspects in some more detail.

The possibility of stabilizing linear SDE by noise was analyzed by Arnold, Crauel, and Wihstutz in \cite[Theorem 2.1]{ACW83} (see also \cite{Ar90}). More precisely, let $n\geq 2$ and let $T \in \mathcal L(\mathbb R^n)$ be a self-adjoint linear operator with a negative trace, $\text{tr}\; T<0$. Then, if $T$ has unstable eigenvalues, the solution to 
$ \ud x_t = Tx_t \ud  t$ blows up exponentially in time. The results of \cite[Theorem 2.1]{ACW83} imply that the inclusion of linear noise can induce global asymptotic stability, in the sense that there exist skew-symmetric matrices $D_1, \ldots, D_{n-1}\in \mathcal L(\mathbb R^n)$ and $A>0$ such that the solution $x_t$ to
\begin{equation}\label{eq:INFTOewithSTA}
 \ud x_t + A\sum_{k=1}^{n-1}D_kx_t\circ dW^k_t = Tx_t  \ud  t,\;\;\; t\geq 0,
\end{equation}
a.s.\ exponentially decays with an exponential rate  arbitrarily close to $\text{tr}\; T$ for $A$ large enough. The natural question of the possibility of an extension to infinite dimensions was posed by Capi\'nski in the late 80s. Choosing $\mathcal M=\mathbb T^d$, $T= \Delta+C$, in \eqref{eq:mainstochdiffeq} 
we have, informally, $\text{tr}\; T=-\infty$. Consequently, one may conjecture that a choice of diffusion coefficients $\sigma_k$ in \eqref{eq:mainstochdiffeq} is possible, implying an exponential rate of decay for the solution to \eqref{eq:mainstochdiffeq} with exponential rate arbitrarily close to $-\infty$. Following Flandoli and Luo \cite{FL19} this problem is called {\em Capi\'nski's conjecture}. 

The first result of this work demonstrates that, if the sum of the coefficients $\sigma_k$ in \eqref{eq:mainstochdiffeq} is smooth,  this conjecture can be deduced by combining results by Dolgopyat, Kaloshin and Koralov \cite{DKK04} with a recent argument by Bedrossian, Blumenthal and Punshon-Smith \cite{BBP-S} and a stopping time argument (see Section \ref{sec:Proofofmainthm} below). With an eye on the Kraichnan model considered below, we include the case of irregular coefficients $\sigma_k$, thereby identifying and including the optimal range of regularities for which these arguments can be used (cf.\ also p.4 below). For irregular coefficients the results of \cite{DKK04} are not directly applicable, but have to be carefully modified and generalized. 

More precisely, we deduce that the following sufficient conditions on the coefficients $(\sigma_k)_{k\geq 1}$, inspired by the ones used in \cite{DKK04,BS88},  imply stabilization by noise. In an informal form, the assumptions read

\begin{enumerate}
 \item[\textnormal {\bf (a)}] The coefficients $(\sigma_k(x_1), \sigma_k(x_2))_{k\geq 1}$ are {strictly elliptic} for any off-diagonal point $(x_1, x_2)\in \mathcal M^2$. This can be relaxed to the H\"ormander condition if the sequence $(\sigma_k)$ is finite, see Section \ref{subsec:Hormtwopint} below.
 \item[\textnormal {\bf (b)}] The coefficients $(\tilde {\sigma}_k)_{k\geq 1}$ of the {normalized tangent flow}, see \eqref{eq:defofotildesigmakdacq} below, are strictly elliptic.
 \item[\textnormal {\bf (c)}] Summability and H\"older continuity of the series $\sum_{k\ge1}\sigma_k$ and their first and second derivatives. 
\end{enumerate}
The rigorous form of the conditions
 {\textnormal {\bf (a)--(c)}} are given in full detail in assumptions {\textnormal {\bf (A)--(C)}} below.
\begin{theorem}\label{thm:proofofCadpconj}
 Let $(\sigma_k)_{k\geq 1}$ satisfy the conditions {\textnormal {\bf (A)--(C)}} and let $T$ be a second-order strictly elliptic operator defined by \eqref{eq:INTROTda}. Then there exist $A_0>0$ and $\gamma_0>0$ such that for any $\gamma\in(0, \gamma_0]$ and $A\geq A_0$ there is $D^{A,\gamma}:\Omega\to \mathbb R_+$ with  $\sup_{A\geq A_0}\mathbb E( D^{A,\gamma})^p <\infty$ for any $1\leq p<\tfrac{9d \gamma_0}{4\gamma}$ such that the solution $(u_t)$ of \eqref{eq:mainstochdiffeq} satisfies a.s.
\begin{equation}\label{eq:INTROmainthmwithCiqo}
 \|u_t\|_{L^2} \leq A^2 D^{A,\gamma} e^{(C-\gamma A^2)t}\|u\|_{L^2},\;\;\; t>0.
\end{equation}
\end{theorem}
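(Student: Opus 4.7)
The plan is to combine three ingredients: (i) ergodic-theoretic results on the stochastic transport flow in the spirit of Dolgopyat--Kaloshin--Koralov \cite{DKK04}, suitably extended to the H\"older regime imposed by (D); (ii) the enhanced-dissipation mechanism of Bedrossian, Blumenthal and Punshon-Smith \cite{BBP-S} that converts Lagrangian mixing into Eulerian $L^2$ decay against a background diffusion; and (iii) a stopping-time / Borel--Cantelli step that promotes an in-probability bound into the a.s.\ estimate \eqref{eq:INTROmainthmwithCiqo} with the stated polynomial moment control on $D^{A,\gamma}$.

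I would begin by reducing to $C=0$ via the multiplicative gauge $w_t := e^{-Ct}u_t$ and by projecting onto the mean-zero subspace $L^2_0(\mathcal M)$, invariant under both the transport noise and $T$ since the $\sigma_k$ and $\chi_m$ are divergence-free. Let $\phi^A_t$ denote the stochastic flow of $A\sum_k\sigma_k\circ\dd W^k$. By Brownian scaling the law of $\phi^A_t$ coincides with that of $\phi^1_{A^2 t}$, so any rates extracted from the noise-only dynamics scale like $A^2$. Assumptions (A)--(C), together with the modification of \cite{DKK04} described in Section~\ref{sec:Proofofmainthm} below, should yield: positivity of the top Lyapunov exponent of the tangent flow $D\phi^A_t$, geometric ergodicity of the projective process $(\phi^A_t(x), D\phi^A_t(x)/|D\phi^A_t(x)|)$ on the unit tangent bundle, and exponential mixing of the two-point motion off the diagonal. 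These should combine to give an $H^{-s}$ contraction $\E\|(\phi^A_t)^* w_0\|_{H^{-s}}^2 \le K e^{-\kappa A^2 t}\|w_0\|_{H^{-s}}^2$ for some small $s>0$ and $\kappa>0$ independent of $A$.

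With the mixing estimate in hand I would then carry out the BBP-S step. Applying It\^o's formula to a Lyapunov functional of the form $\Phi(w) := \|w\|_{H^{-s}}^2 + \delta \|w\|_{L^2}^2$ with small $\delta$, the dissipation $2\langle w_t, Tw_t\rangle$ produced by $T$ together with the mixing contraction of the transport part should, for $A\ge A_0$, dominate the remaining cross terms and give $\E\Phi(w_t)\le e^{-\gamma A^2 t}\Phi(w_0)$; interpolating $\|w\|_{L^2}^2 \lesssim \|w\|_{H^{-s}}^{2\theta}\|w\|_{H^1}^{2(1-\theta)}$ against the $H^1$-regularization provided by the ellipticity of $T$ should upgrade this to $L^2$ decay at rate $\gamma A^2$ in expectation. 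The a.s.\ estimate with moment control would then be extracted by a stopping-time argument: introducing the first time $\tau$ at which $\|w_t\|_{L^2}/(e^{-\gamma A^2 t}\|w_0\|_{L^2})$ crosses a threshold $M$, using Markov's inequality together with the above expectation bound to obtain a polynomial tail $\P(\tau<\infty)\lesssim M^{-p}$ for $p<9d\gamma_0/(2\gamma)$, and defining $D^{A,\gamma}$ as the supremum of that ratio; the precise exponent $9d\gamma_0/(2\gamma)$ should track the interplay between a dimension-dependent polynomial tail on the mixing/exit times of the two-point and projective processes (contributing the factor $d$) and the ratio $\gamma_0/\gamma$ measuring the distance of $\gamma$ from the best rate attainable.

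The principal obstacle is the DKK04 step under only the regularity (D). The original arguments of \cite{DKK04} rely on $C^\infty$ coefficients to obtain hypoellipticity, the strong-Feller property, and smooth densities for the invariant measures of the one-, two-, and projective-point motions. Under merely H\"older continuous second derivatives of $\sum_k\sigma_k$ one has to regularize the Kraichnan noise by smooth truncations, establish uniform-in-the-truncation lower bounds on the Lyapunov exponent and spectral gap of the regularized projective and two-point processes, and then pass to the limit via compactness of the invariant measures; it is precisely this limiting procedure which needs the summability and H\"older regularity of the series $\sum_k\sigma_k$ up to second derivatives assumed in (D), and which ultimately pins down the admissible interval $(0,\gamma_0]$ and the range of $p$ in the moment bound.
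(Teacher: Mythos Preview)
Your high-level architecture is right --- two-point ergodicity yields $H^{-s}$ mixing, and this is then upgraded to $L^2$ decay using the dissipation from $T$ --- but two of your concrete steps do not match how the argument actually runs, and one of them is a genuine gap.

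\textbf{The It\^o/Lyapunov functional step does not work as stated.} You propose to apply It\^o's formula to $\Phi(w)=\|w\|_{H^{-s}}^2+\delta\|w\|_{L^2}^2$ and argue that ``the mixing contraction of the transport part'' combines with the dissipation from $T$ to give a negative drift. But the mixing statement you have is a \emph{semigroup} bound $\E\|(\phi^A_t)^* w_0\|_{H^{-s}}^2\le Ke^{-\kappa A^2 t}\|w_0\|_{H^{-s}}^2$, not an infinitesimal one; the Stratonovich correction of the transport noise acting on $\|w\|_{H^{-s}}^2$ has no sign, so there is no pointwise drift inequality to write down. The paper bypasses this entirely. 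After rescaling to the $\kappa$-equation $\ud u + \sum_k\langle\sigma_k,\nabla u\rangle\circ\ud W^k=\kappa T u$, it first proves the $H^{-s}$ decay \emph{pathwise} (Theorem~\ref{thm:H-sconvergensofHsinvar}, via two-point ergodicity and a Borel--Cantelli argument on the Laplace eigenbasis), obtaining a random constant $D_{\kappa,\gamma}$ directly. Then it uses the deterministic energy identity $\tfrac{\ud}{\ud t}\|u_t\|_{L^2}^2=-\kappa\sum_m\|\chi_m u_t\|^2\eqsim-\kappa\|\nabla u_t\|^2$ together with the interpolation $\|\nabla u_t\|^2\ge\|u_t\|_{L^2}^4/\|u_t\|_{H^{-1}}^2$ to get an ODE for $1/\|u_t\|_{L^2}^2$; plugging the pathwise $H^{-1}$ bound into this ODE yields $\|u_t\|_{L^2}\lesssim D_{\kappa,\gamma}\kappa^{-1/2}e^{-\gamma t}\|u\|_{H^1}$. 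No It\^o on $\Phi$ is needed.

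\textbf{The stopping time has a different purpose.} You use $\tau$ as a first-crossing time to convert an in-expectation bound into an a.s.\ one via Markov. In the paper the a.s.\ bound is already present (it comes straight from the Borel--Cantelli step). The stopping time is used instead to replace the $\|u\|_{H^1}$ on the right-hand side above by $\|u\|_{L^2}$: one sets $\tau=\inf\{t:\sum_m\|\chi_m u_t\|^2\le\|u\|_{L^2}^2/\kappa\}$, observes from the energy identity that $\tau\le1$, so that $\|u_\tau\|_{H^1}\lesssim\kappa^{-1/2}\|u\|_{L^2}$, and then restarts the flow at $\tau$ using the Markov property of the Brownian flow. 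The moment bound on $D^{A,\gamma}$ is inherited from the moments of $D_{\kappa,\gamma}$ (Lemmas~\ref{lem:ChasallLpnporemcfas}--\ref{lem:hatDhasmanyLpnofnmwioc}), which in turn trace back to the exponent in the two-point geometric ergodicity and Weyl's law; this is where the $9d\gamma_0/(2\gamma)$ comes from.

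\textbf{On the regularity issue.} Your description of the DKK04 extension as a truncation/compactness limit is not what the paper does either: it works directly under (A)--(D), rebuilding the Baxendale--Stroock Lyapunov function for the two-point motion at the available H\"older regularity (Section~\ref{subsec:constrofVkappa} and Appendix~\ref{sec:LyaexmomLyafun}) and checking Harris' theorem with a uniform-in-$\kappa$ minorization coming from parabolic Harnack (Proposition~\ref{prop:appendxHarnackmancewell}).
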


The proof of Theorem \ref{thm:proofofCadpconj} is given in Section \ref{sec:Proofofmainthm} below.

As addressed in detail below, we verify the conditions  {\textnormal {\bf (A)--(C)}} in two particular examples: First, for the \textit{Kraichnan model} arising in the analysis of turbulent fluids, for which the series in \eqref{eq:mainstochdiffeq} is infinite, causing limited regularity of the stochastic flow. Second, for a two-dimensional example due to Baxendale and Rozovskii. In this example, only four modes have to be forced, that is $\sigma_k \equiv 0$ for $k\ge5$, to induce stabilization by noise.

We note the relation to the enhancement of diffusive mixing studied in the deterministic setting by Constantin, Kiselev, Ryzhik, and Zlato\v{z} in \cite{CKRZ}, where necessary and sufficient conditions on the coefficient $\sigma$ are derived, implying that the solution $u_t$ of the PDE
\begin{equation}\label{eq:introCKRZcp}
   \ud u_t = \Delta u_t\ud t- A\langle \sigma, \nabla u_t\rangle_{T\mathcal M}\ud t,\;\;\; t\geq 0,
 \end{equation}
converges to zero in $L^2(\mathcal M)$ {\em for any fixed $t>0$} as $A\to \infty$. 

\smallskip

We next address the second main aspect of this work: stochastic and turbulent mixing of passive scalars. Let $u_0:\mathcal M \to \R_+$ be the density of a solute in a solvent with dynamics on $\mathcal M$ governed by a divergence-free vector field $\sigma: [0,T]\times\mathcal M \to T\mathcal M$. Then, the density $u_t$ of the solute at time $t\geq 0$ is given as the solution to the transport equation
\begin{equation}\label{eq:INTROpuretranseq}
\ud u_t +\langle  \sigma(t), \nabla u_t\rangle_{T\mathcal M}\ud t = 0.
\end{equation}
The concept of mixing measures how ``well-spread'' the density $u_t$ is. 
Mixing of passive scalars can be understood as weak-$L^2$ convergence of the density $u_t$ to its mean $\bar u = \int_{\mathcal M}u_0$. For the relation to notions of mixing in dynamical systems theory see e.g.\ \cite{Th12}. This weak-$L^2$ convergence is equivalent to convergence of $u_t$ to $\bar u$ in the Sobolev space $H^{-s}$ for any $s>0$ (see e.g.\ \cite{LTD11,Th12}). In the deterministic setting, the analysis of necessary and sufficient conditions on the drift field $\sigma$ implying mixing and rates of mixing for \eqref{eq:INTROpuretranseq} has attained considerable interest in the literature, see, for example 
Alberti, Crippa, and Mazzucato \cite{ACM19} and the references therein. 

More recently, the case of a solvent in a stochastic fluid has been considered by 
Bedrossian, Blumenthal, and Punshon-Smith in \cite{BBP-S,BBP-S1911a,BBP-S18}. In this case, the flow $\sigma$ in \eqref{eq:INTROpuretranseq} is chosen as the solution to the two-dimensional stochastic Navier-Stokes equation with random excitation of small modes, and it is shown that for a.e.~realization \eqref{eq:INTROpuretranseq} becomes mixing. With an eye on mixing in turbulent fluids, in the present work, the solution to the stochastic Navier-Stokes equation is replaced by the so-called Kraichnan model. 
In a turbulent regime, based on the statistics provided by the Kolmogorov turbulence theory (see, for example, \cite[Section 4]{MK99}), the Kraichnan model (see \cite{Kr68} and Section \ref{sec:Kraichnan} below) describes turbulent fluids by means of a random field, rapidly decorrelating in time and decorrelating in space in accordance to the Kolmogorov statistics. More precisely, in this case 
 we have $\sigma(t) := \sum_{k\geq 1} \sigma_k \dot{W}^k_t$ for a certain choice of coefficients $(\sigma_k)_{k\geq 1}$, determining the spatial decorrelation. Hence, \eqref{eq:INTROpuretranseq} becomes a stochastic transport equation
 \begin{equation}\label{eq:INTROstochtranseq}
  \ud u_t + \sum_{k\geq 1}
  \langle  \sigma_k(t), \nabla u_t\rangle_{T\mathcal M} \circ \ud {W}^k_t = 0.
  \end{equation}

 The second result of this work proves a.s.~exponential mixing for \eqref{eq:INTROstochtranseq} under the same assumptions {\textnormal {\bf (a)--(c)}}. 
 
 These conditions are shown to be satisfied by the Kraichnan model in regimes corresponding to non-smooth coefficients in \eqref{eq:INTROstochtranseq}, while still offering sufficient regularity to be accessible by an analysis of the Lyapunov exponents and stochastic flows: More precisely, we require, roughly speaking, $\sum_{k\geq 1} (\sigma_k,\sigma_k):\mathcal M \times \mathcal M \to T(\mathcal M\times \mathcal M)$  to be $C^{2+\beta}$ for some $\beta>0$. \label{pg:whyonly2+beta} This is the optimal range for the argument used in the present work, since for $\beta <0$ the Lagrangian dynamics lack uniqueness and branching of particles occurs (see \cite[Theorem 9.4 and 10.1]{LJR02}). In addition, \cite[Corollary 3.3 and p.\ 331]{Bax86} implies that the top Lyapunov exponent diverges to infinity for $\beta \searrow 0$. We note that the Kraichnan model corresponding to Kolmogorov turbulence leads to  $\sum_{k\geq 1} (\sigma_k,\sigma_k)$ being at most in $C^{4/3}$, i.e.\ $\beta=-\frac{2}{3}$ above. 

The results of the present work also includes the viscous case, for $\kappa \ge 0$,
\begin{equation}\label{eq:introkappaisnmwq}
  \begin{cases}
  \ud u_t + \sum_{k\geq 1} \langle\sigma_k , \nabla u_t \rangle_{T\mathcal M}\circ \ud W^k_t  \!\!\!& = \kappa  T u_t \ud t,\\
 u_0 &= u.
\end{cases}
\end{equation}

\begin{theorem}\label{thm:H-sconvergensofHsinvar}
 Let $(\sigma_k)_{k\geq 1}$ satisfy the conditions {\textnormal {\bf (A)--(C)}} and let $(u_t)_{t\geq 0}$ be the solution of \eqref{eq:introkappaisnmwq} with $u$ being  mean-zero in $H^s(\mathcal M)\cap H^1(\mathcal M)$ for some $s\in(0,1+\beta/2)$. Then there exist $\kappa_0>0$ and $\gamma_0>0$ such that for any $\kappa\in[0, \kappa_0]$ and $\gamma\in(0, \gamma_0)$ there exists $D_{\kappa,\gamma}:\Omega \to [1, \infty)$ so that a.s.\
 \begin{equation}\label{eq:INTROH-sHsineqsfa}
  \|u_t\|_{H^{-s}} \leq D_{\kappa,\gamma} e^{-\gamma st} \|u\|_{H^s},\;\;\; t> 0.
 \end{equation}
 Moreover, $\sup_{\kappa\in[0, \kappa_0]}\mathbb E |D_{\kappa,\gamma}|^p<\infty$ for any $1\leq p<\frac{9d\gamma_0}{2s\gamma}$.
\end{theorem}

\medskip

We next sketch some aspects of the proof of Theorem \ref{thm:proofofCadpconj}. By rescaling, \eqref{eq:mainstochdiffeq} is equivalent to \eqref{eq:introkappaisnmwq}, see Subsection \ref{sec:Proofofmainthm} below, with $A\to \infty$ corresponding to $\kappa\to 0$. Next, we consider the Lagrangian level, that is the characteristics of \eqref{eq:introkappaisnmwq} and show that the corresponding two-point motion Markov semigroup $(P^{(2), \kappa}_t)_{t\geq 0}$ is ergodic. More precisely, we show that there exists $p>0$ and $\alpha>0$ such that for any bounded mean-zero continuous $\psi:\mathcal M\times \mathcal M\to \mathbb R$ we have that 
\begin{equation}\label{eq:introtwopergfq}
 |P^{(2), \kappa}_t\psi(x, y)| \leq C e^{-\alpha t}d(x, y)^{-p}\|\psi\|_{\infty},\;\;\; t\geq 0.
\end{equation}
 Inequality \eqref{eq:introtwopergfq} will be shown by means of a quantified form of Harris' ergodic theorem, see Theorem \ref{thm:QuaeHarwqthnms} below. The verification of the assumptions of this results require results on stochastic flows provided by Kunita \cite{KSF90} and Baxendale and Stroock \cite{BS88}, and a simple but important trick from \cite[p.\ 9]{DKK04}. Finally, using \eqref{eq:introtwopergfq} we show mixing for \eqref{eq:introkappaisnmwq} in the form of \eqref{eq:INTROH-sHsineqsfa},
by using the methods developed by Bedrossian, Blumenthal, and Punshon-Smith in \cite{BBP-S,BBP-S1911a}. A simple PDE argument then allows to convert $H^{-s}$-decay of the solution of \eqref{eq:introkappaisnmwq} to the desired Theorem \ref{thm:proofofCadpconj} (see Subsection \ref{sec:Proofofmainthm}). In addition, in Section \ref{sec:Proofofmainthm} and Theorem \ref{thm:H-sconvergensofHsinvar} $L^p$-estimates for $D^{A,\gamma}$ in Theorem \ref{thm:proofofCadpconj} are provided.

\subsection{Further comments on the literature}\label{sec:literature}

We collect here further references to the literature on synchronization/stabilization by noise, mixing of passive scalars, and turbulent mixing of passive scalars. Since the literature is extensive, we restrict to literature particularly relevant to this work, referring to the references in the cited works for a more extensive and complete overview.

Stabilization and synchronization by noise for stochastic PDE has been considered, for example in \cite{FGS17-2,GT20,G13,BBP-S,R19}, with related work on the level of SDE in \cite{FGS17,SV18,V18,N18,ACW83,Ar90} and for iterated function systems in \cite{H18} and references therein. For applications to sampling of invariant measures we refer to \cite{LPP15}.

Lyapunov exponents of stochastic flows and their relation to asymptotic properties of the two-point motion have been addressed in depth in the works \cite{DKK04,BS88,Bax86,Bax92,Bax91,ArnBook98,LyapExProc86,Khas12} and references therein.

In parallel to the completion of this work, mixing properties for stochastic PDE falling into the general class of \eqref{eq:mainstochdiffeq} have been analyzed in  \cite{FGL21,FGL21-2} by entirely different methods.   
In comparison to the results obtained in the present work, the methods of \cite{FGL21,FGL21-2} require not only to increase the amplitude of the noise $A$ in \eqref{eq:mainstochdiffeq} but also to choose a particular scaling of the noise coefficients $\sigma_k$ in \eqref{eq:mainstochdiffeq} to obtain an arbitrarily high exponential rate of mixing. In other words, given an exponential rate of mixing $\lambda$, the choice of the noise coefficients $\sigma_k$ in \cite{FGL21,FGL21-2} will depend on this rate $\lambda$, while the present work shows that $\sigma_k$ can be chosen fixed and only the amplitude $A$ has to be chosen large. In this sense the results of the present work are closer to the way problems are formulated in \cite{CKRZ} for the deterministic case \eqref{eq:introCKRZcp}, identifying sufficient conditions for diffusion coefficients $\sigma_k$ to imply arbitrarily fast mixing. 
 On the other hand, the results of \cite{FGL21} allow to include the Kraichnan model for  general $\alpha>0$  due to \cite[Remark 1.2]{FGL21}, which does not follow from Theorem \ref{thm:proofofCadpconj}. A second difference is that by the arguments of \cite{FGL21} no exponential mixing for large times as in Theorem \ref{thm:H-sconvergensofHsinvar} can be obtained, but only weaker statements on finite time horizons, see \cite[Theorem 1.1]{FGL21}. 

Mixing of passive scalars driven by solutions to the stochastic Navier-Stokes equation has been shown in \cite{BBP-S18}. An essential difference to the present contribution is that the passive scalar equation in \cite{BBP-S18} takes the form of a random PDE, while \eqref{eq:mainstochdiffeq} is a stochastic PDE. As a consequence, the Langragian system corresponding to  \eqref{eq:mainstochdiffeq} is Markovian, while the one in \cite{BBP-S18} only becomes Markovian when coupled with the active stochastic PDE (see e.g.\ \cite[Lemma 7.3]{BBP-S18}). Therefore, in the present work we can rely on Harnack inequalities for Kolmogorov equations in finite dimensions, whereas \cite{BBP-S18} has to rely on more specific arguments to take care of the infinite dimensionality of the system. This makes possible covering less regular coefficients in the present work. A second difference to the line of arguments developed in \cite{BBP-S18} is the construction of a $\kappa$-independent Lyapunov function together with the continuity of the two-point motion in $\kappa$ (see Subsections \ref{subsec:constrofVkappa} and \ref{subsec:ergoftwopmot}), replacing the $\kappa$-dependent Lyapunov function in \cite{BBP-S18}. At the same time, the stochastic PDE nature of \eqref{eq:mainstochdiffeq} also causes additional challenges: While  \cite{BBP-S18} may prove the time and spatial regularity of solutions to the passive scalar equation in a pathwise manner (see e.g.\ \cite[proof of Lemma 6.11]{BBP-S}), the present work has to rely on probabilistic calculus in form of Kunita's theory (see \cite{KSF90} and Lemma \ref{lem:phitkat64wppacontinxandkapps}). Finally, the present paper extends these considerations to the manifold setting.

In comparison with Baxendale and Stroock \cite{BS88}, the key difference is that in the present work we include irregular coefficients in the sense that the coefficients of the Fokker-Planck equation corresponding to \eqref{eq:introkappaisnmwq} are assumed only to have  $C^{2+\beta}$ regularity, see Proposition \ref{prop:Lyapconstexistsandpos}).

The existence and uniqueness of solutions to the Kraichnan model in the irregular regime has been analyzed in the works \cite{LR04,LJR02,EVE00}. For related work on well-posedness by noise we refer to \cite{Fl11,FGP10}.

For physical background on mixing by stochastic transport and the Kraichnan model we refer to \cite{CSS94,MK99,GV00,EVE00,ChDG} and the references therein.

There is a large body of works addressing the mixing of passive scalars by deterministic transport. We refer to \cite{CKRZ,LTD11,ACM19,MMGVP,LLNMD12,YZ17} and the references therein. In \cite{MD18} the effect of diffusion on mixing is analyzed. It is observed that in certain cases, a too strong diffusion decreases the mixing effect of transport. This is in line with the results of this work which apply for diffusion intensity $\kappa$ in \eqref{eq:introkappaisnmwq} small enough.

\section{Preliminaries}\label{sec:prelim}

In this section we introduce notations and conventions used throughout the paper. 

For $a, b\in \mathbb R$ we write $a \lesssim_A b$ if there exists a constant $c$ depending only on some quantity $A$ such that
$a \leq cb$, $\gtrsim_A$ is defined analogously, and we write $a \eqsim_A b$ if both $a \lesssim_A b$ and $a \gtrsim_A b$ hold simultaneously. 

 We set $\mathbb R_+ := [0, \infty)$, $\mathbb T = \{z\in\mathbb C : |z|=1\}$ to be the torus and for any $n\geq 1$ we let $|\cdot |$ be the vector norm in $\mathbb R^n$, $\mathbb S^{d-1}$ to be the unit sphere in $\mathbb R^d$ and $\mathbf 1$ to be a function identically equal to $1$.
 
 Further, we set $\mathcal L(X, Y)$ to be the Banach space of bounded linear operators acting from a Banach space $X$ to a Banach space $Y$ endowed with the operator norm $\|\cdot\|$. In particular, $\|\cdot\|$ denotes the operator norm in $\mathcal L(\mathbb R^m, \mathbb R^n)$.

 We let $\mathbb E_{A}\xi_t$ be the expectation of $\xi_t$, where $\xi$ is a Markov process with a given initial value $A$. $\mathbb Z^d_0$ denotes $\mathbb Z^d \setminus \{0\}$.

Let $d\geq 2$ and let $\mathcal M$ be a $d$-dimensional compact $C^{\infty}$-smooth connected  Riemannian manifold with the Riemannian metric tensor $(g_{ij})$ (see e.g.\ \cite[Chapter 3]{LeeIRM}). We will additionally assume that $\mathcal M$ is a regular submanifold of $\mathbb R^{\ell}$ for some $\ell\geq d$ (see \cite[pp.\ 15--16]{LeeIRM}) so that the tangent bundle $T\mathcal M$ and the metric $(g_{ij})$ are generated by the Euclidean structure of $\mathbb R^{\ell}$. Due to the Nash embedding theorem (see \cite[Theorem 3.1.3]{Hs02} and \cite{LeeIRM,Na56}) this does not pose an additional restriction. Let $\mu$ be the Riemannian volume measure and let $d:\mathcal M\times \mathcal M \to \mathbb R_+$ be the distance function on $\mathcal M$ generated by the manifold's metric (see e.g.\ \cite[Sections V and VI]{Boo86}). For any $1\leq p\leq \infty$ we set $L^p(\mathcal M)$ denotes $L^p(\mathcal M,\mu)$. The tangent bundle is $T\mathcal M = (T_x \mathcal M)_{x\in \mathcal M}$ of $\mathcal M$, i.e.\  the set of all tangent spaces $T_{x}\mathcal M$ over $x\in \mathcal M$. For each $x\in\mathcal M$ and $u, v\in T_x\mathcal M$ we denote the inner product of $u$ and $v$ generated by the Riemannian metric by $\langle u, v\rangle_{T_x\mathcal M}$. For simplicity we will write $\langle u, v\rangle_{x}$. Note that since $\mathcal M$ is a regular submanifold of $\mathbb R^{\ell}$, $\langle u, v\rangle_{x} = \langle u, v\rangle$, where $\langle \cdot, \cdot\rangle$ is the standard Euclidean inner product in $\mathbb R^{\ell}$. In particular, we set $|v|:= \sqrt{\langle v,v\rangle_x}=\sqrt{\langle v,v\rangle}$.  For any $x\in \mathcal M$ and $u,v\in T_x\mathcal M$ we denote $\langle u, v\rangle_{T_x\mathcal M}$ by $\langle u, v\rangle_{T\mathcal M}$ if this causes no confusion (as it was done e.g.\ in \eqref{eq:mainstochdiffeq}). Further, for any $x, y\in \mathcal M$, $u\in T_x\mathcal M$, and $v\in T_y\mathcal M$ we set $\langle u, v\rangle$ to be the inner product in $\mathbb R^{\ell}$ containing $\mathcal M$ (this notation is consistent with the one above).

For each $x\in \mathcal M$ we set $S_x\mathcal M$ to be the unit sphere of $T_x\mathcal M$. We also define $S\mathcal M := (S_x\mathcal M)_{x\in \mathcal M}$. Note that $T\mathcal M$ can be endowed with a Riemannian metric called the {\em Sasaki metric}. Then $S\mathcal M\subset T\mathcal M$ can be considered as a submanifold of $T\mathcal M$ with the same metric restricted to $S\mathcal M$.

Analogously to \cite[Section 3]{BS88} define $\Phi:T \mathcal M \to \mathcal M\times \mathcal M$ by
\begin{equation}\label{eq:expmapPhi}
\Phi(x, v)=(x, \exp_x(v)),\;\;\; x\in \mathcal M,\;\;\; v\in T_x \mathcal M,
\end{equation}
where $(x,v)\mapsto \exp_x(v)\in \mathcal M$ is the exponential map (see e.g.\ \cite[p.\ 58]{Hs02}) which is uniquely defined for any $x$ and $v$ as $\mathcal M$ is a $C^{\infty}$-compact. 

 \begin{remark}\label{rem:PhiPhi-1onDcdleta}
 As $\mathcal M$ is compact, it has a positive {\em injectivity radius} $\delta_0$, i.e.\ maximal $\delta_0>0$ such that $\Phi$ is a bijective map from $\{(x, v):x\in \mathcal M, |v|<\delta_0\}$ onto $\mathcal D^c_{\delta_0}:=\{(x, y)\in  \mathcal M \times \mathcal M:0<d(x, y)< \delta_0\}$. Thus for all $(x, y)\in \mathcal D^c_{\delta_0}$ there exists unique $w(x, y)\in T_x\mathcal M$ such that $|w(x, y)| = d(x, y)$ and $\Phi(x, w(x, y)) = (x, y)$.
 \end{remark}

For any smooth $f:\mathcal M \to \mathbb R$, a vector filed $\nabla f:\mathcal M\to T\mathcal M$ denotes the gradient corresponding to the {\em L\`evy-Cevita connection}, i.e.\ a  tangent vector  field of the form $\nabla f=g^{ij} \frac{\partial f}{\partial x^i}\frac{\partial }{\partial x^j}$ where $(x^i)_{i=1}^d$ is a local chart, $\frac{\partial}{\partial x^i}$ is the corresponding basis of  the tangent space, and where $(g_{ij})$ is the Riemannian metric with its inverse $(g^{ij})$, e.g.\ in any orthonormal local coordinates one has that $\nabla f= \frac{\partial f}{\partial x^i}\frac{\partial }{\partial x^i}$. In particular, as any smooth $f:\mathcal M \to \mathbb R$ can be smoothly extended to the whole $\mathbb R^{\ell}\supset\mathcal M$ (see \cite[Exercise 2.3]{LeeIRM}), one might think of $\nabla f$ as of a vector field in $\mathbb R^{\ell}$ so that the projection of $\nabla f(x)$ onto $T_x\mathcal M$ depends only on $f|_{\mathcal M}$ and does not depend on the way $f$ was extended for any $x\in \mathcal M$.

 Now let $X, Y, Z, U$ be Banach spaces, and let $Y\otimes U$ be a normed space of all bilinear forms on $Y^{*}\times U^{*}$ with a finite norm $\|\cdot\|_{Y\otimes U}$ defined by $\|x\|_{Y\otimes U} := \sup\{x(y^*, u^*):y^{*}\in Y^{*}, u^*\in U^*, \|y^*\|=\|u^*\|=1\}$. Then for two linear operators $A\in \mathcal L(X, Y)$ and $B\in \mathcal L(Z, U)$ we let $A\otimes B$ be the bilinear operator $A\otimes B:X\times Z \to Y\otimes U$ defined by $A\otimes B(x, y)(y^*, u^*) := \langle Ax, y^* \rangle\langle Bz, u^* \rangle$. 
 
 If $Y$ and $U$ are finite dimensional, then $Y\otimes U$ is a span of $(y_i\otimes u_j)_{ij}$, where $(y_i)_i$ and $(u_j)_j$ are corresponding bases of $Y$ and $U$, and $y_i\otimes u_j(y^*, u^*) = \langle y_i,y^* \rangle \langle u_i,u^* \rangle$ for any $y^*\in Y^*$ and $u^*\in U^*$. As similarly $(y_i^*\otimes u_j^*)_{ij}$ is a basis of $Y^*\otimes U^*$, where  $(y_i^*)_i$ and $(u_j^*)_j$ are corresponding dual bases of $Y^*$ and $U^*$, i.e.\ $\langle y_i,y_j^* \rangle = \delta_{ij}$ and $\langle u_i,u_j^* \rangle = \delta_{ij}$, one has that $(Y\otimes U)^*  = Y^*\otimes U^*$. In particular, for any $C^2$ function $f:\mathcal M \to \mathbb R$, for any $x\in \mathcal M$, and for any fixed local coordinates $(x^i)_{i=1}^d$ one can define 
 $$
 D^2f(x)(v, w):= v^iw^j \frac{\partial^2 f}{\partial x^i\partial x^j},\;\;\;\; v = v^i \frac{\partial }{\partial x^i}, w = w^i \frac{\partial }{\partial x^i}\in T_x\mathcal M
 $$ 
 (note that $D^2f(x)$ depends on local coordinates). Therefore $D^2 f(x)$ is a bilinear form on $T_x\mathcal M\times T_x\mathcal M$, and hence $D^2 f(x) \in (T_x\mathcal M)^* \otimes (T_x\mathcal M)^* = (T_x\mathcal M \otimes T_x\mathcal M)^*$. Moreover, $D^2 f(x)v\otimes w = D^2f(x)(v, w)$ for any $v, w\in T_x\mathcal M$, so in the sequel we will frequently omit the notation $D^2 f(x)v\otimes w$ preferring $D^2f(x)(v, w)$ or $\langle D^2f(x),(v, w)\rangle$ instead.
 
 Let $g:T\mathcal M \to \mathbb R$ be $C^2$ and fix $x\in \mathcal M$ and $w\in T_x\mathcal M$. For any $u,v \in T_x\mathcal M$ we define
 \[
D^2_{x,x}g(x, w)(u, v)= \langle D^2_{x,x}g(x, w), (u, v) \rangle:=  \Bigl\langle D^2g(x, w), \bigl((u,0),( v,0)\bigr) \Bigr\rangle,
 \]
  \[
 D^2_{x,w}g(x, w) (u, v)=\langle D^2_{x,w}g(x, w), (u, v) \rangle:=  \Bigl\langle D^2g(x, w), \bigl((u,0),( 0,v)\bigr) \Bigr\rangle,
 \]
  \[
 D^2_{w,w}g(x, w) (u, v)=\langle D^2_{w,w}g(x, w), (u, v) \rangle:=  \Bigl\langle D^2g(x, w), \bigl((0,u),(0, v)\bigr) \Bigr\rangle.
 \]
 The same notation is exploited if $g$ is a $C^2$ function on $S\mathcal M$.
 
 \smallskip

For any $C^{\infty}$ vector field $\sigma:\mathcal M \to T\mathcal M$ and for any $C^{\infty}$ function $f:\mathcal M \to \mathbb R$ we define $\sigma f:= \langle Df, \sigma\rangle = \langle \nabla f, \sigma\rangle_{T\mathcal M}$ and $\sigma^2 f$ is defined recursively, i.e.\
\begin{equation}\label{eq:prelimsigma2xfcq}
  \sigma^2 f(x) = \langle D^2 f(x), (\sigma(x), \sigma(x))\rangle + \bigl\langle D f(x),\langle D \sigma(x), \sigma(x)\rangle  \bigr\rangle,\;\; x\in \mathcal M.
\end{equation}

We refer the reader to \cite{El82,Hs02,IW89,KSF90,TrThFSII} for further acquaintance with SDEs on manifolds.

\begin{remark}\label{rem:whyinRell}
 We need $\mathcal M$ to be a regular submanifold of $\mathbb R^{\ell}$ for the following reasons. First, in this case the inner product of any tangent vectors $u\in T_x\mathcal M$ and $v\in T_y\mathcal M$ for any $x, y\in\mathcal M$ can be defined by means of the inner product of the ambient Euclidean space. This is used in the concept of local characteristics \eqref{eq:loccharforphimufd} used intensively in the sequel. Second, in order to apply the theory of stochastic flows from \cite{KSF90} we need the SDE \eqref{eq:equfortildephikappa} below to be defined in $\mathbb R^{\ell}$, which can be done thanks to \cite[Exercise 2.3]{LeeIRM}. Note that $d(x, y)\eqsim_{\mathcal M} |x-y|$ for any $x, y\in \mathcal M$ as $\mathcal M$ is a regular $C^{\infty}$ submanifold of $\mathbb R^{\ell}$ so $(x, y)\mapsto \frac{d(x, y)}{|x-y|}$ is continuous in $x, y\in \mathcal M$, $x\neq y$, and bounded from above. Finally, later we will frequently need to transform Stratonovich integrals to their It\^o form, which can be done easily in $\mathbb R^{\ell}$.
\end{remark}

For any $s\geq 0$ we set $H^s := H^s(\mathcal M)$ to be a Sobolev space, i.e.\ a space of all $f\in L^2(\mathcal M)$ of the form $f= \sum_{k\geq 1} c_k \hat e_k$ satisfying
\begin{equation}\label{eq:defofHspcq}
 \|f\|_{H^s} := \Bigl(\sum_{k\geq 1} (1+\lambda_k)^{s}c_k^2\Bigr)^{\frac 12}<\infty
\end{equation}
where $(\hat e_k)_{k\geq 1}$ is an orthonormal basis of $L^2(\mathcal M)$ consisting of all eigenvectors of $-\Delta$ (where $\Delta$ is the Laplace-Beltrami operator) and $(\lambda_k)_{k\geq 1}$ are the corresponding eigenvalues (see e.g.\ \cite{So88,Do01,CKRZ,GSch13,TrThFSII}).
In particular, in the case of $\mathcal M = \mathbb T^d$ the space $H^s(\mathcal M)$ will consist of all $f\in L^2(\mathbb T^d)$ such that for the Fourier transform $\hat f:\mathbb Z^d \to \mathbb R$ we have that 
$$
\|f\|_{H^s} = \Bigl(\sum_{z\in \mathbb Z^d} (1 + |z|^2)^{s}|\hat{f}|^2(z)\Bigr)^{\frac 12}<\infty.
$$

Let $\mathcal D:= \{(x,x), x\in \mathcal M\}\subset \mathcal M \times \mathcal M$ denote the diagonal and let $\mathcal D^c:=\mathcal M \times \mathcal M \setminus \mathcal D$ denote the off-diagonal. Then for any fixed $s>0$ analogously to \cite[Section 7.2]{BBP-S} thanks to the Sobolev--Slobodeskij norming of Sobolev spaces (see e.g.\ Sections 1.3.2, 1.3.4, and Chapter 7 in \cite{TrThFSII} and \cite{BPS13,GSch13}) the following holds true for any $C^{\infty}$ function $g:\mathcal M \to \mathbb R$
\begin{equation}\label{eq:Sobolevequivalentnormisd}
\begin{split}
\|g\|_{H^s}& \eqsim_{\mathcal M, s}\|g\|_{L^2} + \sum_{\alpha \in I}\Bigl( \iint_{\mathcal D^c_{\alpha}} \frac{|D^{[s]}g(x)- D^{[s]}g(y)|^2}{|\kappa_{\alpha}(x) - \kappa_{\alpha}(y) |^{2\{s\}+d}} \ud \mu\otimes\mu(x, y) \Bigr)^{\frac{1}{2}}\\
& \eqsim_{\mathcal M, (U_{\alpha})_{\alpha \in I}}\|g\|_{L^2} +\Bigl( \iint_{\mathcal D^c} \frac{|D^{[s]}g(x)- D^{[s]}g(y)|^2}{d(x, y) ^{2\{s\}+d}} \ud \mu\otimes\mu(x, y) \Bigr)^{\frac{1}{2}}
\end{split}
\end{equation}
where $(U_{\alpha})_{\alpha\in I}$ is a finite set of open subsets of $\mathcal M$ covering $\mathcal M$, $(\kappa_{\alpha})_{\alpha\in I}$ are the corresponding coordinate charts (i.e.\ for each $\alpha \in I$ there exist an open set $V_{\alpha}\subset \mathbb R^d$ and a homeomorphic map $\kappa_{\alpha}:U_{\alpha}\to V_{\alpha}$; $\kappa_{\alpha}$ can be assumed exponent, see e.g.\ \cite{GSch13}), $\mathcal D^c_\alpha =  U_{\alpha}\times U_{\alpha} \cap \mathcal D^c$, $[s]$ is the integer part of $s$, and $\{s\}:= s-[s]$.

Let $(W^k)_{k\geq 1}$ be a family of independent Brownian motions on $\mathbb R_+$. Throughout the paper we assume the probability space $(\Omega, \mathcal F, \mathbb P)$ and filtration $\mathbb F = (\mathcal F_t)_{t\geq 0}$ to be generated by $(W^k)_{k\geq 1}$. In particular, $\Omega$ will be the set of all trajectories of $(W^k)_{k\geq 1}$, i.e.\ $\Omega = C(\mathbb R_+;\mathbb R^\infty)$. Therefore for any stopping time $\tau:\Omega \to \mathbb R_+$ we are able to define a {\em shift operator} $\theta_{\tau}:\Omega \to \Omega$ to be $\theta_{\tau}(\omega):= \omega(\tau(\omega)+\cdot) - \omega(\cdot)$.

In the sequel we will also need a sequence $(\widetilde W^m)_{m= 1}^n$ of  independent Brownian motions independent of $(W^k)_{k\geq 1}$ which will be used to generate the operator $T$ defined by \eqref{eq:INTROTda} (see Subsection \ref{subsec:ergoftwopmot} below). Let $(\widetilde {\Omega},\widetilde {\mathcal F}, \widetilde {\mathbb P})$ and $\widetilde {\mathbb F} = (\widetilde {\mathcal F}_t)_{t\geq 0}$ be defined analogously $(\Omega, \mathcal F, \mathbb P)$ and $\mathcal F = (\mathcal F_t)_{t\geq 0}$ (in particular, $\widetilde {\Omega}=C(\mathbb R_+;\mathbb R^n)$). Throughout the paper we set $\overline \Omega := \Omega \times \widetilde {\Omega}$, $\overline {\mathcal F} := \mathcal F\otimes\widetilde {\mathcal F}$, $\overline {\mathbb P} := \mathbb P\otimes\widetilde {\mathbb P}$ with the corresponding product filtration $\overline {\mathbb F}$ defined similarly. For any stopping time $\tau:\overline \Omega \to \mathbb R_+$ we define a shift operator $\theta_{\tau}:\overline \Omega \to \overline \Omega$ to be 
\begin{equation}\label{eq:thetataudef}
\begin{split}
  &\theta_{\tau}(\omega\times \widetilde \omega)\\
  &\;\;\;\;:=\bigl( \omega(\tau(\omega\times \widetilde \omega)+\cdot) - \omega(\tau(\omega\times \widetilde \omega))\bigr)\times \bigl(\widetilde  \omega(\tau(\omega\times \widetilde \omega)+\cdot) - \widetilde \omega(\tau(\omega\times \widetilde \omega))\bigr),
\end{split}
\end{equation}
for any $ \omega\in \Omega$ and $\widetilde \omega\in \widetilde\Omega$.

Later in the paper we will use the following quantitative form of Harris' ergodic theorem presented in \cite[Theorem 1.2]{HM11} (see also \cite{MT09,Khas12}).

\begin{theorem}\label{thm:QuaeHarwqthnms}
 Let $(S, \Sigma, \rho)$ be a measure space, let $(\mathcal P^n)_{n\geq 0}$ be a Markov semigroup on $S$, and let $V:S \to [1, \infty)$ be such that
 \begin{enumerate}[\rm ($i$)]
  \item there exist $\gamma\in(0,1)$ and $C\geq 0$ such that $\mathcal P V \leq \gamma V + C$,
  \item there exist $R>2C/(1-\gamma)$, $\eta = \eta(R)>0$, and a probability measure $\nu$ on $S$ such that 
  $$
  \inf_{s\in \{V\leq R\}}\mathcal P \mathbf 1_{A} (s)>\eta \nu(A),
  $$
 for any $A\in \Sigma$. 
 \end{enumerate}
Then there exist constants $C_0>0$ and $\gamma_0\in(0, 1)$ depending only on $\gamma$, $C$, $R$, and $\eta$ and there exists a unique invariant measure $\mu$ of $(\mathcal P^n)_{n\geq 0}$ such that
\begin{equation}\label{eq:maingofthm:QuaeHarwqthnms}
  \Bigl | \mathcal P^n\psi(s) - \int_{S}\psi \ud \mu \Bigr| \leq C_0 \gamma_0^n V(s)\|\psi\|_{\infty},\;\;\; s\in S,\;\; n\geq 0,
\end{equation}
for any $\psi:S\to \mathbb R$ with $\|\psi\|_{\infty}<\infty$.
\end{theorem}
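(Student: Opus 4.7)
The plan is to follow the Hairer--Mattingly argument of \cite{HM11}. For a parameter $\beta>0$ to be chosen, introduce on finite signed measures the weighted total-variation norm
$$\|\mu\|_\beta := \int_S (1+\beta V)\,d|\mu|.$$
The aim is to show that the dual $\mathcal P^*$ is a strict contraction in $\|\cdot\|_\beta$ on the subspace of mean-zero signed measures, with contraction rate $\gamma_0\in(0,1)$ depending only on $\gamma,C,R,\eta$.

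The two hypotheses supply complementary mechanisms. From (i) together with the threshold $R>2C/(1-\gamma)$, on $\{V>R\}$ one has $\mathcal P V\leq \gamma V+C \leq \gamma' V$ for some $\gamma'\in(\gamma,1)$; thus the $V$-weighted mass of a measure charging the complement of the small set decays geometrically under $\mathcal P^*$. From (ii), on $\{V\leq R\}$ one can write $\mathcal P(s,\cdot)=\eta\nu+(1-\eta)\widetilde Q(s,\cdot)$ for a sub-probability kernel $\widetilde Q$; this gives a coupling of $\mathcal P(s_1,\cdot)$ and $\mathcal P(s_2,\cdot)$ agreeing with probability at least $\eta$ whenever $s_1,s_2\in\{V\leq R\}$, contracting the pure total-variation part of the norm by a factor $1-\eta$. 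I would then decompose an arbitrary mean-zero $\mu=\pi_+-\pi_-$ (Jordan), split each $\pi_\pm$ according to whether it concentrates on $\{V\leq R\}$ or on $\{V>R\}$, and estimate $\|\mathcal P^*\mu\|_\beta$ by a case analysis combining both mechanisms. Choosing $\beta$ small enough (depending only on $\gamma,C,R,\eta$) produces $\|\mathcal P^*\mu\|_\beta\leq \gamma_0\|\mu\|_\beta$ with $\gamma_0<1$.

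Iterating yields $\|\mathcal P^{*n}\mu\|_\beta\leq\gamma_0^n\|\mu\|_\beta$ for all mean-zero $\mu$. Existence and uniqueness of an invariant probability measure $\mu^*$ with $\int V\,d\mu^*<\infty$ then follow from a Banach fixed-point argument on the complete metric space of probability measures under $\|\cdot\|_\beta$. Finally, duality converts the measure-level contraction into the pointwise bound: for any $\psi$ with $\|\psi\|_\infty\leq 1$,
$$\bigl|\mathcal P^n\psi(s)-\textstyle\int\psi\,d\mu^*\bigr| \;\leq\; \|\mathcal P^{*n}\delta_s-\mu^*\|_\beta \;\leq\; \gamma_0^n\,\|\delta_s-\mu^*\|_\beta \;\leq\; C_0\,\gamma_0^n\, V(s),$$
using $V\geq 1$ and the finiteness of $\mu^*(V)$; this is precisely \eqref{eq:maingofthm:QuaeHarwqthnms}.

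The principal obstacle is the contraction inequality itself: one must treat all four cases according to whether each of the two endpoints sits inside or outside $\{V\leq R\}$, and tune $\beta$ so that the Lyapunov decay from (i) and the minorization coupling from (ii) assemble into a single linear contraction in $\|\cdot\|_\beta$. Once this central estimate is in place, the remaining steps (iteration, Banach fixed point, duality) are standard.
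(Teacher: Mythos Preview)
Your proposal correctly sketches the Hairer--Mattingly weighted total-variation argument. Note, however, that the paper does not give its own proof of this theorem: it is stated as a preliminary tool and attributed directly to \cite[Theorem 1.2]{HM11}, so your approach coincides with the very reference the paper cites.
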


\section{Stochastic Mixing}\label{sec:mixwithstoch}

We first state the assumptions on $(\sigma_k)_{k\geq 1}$ which we will use in order to prove Theorem \ref{thm:proofofCadpconj}.

\begin{enumerate}[\bf (A)]
\item $(\sigma_k(\cdot), \sigma_k(\cdot))_{k\geq 1}$ is an elliptic system, i.e.\ for any closed off-diagonal subset $U\subset \mathcal D^c$ there exists $C_{U}>0$ such that for any $(x_1, x_2)\in U$ 

\[
 \sum_{k\geq 1} \bigl|\langle \sigma_k(x_1), u\rangle_{x_1} + \langle \sigma_k(x_2), v\rangle_{x_2}\bigr|^2\geq C_U (|u|^2 + |v|^2),\;\;\;u\in T_{x_1}\mathcal M,\;\; v\in T_{x_2}\mathcal M.
\]

\item 

The normalized tangent flow, see \eqref{eq:norm_tangent_flow} below, is elliptic, i.e.\ for any compact $V\subset S \mathcal M$ there exists $C_{V}>0$ such that for any $(x, v)\in V$

\[
 \sum_{k\geq 1} \bigl|\langle \sigma_k(x), u\rangle_{x} + \langle \tilde \sigma_k(x,v), w\rangle_{x,v}\bigr|^2\geq C_V (|u|^2 + |w|^2),\;\;\;u\in T_{x}\mathcal M,\;\; w\in T_{v}(S_x \mathcal M).
\]
where $T_{v}(S_x \mathcal M)$ is the tangent space to $v\in S_x \mathcal M$, and where for each $v\in S_x \mathcal M$ one has that 
\begin{equation}\label{eq:defofotildesigmakdacq}
\tilde \sigma_k(x,v) := \langle D \sigma_k(x), v \rangle - v \Bigl \langle v, \langle D\sigma_k(x), v \rangle \Bigr \rangle_x,\;\;\; k\geq 1.
\end{equation}

\item All $(\sigma_k)_{k\geq 1}$ are $C^{\infty}(\mathcal M; T\mathcal M)$, with
\begin{equation}\label{eq:nescondonsigmaintermsofsums}
\sum_{k\geq 1}\|\sigma_k\|_{\infty}^2 +  \|D \sigma_k\|_{\infty}^2 + \|D^2 \sigma_k\|_{\infty} \|\sigma_k\|_{\infty}<\infty,
\end{equation}
and there exists $\beta\in (0,1]$ such that the maps
\[
x, y\mapsto \sum_{k\geq 1}  D \sigma_k(x)\otimes D \sigma_k(y)\;\; \text{and}\;\;  x, y\mapsto \sum_{k\geq 1}  D^2 \sigma_k(x)\otimes \sigma_k(y),\;\;\; x, y\in \mathcal M,
\]
 are $C^{\beta}$ (i.e.\ $\beta$-H\"older continuous, see \cite[Section 3]{KSF90}), while for any $(x, y)\in \mathcal D^c_{\delta_0}$ and $w=w(x, y)$ (see Remark \ref{rem:PhiPhi-1onDcdleta})
 \begin{align*}
 &\Biggl|\sum_{k\geq 1}\left \langle D^2 w(x, y), \left (\binom{\sigma_k(x)}{\sigma_k(y)},\binom{\sigma_k(x)}{\sigma_k(y)}\right)\right \rangle + \left \langle D w(x, y),  \binom{\langle D\sigma_k(x), \sigma_k(x)\rangle}{\langle D\sigma_k(y), \sigma_k(y)\rangle}  \right \rangle\\
&\quad\quad\quad\quad-\langle D^2\sigma_k(x), (\sigma_k(x), w)\rangle  - \bigl \langle D\sigma_k(x),\langle D\sigma_k(x), w\rangle\bigr\rangle \Biggr|= O(|w|^{1+\beta})
 \end{align*}

and both
$$
 \sum_{k\geq 1} \left\| \left \langle Dw(x, y), \binom{\sigma_k(x)}{ \sigma_k(y)}\right \rangle\otimes\left \langle Dw(x, y), \binom{\sigma_k(x)}{ \sigma_k(y)}\right \rangle- \langle D \sigma_k(x), w \rangle\otimes\langle D \sigma_k(x), w \rangle\right\|,
$$ 
and
\begin{equation}\label{eq:qvofsigma-goan-Dfopsaxw)wfq}
  \sum_{k\geq 1}\left |\left \langle Dw(x, y), \binom{\sigma_k(x)}{ \sigma_k(y)}\right \rangle-\langle D \sigma_k(x),w \rangle\right|^2,
\end{equation}
are bounded by $C|w|^{2+\beta}$
for some uniform constant $C>0$.

\end{enumerate}

The coefficients $\tilde \sigma_k(v)$ appearing in \textbf{(B)} above correspond to the so-called normalized tangent flow 
\begin{equation}\label{eq:norm_tangent_flow}
  \ud \tilde {\phi}_t(v) = \sum_{k\geq 1}\tilde \sigma_k(\tilde {\phi}_t(v)) \circ \ud W^k_t,\;\;\; \tilde {\phi}_0 = v,
\end{equation}
which describes the evolution of the normalized tangent vector in the Lagrangian picture of the solution of the following equation
\begin{equation}\label{eq:flowforkappa=0dsa}
 \ud \phi_t = \sum_{k\geq 1} \sigma_k (\phi_t) \circ \ud W^k_t,\;\;\; \phi_0=x,
\end{equation}
i.e.\ 
$$
\tilde{\phi}_t(v) = \lim_{\eps\to 0}\frac{w\bigl(\phi_t(x), \phi_t(\exp_{x}(\eps v))\bigr)}{\left|w\bigl(\phi_t(x), \phi_t(\exp_{x}(\eps v))\bigr)\right|},
$$ 
where $(x, \eps v)\mapsto \exp_{x}(\eps v)\in \mathcal M$ is the exponential map (see Section \ref{sec:prelim}) and where $w\bigl(\phi_t(x), \phi_t(\exp_{x}(\eps v))\bigr)\in T_{\phi_t(x)}\mathcal M$ is well-defined as thanks to Lemma \ref{lem:phitkat64wppacontinxandkapps} below $\phi$ is a stochastic flow of diffeomorphisms, so $d(\phi_t(x), \phi_t(\exp_{x}(\eps v)) )\to 0$ as $\eps \to 0$ (see Remark \ref{rem:PhiPhi-1onDcdleta}).

\begin{remark}
Thanks to the property \textbf{(C)} the property \textbf{(A)} is equivalent the fact that for any
 $x_1, x_2\in \mathcal M$ with $x_1\neq x_2$ there exists a constant $C(x_1, x_2)>0$ so that
\[
 \sum_{k\geq 1} \bigl|\langle \sigma_k(x_1), u\rangle_{x_1} + \langle \sigma_k(x_2), v\rangle_{x_2}\bigr|^2\geq C(x_1, x_2) (|u|^2 + |v|^2),\;\;\;u\in T_{x_1}\mathcal M,\;\; v\in T_{x_2}\mathcal M,
\]
as in this case the function 
\[
 C(x_1, x_2):= \inf_{\substack{u\in T_{x_1}\mathcal M, v\in T_{x_2}\mathcal M\\
 |u|^2 + |v|^2=1}} \sum_{k\geq 1} \bigl|\langle \sigma_k(x_1), u\rangle_{x_1} + \langle \sigma_k(x_2), v\rangle_{x_2}\bigr|^2
\]
is continuous and hence on any closed set $U\subset \mathcal D^c$ it can be bounded uniformly from below by a constant $C_U>0$. The same can be shown for the condition {\textnormal {\bf (B)}}.
\end{remark}

\begin{remark}
 Note that the last rather technical condition {\textnormal {\bf (C)}} is needed as we are going to apply Theorem \ref{thm:proofofCadpconj} to the Kraichnan model where the sum in \eqref{eq:mainstochdiffeq} is infinite in converse to classical works \cite{DKK04,BS88,Bax86,Bax92} where the corresponding sums are finite.  Further, the reader should not be confused about such a seemingly complicated condition as for any $x\in \mathcal M$ and $u, v\in T_x\mathcal M$ we have that $\langle Dw(x,x), (u,v)\rangle = v-u$, so by embedding $\mathcal M$ into $\mathbb R^{\ell}$ and by approximating $\langle Dw(x,y), (\sigma_k(x),\sigma_k(y))\rangle \approx \sigma_k(y)-\sigma_k(x)$ this condition is based on the fact that
 \begin{align*}
\langle D (\sigma_k(y)-\sigma_k(x)),(\sigma_k(x),\sigma_k(y))\rangle& = \langle D\sigma_k(y),\sigma_k(y)\rangle - \langle D\sigma_k(x),\sigma_k(x)\rangle\\
& \approx \langle D^2\sigma_k(x), (\sigma_k(x), w)\rangle  + \bigl \langle D\sigma_k(x),\langle D\sigma_k(x), w\rangle\bigr\rangle
 \end{align*}
 and
 \[
 \sigma_k(y)-\sigma_k(x) \approx\langle D \sigma_k(x),w \rangle,
 \]
 for $d(x, y)$ sufficiently small.
\end{remark}

Throughout the paper we will assume that $T:C^2(\mathcal M)\to C(\mathcal M)$ is strictly elliptic, i.e.\ for any fixed $x\in \mathcal M$ and for any local coordinates $(x^1, \ldots, x^d)$ $T$ can be written as
\[
 Tf(x)=\sum_{i, j=1}^d a^{ij}(x) \frac{\partial^2 f(x)}{\partial x^i\partial x^j} + \sum_{i=1}^d b^i(x) \frac{\partial f(x)}{\partial x^i},
\]
where $(a^{ij}(x))_{i, j=1}^d$ is a self-adjoint positive definite matrix.
Equivalently, thanks to \eqref{eq:prelimsigma2xfcq} $T$ is strictly elliptic if and only if
\begin{equation}\label{eq:chimbvopicqw}
  \sum_{m=1}^n |\langle \chi_m(x), v\rangle_x|^2>0\;\; \text{for any $x\in \mathcal M$ and $v\in T_x\mathcal M$}.
\end{equation}
In particular, the following lemma holds true. Recall that $\sigma f:= \langle Df, \sigma\rangle = \langle \nabla f, \sigma\rangle_{T\mathcal M}$ for any $C^{\infty}$ vector field $\sigma:\mathcal M \to T\mathcal M$ and for any $C^{\infty}$ function $f:\mathcal M \to \mathbb R$.

\begin{lemma}\label{lem:Tstriellverycoopchimdfowe}
 Let $T$ be strictly elliptic. Then for any $u\in C^1(\mathcal M)$
 \begin{equation}\label{eq:normsvuschimeqnoic}
 \begin{split}
  \sum_{m=1}^n \|\chi_m u\|_{L^2(\mathcal M)}^2 &= \sum_{m=1}^n \|\langle \chi_m ,\nabla u\rangle_x\|_{L^2(\mathcal M)}^2  \\
  &\quad\quad\quad\quad\quad\eqsim \|\nabla u\|_{L^2(\mathcal M; T\mathcal M)}^2 =  \|\nabla u\|_{L^2(\mathcal M; \mathbb R^{\ell})}^2.
  \end{split}
 \end{equation}
\end{lemma}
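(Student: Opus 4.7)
The first equality in \eqref{eq:normsvuschimeqnoic} is immediate from the definition $\chi_m u = \langle \nabla u, \chi_m\rangle_{T\mathcal M}$, and the last equality follows from the fact that $\mathcal M$ is a regular submanifold of $\R^{\ell}$, so that for every $x\in \mathcal M$ the Riemannian inner product on $T_x\mathcal M$ is the restriction of the Euclidean one on $\R^{\ell}$, hence $|\nabla u(x)|_{T_x\mathcal M} = |\nabla u(x)|_{\R^{\ell}}$. So the only substantive claim is the pointwise equivalence
\begin{equation}\label{eq:planPointEq}
\sum_{m=1}^n |\langle \chi_m(x),v\rangle_x|^2 \eqsim |v|^2,\qquad x\in\mathcal M,\; v\in T_x\mathcal M,
\end{equation}
with constants uniform in $x$; applied to $v=\nabla u(x)$ and integrated against $\mu$, this gives \eqref{eq:normsvuschimeqnoic}.

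For the upper bound in \eqref{eq:planPointEq}, Cauchy--Schwarz yields $\sum_m|\langle\chi_m(x),v\rangle_x|^2 \leq \bigl(\sum_m \|\chi_m\|_\infty^2\bigr) |v|^2$, and the prefactor is finite by smoothness of the $\chi_m$ and compactness of $\mathcal M$ (the sum is over the finite range $m=1,\dots,n$).

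The lower bound is the heart of the matter. Define
\[
F(x,v) := \sum_{m=1}^n |\langle \chi_m(x),v\rangle_x|^2,\qquad (x,v)\in S\mathcal M.
\]
By \eqref{eq:chimbvopicqw}, $F(x,v)>0$ for every $(x,v)\in S\mathcal M$. Since each $\chi_m$ is $C^{\infty}$ on $\mathcal M$ and the Riemannian inner product depends smoothly on the base point, $F$ is continuous on the sphere bundle $S\mathcal M$, which is compact because $\mathcal M$ is compact and each fiber $S_x\mathcal M$ is a sphere. Therefore $F$ attains a strictly positive minimum $c>0$ on $S\mathcal M$, and by homogeneity $F(x,v)\ge c|v|^2$ for all $x\in\mathcal M$ and $v\in T_x\mathcal M$, proving \eqref{eq:planPointEq}.

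Integrating \eqref{eq:planPointEq} with $v=\nabla u(x)$ against $\mu$ yields
\[
c\,\|\nabla u\|_{L^2(\mathcal M;T\mathcal M)}^2 \;\leq\; \sum_{m=1}^n \|\langle\chi_m,\nabla u\rangle_x\|_{L^2(\mathcal M)}^2 \;\leq\; \Bigl(\sum_{m=1}^n\|\chi_m\|_\infty^2\Bigr)\|\nabla u\|_{L^2(\mathcal M;T\mathcal M)}^2,
\]
which is \eqref{eq:normsvuschimeqnoic}. The main (mild) obstacle is the lower bound, and the only nontrivial ingredient is the compactness of $S\mathcal M$ combined with the strict ellipticity condition \eqref{eq:chimbvopicqw}; everything else is a routine consequence of the submanifold structure and the finiteness of the collection $(\chi_m)_{m=1}^n$.
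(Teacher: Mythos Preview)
Your proof is correct and follows essentially the same approach as the paper: both establish the pointwise equivalence \eqref{eq:planPointEq} via the strict ellipticity condition \eqref{eq:chimbvopicqw} together with a compactness argument, then integrate. The paper phrases the compactness step as continuity of the sharp constants $c(x),C(x)$ over $\mathcal M$, whereas you work directly on the compact sphere bundle $S\mathcal M$; these are equivalent formulations of the same idea.
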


\begin{proof}
First recall that $\nabla u(x)\in T_x\mathcal M$, so $\|\nabla u(x)\|_{T_x\mathcal M} = \|\nabla u(x)\|_{\mathbb R^{\ell}} = |\nabla u(x)|$, so the last part of \eqref{eq:normsvuschimeqnoic} follows.
 
 Note that by \eqref{eq:chimbvopicqw} for any $x\in \mathcal M$ there exist 
 positive constants $c(x)$ and $C(x)$ such that 
 \[
  c(x)\sum_{m=1}^n |\langle \chi_m(x) ,v\rangle_x|^2  \leq  |v|^2 \leq  C(x)\sum_{m=1}^n |\langle \chi_m(x) ,v\rangle_x|^2,\;\;\; v\in T_x\mathcal M. 
 \]
 Choose $c(x)$ and $C(x)$ sharp.
As $(\chi_m)_{m=1}^n$ are $C^{\infty}$, $c(x)$ and $C(x)$ chosen sharp are continuous in $x\in \mathcal M$. Therefore there exist $c=\min_{x\in \mathcal M}c(x)>0$ and $C=\max_{x\in \mathcal M}C(x)<\infty$, so that
\[
  c\sum_{m=1}^n |\langle \chi_m(x) ,v\rangle_x|^2  \leq  |v|^2 \leq  C\sum_{m=1}^n |\langle \chi_m(x) ,v\rangle_x|^2,\;\;\; v\in T_x\mathcal M, 
 \]
 for any $x\in \mathcal M$, and therefore the lemma follows.
\end{proof}

If we assume {\textnormal {\bf (C)}} and the abovementioned assumption on $T$, then the stochastic equation \eqref{eq:mainstochdiffeq} has a unique solution $(u_t)_{t\geq 0}$ with values in $L^2(\mathcal M)$ so that $u_t\in H^1(\mathcal M)$ for any $t\geq 0$ (see Section \ref{sec:prelim} for the definition of $L^2(\mathcal M)$ and $H^s(\mathcal M)$). Indeed, if one fixes a Gelfand triple $H^1(\mathcal M) \subset L^2(\mathcal M)\subset  H^{-1}(\mathcal M)$, then \eqref{eq:mainstochdiffeq} satisfies the conditions (H1)--(H4) on \cite[p.\ 70]{LR15} as in this case we have that $du_t = Qu_t \ud t + Ru_t \ud W^H_t$, where $W^H$ is a cylindrical Brownian motion over a separable infinite-dimensional Hilbert space $H$ and $Q\in \mathcal L(H^1(\mathcal M); H^{-1}(\mathcal M))$ and $R\in \mathcal L(H^1(\mathcal M);\mathcal L(H, L^2(\mathcal M)))$ are defined via
\[
 Qf := T f + \frac{1}{2}A^2 \sum_{k\geq 1} \Bigl\langle \sigma_k , \nabla\langle\sigma_k , \nabla f\rangle_{T\mathcal M}\Bigr\rangle_{T\mathcal M},
\]
\[
 Rf := -A \sum_{k\geq 1}\langle \sigma_k, \nabla f\rangle_{T\mathcal M} e_k,
\]
for any $f\in H^1(\mathcal M)$, where $(e_k)_{k\geq 1}$ is an orthonormal basis of $H$. (H1) then follows by the linearity, while (H2)--(H4) follows from a direct computation (see also \cite[Example 4.1.7]{LR15}), where one exploits the fact that the sequence $(\|\sigma_k\|_{\infty} \|D\sigma_k\|_{\infty})_{k\geq 1}$ is summable by \eqref{eq:nescondonsigmaintermsofsums} and that by the divergence theorem for manifolds (see e.g.\ \cite[Theorem 16.32]{Le13} and \cite[Theorem 1]{Step16})
\begin{equation}\label{eq:Tff=-chimf2lcvnkw}
 \begin{split}
  & \int_{\mathcal M}Tf(x)f(x) \ud \mu(x) +\sum_{m=1}^n \int_{\mathcal M} (\chi_m f(x))^2 \ud \mu(x) \\
 &\quad\quad\quad\quad\quad= \int_{\mathcal M}\sum_{m=1}^n\langle\chi_m(x) , \nabla(f(x)\chi_m f(x))\rangle_{T\mathcal M} \ud \mu(x)\\
 &\quad\quad\quad\quad\quad\quad\quad\quad\quad\quad= \int_{\mathcal M}\sum_{m=1}^n\text{div}\, (\chi_m(x) f(x)\chi_m f(x)) \ud \mu(x)=0,
 \end{split}
\end{equation}
as $(\chi_m)_{m=1}^n$ are divergence free, 
so $\int_{\mathcal M}Tf(x)f(x) \ud \mu(x) \leq 0$ and hence by \cite[theorem 4.2.4]{LR15} existence and uniqueness of the solution of \eqref{eq:mainstochdiffeq} follows.

In order to prove Theorem \ref{thm:proofofCadpconj} we will first consider the solution of the following stochastic equation on $\mathbb R_+ \times \mathcal M$, obtained from  \eqref{eq:mainstochdiffeq} by rescaling (see Subsection~\ref{sec:Proofofmainthm}),
\begin{equation}\label{eq:mainstochdiffeqwithkappa}
\begin{cases}
 \ud u_t + \sum_{k\geq 1} \langle\sigma_k ,\nabla u_t \rangle_{T\mathcal M}\circ \ud W^k_t  &= \kappa T u_t \ud t,\\
 u_0 &= u\in H^1(\mathcal M),
\end{cases}
\end{equation}
where $(\sigma_k)$ satisfy {\textnormal{\bf (A)--(C)}} and where $\kappa\geq 0$. Note that \eqref{eq:mainstochdiffeqwithkappa} has a unique solution with values in $L^2(\mathcal M)$ in the case of $\kappa>0$ for the same reason as \eqref{eq:mainstochdiffeq} (see the argument above).

 \begin{remark}\label{rem:kappa0icn2938}
 We comment on the well-posedness of solutions in the case $\kappa=0$ separately, that is, the existence and uniqueness of solutions to 
\begin{equation}\label{eq:kappa0equaoicwp}
\begin{cases}
 \ud u_t + \sum_{k\geq 1} \langle\sigma_k ,\nabla u_t \rangle_{T\mathcal M}\circ \ud W^k_t  &=0,\\
 u_0 &= u\in H^1(\mathcal M).
\end{cases}
\end{equation}
In this case, the standard methods for stochastic PDE do not apply, due to the lack of viscosity.

We say that an adapted process $(u_t)_{t\geq 0}$ is a solution to \eqref{eq:kappa0equaoicwp} if $$(u_t)_{t\geq 0}\in L^2(\Omega; L^2_{loc}(\mathbb R_+;H^1(\mathcal M)))\cap L^2(\Omega; C_{loc}(\mathbb R_+;L^2(\mathcal M)))$$and  a.s., for all $t\ge0,$
 \begin{equation}\label{eq:formforuton230forkappa=0}
 \begin{split}
 u_t =& -  \sum_{k\geq 1}\int_0^t \langle\sigma_k ,\nabla u_s \rangle_{T\mathcal M} \ud W^k_s - \sum_{k\geq 1} \int_0^t \bigl \langle \sigma_k, \nabla\langle\sigma_k ,\nabla u_s \rangle_{T\mathcal M} \bigr\rangle_{T\mathcal M} \ud s,
 \end{split}
 \end{equation}
as an equation in $L^2(\mathcal M)$.

In order to show that a solution to \eqref{eq:kappa0equaoicwp}  exists
we consider the flow of $C^1$-diffeomorphisms $(\phi_t)_{t\geq 0}$ to $\ud\phi_t(x) = \sum_{k\geq 1} \sigma_k(\phi_t(x))\circ \ud W^k_t$, $\phi_0(x) = x\in \mathcal M$ (see Lemma \ref{lem:phitkat64wppacontinxandkapps} below), and set $u_t(x):= u(\phi_t^{-1}(x))$. Then, $u_t$ satisfies $(u_t)_{t\in[0, T]}\in L^p(\Omega\times [0, T];L^2(\mathcal M))$ for any $p\geq 1$ and $T>0$. Indeed, $\phi_t$ is measure preserving by Remark \ref{rem:phikappameasupres} below which implies $\|u_t\|_{L^2(\mathcal M)} = \|u\|_{L^2(\mathcal M)}$. Moreover, as  by Lemma \ref{lem:phitkat64wppacontinxandkapps} $(\phi_t)_{t\geq 0}$ is continuous in $t$  in the sense that for $\mathbb P$-a.e.\ $\omega\in \Omega$ and for any $T>0$ one has that 
$$
\sup_{x\in \mathcal M}d((\phi_t)^{-1}(x), (\phi_s)^{-1}(x))\lesssim_{\omega, T}|t-s|^{1/4},\;\;\;\;t,s\in [0, T],
$$ 
$(u_t)_{t\geq 0}$ is $\mathbb P$-a.s.\ $L^2(\mathcal M)$-continuous for any $u\in C^{\infty}(\mathcal M)$. Continuity for a general $u\in L^2(\mathcal M)$ then follows by contractivity and a limiting argument.

Furthermore,  by Lemma \ref{lem:phitkat64wppacontinxandkapps} we have that $(u_t)_{t\in[0, T]}\in L^p(\Omega\times [0, T];H^1(\mathcal M))$ for any $p\geq 1$ and $T>0$, since
\begin{equation}\label{eq:nablaudsapojt}
\langle \nabla u_t(x), v\rangle_x = \bigl\langle \nabla u(\phi_t^{-1}(x)), \langle D\phi_t^{-1}(x), v \rangle \bigr\rangle_{\phi_t^{-1}(x)},\;\;\; x\in \mathcal M,\;\;\; v\in T_x\mathcal M.
\end{equation}

Next, let us show that $(u_t)_{t\geq 0}$ satisfies \eqref{eq:formforuton230forkappa=0} in $H^{-1}(\mathcal M)$. To this end note that by the condition {\textnormal {\bf (C)}}, \eqref{eq:nablaudsapojt}, and Lemma \ref{lem:phitkat64wppacontinxandkapps} for any $T>0$ 
$$
\mathbb E\sum_{k\geq 1}\int_0^T \bigl\|\langle\sigma_k ,\nabla u_t \rangle_{T\mathcal M}  \bigr\|^2_{L^2(\mathcal M)}  + \left\|\bigl \langle \sigma_k, \nabla\langle\sigma_k ,\nabla u_s \rangle_{T\mathcal M} \bigr\rangle_{T\mathcal M} \right\|_{H^{-1}(\mathcal M)}\dd t<\infty,
$$ 
so the right-hand side of \eqref{eq:formforuton230forkappa=0} is well-defined in $H^{-1}(\mathcal M)$. Further, by Remark \ref{rem:phikappameasupres}, It\^o's formula, and the fact that $u_t\in H^1(\mathcal M)\subset L^2(\mathcal M)$ a.s.\ by \eqref{eq:nablaudsapojt} we have that  for any $\psi\in C^{\infty}(\mathcal M)$ a.s.\ for any $t\geq 0$
\begin{align*}
{}_{H^{-1}(\mathcal M)}\langle u_t, \psi\rangle_{H^1(\mathcal M)} &= \int_{\mathcal M} u(\phi_t^{-1}(x))\psi(x)\ud \mu(x) = \int_{\mathcal M} u(x)\psi(\phi_t(x))\ud \mu(x)\\
&= \int_{\mathcal M}u(x) \sum_{k\geq 1}\int_0^t \bigl\langle \sigma_k(\phi_s(x)), \nabla \psi(\phi_s(x)) \bigr\rangle_{\phi_s(x)}\circ\dd W^k_s\ud\mu(x)\\
&\stackrel{(*)}= \sum_{k\geq 1}\int_0^t\int_{\mathcal M}u(x) \bigl\langle \sigma_k(\phi_s(x)), \nabla \psi(\phi_s(x)) \bigr\rangle_{\phi_s(x)}\ud\mu(x)\circ\dd W^k_s\\
&= \sum_{k\geq 1}\int_0^t\int_{\mathcal M}u_s(x) \bigl\langle \sigma_k(x), \nabla \psi(x) \bigr\rangle_{x}\ud\mu(x)\circ\dd W^k_s\\
&= -\int_0^t\sum_{k\geq 1}\int_{\mathcal M} \bigl\langle \sigma_k(x), \nabla u_s(x) \bigr\rangle_{x}\psi(x)\ud\mu(x)\circ\dd W^k_s\\
&=-\int_0^t\sum_{k\geq 1}\Bigl \langle \bigl\langle \sigma_k, \nabla u_s \bigr\rangle_{T\mathcal M},\psi\Bigr\rangle_{L^2(\mathcal M)}\circ\dd W^k_s
\end{align*}
where $(*)$ follows from the stochastic Fubini theorem \cite[Theorem 65]{Prot}, the condition {\textnormal {\bf (C)}}, and the fact that $u\in H^1(\mathcal M)\subset L^2(\mathcal M)$. Since $C^{\infty}(\mathcal M)$ is dense in $H^1(\mathcal M) = (H^{-1}(\mathcal M))^*$, \eqref{eq:formforuton230forkappa=0} holds true in $H^{-1}(\mathcal M)$. Since both $(u_t)_{t\geq 0}$ and $\sum_k \langle\sigma_k ,\nabla u_{\cdot} \rangle_{T\mathcal M} \cdot W^k$ have $L^2(\mathcal M)$-valued continuous versions,  
 \eqref{eq:formforuton230forkappa=0} holds in $L^2(\mathcal M)$.

Uniqueness of solutions follows from linearity and an application of the
 It\^o formula \cite[Theorem 4.2.5 and (4.30)]{LR15}.
\end{remark}

As we will see later in Section \ref{sec:Proofofmainthm}, \eqref{eq:mainstochdiffeqwithkappa} with $\kappa=1/A^2$ corresponds to \eqref{eq:mainstochdiffeq} by rescaling. For us it will be easier to work with \eqref{eq:mainstochdiffeqwithkappa} as in this setting we are able to let $\kappa\to 0$ (corresponding $A\to \infty$ in \eqref{eq:mainstochdiffeq}) and consider the behaviour of the equation for $\kappa=0$ as the solution of \eqref{eq:mainstochdiffeqwithkappa} is in a sense continuous in $\kappa$. 

We first establish mixing of $u_t$ in \eqref{eq:mainstochdiffeqwithkappa} in the sense of \cite{BBP-S,LTD11,MMGVP}, i.e.\ we show that $\|u_t\|_{H^{-s}}$ vanishes for any $s>0$ with an exponential rate uniform in $\kappa$, and based on this convergence show \eqref{eq:INTROmainthmwithCiqo}.

\begin{theorem}\label{thm:H-sconvergensofHsinvartext}
 Let $(\sigma_k)_{k\geq 1}$ satisfy the conditions {\textnormal {\bf (A)--(C)}} and let $(u_t)_{t\geq 0}$ be the solution of \eqref{eq:mainstochdiffeqwithkappa} with $u$ being  mean-zero in $H^s(\mathcal M)\cap H^1(\mathcal M)$ for some $s\in(0, 1+\beta/2)$. Then there exist $\kappa_0>0$ and $\gamma_0>0$ independent of $s$ and $u$ such that for any $\kappa \in [0, \kappa_0]$ and $\gamma\in(0, \gamma_0)$ there exists $D_{\kappa,\gamma}:\Omega \to [1, \infty)$ so that 
 \begin{equation}\label{eq:H-sHsineqsfa}
  \|u_t\|_{H^{-s}} \leq D_{\kappa,\gamma}e^{-\gamma s t} \|u\|_{H^s},\;\;\; t> 0.
 \end{equation}
 
 Moreover, $\sup_{\kappa\in[0, \kappa_0]}\mathbb E |D_{\kappa,\gamma}|^p<\infty$ for any $1\leq p<\tfrac{9d \gamma_0}{2\gamma s}$.
\end{theorem}

\begin{remark}\label{rem:Hsforkappa>0c39jix1}
Note that for $\kappa>0$ the SPDE \eqref{eq:mainstochdiffeqwithkappa} is well-posed for a general $u\in L^2(\mathcal M)$ thanks to \cite[Chapter 4.2]{LR15}. Since,  due to \cite[Proposition 2.4.10]{LR15}, the mapping $u\mapsto u_t$ is in $\mathcal L(L^2(\mathcal M), L^2(\overline \Omega;L^2(\mathcal M)))$ for any $t\geq 0$, \eqref{eq:H-sHsineqsfa} remains valid for any initial value $u\in H^s$ with $s\in[0, 1)$.
\end{remark}

The structure of the proof of Theorem \ref{thm:H-sconvergensofHsinvartext} is inspired by the one presented in \cite{BBP-S}, where the authors exploit Theorem \ref{thm:QuaeHarwqthnms} for the two-point Lagrangian flow corresponding to passive scalars transported by solutions to the stochastic Navier-Stokes equation. Note that in contrast to the present work this leads to a random PDE rather than a stochastic PDE. One major difference of the present work to \cite{BS88} and \cite{BBP-S} is that here we are addressing the case of irregular coefficients, so that the results of \cite{BS88} do not apply and have to be generalized (see e.g.\ Subsection \ref{sec:LyaexmomLyafun}). 

The following analysis relies on the Lagrangian flow corresponding to \eqref{eq:mainstochdiffeqwithkappa} which for  each $\kappa\geq 0$ is defined by (see Lemma \ref{lem:phitkat64wppacontinxandkapps} below and \cite[Chapter 3.1]{Hs02})

\begin{equation}\label{eq:equfortildephikappa}
\begin{cases}
  \ud\phi_t^{\kappa}(x) &= \sum_{k\geq 1} \sigma_k(\phi_t^{\kappa}(x)) \circ \ud W_{t}^k + \sqrt{2\kappa} \sum_{m=1}^n \chi_m(\phi_t^{\kappa}(x)) \circ \ud \widetilde W_t^m, \\
 \phi_0^{\kappa}(x) &= x,
\end{cases}
\end{equation}
where $\widetilde W = (\widetilde W^m)_{m=1}^n$ is a standard Brownian motion in $\mathbb R^n$ independent of $(W^k)_{k\geq 1}$ and $x\in \mathcal M$. 

\begin{remark}\label{rem:phikappameasupres} 
Note that $\phi^{\kappa}_t$ is a.s.\ measure preserving for any $t\geq 0$ by \cite[Theorem 4.2]{Bax89} and the formula \cite[(4)]{Cha86} as $(\sigma_k)_{k\geq 1}$ are divergence-free.
\end{remark}

Let us shortly recall how \eqref{eq:mainstochdiffeqwithkappa} is connected with \eqref{eq:equfortildephikappa}. 
Let $(\phi_t^{\kappa}(x))_{t\geq 0}$ be the flow of solutions of \eqref{eq:equfortildephikappa} (see Lemma \ref{lem:phitkat64wppacontinxandkapps} below), let $u\in L^2(\mathcal M)$ be as in \eqref{eq:mainstochdiffeqwithkappa}, and set
\begin{equation}\label{eq:Utviaphikaplxw}
u_t:= \mathbb E_{\widetilde W} u\bigl((\phi_t^{\kappa})^{-1}(\cdot)\bigr):=\mathbb E \bigl(u\bigl((\phi_t^{\kappa})^{-1}(\cdot)\bigr)\big|(W^k)_{k\geq 1}\bigr), \;\; t\geq 0,
\end{equation}
where the conditional expectation $\mathbb E_{\widetilde W}$ is well-defined on $L^p(\overline {\Omega};L^2(\mathcal M))$ for any $p\geq 1$ by \cite[Section 2.6]{HNVW1}, as since  $\phi^{\kappa}_t$ is a measurable $C^1$-diffeomorphism-valued  function, 
$u\bigl((\phi_t^{\kappa})^{-1}(\cdot)\bigr):\Omega\to L^2(\mathcal M)$ is strongly $\overline{\mathcal F}$-measurable by \cite[Theorem 1.1.6]{HNVW1}.

We now show that, for $u\in C^{\infty}(\mathcal M)$, $(u_t)_{t\geq 0}$ defined in \eqref{eq:Utviaphikaplxw} is a solution to \eqref{eq:mainstochdiffeqwithkappa} in the sense of \cite[Definition 4.2.1]{LR15}. In this case $(u_t)_{t\geq 0}$ solves \eqref{eq:mainstochdiffeqwithkappa} in $H^{-1}(\mathcal M)$ analogously to Remark \ref{rem:kappa0icn2938}. From the definition of $u_t$ in \eqref{eq:Utviaphikaplxw} it immediately follows that
$(u_t)_{t\in [0, T]}\!\in\! L^2(\overline{\Omega};C([0, T]; L^2(\mathcal M)))$. In order to show that then $u$ is also a solution to \eqref{eq:mainstochdiffeqwithkappa} it remains to show that 
$
(u_t)_{t\in [0, T]}\!\in\! L^2(\overline{\Omega}\times[0, T]; H^1(\mathcal M)),
$ 
for any $T>0$. To this end notice that 
for any $x\in \mathcal M$ and $v\in T_x\mathcal M$
\[
\langle \nabla u\bigl((\phi^{\kappa}_t)^{-1}(x)\bigr), v\rangle_x = \Bigl\langle \nabla u\bigl((\phi^{\kappa}_t)^{-1}(x)\bigr), \bigl\langle (D\phi^{\kappa}_t)^{-1}(x), v\bigr \rangle \Bigr\rangle_{(\phi^{\kappa}_t)^{-1}(x)},
\]
which yields that $(u_t)_{t\in [0, T]}\in L^{p\vee 2}(\overline{\Omega}\times[0, T]; H^1(\mathcal M))$ for any $T>0$ and $p\geq 1$ by Lemma \ref{lem:phitkat64wppacontinxandkapps} below. 
The fact that $(u_t)_{t\geq 0}$ is the solution of \eqref{eq:mainstochdiffeqwithkappa}  for a general $u\in L^2(\mathcal M)$ then follows from \cite[Proposition 4.2.10]{LR15} and the continuity of a conditional expectation.

\begin{remark}\label{rem:stoppingrimaforformfout}
Note that the formula \eqref{eq:Utviaphikaplxw} holds true for any $\mathbb F$-stopping time $\tau:\Omega \to \mathbb R_+$. Moreover, one can show that in this case
\[
u_{\tau}(x) =\mathbb E \bigl(u\bigl((\phi_{\tau}^{\kappa})^{-1}(x)\bigr)\big|(W^k)_{k\geq 1},(\widetilde W^m_{t+\tau} -\widetilde W^m_{\tau} )_{t\geq 0, m=1,\ldots n}\bigr),\;\;\; x\in \mathcal M
\]
as $\phi_{\tau}^{\kappa}$ is independent of $(\widetilde W^m_{t+\tau} -\widetilde W^m_{\tau})_{t\geq 0, m=1,\ldots n}$.
\end{remark}

Let us start by showing that $\phi_t^{\kappa}(x)$ is continuous in both $x$ and $\kappa$. Recall that $d:\mathcal M\times \mathcal M\to \mathbb R_+$ is the distance function on $\mathcal M$. Recall also that $\mathcal M$ can be isometrically embedded into $\mathbb R^{\ell}$, i.e.\ there exists an isometric embedding $\iota:\mathcal M \hookrightarrow \mathbb R^{\ell}$. Thus, for any $x\in \mathcal M$ there exists the linear isomorphic embedding $\iota_x:=D\iota(x):T_x\mathcal M \hookrightarrow \mathbb R^{\ell}$. Therefore, any linear operator $R:T_x\mathcal M \to T_y\mathcal M$ can be extended to a linear operator $R^{ex}\in \mathcal L(\mathbb R^{\ell})$ as follows: $R^{ex} \iota_x(v) = \iota_{y}(Rv)$ for any $v\in T_x\mathcal M$, and $R^{ex} u = 0$ for any $u\in \mathbb R^{\ell}$ orthogonal to $\iota_x(T_x \mathcal M)$. For any $x_1, x_2, y_1, y_2\in \mathcal M$ and for any $R_1\in \mathcal L(T_{x_1}\mathcal M, T_{y_1}\mathcal M)$ and $R_2\in \mathcal L(T_{x_2}\mathcal M, T_{y_2}\mathcal M)$ we set
\begin{equation}\label{eq:normdiffR1R2difftangsps}
\|R_1-R_2\|:= \left\| R_1^{ex} - R_2^{ex}\right\|_{\mathcal L(\mathbb R^{\ell})}.
\end{equation}

\begin{lemma}\label{lem:phitkat64wppacontinxandkapps}
For any $\kappa\geq 0$ there exists a stochastic flow of $C^1$-diffeomorphisms $(\phi_t^{\kappa})_{t\geq 0}$ on $\mathcal M$ which satisfies \eqref{eq:equfortildephikappa} for any $x\in \mathcal M$. Moreover, for any $T\geq 0$, $\kappa_0>0$, and $p \in \mathbb R$ there exists a constant $C>0$ such that for any  $\kappa, \kappa' \in [0, \kappa_0]$ and $t, t'\in[0, T]$
\begin{equation}\label{eq:phikappaconeinxx'}
\begin{split}
\mathbb E d(\phi^{\kappa}_t(x), \phi^{\kappa'}_{t'}(x'))^p&+ d\bigl((\phi^{\kappa}_t)^{-1}(x),(\phi^{\kappa'}_{t'})^{-1}(x')\bigr)^p \\
&\quad\leq C \Bigl(|\kappa-\kappa'|^{\frac p2} + d(x, x')^p+ |t-t'|^{\frac p2}\Bigr),\;\;\; x, x'\in \mathcal M,
\end{split}
\end{equation}
Further, for any $T>0$ there exists a random variable $\widetilde C:\overline \Omega \to \mathbb R_+$ having moments of all orders $p\geq 1$ such that  for any  $\kappa, \kappa' \in [0, \kappa_0]$, $x, x'\in \mathcal M$, and $t, t'\in [0, T]$
\begin{equation}\label{eq:dphikappatxphikaa't'x'bqd}
\begin{split}
d(\phi^{\kappa}_t(x),\phi^{\kappa'}_{t'}(x')) &+ d\bigl((\phi^{\kappa}_t)^{-1}(x),(\phi^{\kappa'}_{t'})^{-1}(x')\bigr) \\
&\quad\leq \widetilde C \Bigl(|\kappa-\kappa'|^{\frac 14} + d(x,x')^{\frac 34} + |t-t'|^{\frac 14}\Bigr)
\end{split}
\end{equation}
and
\begin{equation}\label{eq:Dphikappa-Dphikappa'bddbykpkaf[w}
\begin{split}
 \| D\phi^{\kappa}_t(x)- D \phi^{\kappa'}_{t'}(x')\| &+ \bigl \| (D\phi^{\kappa}_t(x))^{-1}- (D \phi^{\kappa'}_{t'}(x'))^{-1}\bigr\| \\
 & \quad\leq \widetilde C \Bigl(|\kappa-\kappa'|^{\frac {\beta}4} + d(x,x')^{\frac {\beta}2} + |t-t'|^{\frac 14}\Bigr),
 \end{split}
\end{equation}
where $\beta$ is as in the condition {\textnormal {\bf (C)}} 
and where the norms in \eqref{eq:Dphikappa-Dphikappa'bddbykpkaf[w} are defined by \eqref{eq:normdiffR1R2difftangsps}.
\end{lemma}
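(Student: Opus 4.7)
The strategy is to realize \eqref{eq:equfortildephikappa} as an SDE on the ambient Euclidean space $\mathbb R^{\ell}$, using the smooth extensions of $\sigma_k,\chi_m$ granted by \cite[Exercise 2.3]{LeeIRM}, and then apply Kunita's theory of stochastic flows \cite[Chapter 4]{KSF90}. The driving semimartingale-valued vector field has local characteristic
\[
 a_\kappa(x,y) := \sum_{k\geq 1} \sigma_k(x)\otimes\sigma_k(y) + 2\kappa \sum_{m=1}^n \chi_m(x)\otimes\chi_m(y),\qquad x,y\in\mathbb R^{\ell},
\]
and condition \textbf{(D)} is tailored so that $a_\kappa$ has bounded first and second spatial derivatives with $C^\beta$-H\"older second derivatives, uniformly in $\kappa\in[0,\kappa_0]$. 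Kunita's theorem then produces a stochastic flow of $C^1$-diffeomorphisms on $\mathbb R^{\ell}$, which restricts to a flow on $\mathcal M$ because the $\sigma_k$ and $\chi_m$ are tangent to $\mathcal M$.

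To obtain the moment bound \eqref{eq:phikappaconeinxx'} I would couple $\phi^\kappa$ and $\phi^{\kappa'}$ with the same noises. The difference $\phi^\kappa_t(x)-\phi^{\kappa'}_{t'}(x')$ then satisfies an It\^o SDE in $\mathbb R^{\ell}$ whose increments are controlled by (i) the Lipschitz bound on $\sum\sigma_k\otimes\sigma_k$ from \textbf{(D)}, yielding a spatial factor $d(x,x')$; (ii) BDG plus the summability \eqref{eq:nescondonsigmaintermsofsums}, yielding a temporal factor $|t-t'|^{1/2}$; and (iii) the elementary inequality $|\sqrt\kappa-\sqrt{\kappa'}|^{2}\leq|\kappa-\kappa'|$, yielding $|\kappa-\kappa'|^{1/2}$ from the $\widetilde W$-noise. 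Applying It\^o's formula to $|\cdot|^p$ (equivalent to $d(\cdot,\cdot)^p$ by Remark \ref{rem:whyinRell}) and then Gronwall gives the claimed $L^p$-bound in terms of $(|\kappa-\kappa'|^{1/2}+d(x,x')+|t-t'|^{1/2})^p$. The analogous bound for the inverse $(\phi^\kappa_t)^{-1}$ follows from Kunita's backward representation of the inverse flow, which is governed by a time-reversed semimartingale with the same local characteristic.

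The pathwise statement \eqref{eq:dphikappatxphikaa't'x'bqd} then follows from the multi-parameter Kolmogorov continuity criterion applied on $[0,\kappa_0]\times[0,T]\times\mathcal M$ equipped with the pseudo-metric $\rho(u,u')=|\kappa-\kappa'|^{1/2}+|t-t'|^{1/2}+d(x,x')$; taking $p$ sufficiently large in the moment estimate gives H\"older regularity in $\rho$ with exponent arbitrarily close to one, which implies the stated exponents $1/4$, $3/4$, $1/4$ (with slack). For the derivative bound \eqref{eq:Dphikappa-Dphikappa'bddbykpkaf[w} I would formally differentiate \eqref{eq:equfortildephikappa} in $x$ to obtain the linear matrix SDE
\[
 \ud J^\kappa_t(x) = \sum_{k\geq 1} D\sigma_k(\phi^\kappa_t(x))J^\kappa_t(x)\circ \ud W^k_t + \sqrt{2\kappa}\sum_{m=1}^n D\chi_m(\phi^\kappa_t(x))J^\kappa_t(x)\circ \ud\widetilde W^m_t
\]
satisfied by $J^\kappa_t(x):=D\phi^\kappa_t(x)$. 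Repeating the coupling/BDG/Gronwall scheme, but now using the $C^\beta$-H\"older estimates from \textbf{(D)} on the tensorized sums $\sum D\sigma_k\otimes D\sigma_k$ and $\sum D^2\sigma_k\otimes\sigma_k$, gives an $L^p$-bound of order $(|\kappa-\kappa'|^{\beta/2}+d(x,x')^\beta+|t-t'|^{1/2})^p$; a second application of Kolmogorov then produces the stated H\"older exponents $\beta/4$, $\beta/2$, $1/4$.

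The principal difficulty is closing the derivative estimate: with infinitely many modes and only $C^{2+\beta}$ regularity of the tensorized coefficients, pointwise bounds on individual $\sigma_k$ are insufficient, and one must rely on the intrinsically tensorial H\"older bounds from \textbf{(D)} to keep constants uniform in the number of modes. In particular, the bound $\sum_k|\langle Dw(x,y),(\sigma_k(x),\sigma_k(y))\rangle-\langle D\sigma_k(x),w\rangle|^2=O(|w|^{2+\beta})$ in \textbf{(D)} is precisely what is needed to isolate the leading term in the difference SDE for $J^\kappa_t(x)-J^{\kappa'}_{t'}(x')$ and to recover the $\beta$-power in the moment estimate.
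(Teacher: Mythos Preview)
Your strategy is sound and leads to the same conclusion, but the paper takes a slightly different (and cleaner) route that is worth knowing. Rather than coupling two SDEs with different $\kappa,\kappa'$ and running the BDG/Gronwall argument by hand, the paper \emph{augments the state space}: it introduces the flow $\phi_t(x,\mu)$ on $\mathcal M\times\mathbb R$ driven by $\sum_k\sigma_k\circ\dd W^k+\mu\sum_m\chi_m\circ\dd\widetilde W^m$, so that $\phi^{\kappa}=\phi(\cdot,\sqrt{2\kappa})$. The local characteristics of this extended flow are computed explicitly (see \eqref{eq:loccharforphimufd}) and shown to be $C^{1,\beta}$ jointly in $(x,\mu)$ thanks to \textbf{(D)}. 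Then \cite[Theorem~4.6.5]{KSF90} gives a $C^{1,\beta}$ flow on $\mathcal M\times\mathbb R$ in one stroke, and the $\kappa$-continuity of $\phi^\kappa$ and $D\phi^\kappa$ comes \emph{for free} from the spatial continuity of the extended flow via \cite[Lemma~4.5.5--4.5.6, Theorem~4.6.4, Corollary~4.6.7]{KSF90}. The Kolmogorov step is then \cite[Theorem~1.4.1]{KSF90}, and the inverse is handled by observing that the extended flow is a backward Brownian flow with characteristics $(a,-b,A)$ (modifying \cite[Theorem~4.2.10]{KSF90}). This avoids having to re-derive the BDG/Gronwall estimates that Kunita has already packaged, and makes the uniformity in $\kappa$ automatic rather than something to be tracked separately.

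One small correction to your final paragraph: the estimate $\sum_k|\langle Dw(x,y),(\sigma_k(x),\sigma_k(y))\rangle-\langle D\sigma_k(x),w\rangle|^2=O(|w|^{2+\beta})$ from \textbf{(D)} is \emph{not} what drives the derivative bound \eqref{eq:Dphikappa-Dphikappa'bddbykpkaf[w}; that estimate concerns the function $w(x,y)=\Phi^{-1}(x,y)$ and is used later, in Section~\ref{subsec:constrofVkappa}, to control the two-point generator near the diagonal. For the present lemma the relevant parts of \textbf{(D)} are exactly the ones you named earlier: the summability \eqref{eq:nescondonsigmaintermsofsums} and the $C^\beta$-continuity of $(x,y)\mapsto\sum_k D\sigma_k(x)\otimes D\sigma_k(y)$ and $(x,y)\mapsto\sum_k D^2\sigma_k(x)\otimes\sigma_k(y)$, which together make the local characteristics $C^{1,\beta}$.
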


\begin{proof}
Let us define a flow $\phi$ on $\mathcal M \times \mathbb R$
\begin{equation}\label{eq:fowwithmudefwvds}
\begin{cases}
  \ud\phi_t(x, \mu) &= \sum_{k\geq 1} \sigma_k(\phi_t(x, \mu)) \circ \ud W_{t}^k \\
  &\quad\quad\quad\quad\quad\quad\quad\quad\quad\quad\quad+ \mu\sum_{m=1}^n \chi_m(\phi_t(x,\mu)) \circ \ud \widetilde W_t^m, \\
 \phi_0(x, \mu) &= x,
\end{cases}
\end{equation}
where $(x, \mu) \in \mathcal M \times \mathbb R$. Note that $\phi(\cdot, \mu)$ coincides with \eqref{eq:equfortildephikappa} for $\mu = \sqrt{2\kappa}$. Then by \cite[Theorem V.26]{Prot} (recall that all our integrals can be considered in $\mathbb R^{\ell}$, see Remark \ref{rem:whyinRell}) we have that
\begin{align*}
 \ud\phi_t(x, \mu) = \sum_{k\geq 1} \sigma_k(\phi_t(x, \mu)) \ud W_{t}^k  &+ \mu\sum_{m=1}^n \chi_m(\phi_t(x,\mu)) \ud  \widetilde W_t^m\\
& + \frac{1}{2}\sum_{k\geq 1}\langle D\sigma_k(\phi_t(x, \mu)), \sigma_k(\phi_t(x, \mu))\rangle \ud t\\
& + \frac{\mu}{2} \sum_{m=1}^n\langle D\chi_m(\phi_t(x, \mu)), \chi_m(\phi_t(x, \mu))\rangle \ud t.
\end{align*}

Therefore the flow \eqref{eq:fowwithmudefwvds} has the local characteristics $(a, b, A)$ (see \cite[pp.\ 79--84]{KSF90}) defined for any $t\geq 0$, $ x, x'\in \mathcal M$, and $\mu, \mu'\in \mathbb R$ by
\begin{equation}\label{eq:loccharforphimufd}
\begin{split}
a((x, \mu), (x', \mu'),t) &= \sum_{k\geq 1}\langle  \sigma_k(x),  \sigma_k(x') \rangle + \mu \mu'\sum_{m=1}^n \langle  \chi_m(x),  \chi_m(x') \rangle,\\
b((x, \mu), t)&= \frac{1}{2}\sum_{k\geq 1}\langle D\sigma_k(x), \sigma_k(x)\rangle+ \frac{\mu}{2} \sum_{m=1}^n\langle D\chi_m(x), \chi_m(x)\rangle,\\
A_t &= t,
\end{split}
\end{equation}
where the inner product $\langle \cdot, \cdot\rangle$ is defined in Section \ref{sec:prelim}.
Note that by \eqref{eq:nescondonsigmaintermsofsums} and condition {\textnormal {\bf (C)}} we have that both $a(\cdot, \cdot, t)$ and $b(\cdot, t)$ are in $C^{1, \beta}$ for any fixed $t\geq 0$ and for $\beta>0$ from the condition {\textnormal {\bf (C)}}. Consequently, by \cite[Theorem 4.6.5]{KSF90} we have that the flow $\phi$ is a stochastic flow of $C^{1, \beta}$ diffeomorphisms. Therefore \eqref{eq:phikappaconeinxx'} for $\phi^{\kappa}$  follows from \cite[Lemma 4.5.5 and 4.5.6]{KSF90}.

Equation \eqref{eq:phikappaconeinxx'} for $(\phi^{\kappa})^{-1}$ follows analogously to the same inequalities for $\phi^{\kappa}$ and the fact that $\phi$ is a  {\em backward} Brownian flow. Indeed, as $\phi$ is a Brownian flow (since its local characteristics are deterministics), by modifying \cite[Theorem 4.2.10]{KSF90} (note that thanks to the proof of \cite[Theorem 4.2.10]{KSF90} we do not need $\phi$ to be a $C^2$ flow necessarily but only that $\sum_{k \geq 1} \langle D\sigma_k, \sigma_k\rangle$ being $C^{1, \beta}$, which is guaranteed by {\textnormal {\bf (C)}}) we have that $\phi$ has a $C^1$ backward stochastic flow with the local characteristics $(a, -b, A)$, so the desired holds true.

Let us show \eqref{eq:dphikappatxphikaa't'x'bqd} and \eqref{eq:Dphikappa-Dphikappa'bddbykpkaf[w}. First, \eqref{eq:dphikappatxphikaa't'x'bqd} follows from \eqref{eq:phikappaconeinxx'} and \cite[Theorem 1.4.1]{KSF90}. Next, analogously to \cite[Theorem 4.6.4, and the proof of Corollary 4.6.7]{KSF90} thanks to \eqref{eq:loccharforphimufd} and Remark \ref{rem:whyinRell} we can show that there exists $c>0$ such that for any fixed $t, t'\in[0, T]$, $x, x' \in \mathcal M$, and $\mu, \mu'\in [0, \sqrt{\kappa_0}]$
\begin{equation}\label{eq:EcowemkDphikappatx-Dphikappa't'x'}
\mathbb E \|D\phi_t(x, \mu) - D\phi_{t'}(x', \mu')\|^p \leq c \bigl (|\mu-
\mu'|^{p\beta} + d(x, x')^{p\beta} + |t-t'|^{ \frac p2}\bigr).
\end{equation}
Let 
$$
\widetilde C:= \sup_{x, x'\in\mathcal M, \mu, \mu'\in [0, \sqrt{\kappa_0}], t, t'\in[0, T]}\frac{|D\phi_t(x, \mu) - D\phi_{t'}(x', \mu')|}{|\mu-\mu'|^{\frac {\beta}2} + |x-x'|^{\frac {\beta}2} + |t-t'|^{\frac 14}},
$$
where we set $\frac 00 = 0$ for simplicity. Then by \cite[Theorem 1.4.1]{KSF90} $\widetilde C$ has moments of all orders (we leave the technical details to the reader), which is exactly \eqref{eq:Dphikappa-Dphikappa'bddbykpkaf[w}. The part of \eqref{eq:Dphikappa-Dphikappa'bddbykpkaf[w} concerning $ \| (D\phi^{\kappa}_t(x))^{-1}- (D \phi^{\kappa'}_{t'}(x'))^{-1}\|$ follows similarly via exploiting that $\phi$ is a  backward Brownian flow.
\end{proof}

Later we will need the following corollaries.

\begin{corollary}\label{cor:supofDphiontsciintegrs}
Fix $\kappa_0$ and $T>0$. Then for any $p\geq 1$ there exist $\delta>0$ and an integrable random variable $C$ such that a.s.
\begin{equation}\label{eq:supofDintkappaandx}
\sup_{\kappa\in[0, \kappa_0],t\in [0, T],x\in \mathcal M}\Bigl(\| D\phi^{\kappa}_t(x) \!-\! I_{T_{x}\mathcal M}\|^p+\bigl \| \bigl(D\phi^{\kappa}_t(x)\bigr)^{-1}\! -\! I_{T_{\phi^{\kappa}_t(x)}\mathcal M}\bigr\|^p\Bigr)<CT^{\delta}.
\end{equation}
\end{corollary}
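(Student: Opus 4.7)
The corollary is a direct consequence of Lemma \ref{lem:phitkat64wppacontinxandkapps}. The strategy is to exploit that $\phi^\kappa_0 = \mathrm{id}_{\mathcal M}$, so both its derivative and the inverse of its derivative coincide with $I_{T_x\mathcal M}$ at $t=0$, and then use the small-time H\"older modulus of continuity proved in \eqref{eq:Dphikappa-Dphikappa'bddbykpkaf[w} to get the required $T^\delta$ factor.

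Concretely, I would apply \eqref{eq:Dphikappa-Dphikappa'bddbykpkaf[w} with $t'=0$, $\kappa'=\kappa$, $x'=x$ and the identifications $D\phi^\kappa_0(x) = (D\phi^\kappa_0(x))^{-1} = I_{T_x\mathcal M}$ to obtain
\[
\|D\phi^\kappa_t(x) - I_{T_x\mathcal M}\| + \|(D\phi^\kappa_t(x))^{-1} - I_{T_x\mathcal M}\| \leq 2\widetilde C\, t^{1/4},
\]
uniformly in $\kappa\in[0,\kappa_0]$, $t\in[0,T]$, $x\in\mathcal M$, where $\widetilde C$ has moments of all orders.

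The first summand is already in the required form. For the second, the corollary involves $\|(D\phi^\kappa_t(x))^{-1} - I_{T_{\phi^\kappa_t(x)}\mathcal M}\|$ rather than $\|(D\phi^\kappa_t(x))^{-1} - I_{T_{x}\mathcal M}\|$, so I would insert an additional triangle-inequality term $\|I_{T_x\mathcal M} - I_{T_{\phi^\kappa_t(x)}\mathcal M}\|$. By the extension convention \eqref{eq:normdiffR1R2difftangsps} this is the operator norm on $\mathbb R^\ell$ of the difference of orthogonal projections onto $\iota_x(T_x\mathcal M)$ and $\iota_{\phi^\kappa_t(x)}(T_{\phi^\kappa_t(x)}\mathcal M)$; since $\mathcal M\hookrightarrow\mathbb R^\ell$ is a compact smooth embedding, the projection map $y\mapsto P_y$ is Lipschitz, so this extra term is bounded by $c_{\mathcal M}\,d(x,\phi^\kappa_t(x)) \leq c_{\mathcal M}\widetilde C\, t^{1/4}$ using \eqref{eq:dphikappatxphikaa't'x'bqd}.

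Combining, the expression inside the supremum is bounded by $(c\widetilde C)^p t^{p/4}$ for some constant $c=c(\mathcal M)$. Taking the supremum over $t\in[0,T]$ and setting $\delta:=p/4$ and $C:=c^p \widetilde C^p$ (integrable because $\widetilde C$ has moments of all orders) gives the claim. I do not foresee any genuine obstacle: the entire argument amounts to reading off the uniform small-time behaviour of $D\phi^\kappa_t$ at $t=0$ from estimates already packaged in Lemma \ref{lem:phitkat64wppacontinxandkapps}, plus a purely geometric remark about the Lipschitz dependence of the tangent-plane projections along $\mathcal M$.
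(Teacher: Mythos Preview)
Your proposal is correct and matches the paper's approach: the paper's proof is the single sentence ``The corollary is a direct consequence of \eqref{eq:Dphikappa-Dphikappa'bddbykpkaf[w}.'' You are in fact more careful than the paper, since you explicitly handle the discrepancy between $I_{T_x\mathcal M}$ and $I_{T_{\phi^\kappa_t(x)}\mathcal M}$ via the Lipschitz dependence of the tangent-plane projections, a point the paper leaves implicit.
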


\begin{proof}
The corollary is a direct consequence of \eqref{eq:Dphikappa-Dphikappa'bddbykpkaf[w}.
\end{proof}

\begin{corollary}\label{cor:Fstopingtimwlidtstochdlow}
Let $\tau$ be an $\mathbb F$-stopping time. Then for any $\kappa \geq 0$ we have that $(\phi^{\kappa,\tau}_t)_{t\geq 0}:= (\phi^{\kappa}_{t+\tau}(\phi^{\kappa}_{\tau})^{-1})_{t\geq 0}$ is a Brownian flow of homeomorphisms. Moreover, $(\phi^{\kappa,\tau}_t)_{t\geq 0}$ depends only on $(W^k_{t+\tau} -W^k_{\tau} )_{t\geq 0,k\geq 1} $ and $(\widetilde W^m_{t+\tau} -\widetilde W^m_{\tau} )_{t\geq 0, m=1,\ldots, n}$ and it has the same distribution as $(\phi^{\kappa}_t)_{t\geq 0}$ as a random element in $C(\mathbb R_+; C(\mathcal M; \mathcal M))$.
\end{corollary}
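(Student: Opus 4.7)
The plan is to combine the strong Markov property of the joint Brownian motion $(W^k, \widetilde W^m)_{k\geq 1, 1\leq m\leq n}$ with uniqueness for \eqref{eq:equfortildephikappa}. Set
\[
W^{k,\tau}_t := W^k_{t+\tau} - W^k_\tau, \qquad \widetilde W^{m,\tau}_t := \widetilde W^m_{t+\tau} - \widetilde W^m_\tau.
\]
Since $\tau$ is an $\mathbb F$-stopping time and $(\widetilde W^m)_{m=1}^n$ is independent of $(W^k)_{k\geq 1}$, the strong Markov property applied to the joint process yields that the families $(W^{k,\tau})_{k\geq 1}$ and $(\widetilde W^{m,\tau})_{m=1}^n$ are jointly standard Brownian motions with the same joint law as $(W^k, \widetilde W^m)$, and are independent of $\overline{\mathcal F}_\tau$.

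Next, I would fix $x\in\mathcal M$ and split the stochastic integrals in \eqref{eq:equfortildephikappa} at time $\tau$. Performing the change of variable $s = u+\tau$, the process $t \mapsto \phi^{\kappa}_{t+\tau}(x)$ satisfies
\begin{align*}
\phi^{\kappa}_{t+\tau}(x) = \phi^{\kappa}_\tau(x) &+ \sum_{k\geq 1}\int_0^t \sigma_k(\phi^{\kappa}_{u+\tau}(x)) \circ \dd W^{k,\tau}_u \\
&+ \sqrt{2\kappa}\sum_{m=1}^n \int_0^t \chi_m(\phi^{\kappa}_{u+\tau}(x)) \circ \dd \widetilde W^{m,\tau}_u.
\end{align*}
Setting $y:=\phi^{\kappa}_\tau(x)$, equivalently $x=(\phi^{\kappa}_\tau)^{-1}(y)$, which is well-defined because $\phi^{\kappa}_\tau$ is a diffeomorphism by Lemma \ref{lem:phitkat64wppacontinxandkapps}, the above shows that $t\mapsto \phi^{\kappa,\tau}_t(y)$ solves \eqref{eq:equfortildephikappa} on $\mathbb R_+$ with initial condition $y$ and driving noise $(W^{k,\tau}, \widetilde W^{m,\tau})$.

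Uniqueness of solutions to \eqref{eq:equfortildephikappa}, together with the existence of the stochastic flow of diffeomorphisms from Lemma \ref{lem:phitkat64wppacontinxandkapps}, then identifies $(\phi^{\kappa,\tau}_t)_{t\geq 0}$ with the flow generated by the shifted noise $(W^{k,\tau}, \widetilde W^{m,\tau})$. Consequently, $\phi^{\kappa,\tau}_t$ is a measurable functional of $(W^{k,\tau}_s, \widetilde W^{m,\tau}_s)_{s\leq t}$, giving the first claimed dependence property. Since this measurable functional is the same as the one producing $\phi^{\kappa}$ from $(W^k, \widetilde W^m)$, and since the shifted noise has the same joint law as the original, $\phi^{\kappa,\tau}$ and $\phi^{\kappa}$ coincide in law as random elements of $C(\mathbb R_+; C(\mathcal M;\mathcal M))$; in particular the Brownian-flow-of-homeomorphisms property is inherited from Lemma \ref{lem:phitkat64wppacontinxandkapps}.

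The main technical obstacle I anticipate is justifying the time-shift change of variable for the infinite sum of Stratonovich integrals: one must commute the countable sum with the random shift $s\mapsto s-\tau$ and transport the Stratonovich correction correctly. This is handled by first passing to the It\^o form of \eqref{eq:equfortildephikappa}, in which the corrector is an ordinary Lebesgue integral transforming pathwise, and by exploiting the summability in \eqref{eq:nescondonsigmaintermsofsums} from condition \textbf{(D)} to run a dominated-convergence argument on the It\^o pieces before returning to Stratonovich form.
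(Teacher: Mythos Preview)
The paper states this corollary without proof, treating it as a routine consequence of the stochastic-flow theory developed in Lemma \ref{lem:phitkat64wppacontinxandkapps} and the strong Markov property for Brownian motion. Your argument supplies precisely the details one would expect: the time-shift of the Stratonovich SDE at the stopping time, identification via pathwise uniqueness, and transfer of distribution via the strong Markov property applied to the joint noise. This is correct and is the standard route; the only caveat is to note explicitly that an $\mathbb F$-stopping time is automatically an $\overline{\mathbb F}$-stopping time (since $\mathcal F_t\subset\overline{\mathcal F}_t$), which is what licenses applying the strong Markov property to the full system $(W^k,\widetilde W^m)$.
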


\subsection{Lyapunov exponents and moment Lyapunov functions for SDEs with H\"older coefficients}\label{sec:LyaexmomLyafun}

Let stochastic flow $(\phi_t)_{t\geq 0}$ be defined by \eqref{eq:flowforkappa=0dsa} with $(\sigma_k)_{k\geq 1}$ satisfying the conditions {\textnormal {\bf (A)--(C)}}.
The goal of the subsection is to show that in this case the  the leading Lyapunov exponent
\begin{equation}\label{eq:topLyapexpdef}
\lambda_1 := \lim_{t\to \infty} \frac 1t \log \|D\phi_t(x)\|
\end{equation}
exists, is independent of choice of $x\in \mathcal M$ and positive and that the moment Lyapunov function 
 \begin{equation}\label{eq:defofLambdap}
    \Lambda(p) := -\lim_{t\to \infty} \frac 1t \log \mathbb E_{(x,v)} |D\phi_t(x) v|^{-p},\;\;\; x\in \mathcal M , \;\;v \in S_x\mathcal M.
 \end{equation}
exists for any $p\in [-p_*, p_*]$ for some $p_*>0$ so that the following proposition holds true. 

 \begin{proposition}\label{prop:propofLambdapandpsipliekconx}
Let $\Lambda(p)$ be defined by \eqref{eq:defofLambdap}. Then
\begin{enumerate}[\rm (A)]
\item $p\mapsto \Lambda(p)$ is continuous and concave in $p\in [-p_*,p_*]$,
\item $\Lambda(p) = \lambda_1 p + o(p)$ as $p\to 0$, where $\lambda_1$ is defined by \eqref{eq:topLyapexpdef}.
\end{enumerate}
\end{proposition}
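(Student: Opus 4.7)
The plan for part (A) is to get concavity directly from H\"older's inequality and then continuity for free. For $p_1, p_2 \in [-p_*, p_*]$ and $\alpha \in (0,1)$, write
\[
  |D\phi_t(x)v|^{-(\alpha p_1 + (1-\alpha)p_2)} = |D\phi_t(x)v|^{-\alpha p_1}\cdot |D\phi_t(x)v|^{-(1-\alpha)p_2}
\]
and apply H\"older with conjugate exponents $1/\alpha$ and $1/(1-\alpha)$, obtaining
\[
  \mathbb E_{(x,v)}|D\phi_t(x)v|^{-(\alpha p_1 + (1-\alpha)p_2)} \leq \bigl(\mathbb E_{(x,v)}|D\phi_t(x)v|^{-p_1}\bigr)^{\alpha}\bigl(\mathbb E_{(x,v)}|D\phi_t(x)v|^{-p_2}\bigr)^{1-\alpha}.
\]
Taking $-\frac{1}{t}\log$ and sending $t \to \infty$ (using that the defining limit exists on $[-p_*, p_*]$ by Appendix~\ref{sec:LyaexmomLyafun}) gives $\Lambda(\alpha p_1 + (1-\alpha)p_2) \geq \alpha \Lambda(p_1) + (1-\alpha) \Lambda(p_2)$. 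Continuity on the interior $(-p_*, p_*)$ is then automatic for finite concave functions; continuity at the endpoints follows from the spectral identification of $\Lambda(p)$ as a principal eigenvalue in Appendix~\ref{sec:LyaexmomLyafun}, which depends analytically on $p$ on a slightly larger interval.

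For part (B), note first that $\Lambda(0) = 0$. Jensen's inequality applied to the convex function $y \mapsto e^{-py}$ yields
\[
  \mathbb E_{(x,v)}|D\phi_t(x)v|^{-p} \geq \exp\bigl(-p\,\mathbb E_{(x,v)}\log |D\phi_t(x)v|\bigr),
\]
so $-\frac{1}{t}\log \mathbb E_{(x,v)}|D\phi_t v|^{-p} \leq p\cdot\frac{1}{t}\mathbb E_{(x,v)}\log |D\phi_t v|$. By ergodicity of the projective process on $S\mathcal M$ (a consequence of the H\"ormander condition \textbf{(C)}) together with the moment bounds of Corollary~\ref{cor:supofDphiontsciintegrs} (providing the uniform integrability needed to upgrade the a.s.\ convergence $\frac{1}{t}\log |D\phi_t v| \to \lambda_1$ to convergence in expectation), the right-hand side tends to $p\lambda_1$, giving $\Lambda(p) \leq \lambda_1 p$ on $[-p_*, p_*]$. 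Combined with $\Lambda(0) = 0$ and the concavity from (A), this forces the one-sided derivatives to satisfy $\Lambda'_+(0) \leq \lambda_1 \leq \Lambda'_-(0)$, but does not yet match them.

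To close the gap and conclude that $\Lambda$ is differentiable at $0$ with $\Lambda'(0) = \lambda_1$, I would invoke the spectral picture developed in Appendix~\ref{sec:LyaexmomLyafun}, following \cite{BS88}: $\Lambda(p)$ coincides with the principal eigenvalue of the tilted generator $L_p = L_0 + pQ$ of the projective two-point process on the unit tangent bundle $S\mathcal M$, where $Q$ is the instantaneous Lyapunov functional. Under \textbf{(A)} and \textbf{(C)}, $L_p$ is hypoelliptic on the compact manifold $S\mathcal M$, and its leading eigenvalue is simple, isolated, and real (by Krein-Rutman together with the strong maximum principle for hypoelliptic operators). Standard analytic perturbation theory then gives $p \mapsto \Lambda(p)$ analytic in a neighbourhood of $0$ with derivative
\[
  \Lambda'(0) = \int_{S\mathcal M} Q\, d\mu = \lambda_1,
\]
the last equality being the Furstenberg-Khasminskii formula, $\mu$ being the unique invariant probability measure of the untilted projective process. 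The main obstacle is precisely this spectral input: one must extend the smooth-coefficient analysis of \cite{BS88,Bax86} to the $C^{2+\beta}$ regime required for the Kraichnan model (see p.\,\pageref{pg:whyonly2+beta}), showing that mere H\"older regularity of the covariance still suffices for hypoellipticity of $L_p$ on $S\mathcal M$, simplicity of its principal eigenvalue, and analytic dependence on $p$. This is exactly where the appendix must do the real work; once it is in place, (A) and (B) follow as sketched above.
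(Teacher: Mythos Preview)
Your proof of (A) and the Jensen step in (B) match the paper exactly. The paper also reduces the remaining task to showing that $\Lambda$ is differentiable at $0$, since once that is known the two-sided bound $\Lambda(p)\le \lambda_1 p$ forces $\Lambda'(0)=\lambda_1$ without any further computation.

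Where you diverge is in how differentiability is obtained, and your route carries a subtle misstep. You propose to work at the level of the generator $L_p=L_0+pQ$, invoke hypoellipticity of $L_p$ on $S\mathcal M$, and then apply analytic perturbation theory plus the Furstenberg--Khasminskii formula. But in the H\"older-coefficient regime of this paper the H\"ormander machinery is unavailable; the appendix states this explicitly and replaces H\"ormander-type hypoellipticity by the strict ellipticity condition \textnormal{\textbf{(C)}}. So ``extending hypoellipticity to $C^{2+\beta}$ coefficients'' is not what the appendix does, nor could it. The paper instead works at the \emph{semigroup} level: $p\mapsto \hat P^p_T$ is $C^\infty$ in $p$ in operator norm (just differentiate $|D\phi_T v|^{-p}$ under the expectation), hence the spectral projector $\pi^p$ given by the Cauchy integral formula is $C^1$ in $p$, hence $\psi_p=\pi^p\mathbf 1$ and $\lambda(p)\psi_p=\hat P^p_T\psi_p$ are $C^1$ in $p$; since $\psi_p$ is bounded away from zero, $\lambda(p)=e^{-\Lambda(p)T}$ is $C^1$. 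This bypasses both hypoellipticity and Furstenberg--Khasminskii entirely, and is what makes the argument go through with merely H\"older coefficients. Your generator-level plan would work in the smooth setting of \cite{BS88,AOP86}, but here the semigroup-level argument is both simpler and the correct one.
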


A very similar statement has been shown in \cite{AOP86,BS88} for smooth and finitely many coefficients $\sigma_k$. However, one main aim in this work is to include the full range of coefficients amenable to the analysis via the associated Lagrangian stochastic flow (cf.\ p.\ \pageref{pg:whyonly2+beta} in the introduction). In this case, 
the corresponding PDE for $\phi$ and $\tilde \phi$ (see \eqref{eq:flowforkappa=0dsa} and \eqref{eq:norm_tangent_flow}) do not have $C^{\infty}$, but only H\"older continuous coefficients. Therefore, the H\"ormander theorem cannot be directly applied, as it was done in \cite{AOP86,BS88}, but one needs to use ellipticity and  to adapt the techniques from \cite{BBP-S,BBP-S1911a}.

\begin{proposition}\label{prop:Lyapconstexistsandpos}
$\lambda_1$ defined by \eqref{eq:topLyapexpdef} exists and is positive.
\end{proposition}

\begin{proof}
Existence of $\lambda_1$ follows from \cite[Theorem 2.2 and Corollary 2.3]{Bax89}. Let us show that $\lambda_1>0$. To this end we will need to use \cite[Theorem 6.8]{Bax89} (see also \cite[p.\ 4]{DKK04}), which states that as $\phi$ is measure-preserving by Remark \ref{rem:phikappameasupres}, $\lambda_1>0$ if
\begin{enumerate}[(i)]
\item  for any $t>0$
\begin{multline*}
\mathbb E \int_{\mathcal M}\sup_{0\leq s\leq t}\log^+\|D\phi_s(x)\| + \log^+\|(D\phi_s(x))^{-1}\|\\
+\log^+\|D\phi_t\phi_s^{-1}(x)\| + \log^+\|(D\phi_t\phi_s^{-1}(x))^{-1}\|\ud \mu(x)<\infty,
\end{multline*}
\item the relative entropy $\mathbb Eh(\mu, \mu_t)$ is finite, { where $\mu_t:= \mu\circ \phi_t^{-1}$} (see \cite[p.\ 523]{Bax89}),
\item there is no Riemannian distance $d'$ on $\mathcal M$ which is preserved by $\phi$,
\item there are no proper (i.e.\ of dimension no less than 1 and no more than $d-1$) tangent subspaces $E_x^1,\ldots E_x^p$ of $T_x\mathcal M$, $x\in \mathcal M$, such that $D\phi_t(x)E^i_x = E^{\sigma_t(i)}_{\phi_t(x)}$, where $\sigma_t$ is some permutation depending only on $\phi$, $t$, and $x$.
\end{enumerate}

Note that $\phi$ is a Brownian flow which is a backward flow as well (see the proof of Lemma \ref{lem:phitkat64wppacontinxandkapps}), so $(i)$ follows from \eqref{eq:Dphikappa-Dphikappa'bddbykpkaf[w} implied to both flow $(\phi_s)_{0\leq s\leq t}$ and the backward flow $(\phi_t\phi_{t-s}^{-1})_{0\leq s\leq t}$. $(ii)$ holds as $\mathbb Eh(\mu, \mu_t)=0$, where $\mu_t:= \mu\circ \phi_t^{-1} = \mu$ as $\phi$ is measure-preserving by Remark \ref{rem:phikappameasupres}. 
 
Both conditions $(iii)$ and $(iv)$ hold by Proposition \ref{prop:appendtkernelHoldcoegg} (see also \cite[Section 2.1]{DKK04}). Indeed, for any $x, y\in \mathcal M$, $x\neq y$, the measure $\nu$ on $(0, \infty)\times \mathcal M\times \mathcal M$ equalling 
$$
\nu([s, t)\times A\times B) = \int_s^t \mathbb P(\phi_r(x)\in A, \phi_r(y)\in B)\ud r,\;\;\; 0\leq s\leq t,\;\;\; A, B\in \mathcal B(\mathcal M),
$$
solves the Fokker-Planck equation \eqref{eq:appenexistskernelHoldcoeff} with $a^{ij}:= \sum_{k\geq 1}(\sigma_k(\cdot), \sigma_k(\cdot))$, $b^i:= \tfrac 12 \sum_{k\geq 1}(\langle D\sigma_k(\cdot), \sigma_k(\cdot),\langle D\sigma_k(\cdot), \sigma_k(\cdot))$, and $c=0$, so due to the conditions  {\textnormal {\bf (A)--(C)}} and Proposition \ref{prop:appendtkernelHoldcoegg} $\nu$ has a density with respect to $\ud t\dd\mu\dd\mu$. Therefore there is no Riemannian distance $d'$ so that $d'(\phi_t(x), \phi_t(y))=d'(x, y)$ a.s.\ for any $t\geq 0$ so the  condition $(iii)$ is satisfied. The same can be shown for condition $(iv)$ (the corresponding $a^{ij}$ and $b^i$ generated by $(\phi_t(\cdot), \tilde \phi_t(\cdot))$ are then H\"older continuous, see Section \ref{sec:mixwithstoch}).
\end{proof}

Let us now prove existence of the moment Lyapunov function $\Lambda(p)$ defined by \eqref{eq:defofLambdap}. 
For any $p\in \mathbb R$ and $\psi\in C(S\mathcal M)$ define the ``twisted'' Markov semigroup $\hat{P}^{p}:\mathbb R_+ \to \mathcal L(C(S\mathcal M))$ by
\begin{equation}\label{eq:defofPhatpkappadsa}
 \hat{P}^{p}_t \psi(x, v) := \mathbb E_{(x, v)} |D \phi_t v|^{-p} \psi(x_t, v_t),\;\;\;x\in \mathcal M,\;\; v\in S_x\mathcal M,\;\; t\geq 0,
\end{equation}
{ where for any $v\in S_x\mathcal M$ we set $v_t:= \tilde \phi_t(v) = \tfrac {D\phi_t v}{|D\phi_t v|}$ (see \eqref{eq:norm_tangent_flow}).}

For any $t\geq 0$ set $\hat{P}_t:=\hat{P}^{0}_t$. Note that $\hat{P}_t$ has an eigenvector $\mathbf 1$ with the eigenvalue $1$. Let us show that this eigenvalue is leading and that $\hat{P}^{0}_t$ has a spectral gap in $1$ for $t$ big enough.

\begin{lemma}\label{lem:Pt0hasspectgap}
 For $T\geq 0$ big enough $\hat{P}_{T}$ as an operator on $C(S\mathcal M)$ has a spectral gap in point $\{1\}$, i.e.\ there exists $\eps\in(0,1)$ so that $\{1\} \subset \sigma(\hat{P}_{T}) \subset \{1\} \cup B(0, \eps)$.
\end{lemma}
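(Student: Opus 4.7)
The plan is to exhibit a Doeblin-type minorisation for the Markov kernel of $\hat P_T$ on the compact manifold $S\mathcal M$, and then deduce the spectral gap from standard oscillation-contraction arguments. The key point is that for $p=0$ the twisted semigroup reduces to the genuine Markov semigroup of the joint process $(x_t,v_t)$ on $S\mathcal M$ given by \eqref{eq:flowforkappa=0dsa} and \eqref{eq:norm_tangent_flow}, with generator
\[
\hat L\psi(x,v)=\tfrac{1}{2}\sum_{k\ge1}\bigl(\sigma_k(x),\tilde\sigma_k(x,v)\bigr)^2\psi+\text{(first-order It\^o correction)}.
\]
Condition \textbf{(C)} is precisely uniform ellipticity of the diffusion matrix of this generator on compact subsets of $S\mathcal M$; since $S\mathcal M$ is itself compact, $\hat L$ is \emph{uniformly} elliptic on $S\mathcal M$. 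Condition \textbf{(D)} (together with the definition \eqref{eq:defofotildesigmakdacq}) transfers H\"older regularity to the lifted coefficients, so Appendix~B applies to $(x_t,v_t)$.

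First I would invoke Proposition~\ref{prop:appendtkernelHoldcoegg} to obtain, for every $t>0$, a continuous transition density $p_t\bigl((x,v),(x',v')\bigr)$ of $(x_t,v_t)$ with respect to the Sasaki volume on $S\mathcal M$. Then, exactly as in the proof of Proposition~\ref{prop:harridsthmforsVkappa}, the parabolic Harnack inequality of Proposition~\ref{prop:appendxHarnackmancewell} yields strict positivity of $p_t$ on $S\mathcal M\times S\mathcal M$ for every $t>0$. A chaining argument combined with the compactness of $S\mathcal M$ then produces $T_0>0$ and $c_0>0$ such that
\[
p_{T_0}\bigl((x,v),(x',v')\bigr)\ge c_0,\qquad (x,v),(x',v')\in S\mathcal M.
\]
This is the desired Doeblin minorisation: $\hat P_{T_0}\mathbf 1_A(x,v)\ge c_0\,m(A)$ for every Borel $A\subset S\mathcal M$, where $m$ is the Sasaki volume normalised to a probability measure.

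From the minorisation I would then conclude by the classical Doeblin argument. Setting $\eta:=c_0$ and $\nu:=m$, for any $\psi\in C(S\mathcal M)$ one writes $\hat P_{T_0}\psi=\eta\nu(\psi)\mathbf 1+(1-\eta)Q\psi$ with $Q$ a Markov operator on $C(S\mathcal M)$; iterating on the oscillation $\operatorname{osc}(\psi):=\sup\psi-\inf\psi$ yields
\[
\operatorname{osc}\bigl(\hat P_{T_0}\psi\bigr)\le(1-\eta)\operatorname{osc}(\psi).
\]
Hence $\hat P_{T_0}$ admits a unique invariant probability measure $\pi$ (necessarily the normalised Sasaki volume lifted appropriately, but this identification is not needed here), and
\[
\|\hat P_{nT_0}\psi-\pi(\psi)\mathbf 1\|_\infty\le(1-\eta)^{n-1}\operatorname{osc}(\psi),\qquad n\ge1.
\]
Choosing $T=NT_0$ with $N$ so large that $(1-\eta)^{N-1}<\tfrac{1}{2}$, the operator $\hat P_T$ has $\mathbf 1$ as eigenvector for the eigenvalue $1$ and acts as a strict contraction on the closed invariant subspace $\{\psi\in C(S\mathcal M):\pi(\psi)=0\}$. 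Consequently $\sigma(\hat P_T)\subset\{1\}\cup\overline{B(0,\varepsilon)}$ with $\varepsilon:=(1-\eta)^{N-1}<1$, which is the statement of the lemma.

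The main obstacle is the verification of continuity and strict positivity of the transition density under the merely H\"older-regular coefficients appearing in \eqref{eq:norm_tangent_flow}; once this has been reduced to Appendix~B via the uniform ellipticity provided by \textbf{(C)} on the compact space $S\mathcal M$, everything else is a standard Doeblin--Markov argument. A minor subtlety is that the density must be taken with respect to a \emph{fixed} reference measure on $S\mathcal M$ rather than the pushforward of $\mu\otimes m_{\mathbb S^{d-1}}$ under $\phi$; but since $\phi_t$ is a measure-preserving stochastic flow of $C^1$-diffeomorphisms by Remark~\ref{rem:phikappameasupres} and Lemma~\ref{lem:phitkat64wppacontinxandkapps}, this change of reference measure costs only bounded factors and does not affect the uniform lower bound.
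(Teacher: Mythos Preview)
Your proposal is correct and follows essentially the same route as the paper. The paper applies the quantitative Harris theorem (Theorem~\ref{thm:QuaeHarwqthnms}) with the trivial Lyapunov function $V\equiv 1$, which reduces Harris to Doeblin; the minorisation condition $(ii)$ is obtained exactly as you suggest, from the Harnack inequality of Proposition~\ref{prop:appendxHarnackmancewell} together with condition \textbf{(C)}, and the spectral gap then follows from the resulting exponential decay on mean-zero functions. Your explicit oscillation-contraction argument is just the classical Doeblin proof written out rather than cited as a black box.
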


\begin{proof}
 For the proof we apply Theorem \ref{thm:QuaeHarwqthnms}, which condition $(i)$ is satisfied for $V\equiv 1$, $\gamma=\frac 12$, $C=1$, and which condition $(ii)$ holds thanks to Proposition \ref{prop:appendxHarnackmancewell} and condition {\textnormal {\bf (B)}}. Therefore there exists an invariant measure $\nu$ of $(\hat{P}_t)_{t\geq 0}$. Let $C^0_{\nu}(S\mathcal M)$ denote the subspace of $C(S\mathcal M)$ of all continuous mean-zero with respect to $\nu$ functions.
 Fix $0<\delta <1/10$.
 It is sufficient to show that for $T$ big enough
 \begin{equation*}\label{eq:Pthatissmallonmeanzero}
 \|\hat{P}_{T}\|_{\mathcal L(C^0_{\nu}(S\mathcal M), C(S\mathcal M))}<\delta
 \end{equation*} 
 which follows directly from \eqref{eq:maingofthm:QuaeHarwqthnms}.
\end{proof}

Our goal is to show the same for $(\hat{P}^{p}_t)_{t\geq 0}$ for $p$ small enough. We start with the following lemma.

\begin{lemma}\label{lem:coninpofPkappap}
 Fix $t\geq 0$. Then for any $p\in \mathbb R$
 $$
\lim_{q\to p}   \|\hat{P}^{p}_t - \hat{P}^{q}_t\|_{\mathcal L(C(S\mathcal M))} = 0.
 $$ 
\end{lemma}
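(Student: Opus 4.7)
The idea is to bound the operator norm by the supremum over initial data and then to control pointwise the $L^1$-difference of the two random weights $|D\phi_t v|^{-p}$ and $|D\phi_t v|^{-q}$ by the mean value theorem, using the $L^r$-moment bounds on $D\phi_t$ and $(D\phi_t)^{-1}$ provided by Lemma \ref{lem:phitkat64wppacontinxandkapps}.

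First, from the very definition \eqref{eq:defofPhatpkappadsa} of $\hat P^p_t$, for any $\psi\in C(S\mathcal M)$ with $\|\psi\|_\infty\leq 1$ and any $(x,v)\in S\mathcal M$,
\[
|(\hat P^p_t\psi - \hat P^q_t\psi)(x,v)| \leq \mathbb E_{(x,v)}\bigl||D\phi_t v|^{-p} - |D\phi_t v|^{-q}\bigr|,
\]
so it suffices to prove that this expectation tends to $0$ as $q\to p$ uniformly in $(x,v)\in S\mathcal M$. Applying the mean value theorem to $r\mapsto e^{-r\log|D\phi_t v|}$, we get pointwise, for $|q-p|\leq 1$,
\[
\bigl||D\phi_t v|^{-p} - |D\phi_t v|^{-q}\bigr| \leq |p-q|\,\bigl|\log|D\phi_t v|\bigr|\bigl(|D\phi_t v|^{-(p-1)} + |D\phi_t v|^{-(p+1)}\bigr),
\]
and Cauchy--Schwarz then reduces the matter to showing that
\[
\sup_{(x,v)\in S\mathcal M}\mathbb E\bigl|\log|D\phi_t v|\bigr|^2 < \infty\quad\text{and}\quad \sup_{(x,v)\in S\mathcal M}\mathbb E\bigl(|D\phi_t v|^{-(p\pm 1)}\bigr)^2 < \infty.
\]

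Both bounds are consequences of Lemma \ref{lem:phitkat64wppacontinxandkapps} applied with $\kappa=0$. Indeed, using $|v|=1$ we have the deterministic inequalities
\[
\|(D\phi_t(x))^{-1}\|^{-1} \leq |D\phi_t v| \leq \|D\phi_t(x)\|,
\]
which imply, for every $r\in\mathbb R$,
\[
|D\phi_t v|^{-r} \leq \|D\phi_t(x)\|^{|r|} + \|(D\phi_t(x))^{-1}\|^{|r|}.
\]
Lemma \ref{lem:phitkat64wppacontinxandkapps} (with $\kappa=\kappa'=0$, $t=t'$, $x=x'$) yields that $\mathbb E\|D\phi_t(x)\|^{r} + \mathbb E\|(D\phi_t(x))^{-1}\|^{r}$ is finite and bounded uniformly in $x\in\mathcal M$ for every $r\in\mathbb R$, which handles the second supremum; for the logarithmic factor one uses $|\log y|^2 \leq C_\varepsilon(y^\varepsilon + y^{-\varepsilon})$ for $y>0$ and the same moment bounds with $r=\pm\varepsilon$.

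Combining these estimates gives
\[
\|\hat P^p_t-\hat P^q_t\|_{\mathcal L(C(S\mathcal M))} \leq C_{p,t}\,|p-q|\quad\text{for all }|p-q|\leq 1,
\]
for some constant $C_{p,t}>0$, from which the claim follows immediately. There is no real obstacle here: the argument is routine once the uniform-in-$(x,v)$ moment estimates from Lemma \ref{lem:phitkat64wppacontinxandkapps} are in hand, and in fact it delivers local Lipschitz continuity of $p\mapsto \hat P^p_t$ rather than mere continuity.
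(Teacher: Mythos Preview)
Your proof is correct and follows essentially the same approach as the paper: bound the operator norm by the supremum over $(x,v)$ of $\mathbb E\bigl||D\phi_t v|^{-p}-|D\phi_t v|^{-q}\bigr|$, control this difference by an elementary inequality (you use the mean value theorem, the paper uses $|a^b-1|\leq |b\log a|\,a^b$) together with H\"older/Cauchy--Schwarz, and then invoke the uniform moment bounds on $\|D\phi_t\|$ and $\|(D\phi_t)^{-1}\|$ coming from the stochastic flow estimates (Lemma~\ref{lem:phitkat64wppacontinxandkapps} and its Corollary~\ref{cor:supofDphiontsciintegrs}). The only cosmetic point is that to extract the moment bounds from Lemma~\ref{lem:phitkat64wppacontinxandkapps} you should set $t'=0$ (so that $D\phi_{t'}=I$) rather than $t'=t$; alternatively, cite Corollary~\ref{cor:supofDphiontsciintegrs} directly.
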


\begin{proof}
It is sufficient to note that
   \begin{equation}\label{eq:uppebdforCcontofPhatpkappa}
  \begin{split}
 &\left\|\mathbb E_{(x, v)} |D \phi_t v|^{-p} \bigl||D \phi_t v|^{-(q-p)} - 1\bigr| \psi(x_t, v_t) \right\|_{\infty}\\
    &\quad\leq \left\|\mathbb E_{(x, v)} |D \phi_t v|^{-p}\bigl||D \phi_t v|^{-(q-p)} - 1\bigr| \right\|_{\infty} \|\psi\|_\infty\\
    &\quad = \sup_{x\in \mathcal M, v\in S_x\mathcal M} \bigl|\mathbb E_{(x, v)}|D \phi_t v|^{-p} \bigl(| D \phi_t v|^{-(q-p)} - 1\bigr)\bigr|\\
    & \quad= \sup_{x} \max\Big\{\mathbb E_{x}\|D \phi_t \|^{|p|}|\|D \phi_t\|^{p-q} - 1|,\\
    &\quad\quad\quad\quad\quad\quad\quad\quad\quad\quad\mathbb E_{x}\bigl\|(D \phi_t)^{-1}\bigr\|^{|p|}\left|\bigl\| (D \phi_t)^{-1}\bigr\|^{p-q} - 1\right|\Big\}\\
    &\quad \stackrel{(*)} \lesssim_{\phi,p}  
    |p-q| \sup_{x} \max\Big\{\mathbb E_{x}\left (\|D \phi_t\|^{p-q} \log \|D \phi_t\| \right)^{2},\\
    &\quad\quad\quad\quad\quad\quad\quad\quad\quad\quad\mathbb E_{x}\left(\bigl\| (D \phi_t)^{-1}\bigr\|^{p-q} \log \bigl\| (D \phi_t)^{-1}\bigr\| \right)^{2}\Big\}^{1/2}
  \end{split}
 \end{equation}
 where 
 $(*)$ follows from Corollary \ref{cor:supofDphiontsciintegrs}, H\"older's inequality, and the fact that
 \begin{equation}\label{eq:obvineqforab-1}
 |a^b-1| \leq |b \log a| a^b,\;\;\;\; a>0,\;\; b\in \mathbb R.
 \end{equation}
 The latter expression of \eqref{eq:uppebdforCcontofPhatpkappa} vanishes as $q\to p$ by  Corollary \ref{cor:supofDphiontsciintegrs}.
\end{proof}

Therefore the following proposition holds true.

\begin{proposition}\label{prop:appendp_8ceij}
There exist $p_*>0$ and $T>0$ such that $\hat P^{p}_T$ has a real leading simple eigenvalue $\lambda(p)$ for any $p\in[-p^*, p^*]$, i.e.\ there exist $\eps\in(0, 1/4)$, a one dimensional projection $\pi^p\in \mathcal L(C(S\mathcal M))$, and an eigenvalue $\lambda(p)>0$ of $\hat P^p_T$ such that $|1-\lambda(p)|<\eps$, $\|\hat P^p_T - \lambda(p)\pi^p\|_{\mathcal L(C(S\mathcal M))}<\eps$, and $\hat P^p_T$ and $\pi^p$ commute. Moreover, for the corresponding eigenfunction $\psi_p\in C(S\mathcal M)$ one has that $\|\psi_p-1\|_{\infty}\to 0$ as $p\to 0$.
\end{proposition}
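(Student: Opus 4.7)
The plan is to deduce Proposition \ref{prop:appendp_8ceij} from Lemma \ref{lem:Pt0hasspectgap} and Lemma \ref{lem:coninpofPkappap} by a standard analytic perturbation argument for isolated eigenvalues. First I would fix $T>0$ large enough so that Lemma \ref{lem:Pt0hasspectgap} applies, giving $\sigma(\hat P_T)\subset\{1\}\cup B(0,\eps_0)$ for some $\eps_0\in(0,1/4)$ and so that $\{1\}$ is a simple isolated eigenvalue of $\hat P_T$ with eigenfunction $\mathbf 1$. The associated Riesz projection is given by the contour integral
\[
 \pi^0 = \frac{1}{2\pi i}\oint_{\Gamma} (z-\hat P_T)^{-1}\,\dd z,
\]
where $\Gamma$ is a small circle around $\{1\}$ separating it from $B(0,\eps_0)$. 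The rank of $\pi^0$ equals one because $1$ is a simple eigenvalue; in particular $\pi^0 \mathbf 1=\mathbf 1$.

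Next I would invoke Lemma \ref{lem:coninpofPkappap}, which states $p\mapsto \hat P^p_T$ is norm continuous from $\mathbb R$ into $\mathcal L(C(S\mathcal M))$. By the standard stability of the resolvent set under small perturbations (see for instance Kato's book on perturbation theory), there exists $p_*>0$ such that for $|p|\le p_*$ the contour $\Gamma$ lies in $\rho(\hat P^p_T)$ and the map $p\mapsto (z-\hat P^p_T)^{-1}$ is norm-continuous uniformly on $\Gamma$. Consequently the perturbed Riesz projection
\[
 \pi^p := \frac{1}{2\pi i}\oint_{\Gamma} (z-\hat P^p_T)^{-1}\,\dd z
\]
is well-defined, norm-continuous in $p$, and has rank one for $|p|\le p_*$ (up to shrinking $p_*$, since the rank is locally constant in $p$). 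Shrinking $p_*$ further, we may ensure that the remaining spectrum of $\hat P^p_T$ lies in $B(0,2\eps_0)$ and that the single perturbed eigenvalue $\lambda(p)$ of $\hat P^p_T$ inside $\Gamma$ satisfies $|\lambda(p)-1|<\eps$ for any prescribed $\eps\in(\eps_0,1/4)$; this gives the estimate $\|\hat P^p_T-\lambda(p)\pi^p\|_{\mathcal L(C(S\mathcal M))}<\eps$ on the complementary invariant subspace.

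To see that $\lambda(p)$ is real and strictly positive, I would use that $\hat P^p_T$ is a positive operator on $C(S\mathcal M)$: from the definition \eqref{eq:defofPhatpkappadsa}, $|D\phi_Tv|^{-p}>0$ and $\psi\ge 0$ imply $\hat P^p_T\psi\ge 0$. Since the perturbed eigenprojection $\pi^p$ has rank one and tends in norm to $\pi^0$, and since $\pi^0\mathbf 1=\mathbf 1$, for $p$ small enough $\pi^p\mathbf 1\ne 0$. Setting $\psi_p := \pi^p\mathbf 1/c_p$ for a suitable scalar $c_p$ (chosen so that, say, $\int\psi_p\,\dd\nu = 1$ against the unique invariant measure of $\hat P_T$), we obtain an eigenfunction of $\hat P^p_T$ with eigenvalue $\lambda(p)$. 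Since $\hat P^p_T$ is positivity preserving and $\psi_p$ is close to $\mathbf 1>0$ for small $p$, $\psi_p$ is strictly positive and hence $\lambda(p)=(\hat P^p_T\psi_p)(x,v)/\psi_p(x,v)>0$. Norm continuity of $\pi^p$ directly yields $\|\psi_p-\mathbf 1\|_\infty\to 0$ as $p\to 0$.

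The main obstacle is ensuring \emph{norm} continuity of $p\mapsto \hat P^p_T$ in $\mathcal L(C(S\mathcal M))$ as opposed to mere strong continuity — but this is exactly what Lemma \ref{lem:coninpofPkappap} delivers, thanks to the moment bounds on $\|D\phi_t\|$ and $\|(D\phi_t)^{-1}\|$ from Corollary \ref{cor:supofDphiontsciintegrs}; once this is in hand, everything else is a routine application of Riesz projection perturbation theory for isolated eigenvalues of bounded operators on a Banach space. A secondary technical point is the simplicity of $\lambda(p)$, which follows because the rank of $\pi^p$ is locally constant and equal to the rank of $\pi^0$, namely one.
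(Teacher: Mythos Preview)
Your approach is essentially the same as the paper's: both fix $T$ from Lemma \ref{lem:Pt0hasspectgap}, use the norm continuity from Lemma \ref{lem:coninpofPkappap}, define $\pi^p$ as the Riesz projection via the contour integral, and conclude rank one and convergence $\psi_p\to\mathbf 1$ from $\|\pi^p-\pi^0\|\to 0$. The paper obtains reality of $\lambda(p)$ from $\overline{\pi^p}=\pi^p$ (the resolvent integrand is real on $C(S\mathcal M;\mathbb R)$), whereas your positivity argument via $\hat P^p_T\ge 0$ and $\psi_p>0$ is an equally valid alternative.

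There is one genuine slip. You infer $\|\hat P^p_T-\lambda(p)\pi^p\|<\eps$ from the fact that the remaining spectrum lies in $B(0,2\eps_0)$; but a spectral radius bound does not control the operator norm on a Banach space. The paper handles this by choosing $T$ so large that already $\|\hat P_T-\pi\|<\delta$ in operator norm (this is what the Harris estimate \eqref{eq:maingofthm:QuaeHarwqthnms} actually gives in the proof of Lemma \ref{lem:Pt0hasspectgap}, stronger than the spectral gap stated there), and then uses norm continuity in $p$ to get $\|\hat P^p_T-\lambda(p)\pi^p\|$ small. Equivalently, you can replace $T$ by $nT$ and use $(\hat P_T-\pi)^n=\hat P_{nT}-\pi$ together with the spectral radius formula to force the norm below $\delta$. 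Either way, you need this extra step before the conclusion holds.
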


\begin{proof}
Let $\nu$ be as in the proof of Lemma \ref{lem:Pt0hasspectgap} and $\pi\in \mathcal L(C(S\mathcal M))$ be the corresponding projection $\pi f = \int f \ud \nu$. Fix $\delta<1/10$ and fix $T>0$ to be such that $\|\hat P_T - \pi\|_{\mathcal L(C(S\mathcal M))}<\delta$ (which is possible by Lemma \ref{lem:Pt0hasspectgap}).

By Lemma \ref{lem:coninpofPkappap} for $p$ small enough we have that for any $z\in \mathbb C$ with $|z-1|=2\delta$ one has that $(z-\hat P^p_T)^{-1}$ is uniformly bounded in $z$. Indeed, for any $z$ with $|z-1|=2\delta$
 \[
 (z-\hat P_T)^{-1} = (I-(z-\pi)^{-1}(\hat P_T - \pi))^{-1} (z-\pi)^{-1},
 \]
  so (here and later in the proof we will for simplicity write $\|\cdot\|$ instead of \linebreak $\|\cdot\|_{\mathcal L(C(S\mathcal M))}$)
   \begin{multline*}
\| (z-\hat P_T)^{-1}\| = \|(I-(z-\pi)^{-1}(\hat P_T - \pi))^{-1}\|\| (z-\pi)^{-1}\|\\
 \leq \tfrac 12\delta^{-1} (1-\tfrac 12\delta^{-1}\|\hat P_T - \pi\|)^{-1} \leq \delta^{-1}.
 \end{multline*}
Therefore we can fix $p_0>0$ such that for $p\in [-p_0, p_0]$ one has that by Lemma \ref{lem:coninpofPkappap} $\|\hat P^p_T-\hat P_T\|\leq \delta^3$ so
  \[
  \|(z-\hat P^p_T)^{-1}\| = \|(I-(z-\hat P_T)^{-1}(\hat P^{p}_T- \hat P_T))^{-1} (z-\hat P_T)^{-1}\|<2\delta^{-1}
  \]
   is uniformly bounded in $z$ with $|z-1|=2\delta$. 
In particular, by Lemma \ref{lem:coninpofPkappap}
  \begin{equation}\label{eq:approxPhatpandapcda15}
  \begin{split}
&\| (z-\hat P^p_T)^{-1}  - (z-\hat P_T)^{-1} \|\\
 &\quad\quad\quad\quad\quad= \| (z-\hat P^p_T)^{-1} (\hat P_T - \hat P^p_T)  (z-\hat P_T)^{-1}\|\to 0,\;\;\; p\to 0.
 \end{split}
 \end{equation}
 For any $p\in[-p_0, p_0]$ by operator Cauchy's integral formula define the following spectral projector (we refer the reader to \cite{SchLN20} for further acquaintance with spectral projectors)
 \begin{equation}\label{eq:defofpipappcmw}
 \pi^{p} := \frac{1}{2\pi i} \int_{|z-1|=2\delta}(z-\hat P^p_T)^{-1} \ud z.
 \end{equation}
 Then $\|\pi^p-\pi\|\to 0$ as $p\to 0$ thanks to \eqref{eq:approxPhatpandapcda15}. Note that as $\pi^p$ is a projection and as the spectrum of $\pi^p$ lies inside the circle $\{z\in \mathbb C:|z-1|\leq 2\delta\}$, there exists small $p_*>0$ such that $\pi^p$ is a rank one operator and such that $\|\pi^p-\pi\|<\delta$ for all $p\in [-p_*, p_*]$.
 
As $\pi^p$ is a spectral projector, let $\psi_p:= \pi^p\mathbf 1\in C(S\mathcal M;\mathbb C)$ and $\lambda(p)\in \mathbb C$ be such that $\pi^p\psi_p=\psi_p$, $\hat P^p_T\psi_p=\lambda(p)\psi_p$. Then both $\psi_p$ and $\lambda(p)$ are real as $\pi^p$ is real ($\overline \pi^p = \pi^p$ by \eqref{eq:defofpipappcmw}). Moreover, as $\|\pi^p-\pi\|<\delta$ by \eqref{eq:approxPhatpandapcda15} and \eqref{eq:defofpipappcmw} and as $\|\hat P^p_T-\hat P_T\|\leq \delta^3$, we have that $\|\psi_p-\mathbf 1\|_{\infty}<\delta$ and $|\lambda(p)-1|<2\delta$.

 Note that $\|\psi_p-1\|_{\infty}\to 0$ due to the definition of $\psi_p$, as $\|\pi^p-\pi\|\to 0$ in the operator norm, and as $\pi \mathbf 1=\mathbf1$.
\end{proof}

Now let us show that $\lambda(p)=e^{-\Lambda(p)T}$ with $\Lambda(p)$ defined by \eqref{eq:defofLambdap}.

\begin{proposition}\label{prop:Lambda(p)defandpsiprequxwqxwq}
Let $p\in [-p_*, p_*]$. Then for any $t>0$, $\hat P^p_t$ has an eigenvalue $e^{-\Lambda(p)t}$ with $\Lambda(p)$ defined by \eqref{eq:defofLambdap}. In particular, $\lambda(p)$ from the proof of Proposition \ref{prop:appendp_8ceij} equals to $e^{-\Lambda(p)T}$. Further, $\psi_p$ is the corresponding eigenfunction with $\psi_p\in C^{2+\beta/2}(S\mathcal M)$.
\end{proposition}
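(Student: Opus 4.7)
\textbf{Proof plan for Proposition \ref{prop:Lambda(p)defandpsiprequxwqxwq}.}

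The plan is to leverage the semigroup structure of $(\hat{P}^p_t)_{t\ge 0}$ together with the one-dimensional spectral projector $\pi^p$ built in Proposition \ref{prop:appendp_8ceij}. First I would verify that $(\hat{P}^p_t)_{t\ge 0}$ is in fact a semigroup on $C(S\mathcal M)$: the cocycle identity $D\phi_{t+s}(x)v=D\phi_s(\phi_t(x))\,D\phi_t(x)v$ combined with the strong Markov property of $(\phi_t,\tilde\phi_t)$ yields $\hat{P}^p_{t+s}=\hat{P}^p_t\hat{P}^p_s$, while strong continuity in $t$ follows exactly as in Lemma \ref{lem:coninpofPkappap} (using Corollary \ref{cor:supofDphiontsciintegrs} for the required uniform moment bounds on $\|D\phi_t\|^{\pm|p|}$). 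In particular $\hat{P}^p_t$ commutes with $\hat{P}^p_T$, hence with the spectral projector $\pi^p$ defined in \eqref{eq:defofpipappcmw}.

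Because $\pi^p$ is of rank one with range $\mathbb R\psi_p$, the commutativity above forces $\hat{P}^p_t\psi_p=\mu(t)\psi_p$ for some function $\mu:\mathbb R_+\to\mathbb R$. The semigroup identity gives $\mu(t+s)=\mu(t)\mu(s)$, with $\mu(T)=\lambda(p)>0$, and strong continuity makes $t\mapsto\mu(t)$ continuous; standard Cauchy functional-equation considerations then yield $\mu(t)=e^{-\Lambda_*(p)t}$ for a unique real $\Lambda_*(p)$ with $e^{-\Lambda_*(p)T}=\lambda(p)$. To identify $\Lambda_*(p)$ with $\Lambda(p)$ from \eqref{eq:defofLambdap}, I would use that $\|\psi_p-\mathbf 1\|_\infty<1/10$, which sandwiches $\psi_p$ between $9/10$ and $11/10$. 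Substituting into $\mathbb E_{(x,v)}|D\phi_tv|^{-p}\psi_p(x_t,v_t)=e^{-\Lambda_*(p)t}\psi_p(x,v)$ gives, uniformly in $(x,v)$,
\begin{equation*}
\tfrac{9}{11}\,e^{-\Lambda_*(p)t}\le \mathbb E_{(x,v)}|D\phi_tv|^{-p}\le \tfrac{11}{9}\,e^{-\Lambda_*(p)t},
\end{equation*}
so taking $-t^{-1}\log$ and letting $t\to\infty$ identifies $\Lambda_*(p)=\Lambda(p)$ and simultaneously shows that the limit in \eqref{eq:defofLambdap} exists and is independent of $(x,v)$.

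The remaining task is the $C^{2+\beta/2}$ regularity of $\psi_p$. Here I would write $\psi_p$ as a fixed point of the rescaled operator $e^{\Lambda(p)T}\hat{P}^p_T\psi_p=\psi_p$ and invoke parabolic/elliptic Schauder theory for the infinitesimal generator of $(\hat{P}^p_t)_{t\ge 0}$ on $S\mathcal M$. That generator is the sum of a first order drift built from the $\sigma_k,\tilde\sigma_k$, a second order principal part $\frac12\sum_k(\sigma_k,\tilde\sigma_k)\otimes(\sigma_k,\tilde\sigma_k)$, and the zero order twisting term $-p\,|w|^{-p}\cdots$ coming from differentiating $|D\phi_tv|^{-p}$; by condition \textbf{(D)} its coefficients are $C^{\beta}$ and by condition \textbf{(C)} it is strictly elliptic on $S\mathcal M$. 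The Schauder interior regularity theory (applied in local charts of $S\mathcal M$, using that $S\mathcal M$ is a smooth compact manifold of dimension $2d-1$) upgrades a continuous eigenfunction of such an operator to $C^{2+\beta'}_{\mathrm{loc}}$ for any $\beta'<\beta$; the loss from $\beta$ to $\beta/2$ accounts for the limited regularity inherited from the principal symbol.

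The main obstacle will be the last step: producing a clean Schauder estimate for the twisted generator $\hat{\mathcal L}^p$ in the setting where the coefficients are only $C^\beta$ and the eigenvalue problem lives on $S\mathcal M$. Everything else---semigroup and commutativity, functional equation for $\mu$, identification with $\Lambda(p)$---is largely formal once Proposition \ref{prop:appendp_8ceij} is in hand and the moment bounds of Corollary \ref{cor:supofDphiontsciintegrs} are available, but the elliptic bootstrap requires invoking the Harnack / regularity machinery of Appendix in the form stated in Proposition \ref{prop:appendxHarnackmancewell} (and an additional Schauder step) rather than the classical smooth Hörmander-type results used in \cite{BS88}.
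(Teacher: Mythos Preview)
Your argument for the eigenvalue part is correct and essentially parallel to the paper's, just organized differently. The paper iterates the spectral decomposition $\hat P^p_T=\lambda(p)\pi^p+R^p$ with $\|R^p\|<2\delta$ to obtain $(\hat P^p_T)^n\mathbf 1=\lambda(p)^{n-1}\psi_p+(R^p)^n\mathbf 1$, reads off the limit \eqref{eq:defofLambdap} along the sequence $nT$, and then invokes a general semigroup fact \cite[Corollary A-III.6.4]{AGGG86} to pass to arbitrary $t$. Your route via commutativity of $\hat P^p_t$ with $\pi^p$ and the Cauchy functional equation for $\mu(t)$ is a clean alternative; the sandwich $9/11\le e^{\Lambda_*(p)t}\,\mathbb E_{(x,v)}|D\phi_tv|^{-p}\le 11/9$ is exactly how the paper identifies $\lambda(p)$ with the limit in \eqref{eq:defofLambdap} (the paper does this along $nT$ and then bridges with the bound $\sup_{0\le t\le T}\|\hat P^p_t\|<\infty$). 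One small correction: the strong continuity in $t$ is not ``exactly as in Lemma \ref{lem:coninpofPkappap}'' (that lemma is continuity in $p$), though the same moment bounds from Corollary \ref{cor:supofDphiontsciintegrs} do the job.

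For the regularity of $\psi_p$ there is a genuine gap in your bootstrap. Schauder theory does not upgrade a merely continuous function satisfying $\hat L^p\psi_p=-\Lambda(p)\psi_p$ to $C^{2+\beta'}$; one needs $\psi_p$ to already possess two (weak) derivatives before the H\"older estimate applies. The paper therefore proceeds in two steps. First, using the strict ellipticity from \textnormal{\bf (C)} and \cite[Theorems 1.5.1 and 8.1.1]{KrS08}, it gets the a~priori equivalence $\|\hat L^p\phi\|_{L^q}+C_p\|\phi\|_{L^q}\eqsim\|\phi\|_{W^{2,q}}$, and since $\hat L^p\psi_p=-\Lambda(p)\psi_p\in L^q$ this yields $\psi_p\in W^{2,q}(S\mathcal M)$ for every $q$, hence $\psi_p\in C^{2-\eps}$ by Sobolev embedding. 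Only then does the paper localize with a cutoff $\zeta$, rewrite $\psi_p\zeta$ as the unique $W^{2,2}$ solution of $\tilde L\phi-\lambda\phi=f$ with $\tilde L$ the pure second-order part and $f$ built from lower-order terms (now H\"older because $D\psi_p\in C^{1-\eps}$), and apply the H\"older Schauder estimate \cite[Theorem 6.5.3]{KrH96} to conclude $\psi_p\zeta\in C^{2+\beta/2}$. Proposition \ref{prop:appendxHarnackmancewell}, which you invoke, gives Harnack positivity for transition densities and does not supply the initial $W^{2,q}$ regularity you need here.
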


\begin{proof}
Let $\pi^p$ be defined by \eqref{eq:defofpipappcmw}. Then due to the proof of Proposition \ref{prop:appendp_8ceij} $\|\hat P^p_T - \lambda(p) \pi^p\|<2\delta$. Let $R^p:= \hat P^p_T - \lambda(p) \pi^p$. By the definition of $\psi_p$ (see the proof of Proposition \ref{prop:appendp_8ceij}), the fact that $\pi^p\psi_p=\psi_p$, and the fact that $\pi^p$ is a spectral projector (so $\pi^pR^p = R^p\pi^p=0$) for any natural $n\geq 1$ we have that
\[
\mathbb E_{(x, v)} |D\phi_{nT}(x)v|^{-p} = (\hat P^p_T)^n\mathbf 1(x, v) = \lambda(p)^{n-1}\psi_p(x, v) + (R^p)^n \mathbf 1(x, v),
\]
so as $\|\psi_p-\mathbf 1\|_{\infty}<\delta$ and as $\|R^p\|<2\delta\leq1/5$ we can define 
\begin{equation}\label{eq:appendefofLampbda(p)}
\Lambda(p):= -\frac{1}{nT}\lim_{n\to \infty}\log\mathbb E_{(x, v)} |D\phi_{nT}(x)v|^{-p}, 
\end{equation}
 so that $e^{-\Lambda(p)Tn}$ is an eigenvalue of $\hat P^p_{nT}$ with the corresponding eigenvector $\psi_p$. The same statement for general $t$ follows from \cite[Corollary A-III.6.4]{AGGG86}. \eqref{eq:defofLambdap} holds by the fact that $\sup_{0\leq t\leq T}\|\hat P^p_t\| <\infty$ thanks to \eqref{eq:Dphikappa-Dphikappa'bddbykpkaf[w}, so \eqref{eq:defofLambdap} coincides with \eqref{eq:appendefofLampbda(p)}.

Finally, let us show that $\psi_p\in C^{2+\beta/2}(S\mathcal M)$. First note that analogously to \cite[Lemma 2.3]{AOP86} (see also the formula \cite[(3.14)]{BS88}) $(\hat P^p_t)_{t\geq 0}$ has a generator $\hat L^p$ of the form (here $\psi \in C^{\infty}(S\mathcal M)$, $(x, v)\in S\mathcal M$)
\begin{align*}
\hat L^p \psi(x,v) &= \frac 12 \sum_{k\geq 1} \bigl\langle \partial_{x,x} \psi(x, v),(\sigma_k(x), \sigma_k(x)) \bigr \rangle + \bigl\langle \partial_{v,v} \psi(x, v),(\tilde\sigma_k(x,v), \tilde\sigma_k(x,v)) \bigr \rangle\\
&\quad + 2  \bigl\langle \partial_{x,v} \psi(x, v),(\sigma_k(x), \tilde\sigma_k(x,v)) \bigr \rangle \\
&\quad- 2p\bigl(\langle \partial_x \psi(x, v), \sigma_k(x) \rangle + \langle \partial_v \psi(x, v), \tilde \sigma_k(x,v) \rangle\bigr)\bigl \langle v, \langle D\sigma_k(x), v\rangle\bigr \rangle\\
&\quad +\psi(x, v)\Big[ p(p+2) \bigl \langle v, \langle D\sigma_k(x), v\rangle\bigr \rangle^2 - p \langle D\sigma_k(x), v\rangle^2\\
&\quad\quad-p\bigl \langle v,\langle D^2 \sigma_k(x),(v, \sigma_k(x))\rangle \bigr\rangle-p\bigl \langle v, \langle D\sigma_k(x), \langle D\sigma_k(x), v\rangle\rangle\bigr\rangle \Big]
\end{align*}
(we leave the calculations to the reader), which coefficients are H\"older-continuous. Fix $q\geq 1$. Due to the condition {\textnormal {\bf (B)}} and \cite[Theorem 1.5.1 and 8.1.1]{KrS08} we have that there exists a constant $C_p$ such that $\|\hat L^p \phi\|_{L^q(\mathcal M)} +C_p\| \phi\|_{L^q(\mathcal M)} \eqsim \| \phi\|_{W^{2, q}}$ for any $\phi\in C^{\infty}(S\mathcal M)$. In particular, as $\hat L^p$ is a generator of $(\hat P^p_t)_{t\geq 0}$,
\begin{multline*}
\|\psi_p\|_{W^{2, q}} \eqsim \|\hat L^p \psi_p\|_{L^q(\mathcal M)} +C_p\| \psi_p\|_{L^q(\mathcal M)}\\
 = \lim_{t\to 0} \Bigl \| \frac{\hat P^p_t \psi_p - \psi_p}{t} \Bigr\|_{L^q(\mathcal M)} +C_p\| \psi_p\|_{L^q(\mathcal M)}=(C_p+|\Lambda(p)|)\| \psi_p\|_{L^q(\mathcal M)}<\infty.
\end{multline*}

By choosing $q$ high enough and by using the Sobolev embedding theorem we get that $\psi_p\in C^{2-\eps}(S\mathcal M)$ for any $\eps>0$. Now, fix a local chart $U\subset S\mathcal M$ and fix a $C^{\infty}$-function $\zeta:U\to[0, 1]$ which equals $1$ within a ball $B_1$ in $U$ and which vanishes outside another ball $B_2\subset U$ compactly containing $B_1$. It is sufficient to show that $\psi_p \zeta \in C^{2+\beta/2}(B_2)$. To this end it is enough to notice that $\psi_p \zeta$ is the unique $W^{2, 2}$ solution $\phi$ of the equation $\tilde L \phi-\lambda \phi =f$ for some fixed $\lambda>1$ thanks to \cite[Theorem 1.5.4]{KrS08}, where $\tilde L$ is a pure quadratic term of $\hat L^p$ and where $f$ is defined via $\psi_p$ and $\zeta$ in the corresponding way and is $C^{1-\eps}$-H\"older continuous. This unique solution must be $C^{2+\beta/2}$ by \cite[Theorem 6.5.3]{KrH96}.
\end{proof}

Let us finally show that $\Lambda(p)'|_{p=0}=\lambda_1$.

\begin{proof}[Proof of Proposition \ref{prop:propofLambdapandpsipliekconx}]
Concavity of $p\mapsto \Lambda(p)$ follows directly from \eqref{eq:defofLambdap}. The continuity follows from the concavity.

Let us now show that $\Lambda(p)'|_{p=0}=\lambda_1$.
First note that by Jensen's inequality
\begin{multline*}
\Lambda(p) = -\lim_{t\to \infty} \frac 1t \log \mathbb E_{(x, v)} |D\phi_t(x)v|^{-p}\\
 \leq  -\lim_{t\to \infty} \frac 1t \mathbb E_{(x, v)} \log |D\phi_t(x)v|^{-p} = p \lim_{t\to \infty} \frac 1t \mathbb E_{(x, v)} \log |D\phi_t(x)v| = p\lambda_1,
\end{multline*}
where the latter equality follows e.g.\ from \cite[Theorem 2.2 and Corollary 2.3]{Bax89}.

It remains to show that $p\mapsto \Lambda(p)$ is continuously differentiable in $p\in[-p_*, p_*]$. Fix $T$ as in the proof of Proposition \ref{prop:appendp_8ceij}. It is sufficient to prove that $p\mapsto \lambda(p)=e^{-\Lambda(p)T}$, $p\in [-p_*,p_*]$, is  continuously differentiable. To this end first notice that $p\mapsto \hat P^p_t$ is $C^{\infty}$ in $p\in \mathbb R$ by \eqref{eq:defofPhatpkappadsa}. Therefore $p\mapsto \psi_p= \pi^p\mathbf 1$, $p\in [-p_*,p_*]$, is  continuously differentiable by \eqref{eq:defofpipappcmw}, and hence $p\mapsto \lambda(p) \psi_p= \hat P^p_T \psi_p$, $p\in [-p_*,p_*]$ is $C^1$ as well. As $\psi_p$ does not vanish (recall that $\|\psi_p-\mathbf 1\|_{\infty}<2\delta $ by the proof of Proposition \ref{prop:appendp_8ceij}), $\lambda(p)$ is continuously differentiable in $p\in [-p_*, p_*]$.
\end{proof}

\subsection{Construction of a Lyapunov function for the two-point motion}\label{subsec:constrofVkappa}
The goal of this section is to construct a Lyapunov function in order to prove the drift condition $(i)$ from the Harris' Theorem \ref{thm:QuaeHarwqthnms} for the two-point motion.
Let $(P^{(2), \kappa}_t)_{t\geq 0}$ be the Markov semigroup on $C(\mathcal D^c)$ associated with the two-point process generated by \eqref{eq:equfortildephikappa}.
This means that for any continuous $\psi: \mathcal D^c \to \mathbb R$ we have that
\begin{equation}\label{eq:P(2)kappadefinition}
 P^{(2), \kappa}_t \psi(x, y) = \overline{\mathbb E}_{x, y} \psi(x^{\kappa}_t, y^{\kappa}_t),\;\;\; t\geq 0,
\end{equation}
where we denote $(x^{\kappa}_t)_{t\geq 0} := (\phi_t^{\kappa}(x))_{t\geq 0}$ and $(y^{\kappa}_t)_{t\geq 0} := (\phi_t^{\kappa}(y))_{t\geq 0}$ for simplicity and where the processes $x^{\kappa}$ and $y^{\kappa}$ are driven by the {\em same noise paths} $(W^k)_{k\geq 1}$ and $(\widetilde W^m)_{m=1}^n$. Let $ P^{(2)}_t:=  P^{(2), 0}_t$, $\phi_t:= \phi^{0}_t$, $x_t:= x^0_t$, and $y_t:= y^0_t$ for any $t\geq 0$. Note that as  $(\phi_t^{\kappa})_{t\geq 0}$ is a flow of diffeomorphisms, a.s.\ $x^{\kappa}_t \neq  y^{\kappa}_t$ for any $x\neq y$.

We would like to show that there exist $p>0$ and $\widetilde V_p:\mathcal D^c \to \mathbb R_+$ such that $\widetilde V_p(x, y)\eqsim d(x, y)^{-p}$ for all $(x, y)\in \mathcal D^c$ and $P^{(2), \kappa}_t\widetilde V_p \leq \lambda \widetilde V_p + C$ for some fixed $t>0$, $\lambda\in (0, 1)$, and $C\geq 0$, and for any $\kappa\in[0, \kappa_0]$. Such a function $\widetilde V_p$ for the case of finitely many $\sigma_k$'s and $\kappa=0$ was constructed by Baxendale and Stroock in \cite{BS88}. The construction provided in \cite{BS88} cannot be applied to the case considered in this work, since it relies on the smoothness of the coefficients. Therefore, we need to recheck and adapt the approach from \cite{BS88} to the present irregular setting. 

The major tool used in \cite{BS88}, which we are going to exploit as well is
 the ``twisted'' Markov semigroup $\hat{P}^{p}:\mathbb R_+ \to \mathcal L(C(S \mathcal M ))$ which linearizes the two-point motion and which is defined by 
\begin{equation*}
 \hat{P}^{p}_t \psi(x, v) := \mathbb E_{(x, v)} |D \phi_t v|^{-p} \psi(x_t, v_t),\;\;\;x\in \mathcal M,\;\; v\in S_x \mathcal M,\;\; t\geq 0,
\end{equation*}
 where for any $v\in S_x \mathcal M$ we set $v_t:= \tilde \phi_t(v) = \tfrac {D\phi_t v}{|D\phi_t v|}\in S_{x_t} \mathcal M$ (see \eqref{eq:norm_tangent_flow} and Subsection \ref{sec:LyaexmomLyafun} for more information).
Then according to Subsection \ref{sec:LyaexmomLyafun} the following proposition holds true.

\begin{proposition}\label{prop:maintextpsipexinsandinrwc}
There exists $p_*>0$ such that for any $p\in [-p_*, p_*]$ there exists $\psi_p\in C^{2+\beta/2}(S \mathcal M)$ with $\|\psi_p-\mathbf 1\|_{\infty}<1/10$ such that $ \hat{P}^{p}_t \psi_p = e^{-\Lambda(p)t}\psi_p$ for any $t\geq 0$.
\end{proposition}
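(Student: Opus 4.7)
The plan is to identify $\psi_p$ as the principal eigenfunction of the infinitesimal generator $\hat{\mathcal L}^p$ of the twisted semigroup $\hat P^p_t$ on $C(S\mathcal M)$, and to establish its existence by analytic perturbation from the trivial case $p=0$. First I would compute $\hat{\mathcal L}^p$ explicitly. Applying It\^o's formula to $|D\phi_t v|^{-p}\psi(x_t,v_t)$ using \eqref{eq:flowforkappa=0dsa} for the base process, \eqref{eq:norm_tangent_flow} for the sphere-valued tangent process (with coefficients $\tilde\sigma_k$ from \eqref{eq:defofotildesigmakdacq}), and differentiating $|D\phi_t v|^{-p}$ along the flow, I expect a schematic form
\[
\hat{\mathcal L}^p = \mathcal L^{\mathrm{tan}} + p\,X + p(p+1)\,c,
\]
where $\mathcal L^{\mathrm{tan}}$ is the generator of the Markov process $(x_t,v_t)$ on $S\mathcal M$, $X$ is a first-order operator, and $c\in C(S\mathcal M)$ comes from the quadratic variation of $\log|D\phi_t v|$. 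At $p=0$ one has $\hat{\mathcal L}^0 = \mathcal L^{\mathrm{tan}}$, which annihilates the constant function $\mathbf 1$, matching $\Lambda(0)=0$ from Proposition~\ref{prop:propofLambdapandpsipliekconx}(B).

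Second, condition \textbf{(C)} asserts strict ellipticity of the system $(\sigma_k,\tilde\sigma_k)$ on every compact subset of $S\mathcal M$, and since $\mathcal M$ is compact so is $S\mathcal M$; hence $\mathcal L^{\mathrm{tan}}$ is uniformly strictly elliptic on $S\mathcal M$. Combined with \textbf{(D)}, which yields H\"older regularity of the coefficients of $\hat{\mathcal L}^p$ (the conditions on $\sum D\sigma_k\otimes D\sigma_k$ and $\sum D^2\sigma_k\otimes\sigma_k$ being $C^\beta$ are exactly what is needed to control, respectively, the second-order part and the drift on the sphere bundle), Schauder theory on the compact manifold $S\mathcal M$ yields a compact resolvent $(\lambda-\hat{\mathcal L}^p)^{-1}:C(S\mathcal M)\to C^{2+\beta/2}(S\mathcal M)$, the factor $\beta/2$ rather than $\beta$ arising because the bundle structure on $S\mathcal M$ couples derivatives in $x$ and $v$, and the sharpest interior Schauder estimate one can extract from the $C^\beta$ regularity of the mixed second-order coefficients on the fibers is in $C^{2+\beta/2}$.

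Third, at $p=0$ the semigroup $\hat P^0_t$ is a strong-Feller Markov semigroup on $C(S\mathcal M)$; strict ellipticity plus the H\"ormander part of \textbf{(C)} give topological irreducibility. By Krein--Rutman (applied to the compact positive operator $\hat P^0_1$), the principal eigenvalue $1=e^{-\Lambda(0)}$ is simple and isolated, with a strictly positive eigenfunction, which here is $\mathbf 1$. Next, since $p\mapsto \hat{\mathcal L}^p$ is an analytic family of type (A) with compact resolvents, Kato's perturbation theorem applies: the simple isolated eigenvalue $0$ perturbs analytically, on a neighbourhood $[-p_*,p_*]$, to an eigenvalue $\mu(p)$ with analytic eigenfunction $\psi_p\in C^{2+\beta/2}(S\mathcal M)$. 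Identifying $\mu(p)=-\Lambda(p)$ from the defining formula \eqref{eq:defofLambdap} and iterating yields $\hat P^p_t\psi_p = e^{-\Lambda(p)t}\psi_p$ for all $t\geq 0$.

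Finally, since $\psi_0=\mathbf 1$ and $p\mapsto \psi_p$ is continuous in $C^{2+\beta/2}(S\mathcal M)\hookrightarrow C(S\mathcal M)$, shrinking $p_*$ if necessary gives $\|\psi_p-\mathbf 1\|_\infty<1/10$ for all $p\in[-p_*,p_*]$. The main obstacles are (i) deriving the coefficients of $\hat{\mathcal L}^p$ carefully and verifying that the summability/H\"older conditions of \textbf{(D)} — particularly the asymptotic conditions on $D^2 w(x,y)$ and \eqref{eq:qvofsigma-goan-Dfopsaxw)wfq} — translate into bona fide $C^\beta$ coefficients of the generator acting on the bundle $S\mathcal M$, since we are no longer in the smooth setting of \cite{BS88}; and (ii) establishing the correct Schauder regularity $C^{2+\beta/2}$ with uniformity in $p$, which is needed both to apply Kato perturbation in the $C^{2+\beta/2}$ topology and to preserve the regularity claim in the statement.
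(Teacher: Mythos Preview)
Your approach is broadly correct and shares the paper's perturbative strategy, but the paper carries out the perturbation at the \emph{semigroup} level rather than the \emph{generator} level. Concretely, the paper first proves a spectral gap for the bounded operator $\hat P^0_T$ (for $T$ large) by applying Harris' ergodic theorem (Theorem~\ref{thm:QuaeHarwqthnms}) together with a Harnack inequality on $S\mathcal M$, rather than invoking Krein--Rutman. It then establishes norm-continuity of $p\mapsto \hat P^p_T$ in $\mathcal L(C(S\mathcal M))$ and perturbs the isolated simple eigenvalue $1$ via an explicit Cauchy-integral spectral projector $\pi^p$; this sidesteps the need to verify that $(\hat{\mathcal L}^p)_p$ is a holomorphic family of type (A) with common domain, which in the non-smooth setting of \textbf{(D)} is delicate. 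The $C^{2+\beta/2}$ regularity of $\psi_p$ is obtained \emph{a posteriori} by a bootstrap: first $\psi_p\in W^{2,q}$ for all $q$ from the eigenequation $\hat{\mathcal L}^p\psi_p=-\Lambda(p)\psi_p$ and $L^q$ elliptic estimates, then $C^{2-\varepsilon}$ by Sobolev embedding, and finally $C^{2+\beta/2}$ by localizing and applying H\"older Schauder theory to a rewritten equation. Your route through Kato perturbation of generators is viable but requires controlling domains of unbounded operators uniformly in $p$; the paper's semigroup route trades that difficulty for working at a fixed large discrete time $T$ and then passing to all $t$ via semigroup theory.
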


 For any $q\in \mathbb R$ let us define $C_q(\mathcal D^c)$ to be the space of all continuous functions $\psi$ on $\mathcal D^c$ which norm
\[
\|\psi\|_{C_q(\mathcal D^c)} := \sup_{(x, y) \in \mathcal D^{c}} \frac{|\psi(x, y)|}{d(x, y)^{-q}},
\]
is finite and $\psi(x, y) = o(d(x, y)^{-q})$. Note that $\|\psi\|_{\infty} \geq \|\psi\|_{C_q(\mathcal  D^c)}$ for $q> 0$, so $C(\mathcal D^c) \subset C_q(\mathcal  D^c)$ in this case.

\begin{lemma}\label{lem:P(2)tisstrocononCp}
Fix $q \geq 0$. Then $(P_t^{(2)})_{t\geq 0}$ forms a $C_0$-semigroup of bounded linear operators on $C_{q}(\mathcal  D^c)$.
\end{lemma}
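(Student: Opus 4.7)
The plan is to verify the three ingredients beyond the semigroup property (which is immediate from the Markov property of the two-point motion): (i) each $P_t^{(2)}$ maps $C_q(\mathcal D^c)$ into itself with a bound that is locally uniform in $t$, (ii) the $o(d(x,y)^{-q})$ decay at the diagonal is preserved, and (iii) strong continuity $\|P_t^{(2)}\psi - \psi\|_{C_q(\mathcal D^c)} \to 0$ as $t\to 0^+$. The key quantitative input is Corollary~\ref{cor:supofDphiontsciintegrs} with $\kappa = 0$, which provides moments of all orders for $\sup_{x\in \mathcal M}\|(D\phi_t(x))^{\pm 1}\|$ uniformly on $[0,T]$.

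For boundedness, observe that because $\phi_t$ is a $C^1$-diffeomorphism of the compact length space $\mathcal M$, one has the Lipschitz bound $d(x_t, y_t) \geq d(x,y)/L_t$ with $L_t := \sup_{z\in\mathcal M}\|(D\phi_t(z))^{-1}\|$. Consequently,
\[
 d(x,y)^q |P_t^{(2)}\psi(x,y)| \leq \|\psi\|_{C_q(\mathcal D^c)}\, \mathbb E\bigl[(d(x,y)/d(x_t,y_t))^q\bigr] \leq \|\psi\|_{C_q(\mathcal D^c)} \mathbb E L_t^q,
\]
and $\mathbb E L_t^q$ is finite and bounded uniformly for $t\in [0,T]$ by Corollary~\ref{cor:supofDphiontsciintegrs}.

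For invariance, continuity of $P_t^{(2)}\psi$ on $\mathcal D^c$ follows from \eqref{eq:phikappaconeinxx'} combined with the local bound above and dominated convergence. To check the $o(d(x,y)^{-q})$ decay, set $\eta(r) := \sup_{(z_1,z_2)\in\mathcal D^c,\, d(z_1,z_2)\leq r} d(z_1,z_2)^q|\psi(z_1,z_2)|$, so $\eta(r)\to 0$ as $r\to 0$, and write
\[
 d(x,y)^q |P_t^{(2)}\psi(x,y)| \leq \mathbb E\Bigl[\eta(d(x_t,y_t))\,(d(x,y)/d(x_t,y_t))^q\Bigr].
\]
The integrand is dominated by $\|\psi\|_{C_q(\mathcal D^c)} L_t^q\in L^1$; moreover, for $(x,y)\to $ diagonal, the estimate $d(x_t,y_t)\leq \sup_z\|D\phi_t(z)\|\cdot d(x,y)\to 0$ a.s.\ gives $\eta(d(x_t,y_t))\to 0$ a.s., and dominated convergence concludes.

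For strong continuity, I would first consider $\psi\in C_c(\mathcal D^c)$ with $\mathrm{supp}(\psi)\subset\{d(x,y)\geq \eta_0\}$. On the region $\{d(x,y)\geq \eta_0/2\}$ one uses uniform continuity of $\psi$ together with \eqref{eq:dphikappatxphikaa't'x'bqd} (giving $d(x_t,x)+d(y_t,y)\leq 2\widetilde Ct^{1/4}$ with $\widetilde C$ integrable) and dominated convergence to obtain uniform vanishing of $\mathbb E|\psi(x_t,y_t)-\psi(x,y)|$. On the region $\{d(x,y)<\eta_0/2\}$ one has $\psi(x,y)=0$, while the triangle inequality and Markov's inequality applied to \eqref{eq:phikappaconeinxx'} give
\[
 \mathbb P(d(x_t,y_t)\geq \eta_0) \leq \mathbb P(d(x_t,x)\geq \eta_0/4)+\mathbb P(d(y_t,y)\geq \eta_0/4)\lesssim_r t^{r/2}
\]
for arbitrary $r\geq 1$, so $d(x,y)^q|P_t^{(2)}\psi(x,y)|\lesssim_r \|\psi\|_\infty (\eta_0/2)^q t^{r/2}\to 0$ uniformly. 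A smooth cutoff $\chi_n(d(x,y))$ approximating $\psi\in C_q(\mathcal D^c)$ by elements of $C_c(\mathcal D^c)$ (using the $o$-condition to control $\|\psi-\psi_n\|_{C_q}$) combined with the uniform bound $\sup_{t\in[0,T]}\|P_t^{(2)}\|_{\mathcal L(C_q(\mathcal D^c))}<\infty$ from step one extends strong continuity to all of $C_q(\mathcal D^c)$.

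The main obstacle is item (iii): the weight $d(x,y)^{q}$ is unbounded, so one cannot simply invoke dominated convergence globally. The remedy is to separate the near-diagonal regime --- where $\psi$ itself is small in $C_q$-norm and $P_t^{(2)}\psi$ is controlled by the tail probability that the flow displaces points by at least $\eta_0$, which decays polynomially in $t$ to any order by Corollary~\ref{cor:supofDphiontsciintegrs} --- from the far-from-diagonal regime, where $d(x,y)^q$ is uniformly bounded and ordinary uniform continuity of $\psi$ together with short-time continuity of the flow suffices.
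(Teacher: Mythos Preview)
Your proof is correct and follows essentially the same three-step structure as the paper's: boundedness via two-point distance control, preservation of the $o(d(x,y)^{-q})$ decay via a modulus function $\eta$, and strong continuity via density of compactly supported functions combined with the uniform operator bound. The only tactical differences are that you use the pathwise Lipschitz bound $d(x_t,y_t)\geq d(x,y)/L_t$ with $L_t=\sup_z\|(D\phi_t(z))^{-1}\|$ (via Corollary~\ref{cor:supofDphiontsciintegrs}) where the paper invokes the moment estimate \eqref{eq:phikappaconeinxx'} directly, and for strong continuity you take $\psi\in C_c(\mathcal D^c)$ and split into near/far-from-diagonal regions, whereas the paper takes $\psi\in C^1_c(\mathcal D^c)$ and handles both regions at once via the global Lipschitz bound $|\psi(x_t,y_t)-\psi(x,y)|\leq\|D\psi\|_\infty(d(x,x_t)+d(y,y_t))$ together with the trivial bound $d(x,y)^q\leq(\mathrm{diam}\,\mathcal M)^q$---a slightly shorter route, but your region-splitting argument is equally valid.
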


\begin{proof}
First let us show that $P_t^{(2)}$ is bounded on $C_{q}(\mathcal  D^c)$ for any $t\geq 0$. To this end note that for any $(x, y)\in \mathcal D^c$ (see Subsection \ref{subsec:ergoftwopmot}  for the definition of $(x_t)_{t\geq 0}$ and $(y_t)_{t\geq 0}$),
\begin{multline*}
|P_t^{(2)} \psi(x, y)|= |\mathbb E_{(x, y)}\psi(x_t, y_t)|\\
 \leq \mathbb E_{(x, y)}d(x_t, y_t)^{-q}\frac{| \psi(x_t, y_t)|}{d(x_t, y_t)^{-q}} \leq \mathbb E_{(x, y)} d(x_t, y_t)^{-q} \|\psi\|_{C_{q}(\mathcal  D^c)},
\end{multline*}
therefore it is sufficient to show that
\begin{equation}\label{eq:vsoEdxtyt-pCtdxy-p}
\mathbb E_{(x, y)} d(x_t, y_t)^{-q}<C_t d(x, y)^{-q},
\end{equation}
for some constant $C_t>0$ and for any $t\geq 0$, which follows from \eqref{eq:phikappaconeinxx'}. Further, assume that for any $(x, y)\in \mathcal D^c$ we have $|\psi(x, y)|\leq \eta(d(x, y)) d(x, y)^{-q}$ for some fixed bounded nondecreasing $\eta:\mathbb R_+ \to \mathbb R_+$ with $\eta(s)\to 0$ as $s\to 0$ (such $\eta$ exists as $\psi\in C_q(\mathcal  D^c)$). Then for any $(x, y)\in \mathcal D^c$
\begin{multline*}
|P_t^{(2)} \psi(x, y)|d(x, y)^{q} \leq \mathbb E_{(x, y)}\frac{d(x_t, y_t)^{-q}}{d(x, y)^{-q}}\frac{| \psi(x_t, y_t)|}{d(x_t, y_t)^{-q}}\\
 \leq \mathbb E_{(x, y)} \frac{d(x_t, y_t)^{-q}}{d(x, y)^{-q}} \eta(d(x_t, y_t))\leq \Bigl(\mathbb E_{(x, y)} \frac{d(x_t, y_t)^{-2q}}{d(x, y)^{-2q}} \Bigr)^{1/2}\sqrt{ \mathbb E_{(x, y)}  \eta^2(d(x_t, y_t))},
\end{multline*}
where $\mathbb E_{(x, y)} \tfrac{d(x_t, y_t)^{-2q}}{d(x, y)^{-2q}}$ is bounded uniformly in $(x, y)\in \mathcal D^c$ by  \eqref{eq:phikappaconeinxx'} and where 
\begin{equation}\label{eq:eta2dxtytqwdcwwq}
\mathbb E_{(x, y)}  \eta^2(d(x_t, y_t)) = o(1)\;\;\; \text{as}\;\;\;d(x, y)\to 0
\end{equation}
by the fact that $d(x_t, y_t) \leq \widetilde C d(x, y)^{\frac 34}$ for some integrable $\widetilde C:\Omega \to \mathbb R$ thanks to \eqref{eq:dphikappatxphikaa't'x'bqd}, so \eqref{eq:eta2dxtytqwdcwwq} follows from the fact that $\eta$ is bounded and converges to zero at zero. Therefore $P_t^{(2)} \psi \in C_q(\mathcal  D^c)$.

Now let us show that for any $\psi\in C_q(\mathcal  D^c)$ we have that $P_t^{(2)} \psi \to \psi$ in $C_q(\mathcal  D^c)$ as $t\to 0$. By the uniform boundedness of operators $(P_t^{(2)})_{t\in[0,1]}$ (due to the universality of constant $C_t$ from \eqref{eq:vsoEdxtyt-pCtdxy-p} for $t$ small, see \eqref{eq:phikappaconeinxx'}) we may assume that $\psi$ is from a dense subset, namely $\psi\in C^{1}(\mathcal D^c)\cap C_q(\mathcal D^{c})$ having a compact domain. Fix $t\geq 0$ and $(x, y)\in \mathcal D^{c}$. Then as $|D\psi|$ is uniformly we have that
\begin{equation}\label{eq:Pt(2)psi-psid)xy(-q}
\begin{split}
\frac{|P_t^{(2)} \psi(x, y) - \psi(x, y)|}{d(x, y)^{-q}} &= \frac{|\mathbb E_{(x, y)}\psi(x_t, y_t)- \psi(x, y)|}{d(x, y)^{-q}} \\
&\lesssim d(x, y)^{q}\mathbb E_{(x, y)} \|D \psi\|_{\infty} \bigl(d(x, x_t) + d(y, y_t)\bigr)
\end{split}
\end{equation}
which vanishes uniformly in $(x, y)\in \mathcal D^c$ as $t\to 0$ by \eqref{eq:dphikappatxphikaa't'x'bqd} and the fact that $\mathcal M$ is a compact.
\end{proof}

The following proposition connects $\Lambda(p)$ and $\widetilde V_p$ with $(P^{(2)}_t)_{t\geq 0}$.

 \begin{proposition}\label{prop:exiofVtildeoimc}
 Let $L^{(2)}$ be the generator of $(P^{(2)}_t)_{t\geq 0}$ on $C_p(\mathcal  D^c)$. Then there exists $p^*>0$ such that for any $p\in (0,p^*]$ there exist a $C^{2+\beta/2}_{loc}$-function $\widetilde V_p:\mathcal D^c \to [1, \infty)$ and constants  $c_p, K>0$ such that $\widetilde V_p$ is in the domain of $L^{(2)}$,
\begin{equation}\label{eq:widetildVpeqaimxoimxq}
\frac 1K d(x, y)^{-p}\leq\widetilde  V_p(x, y)\leq K d(x, y)^{-p},\;\;\; (x, y)\in \mathcal D^c,
\end{equation}
and
\begin{equation}\label{eq:driftcondinpropow}
L^{(2)} \widetilde V_p(x, y) \leq -\Lambda(p)\widetilde V_p(x, y) + c_p,\;\;\; (x, y)\in \mathcal D^c,
\end{equation}
where the latter expression is considered in $C_{p+3}(\mathcal  D^c)$.
 \end{proposition}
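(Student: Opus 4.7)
The plan is to construct $\widetilde V_p$ explicitly from the eigenfunction $\psi_p$ of the twisted tangent-flow semigroup $\hat P^p_t$ supplied by Proposition \ref{prop:maintextpsipexinsandinrwc}. Inside the injectivity-radius neighbourhood $\mathcal D^c_{\delta_0}$ of Remark \ref{rem:PhiPhi-1onDcdleta}, write $w = w(x,y) \in T_x\mathcal M$ for the unique geodesic tangent vector with $\exp_x(w)=y$, and set $v = v(x,y):=w/|w| \in S_x\mathcal M$; note that $|w(x,y)|=d(x,y)$. Define
\[
V_p^{(0)}(x,y) := |w(x,y)|^{-p}\,\psi_p\bigl(x,v(x,y)\bigr),
\]
and then globalise by setting $\widetilde V_p(x,y) := \chi(d(x,y))\,V_p^{(0)}(x,y) + \bigl(1-\chi(d(x,y))\bigr) M_p,$
for a smooth cutoff $\chi$ equal to $1$ on $[0,\delta_0/3]$ and to $0$ on $[\delta_0/2,\infty)$, with $M_p$ large enough that $\widetilde V_p \geq 1$ everywhere. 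The bound $\|\psi_p - \mathbf 1\|_\infty <1/10$ immediately yields \eqref{eq:widetildVpeqaimxoimxq}, and $\widetilde V_p \in C^{2+\beta/2}_{loc}(\mathcal D^c)$ follows from $\psi_p \in C^{2+\beta/2}(S\mathcal M)$ together with smoothness of $\exp$ and $w$ on $\mathcal D^c_{\delta_0}$.

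The heart of the argument is the infinitesimal computation of $L^{(2)} V_p^{(0)}$ inside $\mathcal D^c_{\delta_0}$. Heuristically, as $r:=|w| \to 0$ the two-point process $(x_t,y_t)$ is governed by the tangent flow $(x_t, D\phi_t\cdot w)$, so that
\[
P^{(2)}_t V_p^{(0)}(x,y)\;\approx\; r^{-p}\,\hat P^p_t \psi_p(x,v) \;=\; e^{-\Lambda(p)t}\,V_p^{(0)}(x,y),
\]
whence $L^{(2)}V_p^{(0)} \approx -\Lambda(p) V_p^{(0)}$. To turn this into an estimate, I would reparametrise the two-point motion by $(x_t,W_t)$ with $W_t := w(x_t,y_t)$, further split $W_t = R_t V_t$ into radial and spherical parts, and apply It\^o's formula to $R_t^{-p}\psi_p(x_t,V_t)$. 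The Stratonovich-to-It\^o corrections and the quadratic variations produced in this computation reproduce exactly the twisted generator $\hat L^p$ acting on $\psi_p$ at leading order. The key technical point is that the four identities in condition {\textnormal {\bf (D)}} are precisely what is needed to replace $\langle Dw,(\sigma_k(x),\sigma_k(y))\rangle$, $\langle D^2w,\cdot\rangle$, and the quadratic variation of $\binom{\sigma_k(x)}{\sigma_k(y)}$-driven integrals by their tangent-flow counterparts $\langle D\sigma_k(x),w\rangle$ and $\langle D^2\sigma_k(x),(\sigma_k(x),w)\rangle + \langle D\sigma_k(x),\langle D\sigma_k(x),w\rangle\rangle$, up to errors of orders $O(|w|^{1+\beta})$ and $O(|w|^{2+\beta})$. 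After absorbing a factor of $r^{-p-2}r^2 = r^{-p}$ from the first two derivatives of the radial profile, these errors contribute a remainder of size $|w|^{-p+\beta}$:
\[
L^{(2)} V_p^{(0)}(x,y) \;=\; -\Lambda(p)\,V_p^{(0)}(x,y) + R_p(x,y), \qquad |R_p(x,y)| \lesssim |w|^{-p+\beta},
\]
for $(x,y)\in \mathcal D^c_{\delta_0}$.

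Finally I would fix $p^* := \min(\beta, p_*)$ with $p_*$ from Proposition \ref{prop:maintextpsipexinsandinrwc} and restrict to $p\in(0,p^*]$; then $|w|^{-p+\beta}$ is uniformly bounded on $\mathcal D^c_{\delta_0}$, so $R_p$ is bounded there. In the complementary region $d(x,y)\geq \delta_0/3$, the function $\widetilde V_p$ and its first two derivatives are globally bounded by construction, so $L^{(2)}\widetilde V_p$ is uniformly bounded there as well, invoking the summability \eqref{eq:nescondonsigmaintermsofsums}. Combining the two regions and using $\widetilde V_p \geq 1$, one obtains \eqref{eq:driftcondinpropow} with $c_p$ equal to the sum of these sup-norms, and the right-hand side lies in $C_{p+3}(\mathcal D^c)$ since it is bounded. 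The main obstacle is the middle step: verifying, using only the H\"older estimates in {\textnormal {\bf (D)}} and the infinite-sum structure of the noise, that the remainder generated by inserting the tangent-flow approximations into the coordinate expression for $L^{(2)} V_p^{(0)}$ is indeed of order $|w|^{-p+\beta}$ term by term — this is the analogue of the smooth computation of \cite{BS88} adapted to the irregular coefficients relevant for the Kraichnan model.
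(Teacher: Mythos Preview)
Your approach is essentially the paper's: construct $\widetilde V_p$ from $|w|^{-p}\psi_p(x,w/|w|)$ with a cutoff, identify the leading action of $L^{(2)}$ with the twisted tangent generator (the paper does this via the intermediate semigroup $TP_t\psi(x,w)=\mathbb E\,\psi(x_t,D\phi_t(x)w)$ and its generator $T\mathcal L$, satisfying $T\mathcal L f_p=-\Lambda(p)f_p$), and bound the remainder using the four estimates in condition {\textnormal {\bf (D)}}.

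One correction is needed in your remainder analysis. You account only for the radial derivatives $\partial_r,\partial_r^2$, but the mixed second derivative $D^2_{x,w}h_p$ (of order $|w|^{-p-1}$) pairs with the covariation error
\[
\Sigma'' \;=\; \sum_{k\ge 1}\sigma_k(x)\otimes\Bigl(\bigl\langle Dw(x,y),\tbinom{\sigma_k(x)}{\sigma_k(y)}\bigr\rangle-\langle D\sigma_k(x),w\rangle\Bigr).
\]
Condition {\textnormal {\bf (D)}} controls only the $\ell^2$-sum \eqref{eq:qvofsigma-goan-Dfopsaxw)wfq}, so Cauchy--Schwarz against $(\sum_k\|\sigma_k\|_\infty^2)^{1/2}$ gives merely $\|\Sigma''\|=O(|w|^{1+\beta/2})$, not $O(|w|^{1+\beta})$. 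Hence this cross term contributes $O(|w|^{\beta/2-p})$ to the remainder, and you must take $p^*\le \beta/2$ (the paper sets $p^*=p_*\wedge\beta/4$), not $p^*=\min(\beta,p_*)$. With this adjustment your argument goes through.
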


Let us start proving Proposition \ref{prop:exiofVtildeoimc} by constructing the Lyapunov function $\widetilde  V_p$. 
 Set $p^*:= p_*\wedge \tfrac{\beta}{4}$ with $p_*$ being from Proposition \ref{prop:maintextpsipexinsandinrwc}. Fix any $p\in (0, p_*]$. Let $\psi_p$ be as in  Proposition \ref{prop:maintextpsipexinsandinrwc}. Set for any $(x, w)\in T \mathcal M $
\begin{equation}\label{eq:defoffpkappasd}
 f_{p}(x, w) := 
\begin{cases}
|w|^{-p} \psi_{p} (x, w/|w|),\;\;\; &x\in\mathcal M, \;\; w\in T_x\mathcal M\setminus \{0\},\\
0,\;\;\;&x\in \mathcal M, \;\;w=0.
\end{cases}
\end{equation}
Let $\Phi$ and $\delta_0$ be as in Section \ref{sec:prelim}, let
\begin{equation}\label{eq:defofhpcmiw}
  h_{p}(x, w) =  f_{p}(x, w) \chi(|w|),\;\; x\in \mathcal M, \;\; w\in T_x\mathcal M,
\end{equation}
where $\chi:\mathbb R_+ \to [0,1]$ is a nonincreasing $C^{\infty}$ function with $\chi(t) = 1$ for $0\leq t\leq \delta_0/4$ and $\chi(t)=0$ for $t\geq \delta_0/2$, and let
\begin{equation}\label{eq:defofVkapapfasdcoi}
 \widetilde V_p(x, y) := 
 \begin{cases}
 h_{p}(\Phi^{-1}(x, y)),\;\;\; &(x,y)\in D^c_{\delta_0},\\
 0,\;\;\; &(x, y)\in \mathcal M \times \mathcal M \setminus D^c_{\delta_0},
 \end{cases}
\end{equation}
where $\Phi^{-1}$ is defined on $\mathcal D^c_{\delta_0}$ as in Remark \ref{rem:PhiPhi-1onDcdleta}.

In order to obtain Proposition \ref{prop:exiofVtildeoimc} we need to prove that $ \widetilde V_p$ is in the domain of  $ L^{(2)}$ and we need to show how does $L^{(2)} \widetilde V_p(x, y)$ look like for $x$ and $y$ close enough. While the first fact follows directly form the regularity of $\widetilde V_p$ and its behaviour near the diagonal (see the proof of Proposition \ref{prop:exiofVtildeoimc} below), in order to show what $L^{(2)} \widetilde V_p$ is close to the diagonal we will need a linearized version of $P^{(2)}_t$, namely $TP_t$, defined for any $\psi\in C_0^{\infty}(T\mathcal M)$ and $t\geq 0$ by
 \[
  TP_t\psi(x, w) := \mathbb E_{(x, w)} \psi(x_t, D \phi_t(x)w),\;\;\; x\in \mathcal M,\;\; w\in T_x\mathcal M.
 \]
 Then for the generator $T\mathcal L$ of $(TP_t)_{t\geq 0}$ and for any $(x, y)\in \mathcal D^c$ with $d(x, y)<\delta_0$ and $w=w(x, y)$ one would have 
 \begin{equation}\label{eq:L(2)tildeVp-TLhpfwe}
 \begin{split}
  L^{(2)}{\widetilde V_p} (x, y) - T\mathcal L& h_{p}(x, w) = \lim_{t\to 0} \frac{ \mathbb E_{(x, y)} {\widetilde V_p}(x_t, y_t) -{\widetilde V_p}(x, y) }{t} \\
  &\quad\quad\quad\quad- \lim_{t\to 0} \frac{\mathbb E_{(x, w)}h_{p} (x_t, D \phi_t(x)w) - h_{p}(x, w)}{t}\\
  &= \lim_{t\to 0} \frac{  \mathbb E_{(x, y)}{\widetilde V_p}(x_t, y_t) -\mathbb E_{(x, w)}h_{p} (x_t, D \phi_t(x)w) }{t}\\
  &= \lim_{t\to 0}\mathbb E_{(x, w)} \frac{ h_{p} (x_t, w_t)\mathbf 1_{d(x_t, y_t)<\delta_0} -h_{p} (x_t, D \phi_t(x)w) }{t},\\
  &\stackrel{(*)}= \lim_{t\to 0}\mathbb E_{(x, w)} \frac{ h_{p} (x_t, w_t) -h_{p} (x_t, D \phi_t(x)w) }{t},
\end{split}
\end{equation}
 where $w=w(x, y)\in T_x\mathcal M$ and $w_t = w(x_t, y_t)\in T_{x_t}\mathcal M$ are defined by Remark \ref{rem:PhiPhi-1onDcdleta}, and where $(*)$ holds by \eqref{eq:defofhpcmiw}.
Therefore we only need to calculate $T\mathcal L h_{p}(x, w)$ (which equals $-\Lambda(p) h_p(x, w)$, see Proposition \ref{prop:Tlkappahpkappa=das} below) and the right-hand side of \eqref{eq:L(2)tildeVp-TLhpfwe} for $w$ with $|w|$ sufficiently small.

We will start with the following technical lemma concerning derivatives of $h_p$. Let $\beta$ be as in the condition {\textnormal {\bf (C)}}.

\begin{lemma}\label{lem:estimforhpkappaandderivac}
$h_p$ is $C^{2+\beta/2}_{loc}(T\mathcal M)$. Moreover, there exists $C>0$ such that for any $x\in \mathcal M$, for any $w\in T_x \mathcal M$ small enough, and for any $p\in [-p_*, p_*]$ we have that 
 $$
 |h_{p}(x,w)|, |D_x h_{p}(x,w)|, \|D^2_{x,x} h_{p}(x,w)\| \leq C|w|^{-p},
 $$
 $$
 |D_w h_{p}(x,w)|, \|D^2_{x,w} h_{p}(x,w)\| \leq C|w|^{-p-1},
 $$
 $$ 
 \|D^2_{w,w} h_{p}(x,w)\| \leq C |w|^{-p-2}.
 $$
\end{lemma}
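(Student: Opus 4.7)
The plan is to work in local coordinates, trivializing $T\mathcal M$ over a chart so that $(x, w)$ has coordinates in $\mathbb R^d \times \mathbb R^d$, and to reduce the claim to a direct chain-rule and product-rule computation. Since $\mathcal M$ is compact and all quantities in the lemma are local, it suffices to prove the bounds in such a chart with constants that are uniform over a finite atlas.

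For the regularity I would first observe that away from the zero section the map $(x, w) \mapsto (x, w/|w|)$ is $C^\infty$ from $T\mathcal M$ minus the zero section into $S\mathcal M$, the factor $|w|^{-p}$ is $C^\infty$ there, and $\chi(|w|)$ is $C^\infty$ globally. Since $\psi_p \in C^{2+\beta/2}(S\mathcal M)$ by Proposition \ref{prop:maintextpsipexinsandinrwc}, and the composition of a $C^{2+\beta/2}$ function with a $C^\infty$ map is $C^{2+\beta/2}$, this gives $h_p \in C^{2+\beta/2}_{\mathrm{loc}}$ on $T\mathcal M$ minus the zero section. The convention $h_p(x,0)=0$ does not extend to a $C^{2+\beta/2}$ function globally (the derivative bounds we are about to prove blow up as $|w| \to 0$), so the $C^{2+\beta/2}_{\mathrm{loc}}$ claim is interpreted on $\{w \neq 0\}$.

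For the pointwise bounds I would use that $|w| < \delta_0/4$ implies $\chi(|w|)=1$ and $\chi^{(j)}(|w|)=0$ for $j\geq 1$, so in the relevant regime $h_p(x, w) = |w|^{-p} \psi_p(x, w/|w|)$. The three scaling facts I rely on are: (i) $|w|^{-p}$ has $k$-th $w$-derivative of size $C_k |w|^{-p-k}$; (ii) $w \mapsto w/|w|$ is $0$-homogeneous in $w$, so its $k$-th $w$-derivative has size $C_k |w|^{-k}$ for $k \geq 1$; and (iii) $\psi_p$ together with its first and second derivatives is bounded on $S\mathcal M$ since $\psi_p \in C^{2+\beta/2}(S\mathcal M)$. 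Combining (ii) and (iii) via the chain rule gives
\[
|\partial_w^k \psi_p(x, w/|w|)| \leq C_k |w|^{-k}, \qquad |\partial_x^j \partial_w^k \psi_p(x, w/|w|)| \leq C_{j,k} |w|^{-k},
\]
for $j + k \leq 2$. Applying Leibniz to the product $|w|^{-p} \cdot \psi_p(x, w/|w|)$ then reproduces exactly the required scalings: zero $w$-derivatives contribute $|w|^{-p}$, each $w$-derivative lowers the exponent by one (either via differentiating $|w|^{-p}$ or via differentiating $w/|w|$ and losing a factor of $|w|$), while $x$-derivatives contribute only bounded factors from $\psi_p$.

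I do not anticipate a conceptual obstacle here; the argument is essentially bookkeeping. The only point requiring some care is the coordinate dependence of $D^2_{x,x}, D^2_{x,w}, D^2_{w,w}$ as fixed in Section \ref{sec:prelim}: these depend on the chosen local chart, but the scalings in (i)--(iii) are coordinate-invariant up to multiplicative constants, and the finite-atlas compactness argument on $\mathcal M$ absorbs this uniformly into the constant $C$.
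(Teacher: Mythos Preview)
Your approach is essentially the same as the paper's: both reduce to the chain rule and product rule applied to $|w|^{-p}\psi_p(x,w/|w|)$, using that $\psi_p\in C^{2+\beta/2}(S\mathcal M)$ and that $w\mapsto w/|w|$ is $0$-homogeneous. The paper writes out the $D_w$ case explicitly and leaves the rest as analogous; your homogeneity bookkeeping in (i)--(iii) is a clean way to package the same computation.

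One point you gloss over is the uniformity of $C$ in $p\in[-p_*,p_*]$. Your item (iii) asserts boundedness of $\psi_p$ and its derivatives from membership in $C^{2+\beta/2}(S\mathcal M)$, which gives a bound for each fixed $p$ but not a priori a uniform one. The paper closes this by invoking Appendix~\ref{sec:LyaexmomLyafun}, where the families $(\psi_p)_{p\in[-p_*,p_*]}$ and $(D_{w/|w|}\psi_p)_{p\in[-p_*,p_*]}$ are shown to be uniformly bounded (ultimately via continuity of the spectral projector $\pi^p$ in $p$). You should cite or reprove that uniformity; otherwise the constant $C$ could in principle blow up as $p$ varies.
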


\begin{proof} 
The fact that $h_p$ is locally $C^{2+\beta/2}$ follows directly from the definition of $h_p$ and Proposition \ref{prop:maintextpsipexinsandinrwc}.

Let us turn to the desired estimates. We will only show the case of $D_w$, the rest can be done similarly. As $\chi(|w|) = 1$ for $w$ small,
\begin{equation}\label{eq:expressionfornablawfpkappaSigmadsad}
\begin{split}
D_{ w} h_{p} (x, w) &= D_{ w} ( |w|^{-p}\psi_{p} (x, {w/|w|}) )\\
&= D_{ w} |w|^{-p}  \psi_{p} (x, {w/|w|}) + |w|^{-p}D_{ w}\psi_{p} (x, {w/|w|})  \\
& \stackrel{(*)}= -p |w|^{-p-2} \langle w, \cdot \rangle \psi_{p} (x, {w/|w|})\\
&\quad \quad \quad \quad + |w|^{-p} \Bigl\langle D_{ w/|w|} \psi_{p} (x, {w/|w|}) , \frac{ \cdot |w|^2 - w \langle w, \cdot \rangle}{|w|^3} \Bigr\rangle,
\end{split}
\end{equation}
where for $(*)$ we used the fact that $D_{w}(w/|w|) = \frac{|w|^2 - w \langle w, \cdot \rangle}{|w|^3}$. It remains to notice that by Subsection \ref{sec:LyaexmomLyafun} the families of functions $
(\psi_{p} )_{p\in [-p_*, p_*]}$ and $(D_{w/|w|}\psi_{p} )_{p\in [-p_*, p_*]}$ are uniformly bounded.
\end{proof}

For each $q\in \mathbb R$ let 
 \begin{align*}
 C_q(T\mathcal M) = \Big\{f\in C((T_x\mathcal M\setminus \{0\})_{x\in \mathcal M}):\lim_{|w|\to 0}\sup_{x\in\mathcal M}&|f|(x, w) |w|^q =0 \\
 \text{ and} \;\;&\lim_{|w|\to \infty} |f|(x, w)= 0 \Big\},
 \end{align*}
which is endowed with the norm
 $$
 \|f\|_{C_q(T\mathcal M)}:= \sup_{x\in \mathcal M, w\in T_x\mathcal M\setminus \{0\}}|f(x, w)| (|w|^q\wedge1).
 $$ 
For the proof of Theorem \ref{thm:actionL(2)kappaonVkappa} we will need the following proposition.

\begin{proposition}\label{prop:Tlkappahpkappa=das}
$(TP_t)_{t\geq 0}$ generates a $C_0$-semigroup on  $C_{p+3}(T\mathcal M)$ and
its generator $T\mathcal L$ of $(TP_t)_{t\geq 0}$ satisfies in $C_{p+3}(T\mathcal M)$
\[
T\mathcal L h_{p} = -\Lambda(p)\chi(|w|) f_{p}(x, w),\;\;\; x\in \mathcal M,\;\;\; w\in T_x \mathcal M,\;\;\; |w|\leq \delta_0/8. 
\]
\end{proposition}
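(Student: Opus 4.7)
The overall plan is to exploit positive-homogeneity of $f_p$ in $w$ together with the eigenfunction identity $\hat P^{p}_t\psi_p=e^{-\Lambda(p)t}\psi_p$ from Proposition \ref{prop:maintextpsipexinsandinrwc} to identify $f_p$ as an eigenfunction of $TP_t$; the cutoff $\chi(|w|)$ produces $h_p$, which agrees with $f_p$ near the zero section of $T\mathcal M$, and the change induced by the cutoff will be shown to be negligible at infinitesimal times on $\{|w|\leq\delta_0/8\}$. First I would verify the $C_0$-semigroup statement on $C_{p+3}(T\mathcal M)$ along the lines of Lemma \ref{lem:P(2)tisstrocononCp}: boundedness of $TP_t$ reduces to moment bounds on $\|D\phi_t(x)\|$ and $\|(D\phi_t(x))^{-1}\|$, used to move the weight from $w$ to $D\phi_t(x)w$, which are supplied by Corollary \ref{cor:supofDphiontsciintegrs}; strong continuity at $t=0$ then follows by dominated convergence from the almost-sure continuity of $(t,x,w)\mapsto (x_t,D\phi_t(x)w)$ provided by Lemma \ref{lem:phitkat64wppacontinxandkapps}, combined with the $C_{p+3}$-decay of $f$ at $|w|=0$ and $|w|=\infty$.

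For the eigenfunction computation, write $w=|w|v$ with $v=w/|w|\in S_x\mathcal M$. Then $D\phi_t(x)w=|w|\,D\phi_t(x)v$ and $D\phi_t(x)w/|D\phi_t(x)w|=\tilde\phi_t(v)$, so
\[
f_p(x_t,D\phi_t(x)w)=|w|^{-p}\,|D\phi_t(x)v|^{-p}\,\psi_p(x_t,\tilde\phi_t(v)).
\]
Taking expectations and invoking Proposition \ref{prop:maintextpsipexinsandinrwc} gives
\[
TP_tf_p(x,w)=|w|^{-p}\,\hat P^{p}_t\psi_p(x,v)=e^{-\Lambda(p)t}f_p(x,w),
\]
so $f_p$ is a pointwise eigenfunction with eigenvalue $e^{-\Lambda(p)t}$. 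Differentiating at $t=0$ yields $T\mathcal L f_p=-\Lambda(p)f_p$, and the smoothness bounds of Lemma \ref{lem:estimforhpkappaandderivac} together with the moment estimates above promote this pointwise limit to convergence in the $C_{p+3}(T\mathcal M)$-norm.

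It remains to transfer the identity from $f_p$ to $h_p=\chi(|w|)f_p$ on $\{|w|\leq\delta_0/8\}$. Since $\chi(|w|)=1$ there, $h_p=f_p$ on that set, while
\[
TP_t(h_p-f_p)(x,w)=\mathbb E\bigl[f_p(x_t,D\phi_t(x)w)\,\bigl(\chi(|D\phi_t(x)w|)-1\bigr)\bigr].
\]
For $|w|\leq\delta_0/8$ this integrand vanishes off the event $\{|D\phi_t(x)v|\geq 2\}$, on which $|D\phi_t(x)v|^{-p}\leq 2^{-p}$. Combining this with the short-time tail bound $\mathbb P(\|D\phi_t(x)-I_{T_x\mathcal M}\|\geq 1)=o(t^N)$ for every $N\geq 1$, obtained from estimate \eqref{eq:Dphikappa-Dphikappa'bddbykpkaf[w} of Lemma \ref{lem:phitkat64wppacontinxandkapps} by Markov's inequality at sufficiently high moment exponent, yields $|TP_t(h_p-f_p)(x,w)|=o(t)\,|w|^{-p}$ uniformly on $\{|w|\leq\delta_0/8\}$. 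Hence $(TP_th_p-h_p)/t\to-\Lambda(p)f_p=-\Lambda(p)\chi(|w|)f_p$ as $t\to 0$, which is the claimed identity.

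The main obstacle is the \emph{global} domain statement: to place $h_p$ in the domain of $T\mathcal L$ on $C_{p+3}(T\mathcal M)$ one must realize $T\mathcal L h_p$ as a genuine element of that space, not merely as a pointwise limit on $\{|w|\leq\delta_0/8\}$. I would do this by applying It\^o's formula to the lifted SDE on $T\mathcal M$, exploiting the $C^{2+\beta/2}_{loc}$-regularity of $h_p$ and its compact support in the $w$-direction, to express $TP_th_p-h_p$ as the expectation of an integral of an explicit second-order operator applied to $h_p$; one must then control the weighted norm near the singular set $|w|=0$, where the weight $|w|^{p+3}\wedge 1$ leaves only a $|w|^3$ margin against the $|w|^{-p}$ blow-up of $h_p$ and its derivatives, via the fine bounds of Lemma \ref{lem:estimforhpkappaandderivac} together with the moment estimates of Lemma \ref{lem:phitkat64wppacontinxandkapps}.
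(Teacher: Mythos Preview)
Your approach is essentially the same as the paper's: the eigenfunction computation $TP_tf_p=e^{-\Lambda(p)t}f_p$ via $\hat P^p_t\psi_p=e^{-\Lambda(p)t}\psi_p$ is identical, and the treatment of the cutoff on $\{|w|\leq\delta_0/8\}$ by a tail bound on $\|D\phi_t(x)-I\|$ matches the paper's argument closely (the paper phrases the bad event as $\|D\phi_t(x)\|\geq\delta_0|w|^{-1}/4$ rather than $|D\phi_t(x)v|\geq 2$, but these are equivalent up to constants).

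There are two differences worth noting. First, the paper includes an explicit check that $TP_t$ preserves the decay condition at $|w|\to\infty$ built into $C_{p+3}(T\mathcal M)$, via Markov's inequality on $\|D\phi_t(x)\|^{-1}$; you gloss over this, though it is routine. Second, and more substantively, for the domain membership of $h_p$ the paper does \emph{not} use It\^o's formula as you propose, but instead approximates $h_p$ by $h^n(x,w):=h_p(x,w)\eta_n(|w|)$ with $\eta_n$ a smooth cutoff vanishing near $w=0$, shows $h^n\to h_p$ and $T\mathcal L h^n\to T\mathcal L h_p$ in $C_{p+3}(T\mathcal M)$ using Lemma~\ref{lem:estimforhpkappaandderivac}, and invokes closedness of the generator. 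Your It\^o route is viable since $D\phi_t(x)w$ never hits zero and the derivative bounds of Lemma~\ref{lem:estimforhpkappaandderivac} combine with the homogeneity of the tangent-flow coefficients to give $T\mathcal L h_p=O(|w|^{-p})$; the paper's approximation argument is arguably cleaner because it avoids having to identify $T\mathcal L h_p$ explicitly outside $\{|w|\leq\delta_0/8\}$.
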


For the proof we will need the formula for $D\phi_t w$ which can be deduced from \eqref{eq:flowforkappa=0dsa} and has the following form.
\begin{equation}\label{eq:dsrqdoreDphikappa}
\begin{split}
D \phi_tw &=w + \sum_{k\geq 1} \int_0^{t} \langle D \sigma_k(x_s), D\phi_s w \rangle \ud W^k_t \\
&\quad\quad\quad\quad\quad\quad\quad\quad\quad\quad+ \frac {1}{2}[\langle D \sigma_k(x_{\cdot}), D\phi_{\cdot} w \rangle, W^k]_t,\;\;\; t\geq 0.
\end{split}
\end{equation}

\begin{proof}[Proof of Proposition \ref{prop:Tlkappahpkappa=das}]
 First note that
 similarly to Lemma \ref{lem:P(2)tisstrocononCp} one has that $TP_t$ generates a $C_0$-semigroup on  $C_{p+3}(T\mathcal M)$, where it only remains to show that for any $f\in C^{\infty}(T\mathcal M)$ with a compact support (i.p.\ there exists a constant $\delta>0$ such that $f(x, w)=0$ for any $(x, w)\in T\mathcal M$ with $|w|>\delta$) one has that 
 {
 $$
 \lim_{|w|\to \infty}\sup_{x\in\mathcal M}|TP_tf(x, w)|= 0.
 $$ 
 To this end fix $x\in \mathcal M$ any $w\in T_x \mathcal M$ with $|w|>2\delta$. Then by Markov's inequality 
 \begin{align*}
 |TP_tf(x, w)|  &= | \mathbb E_{(x, w)} f(x_t, D \phi_t(x)w)| \\
 &\leq  \|f\|_{\infty} \mathbb P(\|D \phi_t(x)\|\leq {\delta}/{|w|})\\
 &=  \|f\|_{\infty} \mathbb P(\|D \phi_t(x)\|^{-1}\geq |w|/{\delta})\\
 &\leq |w|^{-1}\delta \|f\|_{\infty} \mathbb E\|D \phi_t(x)\|^{-1},
 \end{align*}
 }
 which vanishes uniformly as $|w|\to\infty$ by Lemma \ref{lem:phitkat64wppacontinxandkapps}

 Next, by \eqref{eq:defoffpkappasd} we have that $f_p\in C_{p+3}(T \mathcal M)$ and that for any $x\in  \mathcal M$ and $w\in T_x \mathcal M\setminus \{0\}$ by the definition of $\psi_p$ (see Subsection \ref{sec:LyaexmomLyafun})
 \begin{align*}
   TP_t f_{p}(x, w) &=  \mathbb E_{(x, w)} f_{p}(x_t, D \phi_tw)=\mathbb E_{(x, w)} |D \phi_tw|^{-p} \psi_{p}\bigl(x_t, \tfrac{D \phi_tw}{|D \phi_tw|}\bigr)\\
   &=|w|^{-p}\mathbb E_{(x, v)}|D \phi_tv|^{-p}  \psi_{p}(x_t, v_t) = |w|^{-p}\hat P^p_{t}\psi_p\\
  & = |w|^{-p} e^{-\Lambda(p)t} \psi_{p}(x, v) = e^{-\Lambda(p)t} f_{p}(x, w),
\end{align*}
where $v:= w/|w|$ and $v_t:= w_t/|w_t|$.
Hence $f_{p}$ is in the domain of $T\mathcal L$ and $T\mathcal Lf_{p} = -\Lambda(p)f_{p}$.
Let us show that $h_p$ is in the domain of $T\mathcal L$. For each $n\geq 1$ let $\eta_n:\mathbb R_+ \to \mathbb R_+$ be nondecreasing such that $\|\eta_n'\|_{\infty} \leq Cn$ and $\|\eta_n''\|_{\infty} \leq Cn^2$ for some fixed constant $C>0$ and such that $\eta_n|_{[0, 1/n]}=0$ and $\eta_n|_{[2/n, \infty)}=1$. Set 
$$
h^n(x, w) := h_p(x, w)\eta_n(|w|),\;\;\; x\in \mathcal M,\;\;\; w\in T_x \mathcal M,\;\;\; n\geq 1.
$$
Then each of $h^n$ is $C^2(T\mathcal M)$ with a compact support  by \eqref{eq:defoffpkappasd}, \eqref{eq:defofhpcmiw}, and Proposition \ref{prop:maintextpsipexinsandinrwc}. Hence, $(h^n)_{n\geq 1}$ are in the domain of $T\mathcal L$. Moreover, we have $h^n \to h_p$, and arguing analogously as for \eqref{eq:L(2)Vp-L(2)fnvaoimceaCp+3} below using Lemma \ref{lem:estimforhpkappaandderivac}, $T\mathcal L h^n \to T\mathcal L h_p$ in $C_{p+3}(T\mathcal M)$. Since $T$ is a closed operator, this implies that  $h_p$ is in the domain of $T\mathcal L$ as well.

Therefore the following holds in $C_{p+3}(T \mathcal M)$
\begin{align*}
TP_t h_{p} (x, w) &= TP_t f_{p} \chi(|\cdot|) (x, w)= TP_t f_{p} (\chi(|\cdot|) - \chi(|w|)) (x, w) + \chi(|w|) TP_t f_{p} \\
&=  TP_t f_{p} (\chi(|\cdot|) - \chi(|w|)) (x, w)\\
&\quad\quad\quad+\chi(|w|) e^{-\Lambda(p)t} f_{p}(x, w),\; x\in  \mathcal M,\; w\in T_x  \mathcal M.
\end{align*}
Thus it is sufficient to show that 
$$
\lim_{t\to 0} \frac{1}{t}TP_t f_{p} (\chi(|\cdot|) - \chi(|w|)) (x, w) = 0
$$
for any $x\in \mathcal M$ and $w\in T_x\mathcal M$ with $|w|<\delta_0/8$. To this end notice that
\begin{equation}\label{eq:1/tTpka[ppatd(xi||-xi|w|)}
 \begin{split}
  & \frac{1}{t}TP_t f_{p} (\chi(|\cdot|) - \chi(|w|)) (x, w) \\
 &\quad\quad\quad\quad = \frac{1}{t}\mathbb E_{(x, w)} |D\phi_tw|^{-p} \psi_{p}\Bigl(x_t, \frac{ D\phi_tw}{ |D\phi_tw|}\Bigr)\bigl(\chi(|D\phi_tw|) - \chi(|w|)\bigr),
 \end{split}
\end{equation}
which vanishes uniformly in  $w\in T_x\mathcal M$ with $|w|<\delta_0/8$ as in this case
\begin{align*}
 \frac{1}{t}\mathbb E_{(x, w)} &|D\phi_t(x)w|^{-p} \Bigl|\psi_{p}\Bigl(x_t, \frac{ D\phi_t(x)w}{ |D\phi_tw|}\Bigr)\Bigr|\bigl|\chi(|D\phi_t(x)w|) - \chi(|w|)\bigr|\\
 &\leq t^{-1} \|\psi_p\|_{\infty} |w|^{-p} \mathbb E_x \bigl\|(D\phi_t(x))^{-1}\bigr\|^p\bigl|\chi(|D\phi_t(x)w|) - \chi(|w|)\bigr|\\
 &\lesssim_{\chi} t^{-1} \|\psi_p\|_{\infty} |w|^{-p} \mathbb E_x \bigl\|(D\phi_t(x))^{-1}\bigr\|^p\mathbf 1_{\|D\phi_t(x)\| \geq \delta_0 |w|^{-1}/4}\\
 &\leq   \|\psi_p\|_{\infty} \Bigl(\mathbb E_x \bigl\|(D\phi_t(x))^{-1}\bigr\|^{2p}\Bigr)^{1/2}\bigl( t^{-2}  |w|^{-2p}\mathbb P(\|D\phi_t(x)\| \geq \delta_0 |w|^{-1}/4)\bigr)^{1/2},
\end{align*}
so it suffices to notice that $\mathbb E_x \|(D\phi_t(x))^{-1}\|^{2p}$ is uniformly bounded for $t$ small by Lemma \ref{lem:phitkat64wppacontinxandkapps} and that as $\delta_0 |w|^{-1}/4>2$ (so $\delta_0 |w|^{-1}/4- 1 \geq  \delta_0 |w|^{-1}/8$) thanks to \eqref{eq:EcowemkDphikappatx-Dphikappa't'x'}
\begin{align*}
& t^{-2}  |w|^{-2p}\mathbb P(\|D\phi_t(x)\| \geq \delta_0 |w|^{-1}/4) \\
 &\quad\quad\quad\quad\quad\quad\quad\quad\quad\leq  t^{-2}  |w|^{-2p}\mathbb P(\|D\phi_t(x)-I_{T_x\mathcal M}\| \geq \delta_0 |w|^{-1}/8)\\
 &\quad\quad\quad\quad\quad\quad\quad\quad\quad \lesssim  t^{-2}  |w|^{4+2p-2p} \mathbb E \|D\phi_t(x)-I_{T_x\mathcal M}\|^{4+2p} \lesssim t^{p}  |w|^{4}, 
\end{align*}
which vanishes as $t\to 0$ uniformly in $|w|<\delta_0/8$.
\end{proof}

Finally, let us prove the desired statement. For any $(x, y)\in \mathcal D^c$ with $d(x, y)<\delta_0$ and $w=w(x, y)\in T_x\mathcal M$ defined by Remark \ref{rem:PhiPhi-1onDcdleta} we set 
{
\begin{equation}\label{eq:Sigmaforexpboundsa}
\begin{split}
\Sigma &:= \frac{1}{2}\sum_{k\geq 1} \left \langle D^2 w(x, y), \left (\binom{\sigma_k(x)}{\sigma_k(y)},\binom{\sigma_k(x)}{\sigma_k(y)}\right)\right \rangle \\
&\quad\quad\quad\quad+ \left \langle D w(x, y),  \binom{\langle D\sigma_k(x), \sigma_k(x)\rangle}{\langle D\sigma_k(y), \sigma_k(y)\rangle}  \right \rangle\\
&\quad\quad\quad\quad-\langle D^2\sigma_k(x), (\sigma_k(x), w)\rangle  - \bigl \langle D\sigma_k(x),\langle D\sigma_k(x), w\rangle\bigr\rangle,
 \end{split}
\end{equation}
\begin{equation}\label{eq:Sigmaprikqforexpboundsa}
\begin{split}
 \Sigma' &:=\sum_{k\geq 1}\left \langle Dw(x, y), \binom{\sigma_k(x)}{ \sigma_k(y)}\right \rangle \otimes \left \langle Dw(x, y), \binom{\sigma_k(x)}{ \sigma_k(y)}\right \rangle\\
&\quad\quad\quad\quad- \langle D \sigma_k(x), w \rangle\otimes\langle D \sigma_k(x), w \rangle,
 \end{split}
\end{equation}
\begin{equation}\label{eq:formforSigmad'''das}
 \Sigma '' := \sum_{k\geq 1}\sigma_k(x)\otimes \left (\left \langle Dw(x, y), \binom{\sigma_k(x)}{ \sigma_k(y)}\right \rangle-\langle D \sigma_k(x),w \rangle\right).
\end{equation}

Notice that $\Sigma$, $\Sigma'$, and $\Sigma''$ are well defined and continuous in $(x, y)\in \mathcal D^c_{\delta_0}$. This follows from the fact that $(x, y)\mapsto w(x, y)$ is $C^{\infty}$ on $\mathcal D^c_{\delta_0}$ (see Remark \ref{rem:PhiPhi-1onDcdleta}) and from the fact that by \eqref{eq:nescondonsigmaintermsofsums} the sequences 
$$
(  \sigma_k(x), \sigma_k(y) )_{k\geq 1},\;\;( \langle D\sigma_k(x), \sigma_k(x) \rangle)_{k\geq 1},\;\;(  D\sigma_k(x),D \sigma_k(y) )_{k\geq 1},
$$
$$
(\langle D^2\sigma_k(x),(\sigma_k(x), \cdot) \rangle)_{k\geq 1}, \;\; \text{and}\;\; \left(\left\langle D\sigma_k(x), \langle D\sigma_k(x), \cdot\rangle \right\rangle\right)_{k\geq 1}
$$ 
are summable for any $x,y\in \mathcal M$ and the corresponding sums are continuous in $x,y\in \mathcal M$.
}

\begin{theorem}\label{thm:actionL(2)kappaonVkappa}
 $\widetilde V_p$ is in the domain of $L^{(2)}$ as of an unbounded operator on $C_{p+3}(\mathcal D^c)$ and for any $(x, y) \in \mathcal D^c$ with $d(x, y)<\delta_0/8$ 
 \begin{equation*}\label{eq:mathcalL(2)kappaVkappaeqwfp}
 \begin{split}
    &L^{(2)} \widetilde V_p (x, y)= -\Lambda(p)\chi(|w(x, y)|) f_{p}(x, w(x, y)) + D_{ w} h_{p} (x, w(x, y)) \Sigma \\
    &\quad\quad\quad\quad\quad+ D^2_{w,w}h_{p} (x, w(x, y)) \Sigma' + D^2_{x,w}h_{p} (x, w(x, y)) \Sigma'',
 \end{split}
 \end{equation*}
 where $\Sigma$, $\Sigma'$, and $\Sigma ''$ are defined by \eqref{eq:Sigmaforexpboundsa}, \eqref{eq:Sigmaprikqforexpboundsa}, and \eqref{eq:formforSigmad'''das}.
\end{theorem}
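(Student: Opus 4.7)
The plan is to use identity \eqref{eq:L(2)tildeVp-TLhpfwe} to split $L^{(2)}\widetilde V_p(x,y)$ (for $d(x,y)<\delta_0/8$) into two pieces: the linearized generator $T\mathcal L h_p$ evaluated at $(x,w(x,y))$, plus the correction
\[
R(x,y):=\lim_{t\to0}\mathbb E_{(x,w)}\frac{h_p(x_t,w_t)-h_p(x_t,D\phi_t(x)w)}{t}.
\]
By Proposition~\ref{prop:Tlkappahpkappa=das} the first piece equals $-\Lambda(p)\chi(|w|)f_p(x,w)$, which is already the first term of the claimed formula. The task thus reduces to identifying $R(x,y)$ with $D_wh_p(x,w)\Sigma+D^2_{w,w}h_p(x,w)\Sigma'+D^2_{x,w}h_p(x,w)\Sigma''$ as an identity in $C_{p+3}(\mathcal D^c)$, and separately to verify that $\widetilde V_p$ belongs to the domain of $L^{(2)}$ on $C_{p+3}(\mathcal D^c)$.

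To compute $R$, I would apply It\^o's formula to the two $T\mathcal M$-valued semimartingales $(x_t,w_t)$ and $(x_t,D\phi_t(x)w)$, which share the initial value $(x,w)$ and the first coordinate. Subtracting the two It\^o expansions, the $D_xh_p$ and $D^2_{x,x}h_p$ contributions cancel, and what remains pairs the derivatives $D_wh_p,\ D^2_{w,w}h_p,\ D^2_{x,w}h_p$ with three tensors obtained as differences of the corresponding coefficients of the two second-coordinate processes at $t=0$. The Stratonovich-to-It\^o correction for $w_t=w(x_t,y_t)$, computed by applying It\^o's formula to the smooth map $w$ on $\mathcal D^c_{\delta_0}$, yields the drift $\tfrac12\sum_k\bigl\{\langle D^2w,(\binom{\sigma_k}{\sigma_k})^{\otimes2}\rangle+\langle Dw,\binom{\langle D\sigma_k,\sigma_k\rangle}{\langle D\sigma_k,\sigma_k\rangle}\rangle\bigr\}$, while \eqref{eq:dsrqdoreDphikappa} yields the drift $\tfrac12\sum_k\bigl\{\langle D^2\sigma_k,(\sigma_k,w)\rangle+\langle D\sigma_k,\langle D\sigma_k,w\rangle\rangle\bigr\}$ for $D\phi_t(x)w$; their difference is exactly $\Sigma$ as in \eqref{eq:Sigmaforexpboundsa}. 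The $\ud W^k$-coefficients of $w_t$ and $D\phi_t(x)w$ at $t=0$ are $\langle Dw,\binom{\sigma_k}{\sigma_k}\rangle$ and $\langle D\sigma_k,w\rangle$ respectively, so the difference of self-covariations yields $\Sigma'$ as in \eqref{eq:Sigmaprikqforexpboundsa}, while the difference of cross-covariations with $\sigma_k(x)$ (the $\ud W^k$-coefficient of $x_t$) yields $\Sigma''$ as in \eqref{eq:formforSigmad'''das}.

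Convergence in $C_{p+3}(\mathcal D^c)$ follows by combining the derivative bounds of Lemma~\ref{lem:estimforhpkappaandderivac}, namely $|D_wh_p|\lesssim|w|^{-p-1}$, $\|D^2_{x,w}h_p\|\lesssim|w|^{-p-1}$, $\|D^2_{w,w}h_p\|\lesssim|w|^{-p-2}$, with the H\"older-vanishing estimates supplied by condition~\textbf{(D)}: $|\Sigma|=O(|w|^{1+\beta})$ (first displayed inequality of \textbf{(D)}), $\|\Sigma'\|=O(|w|^{2+\beta})$ (second displayed inequality), and $\|\Sigma''\|=O(|w|^{1+\beta/2})$ by Cauchy--Schwarz from \eqref{eq:qvofsigma-goan-Dfopsaxw)wfq} and \eqref{eq:nescondonsigmaintermsofsums}. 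Consequently each of the three products is $O(|w|^{-p+\beta/2})=o(|w|^{-(p+3)})$, so the putative right-hand side lies in $C_{p+3}(\mathcal D^c)$. Domain membership of $\widetilde V_p$ is then handled by a truncation argument, as in the proof of Proposition~\ref{prop:Tlkappahpkappa=das}: approximate $\widetilde V_p$ by $\widetilde V_p^{\,n}(x,y):=\widetilde V_p(x,y)\,\eta_n(d(x,y))$ with $\eta_n$ a smooth cutoff vanishing in a neighbourhood of the diagonal, so that each $\widetilde V_p^{\,n}$ is $C^{2+\beta/2}$ with compact support in $\mathcal D^c$ (hence trivially in the domain), and use the above bounds to conclude $\widetilde V_p^{\,n}\to\widetilde V_p$ and $L^{(2)}\widetilde V_p^{\,n}\to$ the claimed right-hand side in $C_{p+3}(\mathcal D^c)$; closedness of $L^{(2)}$ finishes the verification.

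The main obstacle will be to justify, uniformly in $(x,y)\in\mathcal D^c_{\delta_0/8}$, that the second-order It\^o--Taylor remainders for $\mathbb E h_p(x_t,w_t)$ and $\mathbb E h_p(x_t,D\phi_t(x)w)$ are $o(t)$ in the $C_{p+3}$-norm rather than merely pointwise. The inverse-power singularities of $h_p$ and its derivatives near the diagonal are balanced only via the higher-order vanishing built into condition~\textbf{(D)}, but closing the estimate requires pathwise control of the joint laws of $\phi_t$, $D\phi_t$, and the chord $w_t$ down to the diagonal; the moment estimates \eqref{eq:phikappaconeinxx'} and \eqref{eq:Dphikappa-Dphikappa'bddbykpkaf[w} of Lemma~\ref{lem:phitkat64wppacontinxandkapps} will be the key technical inputs that make the interchange of limit and expectation uniform.
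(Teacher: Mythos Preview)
Your approach is essentially the paper's: the same decomposition via \eqref{eq:L(2)tildeVp-TLhpfwe}, the same It\^o comparison of the drifts and covariations of $(x_t,w_t)$ versus $(x_t,D\phi_t(x)w)$ to produce $\Sigma,\Sigma',\Sigma''$, and the same truncation $\widetilde V_p\,\eta_n(d(\cdot,\cdot))$ for domain membership.

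The one place you are making life harder than necessary is your final paragraph. You do \emph{not} need the It\^o remainders to be $o(t)$ uniformly in $(x,y)$ in the $C_{p+3}$-norm. The paper decouples the two tasks: domain membership is established first, using only the crude bound $\|D^2\widetilde V_p\|+\|D\widetilde V_p\|+|\widetilde V_p|\lesssim d(x,y)^{-p-2}$ (not the fine $\Sigma$-structure) to get $L^{(2)}f_n\to L^{(2)}\widetilde V_p$ in $C_{p+3}$. Once $\widetilde V_p$ is in the domain, $C_{p+3}$-convergence of $t^{-1}(P^{(2)}_t\widetilde V_p-\widetilde V_p)$ implies pointwise convergence, so it suffices to identify the limit \emph{pointwise} for each fixed $(x,y)$ with $d(x,y)<\delta_0/8$. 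For that, the paper introduces the stopping time $\tau:=\inf\{t\ge0:d(x_t,x)+d(y_t,y)\ge|w|/2\}$; this keeps $w_t$ well defined and bounded away from zero on $[0,\tau]$, so It\^o's formula applies to the $C^2$ function $h_p$ without any singularity issues, and one only has to check separately (via \eqref{eq:phikappaconeinxx'} and a Markov-inequality estimate on $\mathbb P(\tau<t)$) that the contribution from $\{\tau<t\}$ is $o(t)$ for that fixed $(x,y)$. This localization is the missing ingredient in your sketch and replaces the uniform estimate you were worried about.
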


\begin{proof}
By Proposition \ref{prop:Tlkappahpkappa=das} it is sufficient to show that $\widetilde V_p$ is in the domain of $L^{(2)}$ and that
\begin{equation}\label{eq:L(2)kappa-Tlkappacsfewvp}
 \begin{split}
   &L^{(2)} \widetilde V_p(x, y) - T\mathcal L h_{p}(x, w(x, y))= D_{ w(x, y)} h_{p} (x, w(x, y)) \Sigma\\
 &\quad\quad\quad\quad+ D^2_{w}h_{p} (x, w(x, y)) \Sigma' + D^2_{x,w}h_{p} (x, w(x, y)) \Sigma'',
 \end{split}
\end{equation}
 holds true pointwise for $d(x, y)<\delta_0/8$.

Let us first show that $\widetilde V_p$ is in the domain of $L^{(2)}$. It is enough to prove that $L^{(2)}\widetilde V_p \in C_{p+3}(\mathcal D^c)$ and that there exists a sequence $(f_n)_{n\geq 1}$ of $C^2$-functions on $\mathcal D^c$ with compact supports such that both 
\begin{equation}\label{eq:Vp-fnbvainCp+3}
\|\widetilde V_p - f_n\|_{C_{p+3}(\mathcal D^c)}\to 0,\;\;\; n\to \infty
\end{equation}
 and
 \begin{equation}\label{eq:L(2)Vp-L(2)fnvaoimceaCp+3}
 \|L^{(2)}\widetilde V_p - L^{(2)}f_n\|_{C_{p+3}(\mathcal D^c)}\to 0,\;\;\;n\to \infty
 \end{equation}
 hold true.

Let us start by noticing that as $(x, y)\mapsto \Phi^{-1}(x, y) = (x, (w(x, y))$ is a $C^{\infty}$ function on $\mathcal D^c_{\delta_0}$ (see Remark \ref{rem:PhiPhi-1onDcdleta}), by \eqref{eq:defofVkapapfasdcoi} and Lemma \ref{lem:estimforhpkappaandderivac} for any $(x,y)\in \mathcal D^c_{\delta_0}$ we have that
\begin{equation}\label{eq:D2DVpleqD2Dhpcowem}
\begin{split}
&\|D^2\widetilde V_p(x, y)\| + \|D\widetilde V_p(x, y)\| + |\widetilde V_p(x, y)| \\
&\quad\quad\quad\quad\quad\lesssim \|D^2h_p(x, w(x, y))\| + \|Dh_p(x, w(x, y))\| + |h_p(x, w(x, y))|\\
&\quad\quad\quad\quad\quad\lesssim d(x, y)^{-p-2},
\end{split}
\end{equation}
so $L^{(2)}\widetilde V_p \in C_{p+3}(\mathcal D^c)$ as $\widetilde V_p \equiv 0$ outside $\mathcal D^c_{\delta_0}$.
Next, {set $(\eta_n)_{n\geq 1}$ to be as in the proof of Proposition \ref{prop:Tlkappahpkappa=das}}. For all $(x, y)\in \mathcal D^c$ set $f_n(x, y):= \widetilde V_p(x, y)\eta_n(d(x, y))$. Let us show \eqref{eq:Vp-fnbvainCp+3} and \eqref{eq:L(2)Vp-L(2)fnvaoimceaCp+3}. \eqref{eq:Vp-fnbvainCp+3} follows directly from the inequality 
\begin{align*}
|\widetilde V_p(x, y) - f_n(x, y)| &\leq |\widetilde V_p(x, y)|\mathbf 1_{[0, 2/n]}(d(x, y))\\
& \lesssim \mathbf 1_{[0, 2/n]}(d(x, y)) d(x, y)^{-p}\leq \frac 2n d(x, y)^{-p-1},\;\;\; (x, y)\in \mathcal D^c.
\end{align*}

In order to show \eqref{eq:L(2)Vp-L(2)fnvaoimceaCp+3} it is enough to notice that for any $(x, y)\in \mathcal D^c$
\begin{align*}
|L^{(2)}(\widetilde V_p&-f_n)(x, y)|\lesssim \|D^2(\widetilde V_p-f_n)(x, y)\| +  \|D(\widetilde V_p-f_n)(x, y)\| +  |(\widetilde V_p-f_n)(x, y)|\\
&\leq \bigl( \|D^2\widetilde V_p(x, y)\||1-\eta_n(d(x, y))| \\
&\quad+ \|D\widetilde V_p(x, y)\|(|1-\eta_n(d(x, y))| + \|D\eta_n(d(x, y))\|) \\
&\quad+ |\widetilde V_p(x, y)|(|1-\eta_n(d(x, y))| + \|D\eta_n(d(x, y))\| + \|D^2\eta_n(d(x, y))\|)\bigr)\\
&\stackrel{(*)}\lesssim \mathbf 1_{[0, 2/n]}(d(x, y))\left( d(x, y)^{-p-2} + n d(x, y)^{-p-1} + {n^2}d(x, y)^{-p} \right)\\
&\lesssim \mathbf 1_{[0, 2/n]}(d(x, y))d(x, y)^{-p-2} \leq \frac{1}{\sqrt n} d(x, y)^{-p-\tfrac {5}{2}},
\end{align*}
where $(*)$ follows from Lemma \ref{lem:estimforhpkappaandderivac}, \eqref{eq:D2DVpleqD2Dhpcowem}, upper bounds for $\eta$, $\eta_n'$, and $|\eta_n''|$, and the fact that both $(x, y)\mapsto d(x, y)$ and $(x, y)\mapsto w(x, y)$ are $C^{\infty}$ on $\mathcal D^c_{\delta_0}$ and $V_p \equiv 0$ outside $\mathcal D^c_{\delta_0}$. The latter inequality implies \eqref{eq:L(2)Vp-L(2)fnvaoimceaCp+3}.

Now we need to show \eqref{eq:L(2)kappa-Tlkappacsfewvp} pointwise. Fix $(x, y)\in \mathcal D^c$ such that $d(x, y)\leq \delta_0/8$ and set $w := w(x, y)\in T_x\mathcal M$ as in Remark \ref{rem:PhiPhi-1onDcdleta}. Then  \eqref{eq:L(2)tildeVp-TLhpfwe} holds true, so we only need to compute $ \lim_{t\to 0}t^{-1}\mathbb E_{(x, w)} (h_{p} (x_t, w_t) -h_{p} (x_t, D \phi_t(x)w))$, where for each $t\geq 0$ we set $w_t = w(x_t, y_t)\in T_{x_t}\mathcal M$ if $d(x_t, y_t)<\delta_0$ and $w_t = 0$ if $d(x_t, y_t)\geq\delta_0$.
Let a stopping time $\tau$ be defined by 
$$
\tau:= \inf\{t\geq 0:d(x_t, x) + d(y_t, y) \geq \eps\},
$$
where $\eps := {|w|}/{2}$, so that $t\mapsto w_t$ is continuous a.s.\ in $t\in [0, \tau]$ and keeps distance $|w|/2$ from zero. Let $\Phi$ be defined by \eqref{eq:expmapPhi}. Then $t\mapsto (x_t, y_t)$ satisfies the following SDE due to the chain rule (see e.g.\ \cite[pp.\ 339--342]{Kal})
\begin{equation}\label{eq:dxtwtequationicweo}
\begin{split}
\dd \binom{x_t}{ w_t} = \dd \Phi^{-1}\binom{x_t}{y_t}  &=  D\Phi^{-1}(x_t, y_t)\circ \dd \binom{x_t}{y_t}   \\
&= \sum_{k\geq 1} \left\langle (D\Phi)^{-1}(x_t, w_t), \binom{\sigma_k}{\sigma_k}\bigl(\Phi(x_t, w_t)\bigr)  \right\rangle\circ \dd W^k_t.
\end{split}
\end{equation}
Therefore, by It\^o's formula \cite[Theorem 26.7]{Kal}
(recall that by Proposition \ref{prop:Lambda(p)defandpsiprequxwqxwq} we have that $\psi_{p}\in C^{2}(S\mathcal M)$, and  hence $(x, v)\mapsto h_{p}(x, v)$ is $C^2$ on $\{(x, v)\in T\mathcal M:x\in \mathcal M, v\in T_x\mathcal M, |w|/2\leq |v|\leq 2|w|\}$) for any $t\geq 0$ we have that thanks to \eqref{eq:flowforkappa=0dsa}, \eqref{eq:dsrqdoreDphikappa}, and \eqref{eq:dxtwtequationicweo}
\begin{equation}\label{eq:eqforABCScawP,ce}
\begin{split}
h_{p} (x_{t\wedge\tau}, D \phi_{t\wedge\tau}w) &= \sum_{k\geq 1}\int_0^{t\wedge\tau} A^k_s \ud W^k_s + \int_0^{t\wedge\tau} B_s \ud s,\\
h_{p} (x_{t\wedge\tau}, w_{t\wedge\tau}) &= \sum_{k\geq 1}\int_0^{t\wedge\tau} C^k_s \ud W^k_s + \int_0^{t\wedge\tau} D_s \ud s,
\end{split}
\end{equation}
where $A^k, B, C^k,D:\mathbb R_+ \times \Omega \to \mathbb R$ are continuous processes such that the stochastic integrals $ \sum_{k\geq 1}\int_0^{t\wedge\tau} A^k_s \ud W^k_s$ and $ \sum_{k\geq 1}\int_0^{t\wedge\tau} C^k_s \ud W^k_s$ exist for any $t\geq 0$ thanks to 
\cite{DPZ92,MP80} as by the condition {\textnormal {\bf (C)}}, \eqref{eq:vsoEdxtyt-pCtdxy-p}, the fact that $\Phi(x_t, w_t) = (x_t, y_t)$ and that $\Phi$ is a $C^{\infty}$-diffeomorphim, and Corollary \ref{cor:supofDphiontsciintegrs} we have that
\begin{align*}
 &\mathbb E\sum_{k\geq 1}\int_0^{t\wedge\tau} |A^k_s|^2 +| C^k_s|^2 \ud s\\
 &\quad\leq \mathbb E  \sum_{k\geq 1}\int_0^{t\wedge\tau}\big( \| D_x h_{p} (x_s, w_s)\|^2 + \| D_w h_{p} (x_s, w_s)\|^2\big) \\
 &\quad\quad \quad\quad\quad\quad \quad\quad \quad\quad \quad\quad \quad\quad \quad\quad  \cdot\big\|D\Phi^{-1}(x_s, w_s)\big\|^2\big(|\sigma_k(x_s)|^2 + |\sigma_k(y_s)|^2\big) \\
 &\quad\quad+ \big | \langle D_x h_{p} (x_s, D \phi_sw) ,\sigma(x_s)\rangle\big |^2 + \big | \langle D_w  h_{p} (x_s,  D \phi_sw), \langle D \sigma_k(x_s), D\phi_s w \rangle \rangle\big |^2 \ud  s\\
 &\quad\lesssim_{p,\psi_{p,\kappa},\|\Phi^{-1}\|}  \mathbb E  \sum_{k\geq 1}\int_0^{t\wedge\tau}\bigl( |w_s|^{-2p}+|w_s|^{-2p-2} + |w|^{-2p}\|(D \phi_s)^{-1}\|^{2p}\bigr)\|\sigma_k\|_{\infty}^2\\
 &\quad\quad + |w|^{-2p-2}\|(D \phi_s)^{-1}\|^{2p+2} \|D \sigma_k\|^2_{\infty} \|D\phi_s \|^2 \ud  s <\infty,
\end{align*}
where by $\|\Phi^{-1}\|$ we denote $\|D\Phi^{-1}\|_{C(\mathcal D_{\delta_0/2})} + \|D^2\Phi^{-1}\|_{C(\mathcal D_{\delta_0/2})}$ with $\mathcal D_{\delta_0/2}:= \{(x, y)\in \mathcal M\times \mathcal M:d(x, y)\leq \delta_0/2\}$ and where $\|D_x h_{p}(x, w)\| \lesssim_{p,\psi_{p}} |w|^{-p}$ and $\|D_w h_{p}(x, w)\| \lesssim_{p,\psi_{p}} |w|^{-p-1}$ by Lemma \ref{lem:estimforhpkappaandderivac}. For a similar reason $\mathbb P$-a.s.\ for any $0\leq s\leq t\wedge\tau$
\begin{equation}\label{eq:intBsDsoimcew}
\begin{split}
 |B_s| +&| D_s| \lesssim_{p,\psi_{p,\kappa},\|\Phi^{-1}\|} \\
   &\sum_{k\geq 1}\bigl( |w|^{-p} + |w|^{-p-2}\bigr)
\Bigl(\| \sigma_k \|_{\infty}^2 +\| D\sigma_k \|_{\infty}^2 + \| \sigma_k \|_{\infty}\|D^2\sigma_k\|_{\infty}\Bigr),
\end{split}
\end{equation}
so integrals $\int_0^{t\wedge \tau}B_s\ud s$ and $\int_0^{t\wedge \tau}D_s\ud s$ exist $\mathbb P$-a.s.
 Further, note that
\begin{equation}\label{eq:B0losslikmfe}
\begin{split}
B_0&= \frac{1}{2}\sum_{k\geq 1}\Bigl\langle D_x h_{p} (x,w), \langle D\sigma_k(x), \sigma_k(x)\rangle\Bigr\rangle\\
&\quad+ \Bigl\langle D_w  h_{p} (x, w), \langle D^2\sigma_k(x), (\sigma_k(x), w)\rangle  + \langle D\sigma_k(x),\langle D\sigma_k(x), w\rangle\Bigr\rangle  \\
&\quad+  D^2_{x,x} h_p(x, w) (\sigma_k(x),\sigma_k(x))\\
&\quad+ D^2_{w,w} h_p(x, w) (\langle D \sigma_k(x), w \rangle,\langle D \sigma_k(x), w \rangle)\\
& \quad+ 2D^2_{x,w} h_p(x, w) (\sigma_k(x),\langle D \sigma_k(x),w \rangle)
\end{split}
\end{equation}
and 
\begin{align}\label{eq:D0lofwmelcomxa}
D_0&=\frac{1}{2}\sum_{k\geq 1}\left \langle Dh_p(x, w), \left \langle D^2 \Phi^{-1}(x, y), \left (\binom{\sigma_k(x)}{\sigma_k(y)},\binom{\sigma_k(x)}{\sigma_k(y)}\right)\right \rangle\right \rangle\nonumber\\
&+\left \langle Dh_p(x, w),\left \langle D \Phi^{-1}(x, y), \binom{\langle D\sigma_k(x), \sigma_k(x)\rangle}{\langle D\sigma_k(y), \sigma_k(y)\rangle}\right \rangle\right \rangle\nonumber\\
&+\left \langle D^2h_p(x, w),\left(\left \langle D \Phi^{-1}(x, y), \binom{\sigma_k(x)}{ \sigma_k(y)}\right \rangle,\left \langle D \Phi^{-1}(x, y), \binom{\sigma_k(x)}{ \sigma_k(y)}\right \rangle\right )\right \rangle\nonumber\\
&=\frac{1}{2}\sum_{k\geq 1}\Biggl \langle D_wh_p(x, w), \left \langle D^2 w(x, y), \left (\binom{\sigma_k(x)}{\sigma_k(y)},\binom{\sigma_k(x)}{\sigma_k(y)}\right)\right \rangle \\
&\quad\quad\quad\quad\quad\quad\quad\quad\quad\quad\quad\quad\quad\quad\quad+ \left \langle D w(x, y),  \binom{\langle D\sigma_k(x), \sigma_k(x)\rangle}{\langle D\sigma_k(y), \sigma_k(y)\rangle}  \right \rangle\Biggr \rangle\nonumber\\
&+\bigl \langle D_xh_p(x, w),\langle D\sigma_k(x), \sigma_k(x)\rangle\bigr\rangle+  \bigl \langle D^2_{x,x} h_p(x, w), (\sigma_k(x),\sigma_k(x))\bigr \rangle\nonumber\\
&+ \left \langle D^2_{w,w} h_p(x, w),\left( \left \langle Dw(x, y), \binom{\sigma_k(x)}{ \sigma_k(y)}\right \rangle,\left \langle Dw(x, y), \binom{\sigma_k(x)}{ \sigma_k(y)}\right \rangle\right)\right \rangle\nonumber\\
& + 2\left \langle D^2_{x,w} h_p(x, w),\left (\sigma_k(x),\left \langle Dw(x, y), \binom{\sigma_k(x)}{ \sigma_k(y)}\right \rangle\right)\right \rangle\nonumber
\end{align}
(we leave these technical but elementary calculations to the reader, the notations $D^2_{x,x}$, $D^2_{x, w}$, and $D^2_{w,w}$ are provided in Section \ref{sec:prelim}). 

By Lemma \ref{lem:phitkat64wppacontinxandkapps} we have that for any $t>0$ and $q>1$
\[
 \mathbb E \sup_{0\leq s\leq t} (d(x_s, x) + d( y_s, y))^{2q} \lesssim_q t^{q/2},
\]
so by Markov's inequality for any $C>0$
\begin{equation*}\label{eq:wkappatnotfarfromwwithtq102}
\begin{split}
\mathbb P(\tau<t) = \mathbb P\Bigl\{\sup_{0\leq s\leq t} (d(x_s, x) + d( y_s, y))^{2q}\geq \eps^{2q}\Bigr\} \lesssim_q \eps^{-2q}t^{q/2},
\end{split}
\end{equation*}
and by choosing $q>4$ with exploiting of H\"older's inequality, \eqref{eq:vsoEdxtyt-pCtdxy-p}, and Corollary \ref{cor:supofDphiontsciintegrs} we get 
\begin{equation}\label{eq:Extau<tbddbyttq/2-1oidmew}
\begin{split}
 \mathbb E_{(x, w)}&\mathbf 1_{\tau <t}\frac{ h_{p} (x_t, w_t) -h_{p} (x_t, D \phi_tw) }{t} \lesssim \mathbb E_{(x, w)} \frac{\mathbf 1_{\tau <t}}{t}\Bigl(|w_t|^{-p} + \bigl|(D\phi_t)^{-1}\bigr|^p\Bigr)\\
 &\lesssim \frac{\mathbb P(\tau<t)^{\frac{1}{2}} }{t} \Bigl(\mathbb E_{(x, w)} \Bigl(|w_t|^{-p} + \bigl|(D\phi_t)^{-1}\bigr|^p\Bigr)^{2}\Bigr)^{\frac{1}{2}}\lesssim_{\eps, q} t^{\frac{q}{2} -1},
\end{split}
\end{equation}
so the left-hand side of the latter inequality vanishes as $t\to 0$. 
Consequently, by \eqref{eq:eqforABCScawP,ce}, \eqref{eq:B0losslikmfe}, \eqref{eq:D0lofwmelcomxa}, \eqref{eq:Extau<tbddbyttq/2-1oidmew}, and the fact that $B$ and $D$ are a.s.\ continuous uniformly bounded on $[0, t \wedge \tau]$ by \eqref{eq:intBsDsoimcew}  the following holds true
\begin{align*}
  \lim_{t\to 0}\mathbb E_{(x, w)}& \frac{ h_{p} (x_t, w_t) -h_{p} (x_t, D \phi_tw) }{t}\\
  &=  \lim_{t\to 0}\mathbb E_{(x, w)} \frac{ h_{p} (x_{t\wedge \tau}, w_{t\wedge \tau}) -h_{p} (x_{t\wedge \tau}, D \phi_{t\wedge \tau}w) }{t} \\
  &=  \lim_{t\to 0}\mathbb E_{(x, w)}\frac{\int_0^{t\wedge \tau}D_s-B_s\ud s}{t} = D_0-B_0\\
 &=  \frac{1}{2} \sum_{k\geq 1}\Biggl \langle D_wh_p(x, w), \left \langle D^2 w(x, y), \left (\binom{\sigma_k(x)}{\sigma_k(y)},\binom{\sigma_k(x)}{\sigma_k(y)}\right)\right \rangle \\
&\quad\quad\quad\quad+ \left \langle D w(x, y),  \binom{\langle D\sigma_k(x), \sigma_k(x)\rangle}{\langle D\sigma_k(y), \sigma_k(y)\rangle}  \right \rangle\\
&\quad\quad\quad\quad-\langle D^2\sigma_k(x), (\sigma_k(x), w)\rangle  - \bigl \langle D\sigma_k(x),\langle D\sigma_k(x), w\rangle\bigr\rangle\Biggr \rangle\nonumber\\
&+ \Biggl \langle D^2_{w,w} h_p(x, w),\left( \left \langle Dw(x, y), \binom{\sigma_k(x)}{ \sigma_k(y)}\right \rangle,\left \langle Dw(x, y), \binom{\sigma_k(x)}{ \sigma_k(y)}\right \rangle\right)\\
&\quad\quad\quad\quad- (\langle D \sigma_k(x), w \rangle,\langle D \sigma_k(x), w \rangle)\Biggr \rangle\nonumber\\
& + 2\left \langle D^2_{x,w} h_p(x, w),\left (\sigma_k(x),\left \langle Dw(x, y), \binom{\sigma_k(x)}{ \sigma_k(y)}\right \rangle-\langle D \sigma_k(x),w \rangle\right)\right \rangle,
\end{align*}
so Theorem \ref{thm:actionL(2)kappaonVkappa} follows.
\end{proof}

Let us prove the final lemma before obtaining the drift condition \eqref{eq:driftcondinpropow}.

\begin{lemma}\label{lem:theisCinL(2)kappaVkappa}
Let $\Sigma$, $\Sigma'$, and $\Sigma''$ be defined by \eqref{eq:Sigmaforexpboundsa}, \eqref{eq:Sigmaprikqforexpboundsa}, and \eqref{eq:formforSigmad'''das}, and let $p_0<\beta/2$ with $\beta$ being as in the condition {\textnormal {\bf (C)} (see Section \ref{sec:intro})}. Then for any $p\in [0, p_0]$ there exists a constant $c>0$ such that for any $(x, y)\in \mathcal D^c$
\begin{align*}
\big\||D_{ w} h_{p} (x, w(x, y)) \Sigma | + 
 |D_{ w,w}^2 h_{p} (x, w(x, y))\Sigma'|
 &+ |D_{x, w}^2 h_{p} (x, w(x, y))\Sigma'' |\big\|_{\infty}<c.
\end{align*}
\end{lemma}

\begin{proof}
The fact that both $|D_{ w} h_{p} (x, w(x, y)) \Sigma |$ and $|D_{ w,w}^2 h_{p} (x, w(x, y))\Sigma'|$ are uniformly bounded follows thanks to the condition {\textnormal {\bf (C)}} and Lemma \ref{lem:estimforhpkappaandderivac}.
Uniform boundedness of $|D_{x, w}^2 h_{p} (x, w(x, y))\Sigma'' |$ holds due to Lemma \ref{lem:estimforhpkappaandderivac} and the fact that $\|\Sigma''\| \lesssim |w|^{1+\beta/2}$ as
\begin{multline*}
 \left\|\sum_{k\geq 1} \sigma_k(x)\otimes \left (\left \langle Dw(x, y), \binom{\sigma_k(x)}{ \sigma_k(y)}\right \rangle-\langle D \sigma_k(x),w \rangle\right)\right\|_{T_x\mathcal M \otimes T_x\mathcal M} \\
 \leq \Bigl(\sum_{k\geq 1} \|\sigma_k\|^2\Bigr)^{1/2} \Biggl(\sum_{k\geq 1} \left\|\left \langle Dw(x, y),\binom{\sigma_k(x)}{ \sigma_k(y)}\right \rangle-\langle D \sigma_k(x),w \rangle\right\|^2\Biggr)^{1/2}\\
  = O(|w|^{1+\beta/2}),
\end{multline*}
where the latter follows from the condition {\textnormal {\bf (C)}}.
\end{proof}

\begin{proof}[Proof of Proposition \ref{prop:exiofVtildeoimc}]
The proposition follows directly from Theorem \ref{thm:actionL(2)kappaonVkappa} and Lemma \ref{lem:theisCinL(2)kappaVkappa}, { where $p^*$ is set to be $p_*\wedge \tfrac {\beta}{4}$}.
\end{proof}

\subsection{Ergodicity of the two-point motion}\label{subsec:ergoftwopmot}

 Throughout this section we will be mainly working with the probability space $(\overline {\Omega}, \overline {\mathcal F}, \overline {\mathbb P})$ (see Section \ref{sec:prelim}).

{Fix any $p\in(0, p^*]$}. From Proposition \ref{prop:exiofVtildeoimc} and the fact that $\widetilde V_p$ is in the domain of $L^{(2)}$ we conclude that for any $t\geq 0$
\begin{equation}\label{eq:eLambdaoplakVpdkam=-erq23d}
\begin{split}
e^{\Lambda(p)t} P^{(2)}_t\widetilde V_{p} - \widetilde V_{p} &= \int_0^t e^{\Lambda(p)s} P^{(2)}_s(\Lambda(p) + L^{(2)} ) \widetilde V_{p} \ud s\\
& \leq \int_0^t e^{\Lambda(p)s} P^{(2)}_s c_{p}  \ud s  = c_{p} \int_0^t e^{\Lambda(p)s}   \ud s\\
& =  c_{p} \frac{e^{\Lambda(p)t}-1}{\Lambda(p)} \leq c_{p} \frac{e^{\Lambda(p)t}}{\Lambda(p)}.
\end{split}
\end{equation}
Therefore $P^{(2)}_t \widetilde V_p \leq e^{-\Lambda(p)t} \widetilde V_p + \widetilde C_p$, where $\widetilde C_p := \tfrac{c_{p}}{\Lambda(p)}$, and in particular, if we consider 
$$
V_p(x, y) := d(x, y)^{-p},\;\;\; (x, y)\in \mathcal D^c,
$$
then by \eqref{eq:widetildVpeqaimxoimxq} and thanks to the fact that $P^{(2)}_t$ is nonnegative we get that 
\begin{equation}\label{eq:P(2)tVpbasdpomK2Rr3[p}
 P^{(2)}_t  V_p \leq K^2e^{-\Lambda(p)t} V_p + \widetilde C_p K.
\end{equation}
Let us show that an analogue of \eqref{eq:P(2)tVpbasdpomK2Rr3[p} holds for $P^{(2), \kappa}$ with constants independent of $\kappa$ for $\kappa$ small enough.

\begin{proposition}\label{prop:P(2)kappasminopckascbyVpVpdla}
 For any $t>0$ there exist $\kappa_0>0$ and $C_{p, t}>0$ such that for any $\kappa\in[0, \kappa_0]$ one has that
 \begin{equation}\label{eq:P(2)kappaosalmostwhawened}
    P^{(2), \kappa}_t  V_p \leq 2K^2e^{-\Lambda(p)t} V_p + C_{p, t}.
 \end{equation}
\end{proposition}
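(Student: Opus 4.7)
The plan is to derive a $\kappa$-dependent drift inequality for the Lyapunov function $\widetilde V_p$ constructed in Proposition \ref{prop:exiofVtildeoimc}, by exploiting the decomposition of the two-point generator $L^{(2),\kappa}$ into the $\sigma$-part $L^{(2)}$ plus a $\chi$-part of magnitude $\kappa$.

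First I would observe that the shared-noise two-point motion $(x^\kappa_t, y^\kappa_t)$ associated with \eqref{eq:equfortildephikappa} has generator $L^{(2),\kappa} = L^{(2)} + \kappa L^{\chi}$, where $L^{\chi}$ is the generator of the two-point Markov semigroup of the unit-amplitude $\chi_m$-driven flow (the factor $\kappa$ arises since $(\sqrt{2\kappa})^2/2=\kappa$). Since $\widetilde V_p$ is $C^{2+\beta/2}_{loc}$ on $\mathcal D^c$ and vanishes outside $\mathcal D^c_{\delta_0}$, I would verify, along the lines of the approximation argument in the proof of Theorem \ref{thm:actionL(2)kappaonVkappa}, that $\widetilde V_p$ lies in the domain of $L^{(2),\kappa}$ as an unbounded operator on $C_{p+3}(\mathcal D^c)$, and that the above decomposition holds on $\widetilde V_p$.

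The technical core is then the uniform pointwise bound $|L^{\chi}\widetilde V_p(x,y)|\leq C_p\widetilde V_p(x,y)$. This proceeds by repeating the computation of Theorem \ref{thm:actionL(2)kappaonVkappa} with $(\sigma_k)_{k\geq 1}$ replaced by $(\chi_m)_{m=1}^n$: substituting $\widetilde V_p(x,y) = h_p(x,w(x,y))$ near the diagonal and applying It\^o's formula along the $\chi$-flow yields an analog of Theorem \ref{thm:actionL(2)kappaonVkappa} in which $L^{\chi}\widetilde V_p$ equals $T\mathcal L^{\chi} h_p(x,w)$ plus the analogs of $D_w h_p\cdot\Sigma$, $D^2_{w,w} h_p\cdot \Sigma'$, $D^2_{x,w} h_p\cdot \Sigma''$ with $(\chi_m)$ in place of $(\sigma_k)$. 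Because $(\chi_m)_{m=1}^n$ are $C^{\infty}$ and finitely many, Taylor expansion gives $|\Sigma_{\chi}|=O(|w|)$, $\|\Sigma'_{\chi}\|=O(|w|^2)$ and $\|\Sigma''_{\chi}\|=O(|w|^2)$, with no H\"older loss; combining these with Lemma \ref{lem:estimforhpkappaandderivac} shows that each contribution is $O(|w|^{-p})$, hence $O(\widetilde V_p)$ near the diagonal. Similarly, $|T\mathcal L^{\chi} h_p|$ is bounded by $O(|w|^{-p})$ via Lemma \ref{lem:estimforhpkappaandderivac}. Outside $\mathcal D^c_{\delta_0}$ we have $\widetilde V_p\equiv 0$, so $L^{\chi}\widetilde V_p$ is supported there as well (up to a bounded boundary piece that we absorb into a constant).

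Combining with Proposition \ref{prop:exiofVtildeoimc}, we get
\begin{equation*}
L^{(2),\kappa} \widetilde V_p \leq -(\Lambda(p) - \kappa C_p)\,\widetilde V_p + c_p'
\end{equation*}
in $C_{p+3}(\mathcal D^c)$, for any $\kappa\in[0,\Lambda(p)/(2C_p)]$. Replicating the integration-by-parts computation in \eqref{eq:eLambdaoplakVpdkam=-erq23d} with $\Lambda(p)$ replaced by $\Lambda(p)-\kappa C_p$ yields
\begin{equation*}
P^{(2),\kappa}_t\widetilde V_p \leq e^{-(\Lambda(p)-\kappa C_p)t}\,\widetilde V_p + \frac{c_p'}{\Lambda(p)-\kappa C_p}.
\end{equation*}
Given $t>0$, choose $\kappa_0>0$ small enough that $\kappa_0\leq \Lambda(p)/(2C_p)$ and $e^{\kappa_0 C_p t}\leq 2$. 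Then for $\kappa\in[0,\kappa_0]$ we obtain $P^{(2),\kappa}_t\widetilde V_p \leq 2e^{-\Lambda(p)t}\widetilde V_p + 2c_p'/\Lambda(p)$, and converting between $\widetilde V_p$ and $V_p$ using \eqref{eq:widetildVpeqaimxoimxq} introduces the factor $K^2$ and gives \eqref{eq:P(2)kappaosalmostwhawened}. The principal obstacle is the term-by-term verification in step two; once $(\chi_m)_{m=1}^n$ are exploited as smooth and finite, the remaining Taylor estimates are routine and cleaner than those supporting Lemma \ref{lem:theisCinL(2)kappaVkappa}.
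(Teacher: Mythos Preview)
Your approach is correct and genuinely different from the paper's. You work at the generator level, writing $L^{(2),\kappa}=L^{(2)}+\kappa L^{\chi}$ and showing $|L^{\chi}\widetilde V_p|\lesssim \widetilde V_p + C$ by rerunning the structure of Theorem~\ref{thm:actionL(2)kappaonVkappa} with the smooth, finite family $(\chi_m)$ in place of $(\sigma_k)$; the Taylor remainders are then strictly better than those in condition~{\textnormal{\bf (D)}}, and Lemma~\ref{lem:estimforhpkappaandderivac} closes the estimate. This yields a perturbed drift $L^{(2),\kappa}\widetilde V_p\le -(\Lambda(p)-\kappa C_p)\widetilde V_p+c_p'$, after which the proof of \eqref{eq:eLambdaoplakVpdkam=-erq23d} goes through verbatim.

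The paper instead argues at the semigroup level: it bounds $|(P^{(2),\kappa}_t-P^{(2)}_t)V_p|$ pointwise by writing $|d(x^\kappa_t,y^\kappa_t)^{-p}-d(x_t,y_t)^{-p}|\lesssim |d(x^\kappa_t,y^\kappa_t)-d(x_t,y_t)|(d(x^\kappa_t,y^\kappa_t)^{-p-1}+d(x_t,y_t)^{-p-1})$, then controls the difference $d(x^\kappa_t,y^\kappa_t)-d(x_t,y_t)$ via the $\kappa$-continuity of the flow derivative $\|D\phi^\kappa_t-D\phi_t\|_\infty$ from Lemma~\ref{lem:phitkat64wppacontinxandkapps}. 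Your route is arguably more transparent, since it isolates exactly where the smallness of $\kappa$ enters, and avoids the pathwise geometric estimates \eqref{eq:dxkappaykappt-dxtytopkld0}--\eqref{eq:bendsm|cwom|-|decw|}. The paper's route, on the other hand, never touches the generator of the $\chi$-flow and reuses the already-proved flow regularity; it is thus independent of the particular structure of the viscosity term and would apply equally well to more general dissipative perturbations. One small point in your write-up: the bound $|L^\chi\widetilde V_p|\le C_p\widetilde V_p$ cannot hold on the annulus where the cutoff $\chi$ is active and $\widetilde V_p$ is small, so you do need the additive constant there (you note this, but it should enter the drift inequality, not just the discussion).
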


\begin{proof}
 By \eqref{eq:P(2)kappadefinition} and the definition of $V_p$ we have that for any $(x, y)\in \mathcal D^c$ 
 \begin{align*}
   | (P^{(2), \kappa}_t  - P^{(2)}_t ) V_p(x, y) |&= |\overline{\mathbb E}_{x, y} V_p(x^{\kappa}_t, y^{\kappa}_t) -V_p(x_t, y_t)|\\
   & = |\overline{\mathbb E}_{x, y} d(x^{\kappa}_t, y^{\kappa}_t)^{-p} -d(x_t, y_t)^{-p}|\\
   &\lesssim_{p} \overline{\mathbb E}_{x, y} \bigl|d(x^{\kappa}_t, y^{\kappa}_t) -d(x_t, y_t)\bigr|  \bigl(d(x^{\kappa}_t, y^{\kappa}_t)^{-p-1} +d(x_t, y_t)^{-p-1}\bigr)\\
   &\lesssim \Bigl(\overline{\mathbb E}_{x, y}\bigl(d(x^{\kappa}_t, y^{\kappa}_t) -d(x_t, y_t)\bigr)^2\Bigr)^{\frac 12}\\
   &\;\;\;\; \cdot \Bigl(\overline{\mathbb E}_{x, y}d(x^{\kappa}_t,y^{\kappa}_t)^{-2p-2} +d(x_t, y_t)^{-2p-2}\Bigr)^{\frac 12}.
 \end{align*}
Thanks to \eqref{eq:phikappaconeinxx'} 
\[
 \Bigl(\overline{\mathbb E}_{x, y}d(x^{\kappa}_t,y^{\kappa}_t)^{-2p-2} +d(x_t, y_t)^{-2p-2}\Bigr)^{\frac 12} \lesssim_{p} d(x, y)^{-p-1}.
\]
Fix $\kappa_0>0$ to be chosen at the end of the proof. Let us bound $\overline{\mathbb E}_{x, y}\bigl(d(x^{\kappa}_t, y^{\kappa}_t) -d(x_t, y_t)\bigr)^2$. As by Lemma \ref{lem:phitkat64wppacontinxandkapps} $(\phi^{\mu^2}(x))_{x\in \mathcal M, \mu \in[0, \sqrt{\kappa_0}]}$ is a stochastic flow of $C^1$-diffeomorphisms on $\mathcal M \times [0, \sqrt{\kappa_0}]$, and as $\mathcal M \times [0, \sqrt{\kappa_0}]$ is a compact, there exists $\eps>0$ small enough such that with probability at least $1-\delta_{\eps}$ for any $x, y\in \mathcal M$ with $d(x, y)\leq \eps$ and for any $\kappa\in[0, \kappa_0]$ one has that 
\begin{equation}\label{eq:dxkappaykappt-dxtytopkld0}
|d(x^{\kappa}_t, y^{\kappa}_t) -d(x_t, y_t)|\leq (\|D\phi^{\kappa}_t - D\phi_t\|_{\infty}+ \eps^{1/4}) d(x, y),
\end{equation}
where $\delta_{\eps}\to 0$ as $ \eps\to 0$ and $\kappa_0\to 0$. To this end first fix $\kappa_0>0$ and let $\overline{\Omega}_1\subset \overline{\Omega}$ be the set of all $\overline\omega\in \overline{\Omega}$ such that for any $x, y\in \mathcal M$ with $d(x, y)\leq \eps$ one has that $(x^{\kappa}_t(\overline \omega))_{\kappa\in[0,\kappa_0]}$ and $(y^{\kappa}_t(\overline \omega))_{\kappa\in[0,\kappa_0]}$ are all in the same local chart. Assume that $\eps$ is so small that any $x, y\in \mathcal M$ with $d(x, y)\leq \eps$ are in the same local chart as well. Then by \eqref{eq:dphikappatxphikaa't'x'bqd} one can show that $\delta'_{\eps}:= 1-\mathbb P(\overline{\Omega}_1)\to 0$ as $\eps\to 0$ and $\kappa_0\to 0$.

Next, for any $\kappa\in[0, \kappa_0]$ by \eqref{eq:dphikappatxphikaa't'x'bqd} and by the fact that $(x, y)\mapsto d(x, y)$ is $C^{\infty}$ we have that for some universal $C_{\mathcal M}$ 
\begin{equation}\label{eq:bdnrssofd(xxsas)and|dasp|}
\bigl|d(x^{\kappa}_t,y^{\kappa}_t) - |y^{\kappa}_t - x^{\kappa}_t|\bigr| \leq C_{\mathcal M} d(x^{\kappa}_t,y^{\kappa}_t)^2 \leq C_{\mathcal M} \widetilde C d(x,y)^{3/2},
\end{equation}
where $\widetilde C$ is as in  \eqref{eq:dphikappatxphikaa't'x'bqd}. Let $\overline{\Omega}_2:=\overline{\Omega}_1\cap \{C_{\mathcal M} \widetilde C\eps^{1/2} \leq \eps^{1/4}/2\}$ and let $\delta_{\eps}:=1- \mathbb P(\overline{\Omega}_2)$. Then it remains to notice that $\delta_{\eps}\to 0$ as $\eps \to 0$ and $\kappa_0\to 0$ and that
\begin{equation}\label{eq:bendsm|cwom|-|decw|}
\begin{split}
\bigl||y^{\kappa}_t &- x^{\kappa}_t| - |y_t - x_t|\bigr| \leq |y^{\kappa}_t - x^{\kappa}_t - (y_t - x_t)| \\
&= \Bigl| \int_{0}^{d(x, y)} \langle D\phi^{\kappa}_t(\zeta(a)), \zeta'(a)\rangle \ud a- \int_{0}^{d(x, y)} \langle D\phi_t(\zeta(a)), \zeta'(a)\rangle \ud a\Bigr|\\
&\leq \|D\phi^{\kappa}_t - D\phi_t\|_{\infty}d(x, y),
\end{split}
\end{equation}
where $\zeta:[0, d(x, y)]\to \mathcal M$ is a $C^{\infty}$ geodesic mapping within one chart such that $\zeta(0)=x$ and $\zeta(d(x, y))=y$ with $|\zeta'(a)|=1$, so \eqref{eq:dxkappaykappt-dxtytopkld0} follows from \eqref{eq:bdnrssofd(xxsas)and|dasp|} and \eqref{eq:bendsm|cwom|-|decw|}.

Therefore thanks to \eqref{eq:dxkappaykappt-dxtytopkld0}
\begin{align*}
 \overline{\mathbb E}_{x, y}\bigl(d(x^{\kappa}_t, y^{\kappa}_t) &-d(x_t, y_t)\bigr)^2\\
 &\stackrel{(i)} \lesssim (\sqrt{\delta_{\eps}} + \eps^{1/4}+ \mathbf 1_{[\eps, \infty)}(d(x, y))) d(x, y)^2 +  \overline{\mathbb E} \|D\phi^{\kappa}_t - D\phi_t\|_{\infty}^2 d(x, y)^2\\
 & = d(x, y)^2 (\sqrt{\delta_{\eps}}+ \eps^{1/4} + \mathbf 1_{[\eps, \infty)}(d(x, y)) +  \overline{\mathbb E} \|D\phi^{\kappa}_t - D\phi_t\|_{\infty}^2)\\
 &\stackrel{(ii)} \lesssim  d(x, y)^2 (\sqrt{\delta_{\eps}} + \eps^{1/4}+ \mathbf 1_{[\eps, \infty)}(d(x, y)) +  \kappa_0^{\beta/2}),
\end{align*}
where $(i)$ holds by \eqref{eq:phikappaconeinxx'} and H\"older's inequality and $(ii)$ follows from \eqref{eq:Dphikappa-Dphikappa'bddbykpkaf[w}. Now, via choosing optimal $\eps$ and $\kappa_0$ small enough one can obtain that
\[
  | (P^{(2), \kappa}_t  - P^{(2)}_t ) V_p(x, y) | \leq 
  \begin{cases}
   K^2 e^{-\Lambda(p)t}d(x, y)^{-p},\;\;\;&d(x, y)\leq \eps,\\
   C_{p, t},\;\;\;&d(x, y)> \eps,
  \end{cases}
\]
for some $C_{p, t}>0$, so \eqref{eq:P(2)kappaosalmostwhawened} holds true.
\end{proof}

Later one needs $C_{p, t}$ being independent of $t>0$. This can be done via the following corollary.

\begin{corollary}\label{cor:existnoft*dxc}
 Fix $t^*>0$ so that $2K^2e^{-\Lambda(p)t^*}<1$ and $\kappa_0$ as in Proposition \ref{prop:P(2)kappasminopckascbyVpVpdla} corresponding to $t^*$. Then there exists $C_p>0$ such that for any $n\geq 1$ and $\kappa\in[0, \kappa_0]$
 \begin{equation}\label{eq:P(2)kappaindof}
    P^{(2), \kappa}_{nt^*}  V_p \leq (2K^2e^{-\Lambda(p)t^*})^n V_p + C_{p},
 \end{equation}
 where $C_p := \tfrac{C_{p, t^*}}{1-2K^2e^{-\Lambda(p)t^*}}$.
\end{corollary}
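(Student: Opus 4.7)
The plan is to obtain \eqref{eq:P(2)kappaindof} by iterating the single-step estimate \eqref{eq:P(2)kappaosalmostwhawened} from Proposition \ref{prop:P(2)kappasminopckascbyVpVpdla} via the semigroup property of $(P_t^{(2),\kappa})_{t\geq 0}$. Set $\lambda := 2K^2 e^{-\Lambda(p)t^*}$, which by the choice of $t^*$ lies in $(0,1)$, and write $C:= C_{p,t^*}$ for brevity. The base case $n=1$ is Proposition \ref{prop:P(2)kappasminopckascbyVpVpdla} applied at time $t^*$ with the chosen $\kappa_0$.

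For the inductive step, I would assume
\[
P^{(2),\kappa}_{nt^*} V_p \leq \lambda^n V_p + C\sum_{k=0}^{n-1}\lambda^k,
\]
and apply $P^{(2),\kappa}_{t^*}$ to both sides. Positivity of the Markov operator $P^{(2),\kappa}_{t^*}$ preserves the inequality, and since it is a Markov operator we have $P^{(2),\kappa}_{t^*} \mathbf 1 \leq \mathbf 1$ (in fact equality because the two-point flow \eqref{eq:equfortildephikappa} is conservative). Combining with the semigroup property and the base-case estimate $P^{(2),\kappa}_{t^*} V_p \leq \lambda V_p + C$ gives
\[
P^{(2),\kappa}_{(n+1)t^*} V_p \leq \lambda^n(\lambda V_p + C) + C\sum_{k=0}^{n-1}\lambda^k = \lambda^{n+1} V_p + C\sum_{k=0}^{n}\lambda^k,
\]
which closes the induction.

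Finally, bounding the geometric series by its infinite sum $\sum_{k=0}^\infty \lambda^k = (1-\lambda)^{-1}$ yields the additive constant $C/(1-\lambda) = C_p$, as stated. There is no real obstacle here: the only subtle point is ensuring that one can pull constants through $P^{(2),\kappa}_{t^*}$, but this is immediate from the fact that $(P^{(2),\kappa}_t)_{t\geq 0}$ is a positive Markov semigroup on $C(\mathcal D^c)$ (and the one-step bound \eqref{eq:P(2)kappaosalmostwhawened} shows that $V_p$, although unbounded, is in a domain on which the inequality propagates by monotone convergence if one approximates $V_p$ from below by bounded truncations). The uniformity in $\kappa\in[0,\kappa_0]$ is inherited directly from Proposition \ref{prop:P(2)kappasminopckascbyVpVpdla}, since the $\kappa_0$ chosen for time $t^*$ works for every iterate.
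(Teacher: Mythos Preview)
Your proof is correct and follows essentially the same approach as the paper: iterate the one-step bound \eqref{eq:P(2)kappaosalmostwhawened} via the semigroup identity $P^{(2),\kappa}_{nt^*}=(P^{(2),\kappa}_{t^*})^n$ to obtain the geometric series $C_{p,t^*}\sum_{k=0}^{n-1}\lambda^k$, then bound by $(1-\lambda)^{-1}C_{p,t^*}$. The paper's proof is a one-line version of exactly this argument.
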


\begin{proof}
 It is sufficient to notice that by induction and the fact that $P^{(2), \kappa}_{nt^*}  = (P^{(2), \kappa}_{t^*})^n$
 \[
  P^{(2), \kappa}_{nt^*}  V_p \leq (2K^2e^{-\Lambda(p)t^*})^n V_p + C_{p, t^*}\bigl(1 + 2K^2e^{-\Lambda(p)t^*} + \ldots + (2K^2e^{-\Lambda(p)t^*})^{n-1}\bigr),
 \]
so the desired follows.
\end{proof}

Thanks to Harris' Theorem \ref{thm:QuaeHarwqthnms} we can conclude the following.

\begin{proposition}\label{prop:harridsthmforsVkappa}
Fix $p^*>0$ as in Proposition \ref{prop:exiofVtildeoimc}.
Then for any $0<p<p^*$ there exist $t_0, \kappa_0,C,\hat{\alpha}>0$ such that for any $\kappa\in [0, \kappa_0]$, for any $\psi: \mathcal D^c \to \mathbb R$ with $\int_{\mathcal M \times \mathcal M} \psi(x, y) \ud \mu( x)\ud \mu( y) = 0$, and for any $(x, y)\in \mathcal D^c$ we have that
 \begin{equation}\label{eq:P(2)kappapsiifsdsbddbyCVk||||CVk}
  |P^{(2), \kappa}_{nt_0}\psi(x, y)| \leq C e^{-\hat{\alpha} nt_0} d(x, y)^{-p} \|\psi\|_{\infty},\;\;\; n\geq 1.
 \end{equation}
\end{proposition}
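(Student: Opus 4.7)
The plan is to apply the quantitative Harris Theorem \ref{thm:QuaeHarwqthnms} to the discrete-time chain $(P^{(2),\kappa}_{nt^*})_{n \geq 0}$ on $\mathcal D^c$ with Lyapunov function $V := 1 + V_p$, and then identify the unique invariant measure produced by Harris as $\mu \otimes \mu$ (restricted to $\mathcal D^c$; note the diagonal is $\mu \otimes \mu$-null). Since $V(x,y) \leq (1 + \mathrm{diam}(\mathcal M)^p)\, d(x,y)^{-p}$ and mean-zero $\psi$ annihilates $\mu \otimes \mu$, the conclusion \eqref{eq:maingofthm:QuaeHarwqthnms} will deliver exactly \eqref{eq:P(2)kappapsiifsdsbddbyCVk||||CVk} with $\hat\alpha := -(\log \gamma_{\mathrm{H}})/t^*$, where $\gamma_{\mathrm{H}} \in (0,1)$ is the Harris contraction rate provided by the theorem.

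First I would fix $t^* > 0$ so that $\gamma := 2K^2 e^{-\Lambda(p)t^*} < 1$, take $\kappa_0 > 0$ as in Corollary \ref{cor:existnoft*dxc}, and use that corollary to obtain, for every $\kappa \in [0, \kappa_0]$,
\[
P^{(2),\kappa}_{t^*} V = 1 + P^{(2),\kappa}_{t^*} V_p \leq \gamma V + (1 - \gamma + C_p),
\]
verifying Harris' drift condition (i) with constants independent of $\kappa$. Second, I would choose $R > 2(1 - \gamma + C_p)/(1 - \gamma)$, observing that the sublevel set $\{V \leq R\}$ is the closed off-diagonal set $K_R := \{(x,y) \in \mathcal M^2 : d(x,y) \geq (R-1)^{-1/p}\}$. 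Condition \textnormal{\bf (B)} asserts that on such a set the two-point diffusion matrix of $L^{(2),0}$ is uniformly positive definite, and the additional $\kappa$-contribution from $(\chi_m)_{m=1}^n$ is non-negative, so $L^{(2),\kappa}$ remains uniformly strictly elliptic on an open neighborhood of $K_R$, uniformly in $\kappa \in [0, \kappa_0]$. Combined with the $C^{1,\beta}$-regularity of the two-point coefficients supplied by \textnormal{\bf (D)}, classical parabolic theory (localized Aronson-type Gaussian bounds, or equivalently the strong Feller property derived from a H\"ormander--Malliavin argument on a neighborhood of $K_R$) will then furnish a jointly continuous density $p^{(2),\kappa}_{t^*}$ of $P^{(2),\kappa}_{t^*}$ relative to $\mu \otimes \mu$ satisfying
\[
\inf_{\kappa \in [0, \kappa_0]} \inf_{(x,y), (x',y') \in K_R} p^{(2),\kappa}_{t^*}((x,y), (x',y')) \geq \eta > 0,
\]
so that $\nu := (\mu \otimes \mu)|_{K_R}/(\mu \otimes \mu)(K_R)$ fulfills Harris' minorization (ii).

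To close the argument I would invoke Remark \ref{rem:phikappameasupres}: each $\phi^\kappa_t$ is $\mu$-measure-preserving, so Fubini yields that $\mu \otimes \mu$ is invariant for $(P^{(2),\kappa}_t)_{t \geq 0}$, and by the uniqueness clause of Harris it coincides with the invariant measure produced by the theorem. Assembling the drift and minorization inputs via Theorem \ref{thm:QuaeHarwqthnms} and using $V \lesssim V_p$ then delivers \eqref{eq:P(2)kappapsiifsdsbddbyCVk||||CVk} uniformly in $\kappa \in [0, \kappa_0]$. The main obstacle is the minorization: for each fixed $\kappa$ a strictly positive continuous transition density on $K_R$ is standard, but securing the lower bound $\eta$ \emph{uniformly} in $\kappa \in [0, \kappa_0]$ requires quantitative Gaussian-type estimates whose constants depend only on the (uniform-in-$\kappa$) ellipticity constant on a neighborhood of $K_R$ and on the (uniform-in-$\kappa$) $C^{1,\beta}$-norm of the coefficients---both of which are furnished by \textnormal{\bf (B)} and \textnormal{\bf (D)}, but whose extraction from off-the-shelf parabolic theory requires some care given the manifold geometry and the infinite series defining $L^{(2),\kappa}$.
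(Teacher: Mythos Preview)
Your proposal is correct and follows essentially the same route as the paper: apply Harris' Theorem~\ref{thm:QuaeHarwqthnms} to the discrete-time semigroup $P^{(2),\kappa}_{t^*}$, obtain the drift condition from Corollary~\ref{cor:existnoft*dxc}, obtain the minorization on the sublevel set of $V_p$ from strict ellipticity on compacts of $\mathcal D^c$ via condition~{\textnormal{\bf (B)}} together with the regularity from~{\textnormal{\bf (D)}}, and identify the resulting invariant measure as $\mu\otimes\mu$ by measure-preservation of the flow. The only notable difference is in how the $\kappa$-uniform minorization is justified: you appeal to Aronson-type Gaussian lower bounds, while the paper invokes its own Harnack-type result (Proposition~\ref{prop:appendxHarnackmancewell} in the appendix), which is tailored to produce a lower bound on the transition density over a compact $K\subset\mathcal D^c$ with constant depending only on the ellipticity and coefficient bounds on $K$---precisely the quantities that are continuous (hence uniformly controllable) in $\kappa$.
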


\begin{proof}
Let us start by noticing that $\mu\otimes \mu$ is an invariant measure of $(x^{\kappa}, y^{\kappa})$ on $\mathcal D^c$ 
 as $t\mapsto (x^{\kappa}, y^{\kappa})$ satisfies the following SDE
defined on $\mathcal D^c$ by
\[
  \dd\binom{x_t^{\kappa}}{y_t^{\kappa}} = \sum_{k\geq 1} \binom{\sigma_k(x_t^{\kappa})}{\sigma_k(y_t^{\kappa})} \circ \ud W_{t}^k 
  + \sqrt{2\kappa} \sum_{m=1}^n \binom{\chi_m(x_t^{\kappa})}{\chi_m(y_t^{\kappa})} \circ \ud \widetilde W_t^m,
\]
which contains only divergence-free vector fields and hence preserves $\mu\otimes\mu$ a.s.\ by Remark \ref{rem:phikappameasupres}. 
Hence in order to show \eqref{eq:P(2)kappapsiifsdsbddbyCVk||||CVk} one needs only to check the conditions of Theorem \ref{thm:QuaeHarwqthnms} (in this case the unique invariant measure would be $\mu\otimes \mu$), i.e.\ we need to show that for $\kappa_0$ being from Proposition \ref{prop:P(2)kappasminopckascbyVpVpdla} and for $t_0=t^*$ as in Corollary \ref{cor:existnoft*dxc} for any $\kappa\in[0, \kappa_0]$ $\mathcal P= P_{t^*}^{(2), \kappa}$ satisfies the conditions $(i)$ and $(ii)$ of Theorem \ref{thm:QuaeHarwqthnms} with constants independent of $\kappa\in[0,\kappa_0]$.

 The condition $(i)$ is satisfied by Corollary \ref{cor:existnoft*dxc} with constants $\gamma := 2K^2e^{-\Lambda(p)t^*}$ and $C:=C_p$. Let us now check the condition $(ii)$, i.e.\ that there exist $\eta>0$, $z_{*}\in \mathcal D^c$, and $\eps>0$ such that

\begin{equation}\label{eq:transprobabforP(2)kappafromztoballinbssfromdsa}
\inf_{z\in \{V_{p} \leq R\}} P_{t^*}^{(2), \kappa} \mathbf 1_{A'} (z) >\eta (\mu\otimes \mu)(A'),
\end{equation}
for any $A'\subset B_{\eps}(z_*)$. { Choose $R>1$ to be such that 
$$
\{V_p<R\} = \{(x, y)\in \mathcal D^c:d(x, y)>R^{-1/p}\}
$$ 
is nonempty.}
Then the condition follows from Harnack-type inequalities presented in Proposition \ref{prop:appendxHarnackmancewell}, since for any $\kappa \geq 0$ the transition probability function $p^{\kappa}:(0, \infty)\times \mathcal D^c \times \mathcal D^c$ corresponding to $(x^{\kappa}, y^{\kappa})$ satisfies the following equation for any $u\in C_0^{\infty}(\mathbb R_+ \times \mathcal D^c)$
\[
\int_{\mathbb R_+ \times \mathcal D^c} \left[\frac{\partial u(t, (x, y))}{\partial t} +L^{(2), \kappa}u(t, (x, y))\right] p^{\kappa}(t, \cdot, (x, y))\ud t\ud \mu (x) \ud \mu (y)=0,
\]
where 
\[
L^{(2), \kappa}u:= \left(\frac 12  \sum_{k\geq 1} \binom{\sigma_k}{\sigma_k}^2 +\kappa\sum_{m=1}^n \binom{\chi_m}{\chi_m}^2  \right) u,
\]
see \eqref{eq:prelimsigma2xfcq}. Note that the second- and first-order coefficients, as well as linear term of $L^{(2), \kappa}$ are continuous in $\kappa$. Moreover, by the condition {\textnormal {\bf (C)}} the derivatives of the second- and first-order coefficients and by the condition {\textnormal {\bf (A)}} the ellipticity constant on any given compact $K\subset \mathcal D^c$ are continuous in $\kappa$ as well. Therefore one can apply Proposition \ref{prop:appendxHarnackmancewell} for $p^{\kappa}$ with setting the corresponding compact $K$ to be $\{V_p\leq R\}$ so that due to the continuity in $\kappa$ one is able to assume that both $a_K$ and $b_{K}$ are independent on $\kappa\in [0, \kappa_0]$, hence the corresponding Harnack constant $c_{t^*, K, a_K, b_K}$ is independent of  $\kappa\in [0, \kappa_0]$ and thus \eqref{eq:transprobabforP(2)kappafromztoballinbssfromdsa} holds true.
\end{proof}

\subsection{Proof of Theorem \ref{thm:H-sconvergensofHsinvartext}}\label{subsec:proofofThmmain}

We now present the proof of Theorem \ref{thm:H-sconvergensofHsinvartext}. 
To this end it is sufficient to show the following proposition, connecting ergodicity of the two-point motion and mixing.
\begin{proposition}\label{prop:twopointergimplmixing}
 Assume that the condition {\textnormal {\bf (C)}} is satisfied and that in addition
 there exist $t_0, \kappa_0,C,\hat{\alpha}>0$ such that for any $\kappa\in [0, \kappa_0]$, for any $f\in L^{
 \infty}(\mathcal M)$ with $\int_{\mathcal M }f(x) \ud \mu( x) = 0$ we have that
 \begin{equation}\label{eq:P(2)kappapsforphitimesphi}
  \int_{\mathcal M\times \mathcal M}\bigr|\mathbb E f(\phi^{\kappa}_{nt_0}(x))f(\phi^{\kappa}_{nt_0}(y))\bigl|\ud \mu( x) \ud \mu( y)  \leq C e^{-\hat{\alpha} nt_0}  \|f\|_{\infty}^2,\;\;\; n\geq 1.
 \end{equation}
 Let $(u_t)_{t\geq 0}$ be the solution of \eqref{eq:mainstochdiffeqwithkappa} with $u$ being  mean-zero in $H^s(\mathcal M)\cap H^1(\mathcal M)$ for some $s\in(0, 1+\beta/2)$. Then there exist $\kappa_0>0$ and $\gamma_0>0$ independent of $s$ and $u$ such that for any $\kappa \in [0, \kappa_0]$ and $\gamma\in(0, \gamma_0)$ there exists $D_{\kappa,\gamma}:\Omega \to [1, \infty)$ so that 
 \begin{equation}\label{eq:H-sHsineqsfaprop}
  \|u_t\|_{H^{-s}} \leq D_{\kappa,\gamma}e^{-\gamma s t} \|u\|_{H^s},\;\;\; t> 0.
 \end{equation}
 
 Moreover, $\sup_{\kappa\in[0, \kappa_0]}\mathbb E |D_{\kappa,\gamma}|^p<\infty$ for any $1\leq p<\tfrac{9d \gamma_0}{2\gamma s}$.
\end{proposition}

Note that \eqref{eq:P(2)kappapsforphitimesphi} is a direct consequence of \eqref{eq:P(2)kappapsiifsdsbddbyCVk||||CVk} as for any $0<p<d$ one has $\int_{\mathcal M}\int_{\mathcal M} d(x, y)^{-p}\ud \mu(x)\ud \mu(y)<\infty$.

The proof of Proposition \ref{prop:twopointergimplmixing} is inspired by Section 2.1 and Section 7 from \cite{BBP-S}. In the first step of the proof, we restrict to integer times, relying on their countability. This step exploits in an essential way the ergodicity of the two-point motion \eqref{eq:P(2)kappapsforphitimesphi} 
and allows to conclude for large orders $s$. Using an interpolation argument, in the second step, this is extended to general order $s$. In the third step, the result is extended to all times, by means of local-in-time regularity preservation by the stochastic flow. Together with moments bounds, derived in step five, the proof is finished in the final step.

\textit{Step 1: Integer times and large $s$}

 First, similarly to \cite[Section 2.1]{BBP-S} we prove the proposition for integer times $n$. 
 To this end we notice that for any $n\geq 1$, for any zero mean $f,g\in C(\mathcal M)$, and for any fixed $0<\hat{\gamma}< \hat{\alpha}/2$ 
 (here we set for simplicity that $t_0$ 
 equals 1)
\begin{equation}\label{eq:Pthatfgphikappanletwqe-alpha2gammahads}
\begin{split}
\overline {\mathbb P} \Bigl\{\Big| \int_{\mathcal M} f(x) g(\phi^{\kappa}_n(x)) \ud &\mu(x) \Big| >e^{-\hat{\gamma}n} \Bigr\} \leq e^{2\hat{\gamma}n} \mathbb E \Bigl| \int_{\mathcal M} f(x) g(\phi^{\kappa}_n(x)) \ud \mu(x) \Bigr|^2\\
&= e^{2\hat{\gamma}n} \int_{\mathcal M \times \mathcal M} \tilde f(x, y) P^{(2)}_n \tilde g(x, y) \ud \mu(x) \ud \mu(y)\\
&\stackrel{(*)} \lesssim e^{-( \hat{\alpha} - 2\hat{\gamma})n} \|f\|_{\infty}^2 \|g\|_{\infty}^2,
\end{split}
\end{equation}
where $\tilde f(x, y) = f(x)f(y)$, $\tilde g(x, y) = g(x)g(y)$, and where $(*)$ follows from \eqref{eq:P(2)kappapsforphitimesphi}. By using the fact that the embedding $H^s \hookrightarrow L^{\infty}$ is continuous for $s$ big enough we can conclude that 
$$
\overline {\mathbb P} \Bigl\{\Big| \int_{\mathcal M} f(x) g(\phi^{\kappa}_n(x)) \ud\mu(x) \Big| >e^{-\hat{\gamma}n} \Bigr\} \lesssim e^{-( \hat{\alpha} - 2\hat{\gamma})n} \|f\|_{H^s}^2 \|g\|_{H^s}^2,
$$ 
so by Borell-Cantelli lemma we have that
$$
\Bigl| \int_{\mathcal M} f(x) g(\phi^{\kappa}_n(x,\overline\omega)) \ud \mu(x)\Bigr | \lesssim_{\overline\omega, f, g} e^{-\hat{\gamma}n}
$$ 
for a.e.\ $\overline \omega\in \overline \Omega$.
Unfortunately, this logic does not seem to lead to \eqref{eq:H-sHsineqsfa} as there we need a constant independent of $f\in H^s$ (recall that 
$$
\|g\|_{H^{-s}} = \sup_{0\neq f\in H^s}\int_{\mathcal M} fg/\|f\|_{H^s}
$$
for any $g\in H^{-s}$). So we will proceed via an orthonormal basis of $H^s$. Fix any $s_0>2d$ and $f, g\in H^{s_0}$. Let $(\hat{e}_k)_{k\geq 1}$ be an orthonormal basis of mean zero functions on $\mathcal M$  consisting of all eigenvectors of $-\Delta$ and let $(\lambda_k)_{k\geq 1}$ be the corresponding eigenvalues (see Section \ref{sec:prelim}). Without loss of generality let $(\lambda_k)_{k\geq 1}$ be nondecreasing. Then by Weyl's law (see e.g.\ \cite{Pes19}) we have that 
\begin{equation}\label{eq:weylslawdwq}
\lambda_k \lesssim_{\mathcal M} k^{2/d},\;\;\;k\geq 1;
\end{equation}
 we also know that $\|\hat{e}_k\|_{\infty} \lesssim_{\mathcal M} \lambda_k^{d/4}$ (see e.g.\ \cite{So88,Do01}). Therefore we can conclude that
\begin{equation}\label{eq:ekinfk12da54}
\|\hat{e}_k\|_{\infty} \lesssim_{\mathcal M}  k^{1/2},\;\;\; k\geq 1.
\end{equation}
Moreover, for each $k, k' \geq 1$ analogously to \eqref{eq:Pthatfgphikappanletwqe-alpha2gammahads} by \eqref{eq:ekinfk12da54} we have that
\begin{multline*}
\overline {\mathbb P} \Bigl\{\Big| \int_{\mathcal M} \hat{e}_k(x) \hat{e}_{k'}(\phi^{\kappa}_n(x)) \ud \mu(x) \Big| >e^{-\hat{\gamma}n} (kk')^{\frac{s_0}{d}-1}\Bigr\} \\
\lesssim e^{-( \hat{\alpha} - 2\hat{\gamma})n} (kk')^{2-\frac{2s_0}{d}} \|\hat{e}_k\|_{\infty}^2 \|\hat{e}_{k'}\|_{\infty}^2
 \lesssim e^{-( \hat{\alpha} - 2\hat{\gamma})n} (kk')^{3-\frac{2s_0}{d}},
\end{multline*}
so by Borell-Cantelli lemma and the fact that $\sum_{n, k, k'\geq 1} e^{-( \hat{\alpha} - 2\hat{\gamma})n} (kk')^{3-\frac{2s_0}{d}}<\infty$ there exists a.s.\ finite $\widehat D:\overline\Omega \to \mathbb R_+$ defined by 
\begin{equation}\label{eq:widehatDnewfpw,xw}
\widehat D := 1\vee \sup_{n, k, k'\geq 1} \frac{| \int_{\mathcal M} \hat{e}_k(x) \hat{e}_{k'}(\phi^{\kappa}_n(x)) \ud \mu(x) | e^{\hat{\gamma}n}}{(kk')^{\frac{s_0}{d}-1}},
\end{equation}
so that $| \int_{\mathcal M} \hat{e}_k(x) \hat{e}_{k'}(\phi^{\kappa}_n(x)) \ud \mu(x) | \leq \widehat D  e^{-\hat{\gamma}n }( kk')^{\frac{s_0}{d}-1}$ a.s. In particular, if $f$ and $g$ have the following expansions 
$$
f = \sum_{k\geq 1} f_k \hat e_k\;\;\; \text{and}\;\;\;g = \sum_{k\geq 1} g_k \hat e_k,
$$
then we get that a.s.\ for any $n\geq 1$
\begin{multline*}
\Bigl| \int_{\mathcal M} f(x) g(\phi^{\kappa}_n(x))\ud \mu(x) \Bigr| \leq \sum_{k, k' \geq 1} |f_k||g_{k'}| \Big| \int_{\mathcal M} \hat{e}_k(x) \hat{e}_{k'}(\phi^{\kappa}_n(x)) \ud \mu(x)\Big |\\
\leq \widehat{D} e^{-\hat{\gamma}n }  \sum_{k\geq 1} k^{\frac{s_0}{d}-1}|f_k| \sum_{k'\geq 1}k'^{\frac{s_0}{d}-1}|g_{k'}| \lesssim \widehat D e^{-\hat{\gamma}n} \|f\|_{H^{s_0}} \|g\|_{H^{s_0}},
\end{multline*}
where for the last estimate we used that by the Cauchy-Schwartz inequality, \eqref{eq:defofHspcq}, and \eqref{eq:weylslawdwq}
\[
 \sum_{k\geq 1} k^{\frac{s_0}{d}-1}|f_k|  \leq \|( k^{\frac{s_0}{d}}|f_k|)_{k\geq 1}\|_{\ell^2} \|(k^{-1})_{k\geq 1}\|_{\ell^2} \eqsim \|f\|_{H^s}.
\]
Thus \eqref{eq:H-sHsineqsfa} follows for integer $t=n$ and any fixed $s_0 >2d$.

\textit{Step 2: Integer times and arbitrary $s$}

In order to move to a general $s>0$ notice that $S:g\mapsto S(g):=  g(\phi^{\kappa}_n(\cdot))$ is a bounded linear operator from $H^{s_0}$ to $H^{-s_0}$ with the norm bounded from above by $\widehat D e^{-\hat{\gamma}n}$ a.s.; at the same time this is an isometry in $L^2$ by the fact that $\phi^{\kappa}_t$ is $\mu$-measure preserving thanks to Remark \ref{rem:phikappameasupres}. Hence thanks to the Riesz-Thorin interpolation argument (see e.g.\ \cite[Section C]{HNVW1}) we can extend \eqref{eq:H-sHsineqsfa} to any $0< s \leq  2d$ (and hence to any $s>0$) still with $t$ being integer so that
$$
\|S\|_{\mathcal L(H^s, H^{-s})} \leq \|S\|_{\mathcal L(H^{s_0}, H^{-s_0})}^{\frac{s}{s_0}}\|S\|_{\mathcal L(L^2)}^{1-\frac{s}{s_0}} = \|S\|_{\mathcal L(H^{s_0}, H^{-s_0})}^{\frac{s}{s_0}},
$$ 
in particular
\begin{equation}\label{eq:fgintegekviaHscaoim}
\Bigl| \int_{\mathcal M} f(x) g(\phi^{\kappa}_n(x)) \ud \mu(x) \Bigr|\lesssim_{s} \widehat D^{\frac{s}{s_0}} e^{-\hat{\gamma}'n} \|f\|_{H^{s}} \|g\|_{H^{s}},
\end{equation}
where $\hat{\gamma}' = \frac{s\hat{\gamma}}{s_0}$ (one can assume $s_0=3d$ for simplicity).

\textit{Step 3: Arbitrary times}

Fix $s\in (1,1+\beta/2)$, the case $s\in(0, 1]$ can be done analogously. Let us now treat general $t\geq 0$. This can be done similarly to \cite[Section 7.2]{BBP-S}: it is enough to notice that for a.e.\ $\overline\omega \in \overline\Omega$
\begin{equation}\label{eq:fgineintintegtorealvcda}
\Bigl | \int_{\mathcal M} f(x) g(\phi^{\kappa}_t(x, \overline\omega)) \ud \mu(x) \Bigr| \lesssim_{s} \widehat{D}(\overline\omega)^{\frac{s}{s_0}}e^{-\hat{\gamma}'n} \|f\|_{H^s} \|g(\phi^{\kappa}_{\hat t}(\cdot, \theta_n\overline \omega))\|_{H^s},
\end{equation}
where $n\in \mathbb N$ and $\hat t\in[0, 1)$ are such that $t = n + \hat t$ and where $\theta_n$ is as defined in Section \ref{sec:prelim}, see \eqref{eq:thetataudef}. Then thanks to \eqref{eq:Sobolevequivalentnormisd} for any $\hat t\in[0, 1]$ we can bound for a.e.\ $\overline\omega\in \overline\Omega$ (we omit $\overline\omega$ for the simplicity)
\begin{equation}\label{eq:bigeqforSobSlnors}
\begin{split}
&\|g(\phi^{\kappa}_{\hat t})\|_{H^s}  \eqsim_{\mathcal M, (U_{\alpha})_{\alpha \in I}}\|g\|_{L^2} \\
&\quad+\Bigl( \iint_{\mathcal D^c} \frac{|\langle Dg(x^{\kappa}_{\hat t}), D\phi^{\kappa}_{\hat t}(x)\rangle-\langle Dg(y^{\kappa}_{\hat t}),D\phi^{\kappa}_{\hat t}(y)\rangle |^2}{d(x, y) ^{2\{s\}+d}} \ud \mu\otimes\mu(x, y) \Bigr)^{\frac{1}{2}}
\end{split}
\end{equation}
where $\{s\}:= s-1$ and where $g$ can be assumed $C^{\infty}$ by an approximation argument. The right hand side of \eqref{eq:bigeqforSobSlnors} can be bounded by
\begin{align*}
\|g\|_{L^2}&+  \Bigl( \iint_{\mathcal D^c} \frac{|Dg(x^{\kappa}_{\hat t})-Dg(y^{\kappa}_{\hat t})|^2\|D\phi^{\kappa}_{\hat t}\|_{L^{\infty}}^2 }{d(x, y)^{2\{s\}+d}}  \ud \mu\otimes\mu(x, y) \Bigr)^{\frac{1}{2}}\\
 &+\Bigl( \iint_{\mathcal D^c} \frac{ \|D\phi^{\kappa}_{\hat t}(x)-D\phi^{\kappa}_{\hat t}(y)\|^2|Dg(x^{\kappa}_{\hat t})|^2}{d(x, y)^{2\{s\}+d}}  \ud \mu\otimes\mu(x, y) \Bigr)^{\frac{1}{2}}.
\end{align*}
In order to bound the expression above on the one hand notice that by \eqref{eq:Dphikappa-Dphikappa'bddbykpkaf[w} $\overline {\mathbb P}$-a.s.
\begin{align*}
&\iint_{\mathcal D^c} \frac{ \|D\phi^{\kappa}_{\hat t}(x)-D\phi^{\kappa}_{\hat t}(y)\|^2|Dg(x^{\kappa}_{\hat t})|^2}{d(x, y)^{2\{s\}+d}}  \ud \mu\otimes\mu(x, y)\\
&\quad\quad\quad\quad \leq \widetilde C^2 \int_{\mathcal M}|Dg(x^{\kappa}_{\hat t})|^2  \int_{\mathcal M}d(x, y)^{\beta-2\{s\}-d} \ud \mu(y) \ud \mu(x) \\
&\quad\quad\quad\quad \lesssim_{\mathcal M, \beta}\widetilde C^2 \int_{\mathcal M}|Dg(x^{\kappa}_{\hat t})|^2   \ud \mu(x)\lesssim_{\mathcal M}\widetilde C^2 \|g\|_{H^s}^2,
\end{align*}
where we use the fact that the map $x\mapsto x^{\kappa}_{\hat t}$ preserves $\mu$ a.s.\ by Remark \ref{rem:phikappameasupres} and where $\widetilde C:\overline\Omega\to \mathbb R_+$ is an integrable random variable independent of $\hat t\in[0, 1]$ and $\kappa\in[0, \kappa_0]$ (we assume that $T=1$ in \eqref{eq:Dphikappa-Dphikappa'bddbykpkaf[w}, see Lemma \ref{lem:phitkat64wppacontinxandkapps}), and on the other hand
\begin{align*}
&\Bigl( \iint_{\mathcal D^c} \frac{|Dg(x^{\kappa}_{\hat t})-Dg(y^{\kappa}_{\hat t})|^2\|D\phi^{\kappa}_{\hat t}\|_{L^{\infty}}^2 }{d(x, y)^{2\{s\}+d}}  \ud \mu\otimes\mu(x, y) \Bigr)^{\frac{1}{2}}\\
&\quad\quad\quad=  \|D\phi^{\kappa}_{\hat t}\|_{L^{\infty}} \Bigl( \iint_{\mathcal D^c} \frac{|Dg(x^{\kappa}_{\hat t})-Dg(y^{\kappa}_{\hat t})|^2 }{d(x^{\kappa}_{\hat t},y^{\kappa}_{\hat t})^{2\{s\}+d}} \frac{d(x^{\kappa}_{\hat t},y^{\kappa}_{\hat t})^{2\{s\}+d}}{d(x,y)^{2\{s\}+d}}  \ud \mu\otimes\mu(x, y) \Bigr)^{\frac{1}{2}} \\
&\quad\quad\quad\lesssim \|D\phi^{\kappa}_{\hat t}\|_{L^{\infty}}^{s + d/2+1} \Bigl( \iint_{\mathcal D^c} \frac{|Dg(x^{\kappa}_{\hat t})-Dg(y^{\kappa}_{\hat t})|^2}{d(x^{\kappa}_{\hat t},y^{\kappa}_{\hat t})^{2\{s\}+d}}\ud \mu\otimes\mu(x, y) \Bigr)^{\frac{1}{2}}\\
&\quad\quad\quad\stackrel{(*)}=  \|D\phi^{\kappa}_{\hat t}\|_{L^{\infty}}^{s + d/2+1} \Bigl( \iint_{\mathcal D^c} \frac{|Dg(x)-Dg(y)|^2}{d(x,y)^{2\{s\}+d}}\ud \mu\otimes\mu(x, y) \Bigr)^{\frac{1}{2}}\\
&\quad\quad\quad\eqsim_{\mathcal M, s }\|D\phi^{\kappa}_{\hat t}(\overline\omega)\|_{L^{\infty}}^{s + d/2+1}\|g\|_{H^s},
\end{align*}
where $(*)$ follows from the fact that the mapping $(x, y)\mapsto (x^{\kappa}_{\hat t}, y^{\kappa}_{\hat t})$ preserves $\mu\otimes \mu$ thanks to Remark \ref{rem:phikappameasupres} (see the proof of Proposition \ref{prop:harridsthmforsVkappa}).
Therefore
\begin{equation}\label{eq:frinn+thttothatecmaxGamma}
 \sup_{\hat t\in[0, 1]}\|g(\phi^{\kappa}_{\hat t})(\cdot, \overline\omega)\|_{H^s} \lesssim \Gamma(\overline\omega) \|g\|_{H^s},\;\;\;\overline \omega \in \overline\Omega,
\end{equation}
 with
\begin{equation}\label{eq:defofGgfdammaomegdasdgbvp}
 \Gamma(\overline\omega):= \sup_{\kappa\in[0, \kappa_0]}\sup_{t\in [0, 1]} \|D\phi^{\kappa}_t(\overline\omega)\|_{L^{\infty}(\mathcal M)}^{s + d/2+1} + \widetilde C(\overline\omega),\;\;\; \overline\omega \in \overline\Omega,
\end{equation}
for which $L^{p}$-boundedness for any $1\leq p<\infty$ follows from Corollary \ref{cor:supofDphiontsciintegrs} and Lemma \ref{lem:phitkat64wppacontinxandkapps}.  By setting
\begin{equation}\label{eq:formforCo12dw}
C(\overline \omega) := \sum_{n\geq 1} {\Gamma(\theta_n \overline \omega)}e^{-\eps n} ,\;\;\;\overline \omega \in \overline \Omega,
\end{equation}
we have that
$ \Gamma(\theta_n \cdot) \leq C(\cdot) e^{\eps n}$ for some $C:\overline\Omega \to \mathbb R_+$,
so \eqref{eq:fgineintintegtorealvcda} and \eqref{eq:frinn+thttothatecmaxGamma} yield
\begin{equation}\label{eq:hammaisgammahatprmnepasfdq}
\Bigl | \int_{\mathcal M} f(x) g(\phi^{\kappa}_t(x, \overline\omega)) \ud \mu(x) \Bigr| \lesssim_{s}\widehat D(\overline\omega)^{\frac{s}{s_0}} C(\overline\omega) e^{-(\hat{\gamma}'-\eps)t} \|f\|_{H^s} \|g\|_{H^s},\; \overline\omega \in \overline\Omega.
\end{equation}

\textit{Step 4: Moment bounds}

In order to finish the proof we will need the following lemmas concerning $L^p$-moments of $C$ and $\widehat{D}$.
Let $\Sigma = (\sigma_k)_{k\geq 1}$.

\begin{lemma}\label{lem:ChasallLpnporemcfas}
 For any $1\leq p<\infty$ we have that $\mathbb E C^p\lesssim_{\Sigma, \kappa_0, s,p} \tfrac {1}{\eps^p}$.
\end{lemma}

\begin{proof}
By \eqref{eq:formforCo12dw}, the fact that the distribution of $\Gamma(\theta_n \cdot)$ does not depend on $n\geq 0$ (see the definition \eqref{eq:thetataudef} of $\theta_n$), and a triangle inequality 
 \[
  (\mathbb E C^p)^{1/p} \leq \sum_{n\geq 1}\|\Gamma(\theta_n\cdot)\|_{L^p(\Omega)}e^{-\eps n} = \sum_{n\geq 1}\|\Gamma\|_{L^p(\Omega)}e^{-\eps n}\stackrel{(*)}\lesssim_{\Sigma, \kappa_0, s,p} \sum_{n\geq 1} e^{-\eps n} \leq 1/\eps,
 \]
where $(*)$ follows from \eqref{eq:defofGgfdammaomegdasdgbvp},  Lemma \ref{lem:phitkat64wppacontinxandkapps}, and Corollary \ref{cor:supofDphiontsciintegrs}.
\end{proof}

\begin{lemma}\label{lem:hatDhasmanyLpnofnmwioc}
Choose $\hat{\gamma}< \hat{\alpha}/2$ and $s_0\geq 3d$. Let $1\leq p< \frac{\hat{\alpha}}{2\hat{\gamma}}$ and $\kappa\in[0, \kappa_0]$. Then $\mathbb E |\widehat D|^p\lesssim_{\Sigma, \kappa_0} 1+\frac{1}{\left(\tfrac{ps_0}{d}-p-\tfrac32\right)^2\left(\tfrac{\hat\alpha}{2}-p\hat \gamma\right)}<\infty$.
\end{lemma}

\begin{proof}
Due to \eqref{eq:widehatDnewfpw,xw} we have that
\begin{align*}
\mathbb E |\widehat D|^p &\leq 1+ \sum_{n, k, k'\geq 1}\frac{ \mathbb E| \int_{\mathcal M} \hat{e}_k(x) \hat{e}_{k'}(\phi^{\kappa}_n(x)) \ud \mu(x) |^p e^{p\hat{\gamma}n}}{(kk')^{p(\frac{s_0}{d}-1)}}\\
&\stackrel{(i)}\leq 1+ \sum_{n, k, k'\geq 1}\frac{ \mathbb E| \int_{\mathcal M} \hat{e}_k(x) \hat{e}_{k'}(\phi^{\kappa}_n(x)) \ud \mu(x) | e^{p\hat{\gamma}n}}{(kk')^{p(\frac{s_0}{d}-1)}}\\
&\leq 1+ \sum_{n, k, k'\geq 1}\frac{ (\mathbb E| \int_{\mathcal M} \hat{e}_k(x) \hat{e}_{k'}(\phi^{\kappa}_n(x)) \ud \mu(x) |^2)^{\frac 12} e^{p\hat{\gamma}n}}{(kk')^{p(\frac{s_0}{d}-1)}}\\
&\stackrel{(ii)}\lesssim_{\Sigma, \kappa_0}  1+ \sum_{n, k, k'\geq 1}\frac{ e^{p\hat{\gamma}n-\frac{\hat{\alpha}}{2}n} }{(kk')^{\frac{ps_0}{d}-p-\frac 12}} =1+\left(\sum_{k\geq 1}\frac {1}{k^{\frac{ps_0}{d}-p-\frac 12}}\right)^2\sum_{n\geq 1}e^{p\hat{\gamma}n-\frac{\hat{\alpha}}{2}n}\\
&\lesssim 1+\frac{1}{(\tfrac{ps_0}{d}-p-\tfrac32)^2\left(\tfrac{\hat\alpha}{2}-p\hat \gamma\right)} \stackrel{(iii)}<\infty,
\end{align*}
where $(i)$ holds as $| \int_{\mathcal M} \hat{e}_k(x) \hat{e}_{k'}(\phi^{\kappa}_n(x)) \ud \mu(x) | \leq 1$ a.s.\
 by the Cauchy-Schwarz inequality, $(ii)$ follows from \eqref{eq:ekinfk12da54} and the second half of \eqref{eq:Pthatfgphikappanletwqe-alpha2gammahads}, and where $(iii)$ holds as $\tfrac{ps_0}{d}-p-\tfrac 12>1$ (since $p\geq 1$ and $s_0\geq 3d$) and as $p\hat{\gamma} < \tfrac{\hat{\alpha}}{2}$.
\end{proof}

\textit{Step 5: Conclusion}

 Now we are ready to prove \eqref{eq:H-sHsineqsfa} in full generality. To this end notice that thanks to \eqref{eq:Utviaphikaplxw} and the fact that $\phi^{\kappa}_t$ is $\overline{\mathbb P}$-a.s.\ measure-preserving for a.e.\ $\omega\in \Omega$ by Remark \ref{rem:phikappameasupres} we have that (we omit $\omega$ for simplicity)
\begin{align*}
\|u_t\|_{H^{-s}} &:= \sup_{g\in H^s, g\neq 0}\frac{\int_{\mathcal M} u_t(x) g(x) \ud \mu(x)}{\|g\|_{H^s}}\\
& = \sup_{g\in H^s, g\neq 0}\frac{\mathbb E_{\widetilde W}\int_{\mathcal M} u\bigl((\phi^{\kappa}_t)^{-1}(x)\bigr) g(x) \ud \mu(x)}{\|g\|_{H^s}}\\
&= \sup_{g\in H^s, g\neq 0}\frac{\mathbb E_{\widetilde W}\int_{\mathcal M} u(x) g(\phi^{\kappa}_t(x)) \ud \mu(x)}{\|g\|_{H^s}}\\
&\stackrel{(*)}\lesssim_s \mathbb E_{\widetilde W}\Bigl[\widehat D^{\frac{s}{s_0}} C e^{-(\hat{\gamma}'-\eps)t} \|u\|_{H^s} \Bigr] = \mathbb E_{\widetilde W}\bigl[\widehat D^{\frac{s}{s_0}} C\bigr]  e^{-(\hat{\gamma}'-\eps)t} \|u\|_{H^s},
\end{align*}
where $(*)$ follows from \eqref{eq:hammaisgammahatprmnepasfdq} and where the latter conditional expectation is finite due to Lemma \ref{lem:ChasallLpnporemcfas} and \ref{lem:hatDhasmanyLpnofnmwioc} and H\"older's inequality. Therefore \eqref{eq:H-sHsineqsfa} holds true with 
\begin{equation}\label{eq:Dkappaexpasform}
D_{\kappa,\gamma}(\omega)\eqsim_s \mathbb E_{\widetilde W}\widehat {D}(\omega, \cdot)^{\frac{s}{s_0}}C(\omega, \cdot),\;\;\;\omega\in \Omega.
\end{equation}
It remains to estimate $L^p$-moments of $D_{\kappa,\gamma}$. Set $\gamma_0 =  \tfrac{\hat{\alpha}}{4s_0}$ and fix any $\gamma\in (0, \gamma_0)$. Let $\hat{\gamma}=\tfrac{\hat{\alpha}\gamma}{3\gamma_0}=\tfrac{4s_0\gamma}{3}$. Then, we have that $\hat{\gamma}'=\tfrac {s\hat{\gamma}}{s_0}=\tfrac{4}{3}\gamma s$, so $\gamma s=\hat{\gamma}'-\eps$ for $\eps = \gamma s/3$. Thus, \eqref{eq:H-sHsineqsfa} holds and by H\"older's inequality, the fact that $\|\mathbb E_{\widetilde W} f\|_{L^p(\Omega)} \leq \|f\|_{L^p(\overline{\Omega})}$ for any $f\in L^p(\overline \Omega)$, and Lemma \ref{lem:ChasallLpnporemcfas} and \ref{lem:hatDhasmanyLpnofnmwioc} for any choice of $s_0\geq 3d$ 
we have that 
$\mathbb E D_{\kappa,\gamma}^p$ is finite if $\frac{s_0}{2s}\leq p< \tfrac{s_0\hat{\alpha}}{4s\hat{\gamma}}=\tfrac{3s_0 \gamma_0}{4\gamma s}$ (in particular, if $p\in[\frac{s_0}{2s},\tfrac{9d \gamma_0}{4\gamma s})$) so that by \eqref{eq:Dkappaexpasform}
\begin{align*}
 \mathbb E D_{\kappa,\gamma}^p\lesssim_{s_0} \bigl(\mathbb E \widehat {D}^{\frac{2ps}{s_0}}\bigr)^{\frac 12} \bigl(\mathbb E C^{2p}\bigr)^{\frac 12}&\lesssim_{\Sigma, \kappa_0, s, p} \frac{1}{(\tfrac{2ps}{d}-\tfrac{2ps}{s_0}-\tfrac32)\sqrt{\tfrac{\hat\alpha}{2}-\frac{2ps}{s_0}\hat \gamma}}\frac{1}{\eps^p},
\end{align*}
so $\sup_{\kappa\in[0, \kappa_0]}\mathbb E D_{\kappa, \gamma}^p<\infty$. The same can be shown for $0<p<\frac{s_0}{2s}$ by H\"older's inequality and the fact that $D_{\kappa, \gamma} = D_{\kappa, \gamma} \cdot 1$.
This concludes the proof of Theorem~\ref{thm:H-sconvergensofHsinvartext}.

\begin{remark}
Notice that if the set of $(\sigma_k)$ is finite (as e.g.\ in Section \ref{subsec:Hormtwopint}), then similarly to \eqref{eq:fgintegekviaHscaoim} the inequality \eqref{eq:H-sHsineqsfa} can be proven for any $s>0$. In this case we have $C^{\infty}$ local characteristics, and hence thanks to \cite{KSF90} a version of Lemma \ref{lem:phitkat64wppacontinxandkapps} holds true for $D^n\phi$ for any $n\geq 1$, so the proof above can be extended to any $s>0$.
\end{remark}

\begin{remark}
 Note that the proof above is still valid if one uses the following weaker version of \eqref{eq:P(2)kappapsforphitimesphi}
 $$
 \int_{\mathcal M\times \mathcal M}\bigr|\mathbb E \hat e_k(\phi^{\kappa}_{nt_0}(x))\hat e_k(\phi^{\kappa}_{nt_0}(y))\bigl|\ud \mu( x) \ud \mu( y)  \leq C k^{b} e^{-\hat{\alpha} nt_0},\;\;\; k\geq 1,
 $$
 for some fixed $b\geq 0$.
\end{remark}

\section{Stablilization by transport noise}\label{sec:Proofofmainthm}

In this section we prove Theorem \ref{thm:proofofCadpconj}.  We start with the following corollary of Theorem \ref{thm:H-sconvergensofHsinvartext}.

\begin{corollary}\label{cor:L2normocnverg}
  Let $(\sigma_k)_{k\geq 1}$ satisfy the conditions {\textnormal {\bf (A)--(C)}} and let $(u_t)_{t\geq 0}$ be the solution of \eqref{eq:mainstochdiffeqwithkappa} with $u$ being  mean-zero in $L^2(\mathcal M)$. Then there exist $\kappa_0>0$ and $\gamma_0>0$ independent of $u$ such that for any $\kappa \in (0, \kappa_0]$ and $\gamma\in(0, \gamma_0)$  there exists $D'_{\kappa,\gamma}:\Omega \to [1, \infty)$ such that a.s.
 \begin{equation}\label{eq:strongL2estforUkappa}
   \|u_t\|_{L^2} \leq D'_{\kappa,\gamma} \kappa^{-1} e^{-\gamma t} \|u\|_{L^2},\;\;\; t> 0.
 \end{equation}
 Moreover, $\sup_{\kappa\in[0, \kappa_0]}\mathbb E |D'_{\kappa,\gamma}|^p<\infty$ for any $1\leq p< \frac{9d\gamma_0}{4\gamma}$.
\end{corollary}

\begin{proof}
It follows from \eqref{eq:mainstochdiffeqwithkappa} that for any $t>0$ a.s.\
\begin{equation}\label{eq:uttimesequafforut}
\begin{split}
\|u_t\|_{L^2}^2&-\|u\|_{L^2}^2 +\sum_{k\geq 1} \int_0^t \int_{\mathcal M}\langle  u_s(x)\sigma_k(x), \nabla u_s(x)\rangle_{x}  \ud \mu(x) \circ \ud W^k_s \\
 &\quad\quad= \kappa   \int_0^t \int_{\mathcal M} u_s T u_s \ud \mu(x) \ud s = -\kappa \sum_{m=1}^n\int_0^t \|\chi_m u_s\|_{L^2(\mathcal M)}^2\ud s,
\end{split}
\end{equation}
where the integrals $ \int_{\mathcal M} \langle u_s(x)\sigma_k(x), \nabla u_s(x)\rangle_{x} \ud \mu(x)$ are well-defined as $u_s$ is an element of $H^1(\mathcal M)$ for any $s>0$ a.s.\ (see \cite[Theorem 4.2.4]{LR15}), so $\nabla u_s\in L^2(\mathcal M;T\mathcal M)$ a.s., and as $u_s(x)\sigma_k\in L^2(\mathcal M;T\mathcal M)$ for any $k\geq 1$.
The fact that $\circ \ud W^k$-integrals of \eqref{eq:uttimesequafforut} exist and are summable follows from the condition {\textnormal {\bf (C)}} {and the fact that by \cite[Theorem 4.2.4]{LR15} $u\in L^2(\Omega \times [0, T]; H^1(\mathcal M))\cap L^2(\Omega;C_{loc}(\mathbb R_+; L^2(\mathcal M))$.}

As the coefficients $\sigma_k$'s are divergence-free, by approximating $\nabla u_s$ by smooth functions in $L^2(\mathcal M;T\mathcal M)$ thanks to the divergence theorem on manifolds (see \cite[Theorem 16.32]{Le13} and \cite[Theorem 1]{Step16}; here we use that $u_s\nabla u_s = \frac 12 \nabla u_s^2$)
\[
  \sum_{k\geq 1}\int_0^t\int_{\mathcal M}\Bigl\langle  \sigma_k(x), \frac{1}2\nabla u_s^2(x)\Bigr\rangle_{x}  \ud \mu(x) \circ \ud W^k_s   = 0,
\]
so a.s.\
\begin{equation}\label{eq:u2-ut2isintofnablaut2}
  \|u\|^2_{L^2} -\|u_t\|^2_{L^2}  = \kappa \sum_{m=1}^n\int_0^t \|\chi_m u_s\|_{L^2}^2\ud s,
\end{equation}
hence by Lemma \ref{lem:Tstriellverycoopchimdfowe} $\frac{\ud \|u_t\|^2_{L^2}}{\ud t} = -\kappa \sum_{m=1}^n \|\chi_m u_t\|_{L^2}^2 \eqsim -\kappa\|\nabla u_t\|_{L^2}^2$. In particular, for $D_{\kappa,\gamma}$ and $\gamma$ being as in Theorem \ref{thm:H-sconvergensofHsinvartext} analogously to \cite[Lemma 7.1]{BBP-S1911a} we have that a.s.\ (here we use that $\int_{\mathcal M} u_t=0$ so $\|u_t\|_{H^1} \eqsim \|\nabla u_t\|_{L^2}$)
\[
\frac{\ud \|u_t\|^2_{L^2}}{\ud t} \lesssim -\kappa\|\nabla u_t\|_{L^2}^2 \leq -\kappa \frac{\|u_t\|_{L^2}^4}{\|u_t\|_{H^{-1}}^2} \leq -\kappa  e^{2\gamma t} \frac{\|u_t\|_{L^2}^4}{D_{\kappa,\gamma}^2\|u\|_{H^1}^2},
\]
thus a.s.\
\[
\frac{\ud \frac{1}{\|u_t\|^2_{L^2}}}{\ud t} = -\frac{1}{\|u_t\|^4_{L^2}}\frac{\ud \|u_t\|^2_{L^2}}{\ud t}  \gtrsim \kappa  e^{2\gamma t} \frac{1}{D_{\kappa,\gamma}^2\|u\|_{H^1}^2},
\]
consequently a.s.\
\[
\frac{1}{\|u_t\|^2_{L^2}} \geq\frac{1}{\|u_t\|^2_{L^2}} - \frac{1}{\|u\|^2_{L^2}} \gtrsim\kappa ( e^{2\gamma t} -1)\frac{1}{\gamma D_{\kappa,\gamma}^2\|u\|_{H^1}^2},
\]
which in turn implies
\begin{equation}\label{eq:L2throuwH1opfemw}
 \|u_t\|^2_{L^2} \lesssim \gamma D_{\kappa,\gamma}^2 \kappa^{-1}(e^{2\gamma t}-1)^{-1}\|u\|^2_{H^1},\;\;\; t\geq 0.
\end{equation}

\medskip

Let us now show that there exists an $\mathbb F$-stopping time $\tau\in[0, 1]$ such that $\|\nabla u_{\tau}\|_{L^2}\leq C$ for some universal constant $C>0$. To this end define $\tau$ by
\[
 \tau:=\inf\Bigl\{t\geq 0:\sum_{m=1}^n \|\chi_m u_t\|_{L^2}^2 \leq \|u\|_{L^2}^2/\kappa\Bigr \}.
\]
Note that $\tau\leq 1$ by \eqref{eq:u2-ut2isintofnablaut2}.
Moreover, $\tau$ is a stopping time as $f\mapsto \sum_{m=1}^n\|\chi_m f\|_{L^2}^2 $ is a Borel (possibly, infinite) function on $ L^2$ (it can be defined via the basis $(\hat e_k)_{k\geq 1}$, see Section \ref{sec:prelim}). Thanks to the definition of $\tau$ for a.e.\ $\omega\in \Omega$ there exists a sequence $(t_i(\omega))_{i\geq 1}$ such that $t_i(\omega)\searrow \tau(\omega)$ as $i\to \infty$ and 
$$
\sum_{m=1}^n \|\chi_m u_{t_i(\omega)}\|_{L^2}^2 \leq \|u\|_{L^2}^2/\kappa,\;\;\; i\geq 1. 
$$
In particular, by Lemma \ref{lem:Tstriellverycoopchimdfowe} there exists $C>0$ such that
\begin{equation}\label{eq:u1H1bddbyL2atsadparb}
 \frac 1C \|\nabla u_{t_i}(\omega)\|_{L^2} \leq  \Bigl(\sum_{m=1}^n \|\chi_m u_{t_i(\omega)}\|_{L^2}^2\Bigr)^{\frac 12} \leq  \frac{\|u\|_{L^2}}{\sqrt{\kappa}},\;\;\; i\geq 1.
\end{equation}
As $t\mapsto u_t(\omega)$ is a continuous $L^2$-valued function in $t\geq 0$ (see \cite[Theorem 4.2.4]{LR15}) and as $f\mapsto \|f\|_{H^1}$ is lower-semicontinuous as a function on $L^2(\mathcal M)$,  
we have that by \eqref{eq:u1H1bddbyL2atsadparb} for a.e.\ $\omega \in \Omega$
\begin{equation}\label{eq:u1H1bddbyL2atsadpfdsarb}
\| u_{\tau(\omega)}(\omega)\|_{H^1} \leq \limsup_{i\to \infty} \| u_{t_i(\omega)}(\omega)\|_{H^1} \leq C\frac{\|u\|_{L^2}}{\sqrt{\kappa}}.
\end{equation}

Finally let us show inequality \eqref{eq:strongL2estforUkappa}.
For any $f\in L^1(\overline \Omega)$ denote by $\mathbb E_1f$ and $\mathbb E_2f$ the following conditional expectations:
\[
\mathbb E_1f:= \mathbb E (f|(W^k)_{k\geq 1},(\widetilde W^m_t)_{t\in[0, r], m=1,\ldots, n}),
\]
\[
 \mathbb E_2f:= \mathbb E (f|(W^k)_{k\geq 1},(\widetilde W^m_{t+\tau} -\widetilde W^m_{\tau} )_{t\geq 0, m=1,\ldots, n}).
\]
Then $\mathbb E_{\widetilde W} = \mathbb E_1\mathbb E_2$ since 
$(\widetilde W^m_t)_{t\in[0, \tau(\omega)], m=1,\ldots, n}$ and $(\widetilde W^m_{t+\tau(\omega)} -\widetilde W^m_{\tau(\omega)} )_{t\geq 0, m=1,\ldots, n}$ are independent
for a.e.\ $\omega\in\Omega$ and since for a.e.\ $\omega\in \Omega$ $\mathbb P$-a.s.\
\[
\mathbb E_1f(\omega, \cdot) = \mathbb E(f(\omega, \cdot)|(\widetilde W^m_t)_{t\in[0, \tau(\omega)], m=1,\ldots, n}), 
\]
\[ 
 \mathbb E_2f(\omega, \cdot) = \mathbb E(f(\omega, \cdot)|(\widetilde W^m_{t+\tau(\omega)} -\widetilde W^m_{\tau(\omega)} )_{t\geq 0, m=1,\ldots, n})
\]
(these conditional expectations are well-defined as by Fubini's theorem $f(\omega, \cdot)\in L^1(\widetilde \Omega)$ for a.e.\ $\omega\in \Omega$; the latter equalities can be shown first for step functions and then extended to the whole $L^1(\overline \Omega)$ by an approximation argument).

Note that as $\tau$ is an $\mathbb F$-stopping time, Corollary \ref{cor:Fstopingtimwlidtstochdlow} yields that $(\phi^{\kappa,\tau}_t)_{t\geq 0} := (\phi^{\kappa}_{t+\tau}(\phi^{\kappa}_{\tau})^{-1})_{t\geq 0}$ is a stochastic flow of homeomorphisms independent of $(W^k_t)_{k\geq 1, t\in[0, \tau]}$ and $(\widetilde W^m_t)_{m= 1, \ldots,n, t\in[0, \tau]}$. Therefore by \eqref{eq:Utviaphikaplxw}, Corollary \ref{cor:Fstopingtimwlidtstochdlow}, and  Remark \ref{rem:stoppingrimaforformfout} for any $t\geq 0$ $\mathbb P$-a.s.\
\begin{align*}
u_{t+\tau} &= \mathbb E_{\widetilde W}u((\phi^{\kappa}_{t+\tau})^{-1}\cdot)= \mathbb E_1\mathbb E_2u\bigl((\phi^{\kappa}_{\tau})^{-1}(\phi^{\kappa}_{t+\tau}(\phi^{\kappa}_{\tau})^{-1})^{-1}\cdot\bigr) \\
&= \mathbb E_1u_{\tau}\bigl((\phi^{\kappa,\tau}_t)^{-1}\cdot\bigr)=\mathbb E_{\widetilde W}u_{\tau}\bigl((\phi^{\kappa,\tau}_t)^{-1}\cdot\bigr).
\end{align*}
Next, as $\phi^{\kappa,\tau}$ and $\phi^{\kappa}$ are equidistributed by Corollary \ref{cor:Fstopingtimwlidtstochdlow}, \eqref{eq:L2throuwH1opfemw} and \eqref{eq:u1H1bddbyL2atsadpfdsarb} yield that for any $t\geq0$ $\mathbb P$-a.s.\ 
\begin{align*}
\|u_{t+\tau}\|_{L^2}^2 &=\bigl \|\mathbb E_{\widetilde W}u_{\tau}\bigl((\phi^{\kappa,\tau}_t)^{-1}\cdot\bigr)\bigr\|_{L^2}^2\\
&\lesssim \gamma D_{\kappa,\gamma,\tau}^2 \kappa^{-1}(e^{2\gamma t}-1)^{-1}\|u_{\tau}\|_{H^1}^2 \lesssim  \gamma D_{\kappa,\gamma,\tau}^2 \kappa^{-2}e^{-2\gamma t}\|u\|_{L^2}^2,
\end{align*}
where $D_{\kappa,\gamma,\tau}$ is defined analogously to $D_{\kappa,\gamma}$ but for $\phi^{\kappa,\tau}$ instead of $\phi^{\kappa}$. Thus for any $t\geq 1$ $\mathbb P$-a.s.\
\[
\|u_{t}\|_{L^2} = \|u_{t-\tau+\tau}\|_{L^2} \lesssim\gamma D_{\kappa,\gamma,\tau}^2 \kappa^{-2}e^{-2\gamma (t-\tau)}\|u\|_{L^2}^2 \lesssim \gamma D_{\kappa,\gamma,\tau}^2 \kappa^{-2}e^{-2\gamma t}\|u\|_{L^2}^2,
\]
where the latter holds true as $\tau\leq 1$ a.s.

Therefore \eqref{eq:strongL2estforUkappa} follows with
\begin{equation}\label{eq:Dkappa'howlookliads}
 D'_{\kappa,\gamma}(\omega)\eqsim \sqrt{\gamma} C^2 D_{\kappa,\gamma,\tau}(\omega),\;\;\; \omega \in \Omega.
\end{equation}

Let us turn to the moments of $D'_{\kappa,\gamma}$. To this end note that by Theorem \ref{thm:H-sconvergensofHsinvartext},   \eqref{eq:Dkappa'howlookliads}, the fact that $(W^k_t)_{t\in[0, \tau], k\geq 1}$ is independent of $(W^k_{\cdot + \tau} - W^k_{\tau})_{k\geq 1}$, and the fact that by \eqref{eq:Dkappaexpasform} and Corollary \ref{cor:Fstopingtimwlidtstochdlow} $D_{\kappa,\gamma, \tau}$ depends only on $(W^k_{\cdot + \tau} - W^k_{\tau})_{k\geq 1}$ and has the same distribution as $D_{\kappa,\gamma}$, we have that for $p\in (0, \frac{9d\gamma_0}{4\gamma})\subset (0, \frac{3s_0\hat{\alpha}}{4\hat{\gamma}})$ (recall that $s_0\geq 3d$ and that here we assume that $s=1$)
 \begin{equation}\label{eq:Dikappaintergaoxq}
 \begin{split}
 \mathbb E |D'_{\kappa,\gamma}|^p &\eqsim \gamma^{\frac{p}{2}}C^{2p}\mathbb E |D_{\kappa,\gamma,\tau}|^p \\
 &= \gamma^{\frac{p}{2}}C^{2p} \mathbb E \mathbb E\Bigl(  |D_{\kappa,\gamma,\tau}|^p \Big| (W^k_t)_{t\in[0, \tau], k\geq 1}\Bigr)=  \gamma^{\frac{p}{2}}C^{2p}\mathbb E |D_{\kappa,\gamma}|^p<\infty.
 \end{split}
 \end{equation}
\end{proof}

Let us now show Theorem \ref{thm:proofofCadpconj}. For $C=0$ Theorem \ref{thm:proofofCadpconj} follows from Corollary \ref{cor:L2normocnverg} by applying a rescaling argument. Namely, { let $A_0 := 1/\sqrt{\kappa_0}$. Then for any $A\geq A_0$ and} for $\kappa:= 1/A^2$ we have that for the solutions $(u^{A}_t)_{t \geq 0}$ of \eqref{eq:mainstochdiffeq} and $(u_{t})_{t\geq 0}$ of \eqref{eq:introkappaisnmwq} the processes $(u^{A}_t)_{t \geq 0}$ and $(u_{A^2t})_{t\geq 0}$ are equidistributed, so \eqref{eq:INTROmainthmwithCiqo} is a direct consequence of \eqref{eq:strongL2estforUkappa} with $D^{A,\gamma}(\omega) := D_{1/A^2,\gamma}'(\omega(A^2\cdot))$ (recall that we set $\Omega = C(\mathbb R_+;\mathbb R^\infty)$, see Section \ref{sec:prelim}). { The integrability of $D^{A,\gamma}$ then follows from \eqref{eq:Dikappaintergaoxq}.}

\smallskip

The case $C>0$ follows from the case $C=0$ by applying Theorem \ref{thm:proofofCadpconj} to $v_t := e^{-Ct}u_t$ which in turn satisfies \eqref{eq:mainstochdiffeq} with $C=0$.
 
\begin{remark}\label{rem:wherewhatisneeded}
 This remark provides an overview of the steps of the proof, highlighting where the assumptions are used:
 
 \begin{enumerate}[(i)]
  \item For exponential mixing \eqref{eq:INTROH-sHsineqsfa} and enhanced dissipation \eqref{eq:INTROmainthmwithCiqo} it is sufficient to assume the condition {\textnormal {\bf (C)}} together with the exponential ergodicity of the two-point motion  \eqref{eq:P(2)kappapsforphitimesphi} (see Subsection \ref{subsec:proofofThmmain} and the present Section \ref{sec:Proofofmainthm}).
  \item The two-point exponential ergodicity \eqref{eq:P(2)kappapsforphitimesphi} follows from on Harris' Theorem \ref{thm:QuaeHarwqthnms}, assuming $\kappa$-independent Harnack inequalities, and the existence of a Lyapunov function for the two-point motion (see proof of Proposition \ref{prop:harridsthmforsVkappa}). 
  \item In order to show the existence of a density and $\kappa$-independent Harnack inequalities for the two-point motion we exploit the corresponding non-degeneracy provided by the ellipticity condition {\textnormal {\bf (A)}} (see Proposition \ref{prop:appendxHarnackmancewell} and the proof of Proposition \ref{prop:harridsthmforsVkappa}).
  \item The existence of a Lyapunov function for the two-point motion is shown by using the positivity of the Lyapunov exponent (see Proposition \ref{prop:Lyapconstexistsandpos}), the spectral gap of the normalized tangent flow, and sufficient regularity of the eigenfunction of the normalized tangent flow (see Subsection \ref{subsec:constrofVkappa}).     
  \begin{enumerate}[(a)]
		  \item The Lyapunov exponent is shown to be positive due to the existence of densities of the two-point motion and the normalized tangent flow and the criteria given by \cite[Theorem 6.8]{Bax89} (see Proposition \ref{prop:Lyapconstexistsandpos}).
		  \item The spectral gap of the normalized tangent flow can be shown via the corresponding Harnack inequalities and Harris' ergodic Theorem \ref{thm:QuaeHarwqthnms} (see Lemma \ref{lem:Pt0hasspectgap} and the results thereafter).
		  \item Finally, the existence of a density and the Harnack inequalities for the normalized tangent flow, and the sufficient regularity of the corresponding eigenfunction 
           in turn follow from the non-degeneracy which in our case is given by the ellipticity condition {\textnormal {\bf (B)}} (see Proposition \ref{prop:Lambda(p)defandpsiprequxwqxwq} and Proposition \ref{prop:appendxHarnackmancewell}). 
   \end{enumerate}
 \end{enumerate}
\end{remark}

\section{Regular Kraichnan model on $\mathbb T^d$}\label{sec:Kraichnan}

The {\em Kraichnan model} (a.k.a.\ the {\em RDT model} for ``Rapid Decorrelation in Time'') was introduced by Kraichnan in \cite{Kr68} and by Kazantsev in \cite{Kaz68} in the context of turbulence in fluids. We also refer to \cite{MK99} for further details. The classical Kraichnan model defines an {\em isotropic flows} on $\mathbb R^d$ (or $\mathbb S^d$), while the Kraichnan model on the $d$-dimensional torus $\mathcal M = \mathbb T^d$ was introduced in  \cite[Section 3]{ChDG} as follows. For any $x, y\in \mathbb T^d$ we set, for $i, j=1, \ldots, d$,
\begin{equation}\label{eq:genformofDxyfa}
\begin{split}
D(x, y)(e_i, e_j) &= \sum_{k\geq 1} \langle \sigma_k(x), e_i\rangle \langle \sigma_k(y), e_j\rangle  \\
& = \sum_{z\in \mathbb Z^d_0}\Bigl[ (1-\wp) \delta_{ij} - (1-\wp d) \frac{z_i z_j}{|z|^2}\Bigr] e^{i z \cdot (x-y)}\hat d(|z|),
\end{split}
\end{equation}
where the parameter $\wp$ measures the compressibility of the flow (see e.g.\ \cite[Section 2]{GV00}). Since we are considering incompressible flows, corresponding to $\sigma_k$ being divergence-free (see e.g.\ \cite[\S 10]{LL596}), we have $\wp=0$. 
 {Following \cite[pp.\ 343, 426, and 432]{MK99} the coefficient $\hat d(|z|)$ is chosen as $\hat d(|z|) \eqsim \frac{1}{|z|^{d+\alpha}}$ for some $\alpha>2$.}

In terms of the coefficients $\sigma_k$ this choice correlation function $D(x,y)$ corresponds to choosing $(\sigma_k)_{k\geq 1}$ as $(\hat e_{z}^{\ell})_{z\in\mathbb Z^d_0,  1 \leq \ell \leq d-1}$, where for any $x\in \mathbb T^d$ and for any pair $(z, -z)$ from $\mathbb Z^d_0$ with $z$ lexicographically dominating $-z$
\begin{equation}\label{eq:sigmakforisotonTdalpha>2}
\begin{split}
\hat e_{z}^{\ell}(x) &= a^{\ell}_z\frac{1}{\sqrt{2}|z|^{\frac{d+\alpha}{2}}}\cos(z\cdot x),\;\;\; x\in \mathbb T^d,\\
 \hat e_{-z}^{\ell}(x) &= a^{\ell}_z\frac{1}{\sqrt{2} |z|^{\frac{d+\alpha}{2}}}\sin(z\cdot x),\;\;\; x\in \mathbb T^d,
\end{split}
\end{equation}
with $(a^{\ell}_z)_{ 1 \leq \ell \leq d-1} \subset \mathbb R^d$ being an orthonormal basis of the orthogonal complement to $z$. Indeed, in this case for any $x, y\in \mathbb T^d$ we have
\begin{align*}
&D(x, y)(e_i, e_j) = \sum_{k\geq 1} \langle \sigma_k(x), e_i\rangle \langle \sigma_k(y), e_j\rangle  \\
&\quad\quad\quad=\frac{1}{4}\sum_{z\in \mathbb Z^d_0} \frac{1}{|z|^{d+\alpha}} \bigl[\cos(z\cdot x)\cos(z\cdot y) + \sin(z\cdot y)\sin(z\cdot y) \bigr] \sum_{\ell = 1}^{d-1} \langle a^{\ell}_z, e_i\rangle\langle a^{\ell}_z, e_j\rangle\\
&\quad\quad\quad \stackrel{(*)}=\frac{1}{4}\sum_{z\in \mathbb Z^d_0} \frac{1}{|z|^{d+\alpha}}  \cos\bigl(z\cdot (x-y)\bigr)\Bigl[  \langle e_i, e_j\rangle -  \frac{ \langle z, e_i\rangle \langle z, e_j\rangle}{|z|^2} \Bigr] \\
&\quad\quad\quad =  \sum_{z\in \mathbb Z^d_0}\Bigl[  \delta_{ij} - \frac{z_i z_j}{|z|^2}\Bigr] e^{i z \cdot (x-y)}\frac{1}{8|z|^{d+\alpha}},
\end{align*}
where $(*)$ follows from the fact that $(a^1_z, \ldots, a^{d-1}_z, z/|z|)$ forms an orthonormal basis of $\mathbb R^d$. Thus, we obtain \eqref{eq:genformofDxyfa} with $\hat d(|z|) = \frac{1}{8|z|^{d+\alpha}}$ as required.

We further notice that,
\begin{equation}\label{eq:isotcaseDsigmakcoinswithsigmak}
\langle D \sigma_k, \sigma_k \rangle = 0,\;\;\;\;\; k\geq 1,
\end{equation} 
since 
\begin{equation}\label{eq:ezellDezellleqcam0cz}
\begin{split}
\langle D\hat e_{z}^{\ell}, \hat e_{z}^{\ell} \rangle &= a^{\ell}_z\langle a^{\ell}_z, z\rangle\frac{1}{2 |z|^{d+\alpha}}\sin(z\cdot x)\cos(z\cdot x) = 0,\\
\langle D\hat e_{-z}^{\ell}, \hat e_{-z}^{\ell} \rangle &=- a^{\ell}_z\langle a^{\ell}_z, z\rangle\frac{1}{2 |z|^{d+\alpha}}\sin(z\cdot x)\cos(z\cdot x) = 0.
\end{split}
\end{equation}
This implies that the Stratonovich SDE \eqref{eq:mainstochdiffeq} coincides with the It\^o one by  \cite[Theorem V.26]{Prot}. Note that \eqref{eq:isotcaseDsigmakcoinswithsigmak} holds for general isotropic flows on $\mathbb R^d$ and $\mathbb S^d$, see \cite[(3.7)]{BH86} and compare Remark \ref{rmk:Kraichnan-isotropy}.

\begin{remark}[(An-)isotropy in the Kraichnan model]\label{rmk:Kraichnan-isotropy}
In $\mathbb R^d$ (or $\mathbb S^d$)  the Kraichnan model is known to be governed by {\em isotropic flows}, i.e.\ stochastic flows defined analogously \eqref{eq:flowforkappa=0dsa} on $\mathbb R^d$ (resp.\ $\mathbb S^d$) with $D(x, y) := \sum_{k\geq 1}\sigma_k(x)\otimes\sigma_k(y)$ depending only on $|x-y|$, see, for example \cite[Section 4.2.2]{MK99} and \cite{LJR02,BH86,Yag57,Rai99,Yag87I}. For the Kraichnan model over $\mathbb T^d$ (the {\em box case}) the phenomenon of anisotropy appears (see \cite{ChDG,CFG,LJ84}). Indeed, any isotropic flow on the torus is necessarily trivial: If $D(x, y)$ depends only on $|x-y|$ then it is invariant under rotations. Now applying rotations with center $\{0\} \in \mathbb T^d$ any point on the torus can be moved arbitrarily close to $\{0\}$. Hence, continuity of  $D(x, y)$ implies that $D$ has to be a constant.
\end{remark}

\begin{theorem}
  Consider the Kraichnan model on the $d$-dimensional torus with spatial decorrelation parameter $\alpha>2$, that is, the stochastic transport equation
  \begin{equation}\label{eq:kraichnan}
   \ud u_t + A\sum_{k\geq 1} \langle \sigma_k, \nabla u_t\rangle_{T\mathcal M} \circ \ud W^k_t  = 0,
  \end{equation}   
  with $\sigma_k$ chosen as in \eqref{eq:sigmakforisotonTdalpha>2}. Then, \eqref{eq:kraichnan} is  exponentially mixing, in the sense of Theorem \ref{thm:proofofCadpconj}.
   Moreover, the viscous Kraichnan model   \begin{equation}\label{eq:kraichnan_viscous}
      \ud u_t + A\sum_{k\geq 1} \langle \sigma_k, \nabla u_t\rangle_{T\mathcal M} \circ \ud W^k_t  = \Delta u_t \ud t,\
     \end{equation}   
   satisfies enhanced diffusion, in the sense of Theorem \ref{thm:H-sconvergensofHsinvartext}.
\end{theorem}
\begin{proof}
The proof consists in verifying the assumptions of Theorem \ref{thm:proofofCadpconj} for the Kraichnan model, that is, the conditions {\textnormal {\bf (A)--(C)}} from Section \ref{sec:intro}.

{\textnormal {\bf (A)}:} Fix $x \neq y \in \mathbb T^d$. Fix $z\in \mathbb Z^d_0$ lexicographically dominating $-z$ such that
\begin{equation}\label{eq:goodzforellirpek(B)two-dklpw}
  \langle z, x-y\rangle \neq 2\pi n,\;\;\; \forall n\in \mathbb Z.
\end{equation}
Note that in this case for any $\ell$
\[
 \left(
\begin{array}{c}
\hat{e}_z^{\ell}(x)\\
\hat{e}_z^{\ell}(y)\\
\end{array}
\right)
=
 \tfrac{1}{\sqrt 2 |z|^{\frac{d+\alpha}{2}}}\left(
\begin{array}{c}
a^{\ell}_z \cos(z\cdot x)\\
a^{\ell}_z \cos(z\cdot y)\\
\end{array}
\right)
\;\; \text{and}\;\;
 \left(
\begin{array}{c}
\hat{e}_{-z}^{\ell}(x)\\
\hat{e}_{-z}^{\ell}(y)\\
\end{array}
\right)
=
 \tfrac{1}{\sqrt 2 |z|^{\frac{d+\alpha}{2}}}\left(
\begin{array}{c}
a^{\ell}_z \sin(z\cdot x)\\
a^{\ell}_z \sin(z\cdot y)\\
\end{array}
\right)
\]
generate $(a a^{\ell}_z, b a^{\ell}_z)$ for any $a, b\in \mathbb R$ as then $(\sin(z\cdot x),\sin(z\cdot y))$ and $(\cos(z\cdot x),\cos(z\cdot y))$ are not collinear. Since $\ell$ was arbitrary, we are able to generate $z^{\perp}\times z^{\perp}\subset \mathbb R^{2d}$ (where $z^{\perp}$ is the orthogonal complement to $z$ in $\mathbb R^d$). It remains to notice that such $z$ satisfying \eqref{eq:goodzforellirpek(B)two-dklpw} span the whole $\mathbb R^d$ as the set 
$$
\Bigl\{\frac{z}{|z|}:z\in \mathbb Z^d_0 \;\;\; \textnormal{such that \eqref{eq:goodzforellirpek(B)two-dklpw} is satisfied} \Bigr\}\subset\mathbb R^d
$$ 
is dense in the unit sphere of $\mathbb R^d$ (if $z$ does not satisfy \eqref{eq:goodzforellirpek(B)two-dklpw}, then a small perturbation of $Cz$ for big $C\in \mathbb N$ does). Therefore $(\sigma_k(x), \sigma_k(y))_{k\geq 1}$ span $\mathbb R^{2d}$ and hence {\textnormal {\bf (A)}} is satisfied.

{\textnormal {\bf (B)}:} To this end let us actually write down what are $\tilde{\sigma}_k$'s. By \eqref{eq:equfortildephikappa} we have that $(\tilde{\sigma}_k)_{k\geq 1}$ coincides with $(\tilde e_{z}^{\ell})_{z\in\mathbb Z^d_0,  1 \leq \ell \leq d-1}$, where for any $x\in \mathbb T^d$, for any $v\in \mathbb S^{d-1}$, and for any pair $(z, -z)$ from $\mathbb Z^d_0$ with $z$ lexicographically dominating $-z$
\begin{equation*}\label{eq:sigwrmakforisotonTdtwalpha>2}
\begin{split}
\tilde e_{z}^{\ell}(x,v) &= \frac{1}{\sqrt{2}|z|^{\frac{d+\alpha}{2}}}\sin(z\cdot x)\langle z, v\rangle\Bigl[ a^{\ell}_z - v\langle v, a^{\ell}_z\rangle \Bigr],\;\;\; x\in \mathbb T^d,\;\;v\in \mathbb S^{d-1},\\
 \tilde e_{-z}^{\ell}(x,v) &=\frac{1}{\sqrt{2} |z|^{\frac{d+\alpha}{2}}}\cos(z\cdot x)\langle z, v\rangle\Bigl[ v\langle v, a^{\ell}_z\rangle  - a^{\ell}_z \Bigr],\;\;\; x\in \mathbb T^d,\;\;v\in \mathbb S^{d-1}.
\end{split}
\end{equation*}
Fix $x\in \mathbb T^d$ and $v\in \mathbb S^{d-1}$.
We need to show that for any $u\in \mathbb R^d$ and $w\in v^{\perp}$ satisfying $|u|^2 + |w|^2=1$ there exists $z\in \mathbb Z^d_0$ and $\ell\in\{1, \ldots, d-1\}$ such that $\langle e_{z}^{\ell}(x), u\rangle + \langle \tilde e_{z}^{\ell}(x,v), w\rangle \neq 0$. This follows from the fact that if $z$ dominates lexicographically $-z$, then
\[
\langle e_{z}^{\ell}(x), u\rangle + \langle \tilde e_{z}^{\ell}(x,v), w\rangle = \tfrac{1}{\sqrt{2} |z|^{\frac{d+\alpha}{2}}}\bigl[ \cos(z\cdot x)\langle a^{\ell}_z, u\rangle + \sin(z\cdot x)\langle z, v\rangle \langle a^{\ell}_z, w\rangle \bigr],
\]
\[
\langle e_{-z}^{\ell}(x), u\rangle + \langle \tilde e_{z}^{\ell}(x,v), w\rangle = \tfrac{1}{\sqrt{2} |z|^{\frac{d+\alpha}{2}}}\bigl[ \sin(z\cdot x) \langle a^{\ell}_z, u\rangle-\cos(z\cdot x)\langle z, v\rangle \langle a^{\ell}_z, w\rangle \bigr].
\]
Linear combinations of these numbers generate both $\langle a^{\ell}_z, u\rangle$ and $\langle z, v\rangle \langle a^{\ell}_z, w\rangle$. So if $u\neq 0$, then it is sufficient to choose some $z$ so that $\langle a^{\ell}_z, u\rangle \neq 0$. If $u=0$, then $w\neq 0$, and we can choose $z$  so that $ \langle a^{\ell}_z, w\rangle \neq 0$ and $ \langle z, v\rangle \neq 0$.

{\bf (C)}: First note that by \eqref{eq:isotcaseDsigmakcoinswithsigmak} we have that $\sum_{k\geq 1}\langle D \sigma_k, \sigma_k \rangle = 0 $, so this function is in $C^{1, \beta}$ for some $\beta\in (0,1]$. Next notice that
\[
\|\hat e_{z}^{\ell}\|_{\infty} \eqsim \frac{1}{|z|^{\frac{d+\alpha}2}},\;\; \;\;\|D\hat e_{z}^{\ell}\|_{\infty} \eqsim \frac{1}{|z|^{\frac{d+\alpha}2-1}},\;\; \;\;\|D^2\hat e_{z}^{\ell}\|_{\infty} \eqsim \frac{1}{|z|^{\frac{d+\alpha}2-2}},
\]
hence
\begin{equation}\label{eq:Dsifma2issummableforalpha>2}
\begin{split}
\sum_{k\geq 1}\|\sigma_k\|_{\infty}^2 +  \|D \sigma_k\|_{\infty}^2 &+ \|D \sigma_k\|_{\infty} \|\sigma_k\|_{\infty}+ \|D^2 \sigma_k\|_{\infty} \|\sigma_k\|_{\infty}\\
&\quad\quad\quad\eqsim_d \sum_{z\in\mathbb Z^d_0} \frac{1}{|z|^{d+\alpha-2}} \eqsim_d \sum_{n=1}^{\infty} \sum_{z\in\mathbb Z^d_0, |z|_{\infty}=n} \frac{1}{n^{d+\alpha-2}}\\
&\quad\quad\quad \eqsim_d \sum_{n=1}^{\infty} \frac{n^{d-1}}{n^{d+\alpha-2}}  = \sum_{n=1}^{\infty} \frac{1}{n^{\alpha-1}} <\infty,
\end{split}
\end{equation}
where $|z|_{\infty}$ is the standard sup-norm in $\mathbb Z^d$. Therefore \eqref{eq:nescondonsigmaintermsofsums} holds true as well. Finally, for any $x, y\in \mathbb T^d$ and any $u, v\in \mathbb R^d$ we have that
\begin{equation}\label{eq:DsigmaDsigmainCbetafroKraichsd}
\begin{split}
 &\sum_{k\geq 1}  D \sigma_k(x)\otimes D \sigma_k(y) (u, v) \\
 &\quad\quad\quad\quad= \frac{1}{4}\sum_{z\in \mathbb Z_0^d} \frac{1}{|z|^{d+\alpha }} \sum_{\ell=1} ^{d-1} a_z^{\ell} \otimes a_z^{\ell}\cos\bigl(z\cdot(x-y) \bigr)\langle z, u\rangle \langle z, v\rangle,
 \end{split}
\end{equation}
 so $x, y \mapsto \sum_{k\geq 1}  D \sigma_k(x)\otimes D \sigma_k(y)$ is $C^{\beta}$ for $\beta<\alpha-2$ (since $\cos a - \cos b \leq |a-b|^{\beta}$ for any $0\leq \beta \leq 1$). Similar calculations lead to the last part of the condition {\textnormal {\bf (C)}}.
 Indeed, 
 \begin{align*}
  &\sum_{k\geq 1}\|(\sigma_k(y) - \sigma_k(x)) \otimes (\sigma_k(y) - \sigma_k(x)) - \langle D \sigma_k(x), y-x \rangle \otimes  \langle D \sigma_k(x), y-x \rangle\|\\
 &\quad\quad\quad = \sum_{z\in \mathbb Z^d_0} \frac{1}{2|z|^{d+\alpha}} \sum_{\ell=1}^{d-1} \|a^{\ell}_z \otimes a^{\ell}_z\| \bigl( 2-2\cos(z\cdot(y-x)) - \langle z, y-x\rangle^2 \bigr)\\
 &\quad\quad\quad = \sum_{z\in \mathbb Z^d_0} \frac{d-1}{2|z|^{d+\alpha}} \bigl( 2-2\cos(z\cdot(y-x)) - \langle z, y-x\rangle^2 \bigr);
 \end{align*}
the latter is of the order $O(|w|^{\alpha-\eps})$ for any $0<\eps<\alpha-2$ as it is summable for such $\eps$ and as $2-2\cos(a)-a^2 = O(|a|^{\alpha-\eps})$ for any $a$. \eqref{eq:qvofsigma-goan-Dfopsaxw)wfq} holds for the similar reason.
 
 Thus {\textnormal {\bf (C)}} follows and consequently, Theorem \ref{thm:proofofCadpconj} holds true.
\end{proof}

 \begin{remark}\label{rem:alpha=4/3inreal}
 According to \cite[pp.\ 427 and 436]{MK99} the Kolmogorov spectrum of turbulence corresponds to $\alpha = 4/3$, which in turn leads to coefficients $\sigma_k$ which do not have enough regularity in order to generate a stochastic flow and, therefore, the approach of this paper cannot be applied.
 \end{remark}

\begin{remark}\label{rem:whyalpha>2needed}
The reader might question why do we consider such a range of $\alpha$, why cannot we go beyond $\alpha = 2$?  If we consider $\alpha\leq 2$, we are no longer guaranteed that $\phi$ creates a flow and that in particular $D\phi$ is well defined. Indeed, in this case analogously to \eqref{eq:Dsifma2issummableforalpha>2} we get that the local characteristics of $\phi$ are no longer differentiable but only H\"older continuous, so we can not apply \cite[Section 4.5 and Theorem 4.6.5]{KSF90}.
\end{remark}

\section{H\"ormander conditions}\label{subsec:Hormtwopint}

In this section we show that, in the case of smooth diffusion coefficients, the ellipticity conditions $\textbf{(A)}$ for the two-point motion and $\textbf{(B)}$ for the normalized tangent flow can be relaxed to the H\"ormander condition, as it was also done in the classical work \cite{BS88} by Baxendale and Stroock (see also \cite{Bax91,DKK04}). 
\begin{enumerate}
 \item[\bf (A')] There is a $K>0$ such that  $\sigma_k \equiv 0$ for all $k> K$ and that for all $x_1, x_2\in \mathcal M$ such that $x_1\neq x_2$ we have that
\[
{\rm Lie}\left(\binom{\sigma_1}{ \sigma_1}, \ldots ,\binom{\sigma_K}{\sigma_K}\right)\binom{x_1}{x_2}  = T_{x_1}\mathcal M\times T_{x_2}\mathcal M.
\]
 \item[\bf (B')] There is a $K>0$ such that  $\sigma_k \equiv 0$ for all $k> K$ and that for all $x\in \mathcal M$ and $v\in S_x\mathcal M$ we have that
\[
{\rm Lie}\left(\binom{\sigma_1}{\tilde \sigma_1}, \ldots ,\binom{\sigma_K}{\tilde \sigma_K}\right)\binom{x}{v}  = T_x\mathcal M \times T_v(S_x\mathcal M).
\]
\end{enumerate}

\begin{theorem}\label{thm:hoermander}
 Let $(\sigma_k)_{k\geq 1}$ satisfy the conditions {\textnormal {\bf (A'),(B'),(C)}}. Then the results of  Theorem \ref{thm:proofofCadpconj} and Theorem \ref{thm:H-sconvergensofHsinvar} hold.
\end{theorem}
\begin{proof}
{ First, one can impose the condition {\textnormal {\bf (B')}} instead of {\textnormal {\bf (B)}} in Subsection \ref{sec:LyaexmomLyafun} as it was done e.g.\ in \cite{BS88}. Next, Proposition \ref{prop:harridsthmforsVkappa} with  the conditions {\textnormal {\bf (A')}} and  {\textnormal {\bf (B')}} instead of {\textnormal {\bf (A)}} and {\textnormal {\bf (B)}} was shown on \cite[p.\ 9]{DKK04}.}
The proof then follows the lines of the proofs of Theorem \ref{thm:H-sconvergensofHsinvartext} and Theorem \ref{thm:proofofCadpconj}. Since the sequence $(\sigma_k)$ is finite and each $\sigma_k$ is a $C^{\infty}$ function, the flow $\phi$ in the proof of Lemma \ref{lem:phitkat64wppacontinxandkapps} has smooth local characteristics. Hence, in particular, similarly to the proof of Lemma \ref{lem:phitkat64wppacontinxandkapps} it follows that for any $n\geq 1$ the $n$-th derivative $D^{(n)}\phi^{\kappa}$ exists and has all moments finite, bounded by a constant independent of $\kappa\in[0, \kappa_0]$ (but depending on $n$). Therefore, the analogues of Proposition \ref{prop:P(2)kappasminopckascbyVpVpdla} and Corollary \ref{cor:existnoft*dxc} follow. 
\end{proof}

A notable variation of Kraichnan model was presented by Baxendale and Rozovskii in \cite[pp.\  57--58]{BR92}. We next show that the results of the present section are applicable to this example.

\begin{theorem}\label{ex:BRMHDmodel}
Let $d=2$ and for $x=(x^1, x^2)\in \mathbb T^2$
\[
\sigma_1(x) = \binom{0}{\sin x^1},\;\;\;\sigma_2(x) = \binom{0}{\cos x^1},\;\;\;\sigma_3(x) = \binom{\sin x^2}{0},\;\;\;\sigma_4(x) = \binom{\cos x^2}{0}.
\]
Then, the stochastic flow is diffusion enhancing in the sense of Theorem \ref{thm:proofofCadpconj} and exponentially mixing in the sense of Theorem \ref{thm:H-sconvergensofHsinvar}.
\end{theorem}

\begin{proof}
In this case  the condition {\textnormal {\bf (B')}} is satisfied by \cite[p.\ 58]{BR92} and {\textnormal {\bf (C)}} is obviously true, but condition {\textnormal {\bf (A)}} does not hold. Indeed, for any fixed $x, y\in \mathbb T^2$ with $x=(x^1, x^2)$ and $y=(y^1, y^2)$ one has that
\begin{equation}\label{eq:sigm1sigm2twopcao}
\binom{\sigma_1(x)}{\sigma_1(y)} = \begin{pmatrix} 0 \\ \sin x^1 \\ 0\\ \sin y^1 \end{pmatrix},\;\; 
\binom{\sigma_2(x)}{\sigma_2(y)} = \begin{pmatrix} 0 \\ \cos x^1 \\ 0\\ \cos y^1 \end{pmatrix},
\end{equation}

\begin{equation}\label{eq:sigm3sigm4twopcao}
\binom{\sigma_3(x)}{\sigma_3(y)} = \begin{pmatrix} \sin x^2 \\ 0\\ \sin y^2\\0 \end{pmatrix},\;\; 
\binom{\sigma_4(x)}{\sigma_4(y)} = \begin{pmatrix}  \cos x^2 \\ 0\\ \cos y^2\\ 0 \end{pmatrix},
\end{equation}
which span $\mathbb R^4$ if and only if
\begin{equation}\label{eq:badxydq,BaxRoz}
x^1\ne y^1+\pi k\;\; \text{or}\;\; x^2 \ne y^2+\pi k\;\; \text{for all}\;\; k\in \{0, 1\}.
\end{equation}
Indeed, in this case both \eqref{eq:sigm1sigm2twopcao} and \eqref{eq:sigm3sigm4twopcao} span two-dimensional subspaces of $\mathbb R^4$ which are orthogonal to each other.
Hence, the strict ellipticity condition {\textnormal {\bf (A)}} for the two point motion is not satisfied. 

However, we next show that {\textnormal {\bf (A')}} can be verified. To this end fix $x$ and $y$ so that \eqref{eq:badxydq,BaxRoz} is satisfied and let us exemplarily compute one of the two-point Lie brackets
\begin{align*}
\left [ \binom{\sigma_1(x)}{\sigma_1(y)}, \binom{\sigma_3(x)}{\sigma_3(y)}\right ] &= \left \langle D \binom{\sigma_1(x)}{\sigma_1(y)}, \binom{\sigma_3(x)}{\sigma_3(y)} \right \rangle - \left \langle D \binom{\sigma_3(x)}{\sigma_3(y)}, \binom{\sigma_1(x)}{\sigma_1(y)} \right \rangle\\
&=  \begin{pmatrix}  \sin x^1\cos x^2 \\ -\cos x^1 \sin x^2\\ \sin y^1 \cos y^2\\ -\cos y^1 \sin y^2 \end{pmatrix}.
\end{align*}
In a similar way, the first-order Lie brackets generate the vectors
\[
\begin{pmatrix} \cos x^1 \cos x^2 \\ \sin x^1 \sin x^2\\ \cos y^1 \cos y^2\\ \sin y^1 \sin y^2 \end{pmatrix},\;\; \begin{pmatrix}  \sin x^1 \sin x^2\\\cos x^1 \cos x^2 \\ \sin y^1 \sin y^2\\ \cos y^1 \cos y^2 \end{pmatrix}, \;\; \text{and} \;\;  \begin{pmatrix} \cos x^1 \sin x^2\\ - \sin x^1\cos x^2 \\ \cos y^1 \sin y^2\\-\sin y^1 \cos y^2\end{pmatrix}.
\]
Together with \eqref{eq:sigm1sigm2twopcao} and \eqref{eq:sigm3sigm4twopcao} these vectors span $\mathbb R^4$ unless $x^1=y^1$ and $x^2=y^2$, i.e.\ unless $x=y$.
Therefore, the desired result follows from Theorem \ref{thm:hoermander}.
\end{proof}

\section{Acknowledgements} 
The authors acknowledge support by the Max Planck Society through the Max Planck Research Group
\textit{Stochastic partial differential equations}. This work was funded by the Deutsche Forschungsgemeinschaft (DFG, German Research Foundation) -- SFB 1283/2 2021 -- 317210226.

\appendix

\section{Heat kernel regularity and positivity on manifolds}

The goal of this section is to show technical statements as Harnack inequalities and H\"older continuity for the probability kernels corresponding to SDEs on manifolds with non-smooth coefficients (e.g.\ as in \eqref{eq:flowforkappa=0dsa}), which are manifold versions of those presented in \cite{BKRSh}. We will write our statements in the tensor form, i.e.\ in the form independent of local coordinates (we refer the reader to \cite[Chapter 4]{SpVI99}). For example, we assume that the coefficients $a^{ij}$ and $b^i$ depend on local coordinates in the way described on \cite[p.\ 120]{SpVI99} and are in fact $(2,0)$- and $(1, 0)$-tensor fields (i.e.\ a bilinear form on $(T\mathcal M)^* \times (T\mathcal M)^*$ and a tangent vector fields respectively).

Throughout the section we assume that $\mathcal M$ is a {general} Riemannian manifold with or without boundary. In particular, by writing $\phi\in C^{\infty}_0(\mathcal M)$ we mean that $\phi$ is $C^{\infty}$-smooth with a compact support lying in the interior of $\mathcal M$.

In the paper we need density existence, positivity, and regularity for fundamental solutions of certain PDEs, which can be reached via the following propositions. We start with the following one requiring only H\"older continuity from the coefficients.

\begin{proposition}\label{prop:appendtkernelHoldcoegg}
Let $\mathcal M$ be a $C^{\infty}$-smooth $d$-dimensional Riemannian manifold, let $\mu$ be the corresponding volume measure, $\alpha\in(0,1]$, $(a^{i j})_{i, j=1}^d$ be a symmetric positive-definite $C^{\alpha}$-continuous $(2, 0)$-tensor field, $b$ be a tangent $C^{\alpha}$-continuous vector field, $c\in C(\mathcal M)$,  and let $\nu$ be a signed locally finite Borel measure on $(0, \infty)\times \mathcal M$ satisfying
\begin{equation}\label{eq:appenexistskernelHoldcoeff}
\int_{\mathbb R_+\times\mathcal M} \partial_t\phi + a^{ij} \frac{\partial^2\phi}{ \partial x^i\partial x^j} + b^i \frac{\partial \phi}{\partial x^i} + c\phi \ud \nu = 0,\;\;\; \phi\in C^{\infty}_0(\mathbb R_+ \times \mathcal M).
\end{equation}
Then $\nu$ has a density $\rho$ with respect to $\lambda_{(0, \infty)}\otimes \mu$ on $(0, \infty)\times \mathcal M$.
\end{proposition}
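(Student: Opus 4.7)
The plan is to reduce the statement to the Euclidean case, where existence of densities for locally finite signed solutions of Fokker--Planck type equations with H\"older continuous and locally uniformly elliptic coefficients is the content of the monograph \cite{BKRSh}. Since having a density with respect to $\lambda_{(0,\infty)}\otimes\mu$ is a local property, and since in any smooth chart $\kappa:U\to V\subset\mathbb R^d$ the volume measure $\mu|_U$ has a smooth strictly positive density $\sqrt{\det g_{ij}}$ with respect to Lebesgue measure, it will suffice to show that on each chart the pushforward $(\mathrm{id}\times\kappa)_*\nu$ is absolutely continuous with respect to $\lambda_{(0,\infty)}\otimes\lambda_V$.

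First, I would fix a locally finite cover $(U_\alpha)$ by relatively compact coordinate charts $\kappa_\alpha:U_\alpha\to V_\alpha$ and, for each $\alpha$, insert test functions of the form $\phi=\tilde\phi\circ(\mathrm{id}\times\kappa_\alpha)$ with $\tilde\phi\in C_0^\infty((0,\infty)\times V_\alpha)$ into \eqref{eq:appenexistskernelHoldcoeff}. A direct chain-rule computation transforms the equation into a Fokker--Planck equation on $(0,\infty)\times V_\alpha$ for the pushforward measure, with transformed coefficients
\[
\tilde a^{ij}(y)=a^{k\ell}(\kappa_\alpha^{-1}(y))\,\partial_k\kappa_\alpha^i\,\partial_\ell\kappa_\alpha^j,
\]
together with a drift $\tilde b^i$ and zeroth-order term $\tilde c$ that depend on $a^{k\ell}$, $b^k$, $c$, and smooth derivatives of $\kappa_\alpha$ up to order two. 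Because $\kappa_\alpha$ is $C^\infty$ and $U_\alpha$ is relatively compact, the coefficients $\tilde a^{ij},\tilde b^i,\tilde c$ inherit the $C^\alpha$ regularity of $a^{ij}, b^i, c$. Symmetry of $\tilde a^{ij}$ and local uniform ellipticity on $V_\alpha$ follow from the corresponding properties of $a^{ij}$ and continuity of $D\kappa_\alpha,(D\kappa_\alpha)^{-1}$.

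Second, I would apply the Euclidean density existence theorem of \cite{BKRSh} (the relevant statement being that any locally finite signed Borel measure on $(0,T)\times\Omega$, $\Omega\subset\mathbb R^d$ open, satisfying a parabolic Fokker--Planck equation with symmetric locally uniformly elliptic $C^\alpha$ diffusion coefficients and $C^\alpha$ lower-order terms admits an absolutely continuous density with respect to Lebesgue measure) on each $V_\alpha$ for arbitrary $T>0$. This yields a density of $(\mathrm{id}\times\kappa_\alpha)_*\nu$ on $(0,\infty)\times V_\alpha$, hence a density $\rho_\alpha$ of $\nu|_{(0,\infty)\times U_\alpha}$ with respect to $\lambda_{(0,\infty)}\otimes\mu$. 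Uniqueness of densities on overlaps ensures that the $\rho_\alpha$ agree a.e.\ on the overlaps, so they glue into a global density $\rho$.

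The main obstacle is the bookkeeping of the change of variables: one must verify that the transformed equation is genuinely of Fokker--Planck form with the regularity and ellipticity hypotheses required by \cite{BKRSh}, and one must identify the extra terms generated by $D^2\kappa_\alpha$ as contributions to $\tilde b^i$ rather than to $\tilde a^{ij}$. Once this identification is made, the proposition is an immediate consequence of the Euclidean theory, and no additional PDE estimates need to be carried out intrinsically on $\mathcal M$.
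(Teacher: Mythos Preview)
Your proposal is correct and follows essentially the same approach as the paper: localize the equation in coordinate charts and invoke the Euclidean density result \cite[Theorem 6.3.1]{BKRSh}. The paper's proof is in fact the one-line version of what you wrote, so your additional bookkeeping (the explicit chain-rule computation, the verification that $D^2\kappa_\alpha$-terms land in the drift, and the gluing over overlaps) simply fills in the details the paper leaves implicit.
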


\begin{proof}
The proposition follows from localizing \eqref{eq:appenexistskernelHoldcoeff} and from \cite[Theorem 6.3.1]{BKRSh}.
\end{proof}

We also need continuity for the kernel $\rho$ given more regularity of $a^{ij}$ which can be shown as follows.

\begin{proposition}\label{prop:appendcontkernel}
Let $\mathcal M$ be a $C^{\infty}$-smooth $d$-dimensional Riemannian manifold, let $\mu$ be the corresponding volume measure, $\alpha\in(0,1]$, $(a^{i j})_{i, j=1}^d$ be a symmetric positive-definite $C^{1+\alpha}$-continuous $(2, 0)$-tensor field, $b$ be a tangent $C^{\alpha}$-continuous vector field, $c\in C(\mathcal M)$,  and let $\nu$ be a signed locally finite measure on $(0, \infty)\times \mathcal M$ satisfying
\begin{equation}\label{eq:appenforPhatpeqo}
\int_{\mathbb R_+\times\mathcal M} \partial_t\phi + \frac {\partial}{\partial x^i} a^{ij}\frac{\partial\phi}{ \partial x^j} + b^i \frac{\partial \phi}{\partial x^i} + c\phi \ud \nu = 0,\;\;\; \phi\in C^{\infty}_0(\mathbb R_+ \times \mathcal M).
\end{equation}
Then $\nu$ has a density $\rho$ with respect to $\lambda_{(0, \infty)}\otimes \mu$ on $(0, \infty)\times \mathcal M$ which is locally H\"older continuous.
\end{proposition}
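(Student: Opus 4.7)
The proof proposal has two stages: first establish existence of the density by reducing to Proposition \ref{prop:appendtkernelHoldcoegg}, then upgrade to local H\"older continuity by localizing to coordinate charts and invoking classical parabolic regularity theory.

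For existence, I would expand the divergence term in any local coordinate chart:
\[
\frac{\partial}{\partial x^i}\left(a^{ij}\frac{\partial \phi}{\partial x^j}\right) = a^{ij}\frac{\partial^2 \phi}{\partial x^i\partial x^j} + \left(\frac{\partial a^{ij}}{\partial x^i}\right)\frac{\partial \phi}{\partial x^j}.
\]
Setting $\tilde b^{j} := b^j + \frac{\partial a^{ij}}{\partial x^i}$, the $C^{1+\alpha}$ hypothesis on $a^{ij}$ together with $b\in C^{\alpha}$ ensures that $\tilde b$ is a $C^{\alpha}$-continuous vector field (modulo the standard coordinate-change verifications that put it into tensorial form, which can be localized via a partition of unity argument). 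Thus equation \eqref{eq:appenforPhatpeqo} is exactly of the form \eqref{eq:appenexistskernelHoldcoeff} with drift $\tilde b$, and Proposition \ref{prop:appendtkernelHoldcoegg} applies directly, producing a density $\rho$ of $\nu$ with respect to $\lambda_{(0,\infty)}\otimes \mu$.

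For the H\"older continuity, I would fix an arbitrary coordinate chart $(U, x^1,\ldots,x^d)$ in which $\mu$ has a smooth strictly positive density $\sqrt{\det g}$ with respect to Lebesgue measure. Denoting $\tilde\rho := \rho\sqrt{\det g}$, the integration-by-parts identity \eqref{eq:appenforPhatpeqo} restricted to test functions compactly supported in $\mathbb R_+\times U$ shows that $\tilde \rho$ is a locally integrable weak solution on a Euclidean domain of a linear parabolic equation in divergence form with $C^{1+\alpha}$ leading coefficients, $C^{\alpha}$ first-order coefficients (after absorbing the factors coming from $\sqrt{\det g}$), and a continuous zero-order term. Local H\"older continuity of $\tilde \rho$, and hence of $\rho$ on $U$, then follows from the classical De\,Giorgi--Nash--Moser theory for divergence form parabolic equations (see e.g.\ \cite[Chapter III]{LSU68}), or equivalently from the regularity statements in \cite[\S6.4]{BKRSh} applied to the measure $\nu$ restricted to $\mathbb R_+\times U$. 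Since the chart was arbitrary, local H\"older continuity holds on all of $(0,\infty)\times \mathcal M$.

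The only nontrivial bookkeeping — and the step I expect to require the most care — is ensuring that the transformation between the intrinsic divergence form on $\mathcal M$ and the Euclidean divergence form in each chart correctly routes the derivatives of $\sqrt{\det g}$ into lower-order $C^{\alpha}$ coefficients, and that the adjoint equation actually solved by $\tilde \rho$ matches the hypotheses of the regularity theorem being applied. Once this local reduction is unwound, no new ideas beyond those already present in the proof of Proposition \ref{prop:appendtkernelHoldcoegg} and standard parabolic regularity are required.
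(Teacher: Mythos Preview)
Your proposal is correct and follows essentially the same route as the paper: localize to a coordinate chart and invoke the regularity results of \cite[\S6.4]{BKRSh} (the paper cites specifically \cite[Corollary 6.4.3]{BKRSh}). The only organizational difference is that you first reduce to Proposition~\ref{prop:appendtkernelHoldcoegg} for existence and then upgrade to H\"older continuity, whereas the paper obtains both existence and local H\"older continuity in a single application of \cite[Corollary 6.4.3]{BKRSh} to the pushed-forward measure $\nu(\cdot, i_U^{-1}\cdot)$ on the Euclidean image of the chart; your explicit handling of the $\sqrt{\det g}$ factor is absorbed there into the push-forward.
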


{

We assume that $a^{ij}$ is $C^{1+\alpha}$-continuous as in \cite[Section 6.4]{BKRSh} applied below one needs $a^{ij}$ to be locally $W^{p,1}$ which holds in our case.

}

\begin{proof}[Proof of Proposition \ref{prop:appendcontkernel}]
First we can localize \eqref{eq:appenforPhatpeqo}, i.e.\ choose a local chart $U\subset \mathcal M$ such that $\overline U\subset \mathcal M$, open set $V\subset \mathbb R^d$, and $C^{\infty}$ local coordinates $(x^1, \ldots,x^d)=i_U:U\to V$ so that
\begin{equation*}
\int_{\mathbb R_+\times V} \partial_t\phi + \frac {\partial}{\partial x^i} a^{ij}\frac{\partial \phi}{ \partial x^j} + b^i \frac{\partial \phi}{\partial x^i} + c\phi \ud \nu(t, i_U^{-1} \cdot) = 0,\;\;\; \phi\in C^{\infty}_0(\mathbb R_+ \times V).
\end{equation*}
Then as $\overline U\subset \mathcal M$ and as $a^{ij}$ is continuous, $A=(a^{ij})_{i, j=1}^d$ is strictly elliptic on $V$ together with its inverse $A^{-1}$, so by \cite[Corollary 6.4.3]{BKRSh} we have that $\nu(\cdot, i_U^{-1}\cdot)$ has a density $\rho$ on $(0, +\infty)\times V$ which is locally H\"older continuous.
\end{proof}

\begin{remark}\label{rem:appenbinsidep3d2xim}
{Let $b$ be $C^{1+\alpha}$. Then} the proof is similar if one considers  
\begin{equation*}
\int_{\mathbb R_+\times\mathcal M} \partial_t\phi + \frac {\partial}{\partial x^i} a^{ij}\frac{\partial\phi}{ \partial x^j} +\frac{\partial  b^i  \phi}{\partial x^i} + c\phi \ud \nu = 0,\;\;\; \phi\in C^{\infty}_0(\mathbb R_+ \times \mathcal M).
\end{equation*}
instead of \eqref{eq:appenforPhatpeqo} with the same assumptions on $a^{ij}$ and $c$.
\end{remark}

Now we are ready to prove Harnack-type inequalities for Fokker-Planck equations on manifolds.  Here we assume that $\rho_{\mathcal M}  = \sqrt{\det g_{ij}}$ is the density of $\mu$ in local coordinates $(x^1, \ldots,x^d)$ (see e.g.\ \cite[Chapter 9]{SpVI99}) which is  positive and $C^{\infty}$ as $\mathcal M$ is $C^{\infty}$.

\begin{proposition}\label{prop:appendxHarnackmancewell}
Let $\mathcal M$ be a $C^{\infty}$-smooth $d$-dimensional connected bounded Riemannian manifold, let $\mu$ be the corresponding volume measure, $\alpha\in(0, 1]$, $(a^{i j})_{i, j=1}^d$ be a positive-definite $C^{1+\alpha}$-continuous $(2, 0)$-tensor field, $b$ be a tangent $C^{1+\alpha}$-continuous vector field, $c\in C^{\alpha}(\mathcal M)$. Assume that there exists a function $p:(0, \infty)\times \mathcal M\times \mathcal M \to \mathbb R_+$ such that for any $y\in \mathcal M$ the measure $\nu=p(\cdot,y, \cdot)\ud \lambda_{\mathbb R_+}\dd \mu$ is Borel
and it is a solution of the following equation
\begin{equation}\label{eq:FPKeqcpew}
\int_{\mathbb R_+\times\mathcal M} \partial_t\phi + \frac{\partial}{\partial x^i} a^{ij}\frac{\partial\phi}{\partial x^j} + b^i \frac{\partial \phi}{\partial x^i} + c\phi \ud \nu = 0,\;\;\; \phi\in C^{\infty}_0(\mathbb R_+ \times \mathcal M).
\end{equation}
Assume that 
\begin{equation}\label{eq:PtforapeendaBoic}
P_t\psi(y):= \int_{\mathcal M} \psi(x)p(t, y, x)\ud \mu(x),\;\;\; \psi \in C(\mathcal M), \;\;\; y\in \mathcal M,\;\;\; t\geq 0,
\end{equation}
defines a $C_{0}$-semigroup on $C(\mathcal M)$ with a generator $L_{a,b,c}$ defined for $\psi\in C^{\infty}_0(\mathcal M)$ by
\[
L_{a,b,c}\psi : =  \frac{\partial}{\partial x^i} a^{ij}\frac{\partial \psi}{\partial x^j} + \frac{\partial \ln \rho_{\mathcal M} }{ x^i} \frac{\partial  a^{ij} \psi }{\partial x^j} + \frac{a^{ij}\psi }{\rho_{\mathcal M} }\frac{\partial^2 \rho_{\mathcal M} }{\partial x^i\partial x^j}  - \frac{\partial b^i \psi }{\partial x^i} -b^i\psi \frac{\partial \ln \rho_{\mathcal M} }{\partial x^i} + c\psi.
\]

Then $p(\cdot,\cdot,\cdot)$ is positive locally H\"older continuous, and for any compact $K\subset \mathcal M$ and $t>0$ there exists $C_{t,K,a_K,b_K}>0$ such that $p(t,y,x)\geq C_{t,K,a_Kb_K}$ for any $x, y\in K$, where $a_K>0$ is such that $1/a_K\leq a^{ij}(x)v_iv_j\leq a_K$ for any $x\in K$ and any vector $v_i\in (T_x\mathcal M)^*$ from the cotangent space satisfying $|v_i|^2=g^{ij}(x) v_iv_j=1$ and where $b_K:= \sup_{x\in K}|b^i(x)| + \|Db^i(x)\| + \|Da^{ij}(x)\|$.
\end{proposition}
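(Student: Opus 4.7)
The argument splits into three stages: local H\"older continuity of $p$, pointwise positivity of $p(t,y,\cdot)$ at some small time, and propagation of positivity to the claimed uniform lower bound via chained parabolic Harnack inequalities.

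For local H\"older continuity I will localize \eqref{eq:FPKeqcpew} in a coordinate chart $(U,(x^i))$ with $\overline U$ in the interior of $\mathcal M$. On $U$ the equation becomes a Euclidean divergence-form parabolic equation whose principal part $(a^{ij})$ is $C^{1+\alpha}$ and uniformly elliptic, whose drift $b^i$ is $C^{1+\alpha}$ (which fits the divergence framework of Remark~\ref{rem:appenbinsidep3d2xim} after rewriting $b^i\partial_i\phi=\partial_i(b^i\phi)-(\partial_ib^i)\phi$), and whose zero-order term $c$ is $C^\alpha$. Proposition~\ref{prop:appendcontkernel} then yields a locally H\"older continuous version of $p(\cdot,y,\cdot)$ on $(0,\infty)\times U$. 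Continuity in the $y$-variable follows from the $C_0$-semigroup assumption via the representation $y\mapsto P_t\psi(y)=\int_{\mathcal M}\psi(x)\,p(t,y,x)\ud\mu(x)$ for $\psi\in C(\mathcal M)$, so $p$ is in fact jointly locally H\"older continuous on $(0,\infty)\times\mathcal M\times\mathcal M$.

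For pointwise positivity, pick any $y\in\mathcal M$ and a nonnegative bump $\chi\in C(\mathcal M)$ supported in a small neighborhood of $y$ with $\chi(y)=1$. The $C_0$-semigroup property gives
\[
\int_{\mathcal M}\chi(x)\,p(t,y,x)\ud\mu(x)=P_t\chi(y)\xrightarrow[t\downarrow 0]{}\chi(y)=1,
\]
so the continuous nonnegative function $p(t,y,\cdot)$ is not identically zero for all sufficiently small $t>0$, and by continuity it is strictly positive on some open set. Now fix such $y$ and set $u(s,x):=p(s,y,x)$; then $u$ is a nonnegative distributional solution of \eqref{eq:FPKeqcpew} on $(0,\infty)\times\mathcal M$. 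In any coordinate chart $u$ solves a uniformly parabolic divergence-form equation with H\"older continuous coefficients, so Moser's parabolic Harnack inequality applies on suitable cylinders $Q^-\Subset Q^+\Subset(0,\infty)\times U$, with a constant depending only on the ellipticity bound and on the sup, Lipschitz and H\"older norms of $a^{ij},b^i,c$ on $U$.

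To obtain the uniform lower bound on $K\times K$, fix a compact connected neighborhood $K'\supset K$ (possible since $\mathcal M$ is connected) and a finite cover of $K'$ by coordinate balls, together with a common parabolic step size $\tau>0$. Chaining the Harnack inequality along a ball-path from $y\in K$ to $x\in K$ (whose number of links is bounded uniformly in $x,y\in K$ thanks to the fixed cover) propagates the pointwise positivity established above to a uniform lower bound $p(t,y,x)\geq C_{t,K,a_K,b_K}>0$ for all $x,y\in K$. The composite constant depends only on $t$, on the (fixed) geometry of the cover of $K'$, and on the per-link Harnack constants, which in turn are determined by the ellipticity bound $a_K$ and the coefficient bound $b_K$ (augmented by $\|c\|_{C^\alpha(K')}$ which is finite and controlled by the same data). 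The main obstacle is the careful chart-by-chart bookkeeping that ensures the final constant depends only on the stated quantities $(t,K,a_K,b_K)$; this is handled by invoking Moser's Harnack theorem with its explicit constant dependence on ellipticity and coefficient norms and by fixing the covering geometry of $K'$ once and for all in terms of $K$.
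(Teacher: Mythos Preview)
Your overall architecture---local regularity, then small-time positivity, then Harnack chaining---matches the paper's, and your positivity and chaining steps are close to what the paper does (the paper is a bit more explicit, estimating $P_s\psi(x)\geq 1-C_2 s$ directly from the generator, but your $P_t\chi(y)\to 1$ argument is an acceptable variant).

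There is, however, a genuine gap in your regularity argument in the $y$-variable. You claim that continuity (indeed local H\"older continuity) of $y\mapsto p(t,y,x)$ follows from the $C_0$-semigroup representation $P_t\psi(y)=\int_{\mathcal M}\psi(x)p(t,y,x)\ud\mu(x)$. But the semigroup hypothesis only gives continuity of $y\mapsto P_t\psi(y)$ for each $\psi\in C(\mathcal M)$, i.e.\ weak-$*$ continuity of the measures $p(t,y,\cdot)\ud\mu$ in $y$. This does \emph{not} imply pointwise continuity of the density $p(t,y,x)$ in $y$, let alone local H\"older continuity: a family of probability densities can converge weakly without converging pointwise. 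To upgrade weak continuity to pointwise continuity you would at minimum need a uniform-in-$y$ equicontinuity bound on $p(t,y,\cdot)$, which you have not established.

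The paper closes this gap by a duality argument: it observes that for fixed $x$ the map $(t,y)\mapsto p(t,y,x)$ is a weak fundamental solution of the \emph{adjoint} equation $\partial_t u = L_{a,b,c}^* u$ (this is checked by pairing with $P_t^* u_0$ and differentiating in $t$), and then applies Proposition~\ref{prop:appendcontkernel} together with Remark~\ref{rem:appenbinsidep3d2xim} to that adjoint problem to obtain local H\"older continuity in $(t,y)$. You should replace your $C_0$-semigroup sentence with this adjoint-equation step; once that is done, the rest of your plan goes through essentially as in the paper.
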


\begin{proof}
We start with proving that $p$ is locally H\"older continuous in all variables. The fact that $(t, x)\mapsto p(t, \cdot, x)$ is locally H\"older continuous follows from Proposition \ref{prop:appendcontkernel}. Let us show that $(t, y)\mapsto p(t, y, \cdot)$ is locally H\"older continuous. It is sufficient to notice that by \eqref{eq:FPKeqcpew} for any $y\in \mathcal M$ $(x,t)\mapsto p(t,y, x)$ is a fundamental solution of the following PDE on $\mathbb R_+ \times \mathcal M$
$$
\begin{cases}
\partial_t u = L_{a,b,c}u ,\\ 
u(0,\cdot) = u_0\in C^{\infty}_0(\mathcal M).
\end{cases}
$$
Thus by a standard trick for any $x\in \mathcal M$ the map $(y, t)\mapsto p(t,y, x)$ is a weak fundamental solution for the adjoint equation $\partial_t u = L_{a,b,c}^*u$, {as for any $f, u_0\in C_0^{\infty}$ and $\zeta\in C_0^{\infty}(\mathbb R_+)$ we have that
\begin{multline*}
\frac {\partial}{\partial t}\langle f\zeta(t), P^*_t u_0 \rangle_{L^2(\mu)}  = \frac {\partial}{\partial t}\langle P_t f\zeta(t), u_0 \rangle_{L^2(\mu)} \\
= \langle P_t L_{a, b,c} f\zeta(t) + P_t  f\zeta'(t), u_0 \rangle_{L^2(\mu)} =\langle L_{a, b,c} f\zeta(t) +  f\zeta'(t), P_t^*u_0 \rangle_{L^2(\mu)},
\end{multline*}
so $u(t, \cdot) = P_t^*u_0(\cdot)$, $t\geq 0$, solves $\partial_t u = L_{a,b,c}^*u$ weakly, where $P_t^*$ is defined by 
\begin{equation*}
P^*_t\psi(x):= \int_{\mathcal M} \psi(y)p(t, y, x)\ud \mu(y),\;\;\; \psi \in C(\mathcal M), \;\;\; x\in \mathcal M,\;\;\; t\geq 0,
\end{equation*}
 thanks to \eqref{eq:PtforapeendaBoic}.} Therefore by Proposition \ref{prop:appendcontkernel} and Remark \ref{rem:appenbinsidep3d2xim} $(y, t)\mapsto p(t,y, x)$ is locally H\"older continuous.

Let us now turn to the last part of the proposition concerning Harnack-type inequalities. Without loss of generality let $K$ be connected and let $ t<1$. As $K$ is a compact, there exist local charts $U_1, \ldots, U_N\subset \mathcal M$ such that $K\subset \cup_{n=1}^N U_n$. Without loss of generality we may assume that these charts are balls, i.e.\ the corresponding maps map $U_n$'s into balls in $\mathbb R^d$. Fix $x,y\in K$. First assume that $x,y\in U_n$ for some $n=1, \ldots, N$. Fix any $z\in U_n$. Then by \cite[Theorem 8.1.3]{BKRSh} (and as $U_n$ is a ball in local coordinates) for any $0<s<t<1$ there exists a constant $C_1$ depending only on $s$, $t$, $a_K$, and $b_K$ such that 
\begin{equation}\label{eq:appenHarnf3}
p(t,x,y)\geq C_1 p(s,x,z).
\end{equation} 
In particular, $p(t,x,y)\geq \tfrac{C_1}{\mu(U_n)} \int_{U_n}p(s,x,z)\ud \mu(z)$. 

Let us estimate the latter term. Let the semigroup $(P_t)_{t\geq 0}$ be defined by \eqref{eq:PtforapeendaBoic} with the generator $L_{a, b}$. Fix some nonnegative $\psi\in C^{\infty}_0(U_n)$ so that $\psi (x)=1$ and $\psi_{\infty}\leq 1$. Then
\begin{multline*}
 \int_{U_n}p(s,x,z)\ud \mu(z) = P_s \mathbf 1_{U_n}(x) \geq P_s \psi(x)  = 1 + \int_0^s \partial_r P_r \psi(x)\ud r\\
  = 1+\int_0^s P_r L_{a, b}\psi(x)\ud r \geq 1+ \min_{y\in U_n} L_{a, b}\psi(y)\int_0^s P_r \mathbf 1_{\mathcal M}(x) \ud r\\
 \geq 1+ s(0\wedge\min_{y\in U_n} L_{a, b}\psi(y))\max_{y\in \mathcal M, 0\leq r\leq 1} P_r \mathbf 1_{\mathcal M}(y)  = 1- C_2s,
\end{multline*}
where $C_2\geq 0$ depends only on $\psi$, $a_K$, and $b_K$, $\max_{y\in \mathcal M, 0\leq r\leq 1} P_r \mathbf 1_{\mathcal M}(y)<\infty$ as the expression under the maximum is continuous in both $r$ and $y$ and as maximum is taken over a compact set, where we use that $P_tf\geq 0$ for $f\geq 0$ due to \eqref{eq:PtforapeendaBoic}, and where $P_r \psi(x)\to \psi(x)=1$ as $r\to 0$. Therefore $p(t,x,y)\geq \tfrac{C_1}{\mu(U_n)} (1 - C_2s)$. By choosing $s$ close enough to $0$ we get the desired estimate. By minimizing $C_1$ and $C_2$ over all charts we make these estimates independent of $n=1, \ldots, N$.

If $x$ and $y$ do not belong to one $U_n$, it is possible to construct a chain $x_0, \ldots, x_N\in K$ such that $x_n$ and $x_{n+1}$ are in the same local chart for any $n=0, \ldots N-1$ and such that $x_0=x$ and $x_N=y$. Then first by the considerations above we obtain that $p(t/N, x_0, x_1)>C'_{t/N,K,a_K,b_K}$ and by \eqref{eq:appenHarnf3} we get that $p(t(n+1)/N, x, x_{n+1})>C'_{t/N,K,a_K,b_K}p(tn/N, x, x_n)$ for any $n=1, \ldots, N-1$ for some universal constant $C'_{t/N,K,a_K,b_K}>0$ depending only on $t$, $K$, $a_K$, and $b_K$. A simple computation terminates the proof.
\end{proof}

\bibliographystyle{plain}

\end{document}